\documentclass[1 [leqno,11pt]{amsart}
\usepackage{amssymb, amsmath,amsmath,latexsym,amssymb,amsfonts,amsbsy, amsthm,mathtools,graphicx,CJKutf8,CJKnumb,CJKulem,color}
%%%%%%%%%%%%%%%%%%%%%%%%%%%%%%%%%%

%\usepackage{pdfsync}
\setlength{\oddsidemargin}{0mm}
\setlength{\evensidemargin}{0mm} \setlength{\topmargin}{0mm}
\setlength{\textheight}{222mm} \setlength{\textwidth}{160mm}

\numberwithin{equation}{section}

\allowdisplaybreaks
%%%%%%%%%%%%%%%%%%%%%%%%%%%%%%%%%%

%%%My setting%%%%%%%%%%%%%%%%%%%%%%%%%%%%%%%%%%%%%%%%%%%%%%%%%%%

%%%ABREVIATIONS%%%%%%
\let\al=\alpha

\let\d=\delta

\let\ep=\epsilon

\let\la=\lambda

\let\s=\sigma
\let\f=\frac

\let\om=\omega

\let\Om=\Omega

\let\na=\nabla
\let\th=\theta
\let\pa=\partial

%%LETTRES RONDES%%

\def\bbD{\mathbb{D}}
\def\bfD{\mathbf{D}}

\def\bfR{\mathbf{R}}

\def\tz{\tilde{z}}
\def\tom{\tilde{\omega}}
\def\trho{\tilde{\rho}}
\def\tG{\tilde{G}}
\def\tpsi{\tilde{\psi}}
\def\tchi{\tilde{\chi}}

\def\rmA{\mathrm{A}}

\def\rmD{\mathrm{D}}
\def\rmE{\mathrm{E}}

\def\rmJ{\mathrm{J}}

\def\rmf{\mathrm{f}}
\def\rmF{\mathrm{F}}
\def\rmR{\mathrm{R}}
\def\rmK{\mathrm{K}}

\def\rmM{\mathrm{M}}
\def\rmN{\mathrm{N}}
\def\rmT{\mathrm{T}}

\def\rmU{\mathrm{U}}
\def\rmw{\mathrm{w}}
\def\rmW{\mathrm{W}}

%%MACROS SANS ARGUMENTS%%%%%%%%%%%%%%%%%

\def\R{\mathbf R}

\def\no{\noindent}

\def\eqdef{\buildrel\hbox{\footnotesize def}\over =}

\def\bbT{\mathbb{T}}

\newcommand{\beq}{\begin{equation}}
\newcommand{\eeq}{\end{equation}}
\newcommand{\ben}{\begin{eqnarray}}
\newcommand{\een}{\end{eqnarray}}
\newcommand{\beno}{\begin{eqnarray*}}
\newcommand{\eeno}{\end{eqnarray*}}
\newcommand{\udl}[1]{\underline{#1}}

%%theorem%%%%%%%%%%%%%%%%%%%%%%%%%%%%%%%%
\newtheorem{theorem}{Theorem}[section]

\newtheorem{lemma}[theorem]{Lemma}
\newtheorem{proposition}[theorem]{Proposition}
\newtheorem{corol}[theorem]{Corollary}
\newtheorem{remark}[theorem]{Remark}
%%

%%%%%%%%%%%%%%%%%%%%%%%%%%%%%%%%%%%%%%%%%%%%%%

%%%%%%%%%%%%%%end of my setting%%%%%%%%%%%%%%%%%%%%%%%%%%%%%%%%%%%%%%
\begin{document}
\begin{CJK*}{UTF8}{gkai}
\title{Nonlinear inviscid damping for a class of monotone shear flows in finite channel}
\author{Nader Masmoudi}
\address{Department of Mathematics, New York University in Abu Dhabi, Saadiyat Island, P.O. Box 129188, Abu Dhabi, United Arab Emirates. Courant Institute of Mathematical Sciences, New York University, 251 Mercer Street, New York, NY 10012, USA,}
\email{masmoudi@cims.nyu.edu}
\author{Weiren Zhao}
\address{Department of Mathematics, New York University in Abu Dhabi, Saadiyat Island, P.O. Box 129188, Abu Dhabi, United Arab Emirates.}
\email{zjzjzwr@126.com, wz19@nyu.edu}
\maketitle

\begin{abstract}
We prove the nonlinear inviscid damping for a class of monotone shear flows in $\mathbb{T}\times [0,1]$ for initial perturbation in Gevrey-$\f1s$($s>2$) class with compact support. The main idea of the proof is to use the wave operator of a slightly modified Rayleigh operator in a well chosen coordinate system. 
\end{abstract}

\section{Introduction}
We consider the 2D Euler system in the vorticity formulation with a background shear flow $(u(y), 0)$:
\beq
\label{eq:vorticity}
\left\{
\begin{aligned}
&\om_t+u(y)\pa_x\om-u''(y)\pa_x\psi+U\cdot\nabla \om=0,\\
&U=\na^{\bot}\psi, \quad \Delta\psi=\om,\quad \psi(t,x,0)=\psi(t,x,1)=0,\\
&\om|_{t=0}=\om_{in}(x,y).
\end{aligned}
\right.
\eeq
Here, $(x,y)\in \mathbb{T}\times [0,1]$, $\na^{\bot}=(-\pa_y,\pa_x)$ and $(U,\om)$ are periodic in $x$ variable with period normalized to $2\pi$. The physical velocity is $(u(y),0)+U$ where $U=(U^x,U^y)$ denotes the velocity perturbation and the total vorticity is $-u'(y)+\om$. 

In this work, we are interested in the long time behavior of \eqref{eq:vorticity} for small initial perturbations $\om_{in}$. In particular, we show that all sufficiently small perturbations in a suitable regularity class undergo `inviscid damping' and satisfy $(u(y),0)+U(t,x,y)\to (u(y)+u_{\infty}(y),0)$ as $t\to \infty$ for some $u_{\infty}(y)$ determined by the evolution. 

The field of hydrodynamic stability started in the nineteenth century with Stokes, Helmholtz, Reynolds, Rayleigh, Kelvin, Orr, Sommerfeld and many others. 
In \cite{Orr}, Orr observed an important phenomenon that the velocity will tend to 0 as $t\to \infty$. This phenomenon is so-called inviscid damping, which is the analogue in hydrodynamics of Landau damping found by Landau \cite{Lan}, which predicted the rapid decay of the electric field of the linearized Vlasov equation around homogeneous equilibrium. Mouhot and Villani \cite{MV} made a breakthrough and proved nonlinear Landau damping for the perturbation in Gevrey class(see also \cite{BMM}). In this case, the mechanism leading to the damping is the vorticity mixing driven by shear flow or Orr mechanism \cite{Orr}. See \cite{RZ17,Ry} for similar phenomena in various system.

Due to the presence of the nonlocal operator for general shear flows, the inviscid damping for general shear flows is a challenge problem even at linear level.
For the linear inviscid damping we refer to \cite{Zill,WZZ1,Jia,Jia2} for the results for general monotone flows. 
For non-monotone flows such as the Poiseuille flow and the Kolmogorov flow, another dynamic phenomena should be taken into consideration, which is so called the vorticity depletion phenomena, predicted by Bouchet and Morita \cite{BouMor} and later proved by Wei, Zhang and Zhao \cite{WZZ2,WZZ3}. See also \cite{BCV2017}. 

Due to the possible nonlinear transient growth, it is a challenging task extending linear damping to nonlinear damping. 
Even for the Couette flow there are only few results. Moreover, nonlinear damping is sensitive to the topology of the perturbation. 
Indeed, Lin and Zeng \cite{LZ} proved that nonlinear inviscid damping is not true for perturbations of the Couette flow in $H^s$ for $s<\f32$. 
Bedrossian and Masmoudi \cite{BM1} proved nonlinear inviscid damping around the Couette flow in Gevrey class $2_-$. 
Recently Deng and Masmoudi \cite{DM}  proved some instability for initial perturbations in Gevrey class $2_+$. 
We refer to \cite{IonescuJia,Jia2} and references therein for other related interesting results. 

Our main result is
\begin{theorem}\label{Thm: main}
Suppose $u(y)$ is a smooth function defined on $[0,1]$ which satisfies
\begin{itemize}
\item[1.]  (Monotone) There exists $c_0>0$ such that $u'\geq c_0>0$.
\item[2.] (Compact support) There exists $\th_0\in (0,\f{1}{10}]$ such that $\mathrm{supp}\, u''\subset [4\th_0,1-4\th_0]$. 
\item[3.] (Linear stability) The Rayleigh operator $u\mathrm{Id}-u''\Delta^{-1}$ has no eigenvalue and no embedding eigenvalue. 
\item[4.] (Regularity) There exist $K>1$ and $s_0\in (0,1)$ such that for all integers $m\geq 0$,
\beno
\sup_{y\in\R}\left|\f{d^m(u''(y))}{dy^m}\right|\leq K^m(m!)^{\f{1}{s_0}}(m+1)^{-2}.
\eeno
\end{itemize}
For all $1>s_0\geq s>1/2$ and $\la_{in}>0$, there exist $\la_{in}>\la_{\infty}=\la_{\infty}(\la_{in}, K,\th_0,s)>0$ and $0<\ep_0=\ep_0(\la_{in},\la_{\infty},\th_0,s)\leq \f12$ such that for all $\ep\leq \ep_0$ if $\om_{in}$ has compact support in $\mathbb{T}\times [3\th_0,1-3\th_0]$ and satisfies 
\beno
\|\om_{in}\|_{\mathcal{G}^{\la_{in}}}^2=\sum_{k}\int\left|\widehat{\om}_{in}(k,\eta)\right|e^{2\la_{in}|k,\eta|^s}d\eta\leq \ep^2,
\quad \int_{\mathbb{T}\times [0,1]}\om_{in}(x,y)dxdy=0
\eeno
then the smooth solution $\om(t)$ to \eqref{eq:vorticity} satisfies: 
\begin{itemize}
\item[1.] (Compact support) For all $t\geq 0$, $\mathrm{supp}\, \om(t)\subset \mathbb{T}\times [2\th_0,1-2\th_0]$. 
\item[2.] (Scattering) There exists $f_{\infty}\in \mathcal{G}^{\la_{\infty}}$ with $\mathrm{supp}\, f_{\infty}\subset \mathbb{T}\times [2\th_0,1-2\th_0]$ such that for all $t\geq 0$, 
\beq\label{eq: Scattering} 
\left\|\om(t,x+tu(y)+\Phi(t,y),y)-f_{\infty}(x,y)\right\|_{\mathcal{G}^{\la_{\infty}}}\lesssim \f{\ep}{\langle t\rangle},
\eeq
where $\Phi(t,y)$ is given explicitly by 
\beq\label{eq:Phi(t,y)}
\Phi(t,y)=\f{1}{2\pi}\int_0^t\int_{\mathbb{T}}U^x(\tau,x,y)dxd\tau=u_{\infty}(y)t+O(\ep^2),
\eeq
with $u_{\infty}=\pa_y<\Delta^{-1}f_{\infty}>$. 
\item[3.] (Inviscid damping) 
The velocity field $U$ satisfies
\begin{align}\label{eq: inviscid damping}
\left\|\f{1}{2\pi}\int U^x(t,x,\cdot)dx-u_{\infty}\right\|_{\mathcal{G}^{\la_{\infty}}}&\lesssim \f{\ep^2}{\langle t\rangle^2},\\
\left\|U^x(t)-\f{1}{2\pi}\int U^x(t,x,\cdot)dx\right\|_{L^2}&\lesssim \f{\ep}{\langle t\rangle},\\
\big\|U^y(t)\big\|_{L^2}&\lesssim \f{\ep}{\langle t\rangle^2}.
\end{align}
\end{itemize}
\end{theorem}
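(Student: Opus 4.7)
My plan follows the Gevrey energy method of Bedrossian--Masmoudi \cite{BM1} but incorporates two modifications needed to accommodate a general monotone shear flow: a moving-frame change of coordinates absorbing the mean drift, and a wave operator that intertwines the non-local linearised Rayleigh operator with pure multiplication by $u(y)$.

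\textbf{Step 1: Moving frame.} First I would introduce the change of variables $z=x-tu(y)-\Phi(t,y)$ (keeping $y$ as the transverse variable), where $\Phi$ is defined by \eqref{eq:Phi(t,y)}. With the shifted vorticity $f(t,z,y)=\om(t,z+tu(y)+\Phi(t,y),y)$, the system \eqref{eq:vorticity} becomes a transport equation for $f$ whose streamfunction is recovered by inverting a time-sheared Laplacian. Because $\mathrm{supp}\,u''\subset[4\th_0,1-4\th_0]$ and $\om_{in}$ is supported in $[3\th_0,1-3\th_0]$, all nonlinear interactions will occur strictly inside $[2\th_0,1-2\th_0]$, so the channel boundary plays only a passive role and $\Phi$ depends only on a weighted average of $U^x$.

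\textbf{Step 2: Wave operator for a modified Rayleigh operator.} For every non-zero $x$-Fourier mode $k$, consider the linearised Rayleigh operator $\cL_k:=u\,\mathrm{Id}-u''(-k^2+\pa_y^2)^{-1}$ with Dirichlet boundary conditions. By hypothesis~3 it has no eigenvalues and no embedded eigenvalues. Together with the Gevrey-$1/s_0$ regularity of $u''$ from hypothesis~4, the spectral theory of \cite{WZZ1,Jia} then yields, via a limiting-absorption principle, a wave operator $W_k$ bounded on the Gevrey space $\cG^{\la}$ (uniformly in $k$) satisfying $\cL_k W_k=W_k\,u(y)\mathrm{Id}$ and reducing to the identity off $\mathrm{supp}\,u''$. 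Conjugation by $W_k$ turns the linearised dynamics into pure multiplication by $e^{-itku(y)}$, reducing the equation for $f$ (up to $O(\ep)$ nonlinear terms) to a Couette-type transport.

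\textbf{Step 3: Gevrey energy estimate and bootstrap.} Having diagonalised the linear part, I would apply the BM1 multiplier method to $g:=W^{-1}f$: introduce a time-dependent multiplier $A(t,k,\eta)=e^{\la(t)|k,\eta|^s}\langle k,\eta\rangle^{\sigma}M(t,k,\eta)$ where $\la(t)$ decreases monotonically from $\la_{in}$ to $\la_\infty$ and $M$ is the commutator multiplier encoding the echo resonances near critical times $t\approx\eta/k$. Since $s>1/2$ the Gevrey index is strictly less than $2$, so the echo cascade remains summable. A standard bootstrap on $\|A(t)g\|_{L^2}$ then closes; from the resulting Gevrey bounds one reads off the inviscid-damping rates $\|U^x-\langle U^x\rangle\|_{L^2}\lesssim \ep/\langle t\rangle$ and $\|U^y\|_{L^2}\lesssim\ep/\langle t\rangle^2$, the scattering statement \eqref{eq: Scattering}, and (via integrating $U^y$ in time, which is integrable) the compact-support propagation since the total $y$-displacement is $O(\ep^2)\ll\th_0$. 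The zero-mode statement on $\langle U^x\rangle-u_\infty$ comes from a separate, more quantitative estimate using that $\Phi$ is constructed directly from $\langle U^x\rangle$.

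\textbf{Main obstacle.} The hardest part is the interplay between the (non-local in $y$) wave operator $W_k$, the $y$-dependent coordinate change, and the time-dependent Gevrey multiplier $A(t)$. Quantitative commutator estimates of the form $\|[A(t),W]\|_{\cG^{\la}\to\cG^{\la}}\lesssim 1$ and suitable bounds for $[W,\pa_t]$ must be established, respecting precisely the Gevrey-$1/s_0$ regularity of $u''$ encoded in hypothesis~4; this is where the detailed spectral structure under the no-embedded-eigenvalue assumption is essential. Once these commutator bounds are in place, the echo multiplier $M$ and the nonlinear paraproduct estimates run along the lines of \cite{BM1}, with the generalised coefficient $u(y)$ entering only through harmless lower-order terms.
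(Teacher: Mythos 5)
Your overall architecture (BM1 Gevrey multiplier plus a wave operator for the Rayleigh operator) matches the paper, but two of your specific choices would fail as stated. First, you keep $y$ as the transverse variable and only shift $x$ by $tu(y)+\Phi(t,y)$, yet in Step 3 you build the echo multiplier around critical times $t\approx\eta/k$. Those resonance times are only correct when the frequency $\eta$ is dual to a coordinate $v$ in which the free transport is literally $\pa_t+v\pa_z$; with $y$ retained, the phase is $e^{-iktu(y)}$, which is not a Fourier shear, and the critical times depend on $u'(y)$ in a way that cannot be encoded in a $(k,\eta)$ multiplier. The paper therefore takes $v(t,y)=u(y)+\frac1t\int_0^t\langle -\pa_y\psi\rangle\chi_1\,dt'$ as the new transverse variable (a nonlinear correction of $v=u(y)$), at the price of a twisted Laplacian $\Delta_t$ and coefficient equations for $h=\pa_yv-u'$, $\udl{u''}-\widetilde{u''}$, etc.; your scheme has no analogue of this and the multiplier analysis would not close.

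Second, you conjugate by the wave operator on \emph{all} nonzero modes and assert it is bounded on $\mathcal G^{\la}$ uniformly in $k$. Only the $L^2\to L^2$ bound is uniform in $k$; the Gevrey kernel bounds degrade with $k$ (the kernel $\Phi_1(k,u,c)$ lives in a weighted class with radius $M|k|$ and weight $\cosh(M|k|(u-c))$, so the frequency-localization constant $\la_{\mathcal D}$ depends on the mode cutoff). This is precisely why the paper applies $\bfD_{u,k}$ only for $0<|k|<k_M$ and handles the nonlocal term $u''\pa_z\Delta_t^{-1}\Om$ for $|k|\ge k_M$ by a separate mechanism: for large $k$ that term gains a factor $\kappa_0\sim k_M^{-1}$ against the elliptic/CK terms (Proposition on $\Pi^{u''}$), so no intertwining is needed there. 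Without either the (unavailable) uniform Gevrey bound or this high-frequency truncation, your Step 2 does not go through. Finally, you correctly identify the commutators $[A(t),W]$ and $[\pa_t,W]$ as the crux, but resolving them requires the quantitative statement that the wave operator's kernel is concentrated near the frequency diagonal ($|\mathcal D(t,k,\xi_1,\xi_2)|\lesssim e^{-\la_{\mathcal D}|\xi_1-\xi_2|^{s_0}}$) together with the extra decay $\langle \xi-kt\rangle^{-2}$ for the difference $\bfD_{u,k}-\bfD^1_{u,k}$; merely postulating $\|[A,W]\|\lesssim1$ is not enough to absorb the transport commutator into the CK terms.
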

Let us give some remarks about Theorem \ref{Thm: main}:
\begin{itemize}
\item The compact support assumption of $u''$ is to prevent  boundary effects.
\item If the background flow satisfies $u''(y)\geq 0$, the Rayleigh operator $u\mathrm{Id}-u''\Delta^{-1}$ has no eigenvalue and no embedding eigenvalue. 
\item The regularity assumption means that $u''$ is in some Gevrey-$\f{1}{s_0}$ class. 
\item The solution will be less regular than the background flow $u(y)$. 
\end{itemize}
Let also mention a very recent paper by Ionescu and Jia \cite{IonescuJia2}, where a similar result to ours was proved with a different method. The two papers are independent. 

Compared to {\it (1.5)} in \cite{BM1}, \eqref{eq: Scattering} loses an $\ep$, due to the linear nonlocal effect. We can recover the $\ep^2$ by using the wave operator $\bbD_{u}$ defined in section 2: 
\begin{corol}\label{Rmk: better-scattering}
Let us define
\ben
\bbD_{u}\om(t,x,y)=\sum_{k=0}\bbD_{u,k}(\hat{\om}_k)(t,y)e^{ikx}.
\een
Then there exists $\rmf_{\infty}$ such that for all $t\geq 0$, 
\ben\label{eq:new-scattering}
\left\|(\bbD_{u}\om)\big(t,x+tu(y)+\Phi(t,y),y\big)-\rmf_{\infty}(x,y)\right\|_{L^2}\lesssim \f{\ep^2}{\langle t\rangle}.
\een
\end{corol}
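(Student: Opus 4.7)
The strategy rests on the fact that the wave operator $\bbD_u$, constructed in Section~2 from the spectral theory of the (slightly modified) Rayleigh operator, intertwines the linearised operator $u\,\mathrm{Id}-u''\Delta_k^{-1}$ at each non-zero $x$-frequency $k$ with multiplication by the effective shear. Consequently, applying $\bbD_u$ to the vorticity equation \eqref{eq:vorticity} removes the non-local linear contribution $-u''\pa_x\psi$, which is precisely the term responsible for the loss of one power of $\ep$ in \eqref{eq: Scattering}, and leaves on the right-hand side only terms that are genuinely nonlinear and therefore of size $\ep^2$.

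Concretely, one applies $\bbD_u$ to \eqref{eq:vorticity}, uses the intertwining identity of Section~2, and passes to the moving frame to introduce the profile
$$\mathrm{g}(t,x,y):=(\bbD_u\om)\big(t,\,x+tu(y)+\Phi(t,y),\,y\big).$$
A direct change-of-variables computation shows that $\mathrm{g}$ satisfies an equation of the schematic form
$$\pa_t\mathrm{g}=\mathcal{N}_{\bbD_u}(t,x,y),$$
where $\mathcal{N}_{\bbD_u}$ collects $\bbD_u$ applied to the transport nonlinearity $U\cdot\na\om$ (evaluated at $\phi=x+tu(y)+\Phi(t,y)$) together with the correction $(\Phi_t-u_\infty)\pa_x\mathrm{g}$ left over from the moving-frame reduction after subtracting the leading-order $u_\infty$ drift. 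The key quantitative claim is then
$$\|\mathcal{N}_{\bbD_u}(t)\|_{L^2}\lesssim\f{\ep^2}{\langle t\rangle^2}.$$
This is established by combining: \emph{(i)} the $L^2$ decay $\|U^y(t)\|_{L^2}\lesssim\ep/\langle t\rangle^2$ together with the splitting of $U^x$ into its zero-in-$x$ mode plus an $O_{L^2}(\ep/\langle t\rangle)$ remainder, both from Theorem~\ref{Thm: main}; \emph{(ii)} the bound $\|\Phi_t-u_\infty\|_{\mathcal{G}^{\la_\infty}}\lesssim\ep^2/\langle t\rangle^2$ from the first line of \eqref{eq: inviscid damping}, which controls the residual moving-frame term; and \emph{(iii)} the Gevrey regularity of the profile of $\om$ proved in Theorem~\ref{Thm: main}, which lets one absorb the $O(t)$-growth of $\pa_y$-derivatives in the original frame against the sharp temporal decay of $U$.

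Once this pointwise-in-time $L^2$ bound is in hand, a Cauchy-in-time argument gives, for any $t_1<t_2$,
$$\|\mathrm{g}(t_2)-\mathrm{g}(t_1)\|_{L^2}\lesssim\int_{t_1}^{t_2}\f{\ep^2}{\langle s\rangle^2}\,ds\lesssim\f{\ep^2}{\langle t_1\rangle},$$
so $\mathrm{g}(t)$ is Cauchy in $L^2$ as $t\to\infty$, converges to some $\rmf_\infty\in L^2$, and the rate claimed in \eqref{eq:new-scattering} follows upon sending $t_2\to\infty$. The principal obstacle is the $L^2$ bound on $\bbD_u(U\cdot\na\om)$ at the sharp rate $\ep^2/\langle t\rangle^2$: because $\bbD_u$ is a non-local operator in $y$ built from the resolvent of the modified Rayleigh operator, it does not commute with the nonlinear product, so one must carefully track how $\bbD_u$ acts on the profile-variable expression of $U\cdot\na\om$ and exploit the interplay between the decay of $U$ in $L^2$ and the growth of $\na\om$ in the un-rotated frame. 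All of the bookkeeping needed for this, however, is already available as a by-product of the regularity estimates carried out in the proof of Theorem~\ref{Thm: main}.
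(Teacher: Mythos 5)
Your proposal follows essentially the same route as the paper: introduce the profile $\rmF(t,z,v)=(\bbD_u\om)(t,x+tu(y)+\Phi(t,y),y)$, use the intertwining identity $[\pa_t,\bfD_{u,k}]=[ikv,\bfD_{u,k}]$ to cancel the linear nonlocal term, bound the remaining quadratic source terms in $L^2$ by $\ep^2/\langle t\rangle^2$ via the uniform $L^2\to L^2$ boundedness of $\bbD_{u,k}$ and the decay \eqref{eq:decay-psi_1} of the stream function, and integrate in time. The one point your schematic glosses over is that the intertwining is exact only for the linear coordinate $v=u(y)$, so $\pa_t\rmF$ also carries coordinate-mismatch errors such as $\bfD_{u,k}\big((\udl{u''}-\widetilde{u''})\pa_z\Delta_t^{-1}\Om\big)$ and $\bfD_{u,k}\big(\widetilde{u''}\pa_z(\Delta_u^{-1}-\Delta_t^{-1})\Om\big)$, but these are likewise quadratic in $\ep$ and decay at the required rate, exactly as the paper verifies.
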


In \eqref{eq:new-scattering}, we can improve $L^2$ to Gevrey class by modulating the uniform estimate of the wave operator $\bbD_{u,k}$ in Gevrey class. 

\subsection{Notation and conventions}
For $f(x,y)$ in Schwartz space with compact support in $(0,1)$, we define the Fourier transform in the first direction $\mathcal{F}_{1}f(k,y)$, the Fourier transform in the second direction $\mathcal{F}_{2}f(x,\eta)$ and the Fourier transform in both directions $\hat{f}_k(\eta)$ where $(k,\eta)\in \mathbb{Z}\times \R$, 
\begin{align*}
&\mathcal{F}_{1}f(k,y)=\f{1}{2\pi}\int_{\mathbb{T}}f(x,y)e^{-ixk}dx,\\
&\mathcal{F}_{2}f(x,\eta)=\int_{\R}f(x,y)e^{-iy\eta}dy,\\
&\hat{f}_k(\eta)=\f{1}{2\pi}\int_{\mathbb{T}\times \R}f(x,y)e^{-ixk-iy\eta}dxdy. 
\end{align*}
A convention we generally use is to denote the discrete $x$ (or $\tz, \, z$) frequencies as subscripts. By convention we always use Greek letters such as $\eta$ and $\xi$ to denote frequencies in the $y$, $u$ or $v$ direction and lowercase Latin characters commonly used as indices such as $k$ and $l$ to denote frequencies in the $x$, $\tz$ or $z$ direction (which are discrete). Another convention we use is to denote $\rmK, \rmM, \rmN$ as dyadic integers $\rmK, \rmM, \rmN\in \mathcal{D}$ where 
\beno
\mathcal{D}=\left\{\f12, 1, 2, ..., 2^j,...\right\}.
\eeno
When the sum is written with indices $\rmK, \rmM, \rmM', \rmN$ or $\rmN'$ it will always be over a subset of $\mathcal{D}$. 
This will be useful when defining Littlewood-Paley projections and paraproduct decompositions, see {\it section A.1} in \cite{BM1}. 

We use the notation $f\lesssim g$ when there exists a constant $C>0$ independent of the parameters of interest such that $f\leq Cg$ (we analogously $f\gtrsim g$ define). Similarly, we use the notation $f\approx g$ when there exists $C>0$ such that $C^{-1}g\leq  f \leq  Cg$. We sometimes use the notation $f\lesssim_{\al} g$ if we want to emphasize that the implicit constant depends on some parameter $\al$.

We will denote the $l^1$ vector norm $|k, \eta| = |k| + |\eta|$, which by convention is the norm taken in our work. Similarly, given a scalar or vector in $\R^d$  we denote
\beno
\langle v\rangle =\left(1+|v|^2\right)^{\f12}
\eeno
We use a similar notation to denote the $x$ (or $z$) average of a function: 
$f_0=<f>=\f{1}{2\pi}\int_{\mathbb{T}}f(x,y)dx=P_0f$. We also frequently use the notation $P_{\neq}f=f-f_0$. We denote the standard $L^p$ norm by $\|f\|_{L^p}$. For any $f$ with compact support in $(0,1)$, we make common use of the Gevery-$\f1s$ norm with Sobolev correction defined by
\beno
\|f\|_{\mathcal{G}^{\la,\s;s}}=\sum_k\int\left|\hat{f}_k(\eta)\right|^2e^{2\la|k,\eta|^s}\langle k,\eta\rangle^{2\s}d\eta.
\eeno
We refer to {\it section A.2} in \cite{BM1} for a discussion of the basic properties of this norm and some related useful inequalities. 
For $\eta\geq 0$, we define $\rmE(\eta)\in \mathbf{Z}$ to be the integer part. We define for $\eta\in \R$ and $1\leq |k|\leq \rmE(\sqrt{|\eta|})$ with $\eta k\geq 0$, $t_{k,\eta}=\Big|\f{\eta}{k}\Big|-\f{|\eta|}{2|k|(|k|+1)}$ and $t_{0,\eta}=2|\eta|$ and the critical intervals
\beno
\mathrm{I}_{k,\eta}=\left\{
\begin{split}
&[t_{|k|,\eta},t_{|k|-1,\eta}]\quad &\text{if}\ \eta k\geq 0 \ \text{and } 1\leq |k|\leq \rmE(\sqrt{|\eta|}),\\
&\emptyset \quad &\text{otherwise}.
\end{split}\right.
\eeno
For minor technical reasons, we define a slightly restricted subset as the resonant intervals
\beno
\mathbf{I}_{k,\eta}=\left\{
\begin{split}
&\mathrm{I}_{k,\eta}\quad &\sqrt{|\eta|}\leq t_{k,\eta},\\
&\emptyset \quad &\text{otherwise}.
\end{split}\right.
\eeno

We use $1_{A}$ be the characteristic function which means $1_{A}(x)=\left\{\begin{aligned}&1\quad x\in A\\&0\quad x\notin A\end{aligned}\right.$. 

We also use the smooth cut-off functions $\chi_1$ and $\chi_2$ in Gevrey-$\f{2}{s_0+1}$ class which satisfies 
\begin{align*}
&\mathrm{supp}\, \chi_1\subset [\th_0,1-\th_0],\quad 
\chi_1(y)\equiv 1\  \text{for}\  y\in [1-2\th_0,2\th_0],\\
&\mathrm{supp}\, \chi_2\subset \big[\f{\th_0}{2},1-\f{\th_0}{2}\big],\quad  
\chi_2(y)\equiv 1\  \text{for}\  y\in [\th_0,1-\th_0],
\end{align*} 
and there exist $K_1>1$ such that for all integers $m\geq 0$
\ben\label{eq: chi}
\sup_{y\in\R}\left|\f{d^m\chi_i(y)}{dy^m}\right|\leq K_1^m(m!)^{\f{2}{s_0+1}}(m+1)^{-2},\quad i=1,2. 
\een
One may refer \cite{IonescuJia,Jia2} for the construction of such cut-off functions. 

\subsection{Discussion}
The key idea of this paper is to use the wave operator in a well chosen coordinate system. 
We are able to reduce the length of the paper, since we use \cite{BM1} and \cite{WZZ1} as black boxes. There are four propositions (see Proposition \ref{prop: transport}, \ref{prop:reaction}, \ref{prop: remainder} and \ref{prop: coordinate}) which are identical with the propositions in \cite{BM1}. The elliptic estimate (see Proposition \ref{prop:elliptic}) is slightly different, since the linear change of coordinates twists the Laplace operator $\Delta_{x,y}$. In the energy estimate, we mainly focus on the new terms. One of them comes from the application of the wave operator (see Proposition \ref{prop:com}), the other is coming from the nonlocal term (see Proposition \ref{prop: nonlocal} and \ref{prop:nonlocal-ep}).

\section{Proof of Theorem \ref{Thm: main}}
Next we give the proof of Theorem \ref{Thm: main}, starting the primary steps as propositions which are proved in the subsequent sections. 

\subsection{Time-dependent norm}
We will use the same multiplier $\rmA(t,\na)$ introduced in \cite{BM1}. 
\beno
\rmA_k(t,\eta)=e^{\la(t)|k,\eta|^s}\langle k,\eta\rangle^{\s}\rmJ_{k}(t,\eta).
\eeno
The index $\la(t)$ is the bulk Gevrey-$\f{1}{s}$ regularity and will be chosen to satisfy
\beno
&&\la(t)=\f34\la_0+\f14\la',\quad t\leq 1\\
&&\dot{\la}(t)=-\f{\d_{\la}}{\langle t\rangle^{2\tilde{q}}}(1+\la(t)),\quad t\in (1,\infty)
\eeno
where $\la_0, \la'$ are parameters which depend on the regularity of the background flow $u(y)$ and chosen by the proof, $\d_{\la}\approx \la_0-\la'$ is a small parameter that ensures $\la(t)\geq \f{\la_0}{2}+\f{\la'}{2}$ for $t\geq 0$ and $\f12<\tilde{q}\leq \f{s}{8}+\f{7}{16}$ is a parameter chosen by the proof. 

Roughly speaking, we need the Gevrey bandwidth $\la(t)$ in the multiplier $\rmA$ to be sufficiently small (if $s=s_0$) and determined by the background flow so that the wave operator is bounded in $\{f\in C^{\infty}:~\|\rmA f\|_2<\infty\}$. See Proposition \ref{prop: kernel-wave-op}, Remark \ref{eq: wave-A} and Remark \ref{Rmk: wave bdd-A} for more details. 

The main multiplier for dealing with the Orr mechanism and the associated nonlinear growth is 
\beq\label{eq: J}
\rmJ_{k}(t,\eta)=\f{e^{\mu |\eta|^{\f12}}}{w_k(t,\eta)}+e^{\mu |k|^{\f12}},
\eeq
where 
\beq\label{eq: w_k(t,eta)}
w_k(t,\eta)=\left\{
\begin{aligned}
&w_k(t_{\rmE(\sqrt{\eta}),\eta},\eta)\quad t<t_{\rmE(\sqrt{\eta}),\eta}\\
&w_{\mathrm{NR}}(t,\eta)\quad t\in [t_{\rmE(\sqrt{\eta}),\eta},2\eta]\setminus \mathrm{I}_{k,\eta}\\
&w_{\mathrm{R}}(t,\eta)\quad t\in \mathrm{I}_{k,\eta}\\
&1\quad t\geq 2\eta.
\end{aligned}
\right.
\eeq
Here $(w_{\mathrm{R}}(t,\eta),w_{\mathrm{NR}}(t,\eta))$ is defined in the following way: 

Let $w_{\mathrm{NR}}$ be a non-decreasing function of time with $w_{\mathrm{NR}}(t,\eta)=1$ for $t\geq 2\eta$. For definiteness, we remark here that for $|\eta|\leq 1$, $w_{\mathrm{NR}}(t,\eta)=1$, which will be a consequence of  the definition. For $k=1,2,3,...,\rmE(\sqrt{\eta})$, we define
\beq\label{eq: w_NR-R}
\begin{split}
w_{\mathrm{NR}}&(t,\eta)=\left(\f{k^2}{\eta}\Big(1+b_{k,\eta}\Big|t-\f{\eta}{k}\Big|\Big)\right)^{C\kappa}w_{\mathrm{NR}}(t_{k-1,\eta},\eta),\\
&\forall t\in \mathrm{I}^{\mathrm{R}}_{k,\eta}=\left[\f{\eta}{k},t_{k-1,\eta}\right],
\end{split}
\eeq
and
\beq\label{eq: w_NR-L}
\begin{split}
w_{\mathrm{NR}}&(t,\eta)=\left(1+a_{k,\eta}\Big|t-\f{\eta}{k}\Big|\right)^{-1-C\kappa}w_{\mathrm{NR}}\left(\f{\eta}{k},\eta\right),\\
&\forall t\in \mathrm{I}^{\mathrm{L}}_{k,\eta}=\left[t_{k,\eta},\f{\eta}{k}\right].
\end{split}
\eeq
The constant $b_{k,\eta}$ is chosen to ensure that $\f{k^2}{\eta}\Big(1+b_{k,\eta}\Big|t_{k-1,\eta}-\f{\eta}{k}\Big|\Big)=1$, hence for $k\geq 2$
\beq\label{eq: b_k,eta}
b_{k,\eta}=\f{2(k-1)}{k}\left(1-\f{k^2}{\eta}\right)
\eeq
and $b_{1,\eta}=1-\f{1}{\eta}$ and similarly, 
\beq\label{eq: a_k,eta}
a_{k,\eta}=\f{2(k+1)}{k}\left(1-\f{k^2}{\eta}\right).
\eeq
On each interval $\mathrm{I}_{k,\eta}$, we define $w_{\mathrm{R}}(t,\eta)$ by
\begin{align}\label{eq: w_R-R}
&w_{\mathrm{R}}(t,\eta)=\f{k^2}{\eta}\left(1+b_{k,\eta}\Big|t-\f{\eta}{k}\Big|\right)w_{\mathrm{NR}}(t,\eta),\quad
\forall t\in \mathrm{I}^{\mathrm{R}}_{k,\eta}=\left[\f{\eta}{k},t_{k-1,\eta}\right],\\
\label{eq: w_R-L}
&w_{\mathrm{R}}(t,\eta)=\f{k^2}{\eta}\left(1+a_{k,\eta}\Big|t-\f{\eta}{k}\Big|\right)w_{\mathrm{NR}}(t,\eta),\quad
\forall t\in \mathrm{I}^{\mathrm{L}}_{k,\eta}=\left[t_{k,\eta},\f{\eta}{k}\right].
\end{align}
We also define $\rmJ^{\mathrm{R}}(t,\eta)$ and $\rmA^{\mathrm{R}}(t,\eta)$ to assign resonant regularity at every critical time: 
\beq\label{eq: J^R,A^R}
\begin{split}
&\rmJ^{\mathrm{R}}(t,\eta)
=\left\{\begin{aligned}
&e^{\mu |\eta|^{\f12}}w_{\mathrm{R}}^{-1}(t_{\rmE(\sqrt{\eta}),\eta},\eta)\quad t<t_{\rmE(\sqrt{\eta}),\eta}\\
&e^{\mu |\eta|^{\f12}}w_{\mathrm{R}}^{-1}(t,\eta)\quad t\in [t_{\rmE(\sqrt{\eta}),\eta},2\eta]\\
&e^{\mu |\eta|^{\f12}}\quad t\geq 2\eta,
\end{aligned}\right.\\
&\rmA^{\mathrm{R}}(t,k,\eta)=e^{\la(t)|\eta|^s}\langle\eta\rangle^{\s}\rmJ^{\mathrm{R}}(t,\eta).
\end{split}
\eeq

One may refer to \cite{BM1} for more basic properties of the multiplier $\rmA(t,\na)$.

\subsection{Wave operator}
The wave operator related to a self-adjoint operators is well known. Let $A, B$ be two self-adjoint operators in the Hilbert space $H$, then the wave operator $\bbD$ related to $A$ and $B$ satisflies
\beno
A\bbD=\bbD B
\eeno
It can be defined by
\beno
\bbD=\lim_{t\to \infty}e^{-itA}e^{itB}. 
\eeno
However the wave operator related to non-self-adjoint operators is usually not easy to construct and estimate. 
Recently, this was successfully used to solve important problems in fluid mechanics.
In \cite{LWZ}, the authors use the wave operator method to solve Gallay's conjecture on pseudospectral and spectral bounds of the Oseen vortices operator. In \cite{WZZ3}, the wave operator method was used to solve Beck and Wayne's conjecture. 

In order to absorb the nonlocal term, we also construct a wave operator $\bbD_{u,k}$ which is related to the Rayleigh operator $\mathcal{R}_{u,k}=u(y)\mathrm{Id}-u''(y)\Delta_k^{-1}$ where $\Delta_k=\pa_y^2-k^2$. We have the following proposition. 
\begin{proposition}\label{prop: general-wave}
Suppose $k\neq 0$, $u\in C^4([0,1])$ is a strictly monotone function and $\mathcal{R}_{u,k}$ has no eigenvalue and no embedding eigenvalue. Then there exists $\bbD_{u,k}$ and $\bbD^1_{u,k}$ such that
\ben\label{eq:DR=uD}
\bbD_{u,k}\mathcal{R}_{u,k}=u\bbD_{u,k},
\een
and for $f,g\in L^2(0,1)$
\ben\label{eq: id1}
\int_0^1\bbD_{u,k}(f)(y)\bbD^1_{u,k}(g)(y)dy=\int_0^1f(y)g(y)dy. 
\een
Moreover, there exist $C>1$ independent of $k$ such that 
\ben\label{eq: est1}
C^{-1}\leq \|\bbD_{u,k}\|_{L^2\to L^2}\leq C,\quad C^{-1}\leq\|\bbD^1_{u,k}\|_{L^2\to L^2}\leq C.
\een
\end{proposition}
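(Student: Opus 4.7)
The plan is to construct $\bbD_{u,k}$ via the spectral/generalized eigenfunction picture of the Rayleigh operator, using the monotonicity and the absence of (embedded) eigenvalues to perform a limiting absorption principle on the real axis. First I would formalize the object that will serve as the kernel: for each $y_0 \in (0,1)$ set $c = u(y_0)$ and introduce the Green's function $G_k(y,y_0)$ of the Rayleigh equation
\beno
(u(y)-c)(\pa_y^2-k^2)\phi - u''(y)\phi = \delta(y-y_0), \quad \phi(0)=\phi(1)=0,
\eeno
together with the boundary traces $\phi_{\pm}(y,y_0)$ obtained as boundary values from $\mathrm{Im}\, c \to 0^{\pm}$. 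Under the hypothesis that $u' \geq c_0 > 0$ and that $\mathcal{R}_{u,k}$ has neither eigenvalue nor embedded eigenvalue, the construction of $G_k$ and the corresponding density $\Pi_k(y_0)$ of the spectral measure is precisely what is developed in \cite{WZZ1}, from which I would quote uniform (in $k$) non-degeneracy of the Wronskian and pointwise bounds on $G_k$.

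Once the generalized eigenfunction $\phi_k(\cdot,y_0)$ is in hand, I would define $\bbD_{u,k}$ by the explicit integral transform
\beno
(\bbD_{u,k} f)(y) \,=\, \Pi_k(y)\int_0^1 \phi_k(y',y)\, f(y')\,dy',
\eeno
and define the companion operator $\bbD^1_{u,k}$ from the dual (or complex-conjugate) generalized eigenfunction. The intertwining identity \eqref{eq:DR=uD} is then a direct computation: multiplying $\mathcal{R}_{u,k} f$ against $\phi_k(\cdot,y)$ and integrating by parts twice moves the operator onto the kernel, where the defining equation of $\phi_k$ converts $(u(y')-u(y))\phi_k$ into a delta-term plus a term absorbed into the normalization, producing exactly the factor $u(y)$. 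The duality relation \eqref{eq: id1} follows from Plancherel for the underlying spectral resolution, namely the change of variables $y_0 \mapsto u(y_0)$ reducing both sides to the standard inner product on $\mathrm{Range}(u)$.

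For the $L^2$ bounds \eqref{eq: est1}, I would use that $\bbD_{u,k}$ is, up to the bijection $y\leftrightarrow u(y)$ (whose Jacobian is bounded above and below by $u'$), the unitary transformation to the spectral representation of $\mathcal{R}_{u,k}$. Concretely, one splits $[0,1]$ into an interior piece and a small neighborhood of the endpoints $u(0),u(1)$; on the interior the uniform (in $k$) lower and upper bounds on $\Pi_k$ and on $G_k$ from \cite{WZZ1} give the two-sided $L^2$ estimate by a $T T^*$ argument, while near the endpoints the non-embedded-eigenvalue assumption at $y=0,1$ supplies the required non-degeneracy of the Wronskian uniformly in $k$. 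The same argument applied to $\bbD^1_{u,k}$ gives its bound.

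The main obstacle is the uniformity in $k$. For large $|k|$ the Rayleigh equation is essentially a regular perturbation of $\pa_y^2 - k^2$, so the Green's function is well behaved; for small $|k|$ the nonlocal term $u''\Delta_k^{-1}$ is strongest and one must rule out any degeneration of $\Pi_k$, which is exactly where the no-embedded-eigenvalue hypothesis enters quantitatively. Verifying that the constants in \eqref{eq: est1} can be chosen independent of $k$ — rather than merely finite for each fixed $k$ — will be the step that requires the most care, and I expect to borrow the quantitative resolvent estimates of \cite{WZZ1} as a black box precisely here.
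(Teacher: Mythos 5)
Your overall strategy coincides with the paper's: both constructions start from the generalized-eigenfunction / limiting-absorption representation of the stream function built in \cite{WZZ1} (the homogeneous Rayleigh solutions $\phi(k,y,y_c)=(u(y)-u(y_c))\phi_1(k,y,y_c)$ with $(\mathrm{Id}-k^2T)\phi_1=1$, and the representation formula {\it (7.1)--(7.3)} there), prove the intertwining identity by integrating by parts against $\phi$ and using the Rayleigh ODE, and obtain the uniform-in-$k$ bounds from the two-sided Wronskian estimate $\big(u'\rho\,\mathrm{p.v.}\!\int_0^1\phi^{-2}\big)^2+\big(\pi\rho\,u''/(u')^2\big)^2\approx 1+k^2\rho^2$, which is exactly where the no-(embedded-)eigenvalue hypothesis is used quantitatively. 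So the route is the same; the issue is with two concrete steps.

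First, the formula $(\bbD_{u,k}f)(y)=\Pi_k(y)\int_0^1\phi_k(y',y)f(y')\,dy'$ cannot be correct as written. The identity \eqref{eq: id1} applied with $g=f$ gives $\|f\|_2^2\leq\|\bbD_{u,k}f\|_2\,\|\bbD^1_{u,k}f\|_2\leq C\|\bbD_{u,k}f\|_2\,\|f\|_2$, so $\bbD_{u,k}$ must be bounded below on $L^2(0,1)$; a genuine integral operator with a locally bounded (or even $L^2$) kernel on $(0,1)^2$ is compact and cannot be bounded below on an infinite-dimensional space. The actual operator has the form $d_1(k,y)f(y)+u''(y)d_2(k,y)\,\mathrm{p.v.}\!\int_0^1\frac{e(k,y',y)}{u(y')-u(y)}f(y')\,dy'$, i.e.\ a multiplication part plus a singular (principal value) integral. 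In your limiting-absorption picture the multiplication part comes from the jump $\frac{1}{u(y')-c-i0}-\frac{1}{u(y')-c+i0}=2\pi i\,\delta(u(y')-c)$, so the ``generalized eigenfunction'' is a distribution with a delta component and a p.v.\ component; you must make both explicit, since the $L^2$ bound for the p.v.\ part requires the structure of $e(k,y',y)$ (not a crude $TT^*$ bound), and this is what {\it (8.1)} of \cite{WZZ1} supplies. Second, ``Plancherel for the underlying spectral resolution'' is not available: $\mathcal{R}_{u,k}$ is not normal, and no abstract spectral theorem gives \eqref{eq: id1}. The paper proves it by a direct computation: for $g\in H^2\cap H^1_0$ one pairs the representation of $\psi$ with $\Delta_k g$, integrates by parts twice using $\phi(k,y_c,y_c)=0$ and $\pa_y\phi(k,y_c,y_c)=u'(y_c)$ to identify $\bbD^1_{u,k}$ explicitly, and then extends to $L^2$ by density. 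That computation is also what pins down the precise (non-symmetric) form of $\bbD^1_{u,k}$, which you leave unspecified.
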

Note that
\ben
\bbD^1_{u,k}=(\bbD_{u,k}^{-1})^*.
\een
The proof can be found in Section \ref{sect: The existence of wave operator}. One can also refer to \cite{WZZ1} for more details. 

\begin{remark}\label{Rmk: Formula}
We have the following representation formula of the wave operator and its inverse:
\begin{align}
\label{eq: defD}\bbD_{u,k}(\om)(k,y)&=d_1(k,y)\om(y)+u''(y)d_2(k,y)\int_0^1\f{e(k,y',y)}{u(y')-u(y)}\om(y')dy',\\
\bbD_{u,k}^1(\om)(k,y)&=d_1(k,y)\om(y)+d_2(k,y)\int_0^1\f{e(k,y',y)u''(y')}{u(y')-u(y)}\om(y')dy',\\
\label{eq: defD^-1}\bbD_{u,k}^{-1}(\om)(k,y)&=d_1(k,y)\om(y)+u''(y)\int_0^1\f{e(k,y,y')}{u(y)-u(y')}d_2(k,y')\om(y')dy'. 
\end{align}
where $d_1(k,y)$, $d_2(k,y)$ and $e(k,y',y)$ are defined in \eqref{eq:d_1}, \eqref{eq:d_2} and \eqref{eq:e}. 
\end{remark}
Let us now introduce the linear change of coordinates, namely $(t,x,y)\to (t,z,v)$:
 \ben\label{eq: linear-coordinate}
v=u(y),\quad 
z=x-tv.
\een
Under this linear change of coordinates, $\Delta_{x,y}$ becomes $\Delta_u$, which is defined by
\ben
\Delta_u\eqdef\pa_{zz}+(\tilde{u'})^2(\pa_v-t\pa_{z})^2+\widetilde{u''}(\pa_v-t\pa_{z})
\een 
where $\tilde{u'}=u'\circ u^{-1}(v)$ and $\widetilde{u''}=u''\circ u^{-1}(v)$. Here and below, for any $\varphi(y)$, $\tilde{\varphi}(v)=\varphi\circ u^{-1}(v)$. 

We also denote $\Delta_{u,k}=-k^2+(\tilde{u'})^2(\pa_v-itk)^2+\widetilde{u''}(\pa_v-itk)$ to be the Fourier tranform of $\Delta_u$ in the first direction. 

Let us now write \eqref{eq: defD} and \eqref{eq: defD^-1} in the $(z,v)$ coordinate. 
\begin{align*}
&\bfD_{u,k}\big(\mathcal{F}_{1}{f}(t,k,\cdot)\big)(t,k,v)\\
&=D_1(k,v)\mathcal{F}_{1}{f}(t,k,v)\\
&\quad+u''(u^{-1}(v))D_2(k,v)\int_{u(0)}^{u(1)}E(k,v_1,v)\f{\mathcal{F}_{1}{f}(t,k,v_1)e^{-i(v_1-v)tk}}{v_1-v}(u^{-1})'(v_1)dv_1,\\
&\bfD_{u,k}^{-1}\big(\mathcal{F}_{1}{f}(t,k,\cdot)\big)(t,k,v)\\
&=D_1(k,u)\mathcal{F}_{1}{f}(t,k,v)\\
&\quad+u''(u^{-1}(v))\int_{u(0)}^{u(1)}E(k,v,v_1)D_2(k,v_1)\f{\mathcal{F}_{1}{f}(t,k,v_1)e^{i(v_1-v)tk}}{v-v_1}(u^{-1})'(v_1)dv_1.
\end{align*}
We change $\bbD_{u,k}^1$ in the $(z,v)$ coordinate slightly (see Remark \ref{Rmk:dual-v}) to make the dual of $\bfD^{-1}_{u,k}$ in $v$ variable: 
\begin{align*}
&\bfD_{u,k}^1\big(\mathcal{F}_{1}{f}(t,k,\cdot)\big)(t,k,v)\\
&=D_1(k,v)\mathcal{F}_{1}{f}(t,k,v)\\
&\quad+D_2(k,v)\int_{u(0)}^{u(1)}E(k,v_1,v)\f{u''(u^{-1}(v_1))\mathcal{F}_{1}{f}(t,k,v_1)e^{-i(v_1-v)tk}}{v_1-v}(u^{-1})'(v)dv_1.
\end{align*}
Here $D_1(k,v)$, $D_2(k,v)$ and $E(k,v_1,v)$ satisfy
\beq\label{eq:DE}
\begin{split}
&D_1(k,u(y))=d_1(k,y),\quad D_2(k,u(y))=d_2(k,y),\\
&E(k,u(y_1),u(y))=e(k,y_1,y).
\end{split}\eeq
One can also refer to \eqref{eq: D_1}, \eqref{eq: D_2} and \eqref{eq: E} for the explicit formula. 

\begin{proposition}\label{prop:wave-coordinate}
For $k\neq 0$, let $\mathbf{R}_{u,k}=v\mathrm{Id}-u''( u^{-1}(v))\Delta_{u,k}^{-1}$ be a modified Rayleigh operator. 
Then it holds for any ${f}(t,z,v)$ and ${g}(t,z,v)$ that
\begin{align}
&\bfD_{u,k}\bfR_{u,k}=v\bfD_{u,k},\\
&\bfD_{u,k}\bfD_{u,k}^{-1}=\bfD_{u,k}^{-1}\bfD_{u,k}=\mathrm{Id},\ \bfD_{u,k}^{-1}=(\bfD_{u,k}^1)^*,\\
\label{eq:[pa_t,D]}
&[\pa_t,\bfD_{u,k}]\big(\mathcal{F}_{1}{{f}}(t,k,\cdot)\big)(t,k,v)
=[ikv,\bfD_{u,k}]\big(\mathcal{F}_{1}{{f}}(t,k,\cdot)\big)(t,k,v). \\
&\int\bfD_{u,k}\big(\mathcal{F}_{1}{f}(t,k,\cdot)\big)(t,k,v)\overline{\bfD_{u,k}^1\big(\mathcal{F}_{1}{g}(t,k,\cdot)\big)(t,k,v)}dv
=\int\mathcal{F}_{1}{f}(k,u)\overline{\mathcal{F}_{1}{g}(k,v)}dv,
\end{align}
and going back to $(x,y)$, it holds that
\begin{align*}
\bfD_{u,k}\big(\mathcal{F}_{1}{\udl{f}}(t,k,\cdot)\big)(t,k,v)e^{ikz}\bigg|_{z=x-u(y)t,\ u=u(y)}
&=\bfD_{u,k}\big(\mathcal{F}_{1}{\udl{f}}(t,k,\cdot)\big)(t,k,u(y))e^{ik(x-tu(y))}\\
&=\bbD_{u,k}\big(\mathcal{F}_{1}{f}(t,k,\cdot)\big)(t,k,y)e^{ikx},
\end{align*}
for any $f(t,x,y)=\udl{f}(t,z,v)\big|_{z=x-u(y)t,\ u=u(y)}$. 
\end{proposition}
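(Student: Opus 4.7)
The plan is to deduce each assertion of Proposition \ref{prop:wave-coordinate} from Proposition \ref{prop: general-wave} by pushing it through the linear change of variables \eqref{eq: linear-coordinate}, relying on the explicit representation formulas for $\bbD_{u,k}, \bbD_{u,k}^1, \bbD_{u,k}^{-1}$ recalled in Remark \ref{Rmk: Formula} and the compatibility \eqref{eq:DE} between the kernels $d_1,d_2,e$ in $y$ and $D_1,D_2,E$ in $v$.

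First I would establish the conjugation identity relating $\bfD_{u,k}$ and $\bbD_{u,k}$, which is the last line of the proposition. Writing $f(t,x,y)=\udl{f}(t,x-u(y)t,u(y))$ and taking the Fourier transform in $x$ yields
\begin{equation*}
\mathcal{F}_1 f(t,k,y)=e^{-iktu(y)}\mathcal{F}_1\udl{f}(t,k,u(y)).
\end{equation*}
Substituting this into formula \eqref{eq: defD} for $\bbD_{u,k}$, then changing variables $v=u(y)$, $v_1=u(y_1)$ (whose Jacobian produces exactly the factor $(u^{-1})'(v_1)$ appearing in the $(z,v)$ kernel), and using \eqref{eq:DE}, the result matches $e^{iktv}\bfD_{u,k}(\mathcal{F}_1\udl{f})(t,k,v)\big|_{v=u(y)}$. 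Multiplying by $e^{ikx}$ and inserting $z=x-tv$ gives the claimed pointwise equality, and it also records the fundamental rule: $\bbD_{u,k}$ in $(x,y)$ and $\bfD_{u,k}$ in $(z,v)$ differ precisely by conjugation with the shear phase $e^{-iktv}$.

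Once this dictionary is in place, the remaining identities are essentially translation. Under the change of coordinates $-\Delta_k$ becomes $-\Delta_{u,k}$ (the twisted operator defined just before the proposition) and multiplication by $u(y)$ becomes multiplication by $v$; consequently $\mathcal{R}_{u,k}$ corresponds to $\bfR_{u,k}$, so \eqref{eq:DR=uD} of Proposition \ref{prop: general-wave} yields $\bfD_{u,k}\bfR_{u,k}=v\bfD_{u,k}$. For the invertibility $\bfD_{u,k}\bfD_{u,k}^{-1}=\bfD_{u,k}^{-1}\bfD_{u,k}=\mathrm{Id}$ I would substitute the explicit formulas for $\bfD_{u,k}$ and $\bfD_{u,k}^{-1}$ and reduce to the algebraic identity satisfied by $d_1,d_2,e$ (this is essentially the Wronskian identity coming from the construction in Section \ref{sect: The existence of wave operator}). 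The adjoint relation $\bfD_{u,k}^{-1}=(\bfD_{u,k}^1)^*$ and the pairing identity are exactly where the modification described in Remark \ref{Rmk:dual-v} matters: placing the Jacobian $(u^{-1})'(v)$ outside the integral in the definition of $\bfD_{u,k}^1$ (rather than $(u^{-1})'(v_1)$ inside, as would come from a naive pushforward of $\bbD_{u,k}^1$) is precisely what converts the $y$-integral identity \eqref{eq: id1} into an identity with respect to $dv$ after the substitution $dy=(u^{-1})'(v)dv$.

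Finally, for the commutator \eqref{eq:[pa_t,D]}, the key observation is that in the $(z,v)$ coordinates the operator $\bfD_{u,k}$ depends on $t$ only through the oscillatory kernel factor $e^{-i(v_1-v)tk}$. Differentiating in $t$ produces the multiplier $-ik(v_1-v)=ik(v-v_1)$ inside the integral. On the other hand, $[ikv,\bfD_{u,k}]$ annihilates the diagonal term $D_1(k,v)\mathcal{F}_1 f(t,k,v)$ and produces the very same integrand $ik(v-v_1)(\cdots)$ from the nonlocal part, since $ikv\bfD_{u,k}(f)-\bfD_{u,k}(ikv\, f)$ yields $ikv-ikv_1$ inside the integral. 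Equating the two expressions gives \eqref{eq:[pa_t,D]}. The only delicate point in the whole proof is keeping the phase factors $e^{-iktv}$ and the Jacobians $(u^{-1})'$ consistent when moving between coordinate systems; once those are tracked carefully, every identity reduces to its counterpart in Proposition \ref{prop: general-wave}.
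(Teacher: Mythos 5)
Your proposal is correct and takes exactly the route the paper intends: the paper omits the proof, stating only that the proposition ``follows directly from Proposition \ref{prop: general-wave} and Remark \ref{Rmk: Formula}'', and your argument supplies precisely those details (the phase-conjugation dictionary $\mathcal{F}_1 f(t,k,y)=e^{-iktu(y)}\mathcal{F}_1\udl{f}(t,k,u(y))$, the Jacobian bookkeeping that explains the modification of $\bfD_{u,k}^1$ in Remark \ref{Rmk:dual-v}, and the observation that the $t$-dependence of the kernel sits entirely in $e^{-i(v_1-v)tk}$, which gives \eqref{eq:[pa_t,D]}). All the key computations check out, so nothing further is needed.
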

\begin{remark}\label{Rmk:dual-v}
It holds that
\beno
\bfD_{u,k}^1\big(\mathcal{F}_{1}\udl{f}(t,k,\cdot)\big)(t,k,v)e^{ikz}\bigg|_{z=x-u(y)t,\ v=u(y)}
=\f{1}{u'(y)}\bbD_{u,k}^1\big(u'\mathcal{F}_{1}{f}(t,k,\cdot)\big)(t,k,y)e^{ikx}.
\eeno
for any $f(t,x,y)=\udl{f}(t,z,v)\big|_{z=x-tu(y),\ v=u(y)}$. 
\end{remark}
The proposition follows directly form Proposition \ref{prop: general-wave} and Remark \ref{Rmk: Formula}. We omit the proof. 

Although the wave operator is nonlocal both in physical space and frequency space, the next proposition shows that the wave operator does not move frequencies a lot. 

\begin{proposition}\label{prop: kernel-wave-op}
Recall $\tchi_2=\chi_2\circ u^{-1}$ be a smooth function with compact support such that $\mathrm{supp}\, \chi_2\subset [\f{\th_0}{2},1-\f{\th_0}{2}]$ and satisfies \eqref{eq: chi}. Then for any $0<|k|\leq k_{M}$, there exist $\mathcal{D}(t,k,\xi_1,\xi_2)$ and $\mathcal{D}^{-1}(t,k,\xi_1,\xi_2)$ such that 
\beno
\mathcal{F}_{2}\Big(\bfD_{u,k}\big(\tchi_2\mathcal{F}_{1}f(t,k,\cdot)\big)\Big)(t,k,\xi_1)=\int \mathcal{D}(t,k,\xi_1,\xi_2)\hat{f}_k(t,\xi_2)d\xi_2,
\eeno
and
\beno
\mathcal{F}_{2}\Big(\tchi_2\bfD_{u,k}^1\big(\tchi_2\mathcal{F}_{1}f(t,k,\cdot)\big)\Big)(t,k,\xi_1)=\int \mathcal{D}^1(t,k,\xi_1,\xi_2)\hat{f}_k(t,\xi_2)d\xi_2,
\eeno
and
\beno
\mathcal{F}_{2}\Big(\bfD_{u,k}^{-1}\big(\tchi_2\mathcal{F}_{1}f(t,k,\cdot)\big)\Big)(t,k,\xi_1)=\int \mathcal{D}^{-1}(t,k,\xi_1,\xi_2)\hat{f}_k(t,\xi_2)d\xi_2.
\eeno
Moreover, there exists $\la_{\mathcal{D}}=\la_{\mathcal{D}}(\la_0,\th_0,s_0,s_1, k_{M})$ independent of $t$ such that
\beno
\left|\mathcal{D}(t,k,\xi_1,\xi_2)\right|+\left|\mathcal{D}^1(t,k,\xi_1,\xi_2)\right|+\left|\mathcal{D}^{-1}(t,k,\xi_1,\xi_2)\right|\lesssim e^{-\la_{\mathcal{D}}|\xi_1-\xi_2|^{s_0}}. 
\eeno
\end{proposition}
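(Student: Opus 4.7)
I would prove the three kernel estimates simultaneously by exhibiting each kernel as the $v$-Fourier transform, evaluated at $\xi_1$, of the corresponding operator applied to the pure mode $\tchi_2(v)e^{iv\xi_2}$, then reading off Gevrey-$\f{1}{s_0}$ Fourier decay from the regularity of every coefficient that appears. Let me concentrate on $\mathcal{D}(t,k,\xi_1,\xi_2)$; the cases $\mathcal{D}^1$ and $\mathcal{D}^{-1}$ are handled by the same scheme.

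By Fourier inversion and linearity,
\beno
\mathcal{D}(t,k,\xi_1,\xi_2)=\f{1}{2\pi}\int e^{-iv\xi_1}\bfD_{u,k}\big(\tchi_2(\cdot)e^{i\cdot\,\xi_2}\big)(t,k,v)\,dv = A(k,\xi_1-\xi_2)+B(t,k,\xi_1,\xi_2).
\eeno
Here, using \eqref{eq: defD} and the substitution $v_1=v+w$,
\beno
A(k,\xi_1-\xi_2)=\f{1}{2\pi}\int e^{-iv(\xi_1-\xi_2)}D_1(k,v)\tchi_2(v)\,dv,
\eeno
\beno
B(t,k,\xi_1,\xi_2)=\f{1}{2\pi}\iint u''(u^{-1}(v))D_2(k,v)\wt E(k,v{+}w,v)\tchi_2(v{+}w)(u^{-1})'(v{+}w)\,e^{iv(\xi_2-\xi_1)+iw(\xi_2-tk)}\,dv\,dw,
\eeno
where $\wt E(k,v_1,v)\eqdef E(k,v_1,v)/(v_1-v)$ is smooth thanks to the cancellation $E(k,v,v)=0$ (clear from \eqref{eq:e} and \eqref{eq:DE} via the definition of $d_2,e$ in the standard Rayleigh construction).

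For the local piece $A$, note that $D_1(k,\cdot)$ inherits Gevrey-$\f{1}{s_0}$ regularity from $d_1(k,\cdot)$ via the composition with $u^{-1}$ (permissible because $u'\ge c_0>0$ and $u\in\mathcal{G}^{\f{1}{s_0}}$ by hypothesis 4), with seminorm bounds uniform in $|k|\le k_M$; the cut-off $\tchi_2\in\mathcal{G}^{\f{2}{s_0+1}}\subset\mathcal{G}^{\f{1}{s_0}}$ has compact support. Hence $D_1(k,\cdot)\tchi_2(\cdot)$ is compactly supported and Gevrey-$\f{1}{s_0}$, so the standard Gevrey--Fourier estimate (as in {\it section A.2} of \cite{BM1}) gives $|A(k,\xi_1-\xi_2)|\lesssim e^{-\la_A|\xi_1-\xi_2|^{s_0}}$ with $\la_A=\la_A(\la_0,\th_0,s_0,s_1,k_M)>0$.

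For the nonlocal piece $B$, the phase factorizes as $e^{iv(\xi_2-\xi_1)}\cdot e^{iw(\xi_2-tk)}$, so I would first integrate in $w$:
\beno
G_k(v,\al)\eqdef\int\wt E(k,v{+}w,v)\tchi_2(v{+}w)(u^{-1})'(v{+}w)\,e^{iw\al}\,dw.
\eeno
For fixed $v$ the $w$-integrand is compactly supported (via $\tchi_2$) and Gevrey-$\f{1}{s_0}$, so $|G_k(v,\al)|\lesssim 1$ uniformly in $\al$ and $v$. More importantly, $G_k(v,\cdot)$ is Gevrey-$\f{1}{s_0}$ in $v$ uniformly in $\al$, because each $\pa_v$-derivative hits only the Gevrey factors $\wt E,\tchi_2(v{+}\cdot),(u^{-1})'(v{+}\cdot)$ and never the $\al$-dependent phase $e^{iw\al}$. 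Therefore, for every fixed $t,k$, the function $v\mapsto u''(u^{-1}(v))D_2(k,v)G_k(v,\xi_2-tk)$ is compactly supported and Gevrey-$\f{1}{s_0}$ with seminorm bounds depending only on $(\la_0,\th_0,s_0,s_1,k_M)$, and hence its $v$-Fourier transform at $\xi_1-\xi_2$ decays as $|B(t,k,\xi_1,\xi_2)|\lesssim e^{-\la_B|\xi_1-\xi_2|^{s_0}}$, uniformly in $t$.

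The same argument treats $\mathcal{D}^{-1}$, and for $\mathcal{D}^1$ the outer cut-off $\tchi_2$ simply multiplies $v\mapsto\bfD^1_{u,k}(\tchi_2\,e^{i\cdot\xi_2})(v)$ by another Gevrey-$\f{1}{s_0}$ function before the final Fourier transform in $v$, which does not change the decay rate. Taking $\la_{\mathcal{D}}=\min(\la_A,\la_B)/2$ yields the claim. The main technical obstacle I anticipate is keeping the Gevrey seminorms of $\wt E(k,v{+}w,v)$ and of $D_1,D_2$ uniform in $|k|\le k_M$ and in the shift parameter $w$; this is a Faà di Bruno bookkeeping along the composition with $u^{-1}$, for which the monotonicity $u'\ge c_0$ and hypothesis 4 provide exactly the required control.
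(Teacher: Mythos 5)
Your reduction of the kernel bound to the $v$-Fourier transform of $\bfD_{u,k}(\tchi_2 e^{i\cdot\,\xi_2})$, and the idea of integrating first in the difference variable $w=v_1-v$ so that the $t$-dependent oscillation $e^{iw(\xi_2-tk)}$ becomes a harmless parameter $\al$, are sound and close in spirit to the paper's Lemma \ref{lem: Fourier_type1}. However, the step on which your treatment of the nonlocal piece $B$ rests is false: $E(k,v,v)\neq 0$. From \eqref{eq: E} one reads off $E(k,c,c)=-1/\widetilde{u'}(c)=-(u^{-1})'(c)$, which is bounded away from zero (equivalently, from \eqref{eq:e} one computes $e(k,y_c,y_c)=-1/u'(y_c)$). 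Consequently $\wt E(k,v_1,v)=E(k,v_1,v)/(v_1-v)$ has a genuine principal-value singularity $\sim -(u^{-1})'(v)/(v_1-v)$ on the diagonal; moreover $E$ itself contains the terms $(u-c)\Phi_1\ln|u-c|$ and $\Phi^{re}_j\,1_{\R^{\pm}}(u-c)$, so after division one is also left with logarithmic and jump singularities. The nonlocal part of the wave operator is a singular integral operator (a twisted Hilbert transform plus lower-order singular pieces), not an operator with a smooth kernel; if the kernel were Gevrey the proposition would be almost immediate, and the difficulty you have erased is precisely what the paper's proof addresses by splitting the kernel into the four singularity types $\mu_1$ (smooth), $\mu_2$ (jump), $\mu_3$ (log), $\mu_4=p.v.\,\f{1}{u}$ and proving Remark \ref{Rmk: fo-g-2}, which shows that convolving with $\widehat{\mu_m}(\xi'+kt)$ still yields $e^{-\la_{\mathcal{D}}|\xi_1-\xi_2|^{s_0}}$ decay uniformly in $t$ because the singularity and the oscillation live only in the $u-c$ direction.

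Your scheme could be repaired: for each singular model $\mu_m$ the function $G_k(v,\al)=\int H(v,w)\mu_m(w)e^{iw\al}\,dw$ is indeed bounded uniformly in $\al$ and Gevrey in $v$ provided the factor $H$ is Gevrey, but you must first exhibit the kernel as a finite sum of (Gevrey factor)$\times\mu_m(v_1-v)$, which is exactly the content of the decomposition \eqref{eq: E}, \eqref{eq: Phi^{re}_j} and \eqref{eq: bfD}. A second, independent gap: the Gevrey-$\f{1}{s_0}$ regularity of $\Phi_1$ (hence of $\Phi^{re}_{j,1}$, $D_1$, $D_2$) with bounds uniform in $|k|\le k_M$ is not ``Fa\`a di Bruno bookkeeping along $u^{-1}$''; $\Phi_1$ solves the integral equation \eqref{eq: Phi_1}, $(\mathrm{Id}-k^2\rmT_0\circ\rmT_{2,2})\Phi_1=1$, and controlling its Gevrey seminorms requires inverting this operator in the weighted class $\mathcal{G}^{M|k|,s_0}_{ph,\cosh}$ with the $\cosh M|k|(u-c)$ weight absorbing the factor $k^2$ (Proposition \ref{prop: T} and Corollary \ref{corol: regular Phi}); this is a genuine piece of work your proposal does not supply.
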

This proposition is proved in Section \ref{Sec:Gevrey regularity}. 
\begin{corol}\label{corol: commutator}
There exists $\mathcal{D}^{com}(t,k,\xi,\xi_1)$ such that
\beno
\mathcal{F}_2\Big(\tchi_2\bfD_{u,k}(\tchi_2\mathcal{F}_1{f})-\tchi_2\bfD_{u,k}^1(\tchi_2\mathcal{F}_1{f})\Big)(\xi)=\int \mathcal{D}^{com}(t,k,\xi,\xi_1)\hat{f}_k(t,\xi_1)d\xi_1.
\eeno
Moreover, there exists $\la_{\mathcal{D}}=\la_{\mathcal{D}}(\la_0,\th_0,s_0,s_1, k_{M})$ independent of $t$ such that
\beno
\big|\mathcal{D}^{com}(t,k,\xi,\xi_1)\big|\lesssim \min\left(\f{e^{-\la_{\mathcal{D}}|\xi-\xi_1|^{s_0}}}{1+|\xi-kt|^2}, \f{e^{-\la_{\mathcal{D}}|\xi-\xi_1|^{s_0}}}{1+|\xi_1-kt|^2}\right).
\eeno
\end{corol}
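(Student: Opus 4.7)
My plan is to exhibit the algebraic cancellation that removes the apparent $\frac{1}{v_1-v}$ singularity in $\bfD_{u,k}-\bfD_{u,k}^1$, leaving a smooth Gevrey kernel modulated by the oscillation $e^{-i(v_1-v)kt}$, and then to extract the two promised decay factors from integration by parts in the phase variables and from Gevrey Paley--Wiener.

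First I would subtract the explicit $(z,v)$-representation formulas for $\bfD_{u,k}$ and $\bfD_{u,k}^1$ displayed just before Proposition \ref{prop:wave-coordinate}. The $D_1(k,v)\mathcal{F}_1 f$ diagonal terms agree and cancel, so the difference is an integral operator with kernel (after inserting the two $\tchi_2$ factors)
\begin{align*}
K_0(v,v_1)=\tchi_2(v)\tchi_2(v_1)D_2(k,v)E(k,v_1,v)\,\frac{p(v)q(v_1)-p(v_1)q(v)}{v_1-v}\,e^{-i(v_1-v)kt},
\end{align*}
where $p(v)=u''(u^{-1}(v))$ and $q(v)=(u^{-1})'(v)$. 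The algebraic identity
\begin{align*}
p(v)q(v_1)-p(v_1)q(v)=p(v)\bigl[q(v_1)-q(v)\bigr]-q(v)\bigl[p(v_1)-p(v)\bigr]
\end{align*}
shows that the numerator vanishes as $v_1\to v$ proportionally to $v_1-v$, so the quotient is a smooth Gevrey-$\frac{1}{s_0}$ function $G(v,v_1)$, using the hypothesis on $u''$ together with the Gevrey regularity of $D_2$ and $E$ already quantified in the proof of Proposition \ref{prop: kernel-wave-op}. This lets me write $K_0(v,v_1)=\Phi(v,v_1)\,e^{-i(v_1-v)kt}$ with $\Phi$ compactly supported in $[\frac{\th_0}{2},1-\frac{\th_0}{2}]^2$ and Gevrey-$\frac{1}{s_0}$ regular in both variables uniformly in $0<|k|\le k_M$.

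Next, taking the double Fourier transform reorganizes the phase and gives
\begin{align*}
\mathcal{D}^{com}(t,k,\xi,\xi_1)=\iint e^{iv(kt-\xi)+iv_1(\xi_1-kt)}\Phi(v,v_1)\,dv\,dv_1.
\end{align*}
Two integrations by parts in $v$ (no boundary terms, thanks to the support of $\tchi_2$) produce the prefactor $(kt-\xi)^{-2}$ and replace $\Phi$ by $\partial_v^2\Phi$; combined with the trivial $L^\infty$ bound in the regime $|\xi-kt|\le 1$, this gives the $(1+|\xi-kt|^2)^{-1}$ decay. By the same argument applied to $v_1$, one obtains $(1+|\xi_1-kt|^2)^{-1}$, and the minimum of the two bounds is retained. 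For the Gevrey factor $e^{-\la_{\mathcal{D}}|\xi-\xi_1|^{s_0}}$, I would change variables $v_1=v+w$ so that the phase becomes $iv(\xi_1-\xi)+iw(\xi_1-kt)$; the $v$-integral is then the Fourier transform at frequency $\xi-\xi_1$ of $v\mapsto(\partial_v^2\Phi)(v,v+w)$, a compactly supported Gevrey-$\frac{1}{s_0}$ function uniformly in $w$, hence is dominated by $C e^{-\la_{\mathcal{D}}|\xi-\xi_1|^{s_0}}$ by the standard Paley--Wiener-type estimate for Gevrey functions. Integrating in $w$ over the compact support finishes both bounds.

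The step I expect to be the main obstacle is not conceptual but book-keeping: one must verify that each factor --- $D_2$, $E$, the quotient $G$, and the derivatives $\partial_v^2\Phi$ and $\partial_{v_1}^2\Phi$ --- retains Gevrey-$\frac{1}{s_0}$ regularity with constants uniform in $|k|\le k_M$ and in $t$. This is precisely the content of the estimates developed in the proof of Proposition \ref{prop: kernel-wave-op}, which I would invoke as a black box; once those estimates are in hand, the $\la_{\mathcal{D}}$ here can be chosen to be the same $\la_{\mathcal{D}}$ as in that proposition (possibly after a harmless reduction), so no independent Gevrey analysis is required.
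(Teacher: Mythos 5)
Your overall strategy --- exhibit the cancellation in the bracket $p(v)q(v_1)-p(v_1)q(v)$, then trade oscillation in the phase $e^{-i(v_1-v)kt}$ for decay in $|\xi-kt|$, and get the Gevrey factor from a Paley--Wiener bound --- is the same as the paper's. But there is a genuine gap at the central step: you assert that after dividing the bracket by $v_1-v$ the kernel $\Phi(v,v_1)$ is a compactly supported, jointly Gevrey-$\f{1}{s_0}$ function. It is not. The kernel $E(k,v_1,v)$ is itself singular on the diagonal: by \eqref{eq: E} it contains a term $(u^{-1})''(c)(u-c)\Phi_1\ln|u-c|$ and terms carrying $1_{\R^{\pm}}(u-c)$. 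The vanishing of the bracket at $v_1=v$ cancels the $p.v.\,\f{1}{v_1-v}$ singularity coming from the $-\f{1}{\widetilde{u'}}$ part of $E$ (that piece of the commutator is indeed Gevrey), but it does not smooth out the logarithm or the jump in the derivative across $v_1=v$: the resulting kernel is continuous but only finitely differentiable there. Consequently your claim that $\pa_v^2\Phi$ is again a compactly supported Gevrey kernel is false --- after two derivatives the log term produces a genuine $p.v.\,\f{1}{v-v_1}$ singularity and the indicator terms produce trace (multiplication) contributions --- and your change of variables $v_1=v+w$ followed by "integrating in $w$ over the compact support" does not converge absolutely.

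This is not book-keeping; it is exactly why the statement gains two derivatives and not more. The paper's proof decomposes the commutator into the operator types $\Pi_1,\dots,\Pi_4$ with the singular factors $\mu_1,\mu_2=\mu_1 1_{\R^-}$, $\mu_3=\mu_1\ln|\cdot|$, $\mu_4=p.v.\f{1}{\cdot}$, uses Lemma \ref{lem: Fourier_type1} and Remark \ref{Rmk: fo-g-2} to bound the Fourier kernel of each type by $e^{-\la_{\mathcal{D}}|\xi-\xi_1|^{s_0}}$, and then applies $(\pa_u-itk)$ exactly twice, checking at each stage that the differentiated kernels are still of one of the admissible types; the closing remark of Section \ref{Sec: comm} (the computation of $\pa_x^2[u,T]f$ for $T$ the log-kernel operator) records precisely the obstruction to a third derivative. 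A correct write-up must either reproduce this decomposition or otherwise justify the two (and only two) integrations by parts across the diagonal singularity. A minor additional difference: the paper obtains only the $(1+|\xi-kt|^2)^{-1}$ decay directly and deduces the $(1+|\xi_1-kt|^2)^{-1}$ bound from $\f{1+|\xi_1-kt|^2}{1+|\xi-kt|^2}\lesssim\langle\xi-\xi_1\rangle^2$ absorbed into the Gevrey weight, rather than repeating the argument in $v_1$; your symmetric route would also work once the singular structure is handled, but it is not needed.
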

The proof can be found in section \ref{Sec: comm}. 

\begin{remark}\label{eq: wave-A}
There exists $C=C(\la_{\mathcal{D}},\th_0,s,s_0, k_{M})$ independent of $t$ such that for any $\la<\la_{\mathcal{D}}$ and $k\leq k_{M}$ 
\beno
\big\|\mathbf{D}_{k,u}(\tchi_2\mathcal{F}_{1}f)\big\|_{\mathcal{G}^{\la,\s;s}}
+\left\|\mathbf{D}_{k,u}^{-1}(\tchi_2\mathcal{F}_{1}f)\right\|_{\mathcal{G}^{\la,\s;s}}
+\left\|\tchi_2\mathbf{D}_{k,u}^{1}(\tchi_2\mathcal{F}_{1}f)\right\|_{\mathcal{G}^{\la,\s;s}}\leq C\|\mathcal{F}_{1}f\|_{\mathcal{G}^{\la,\s;s}},
\eeno
and
\beno
\sum_{0<|k|\leq k_{M}}\left(\big\|\rmA_k\mathbf{D}_{k,u}(\tchi_2\mathcal{F}_{1}f)\big\|_{L^2}^2
+\left\|\rmA_k\mathbf{D}_{k,u}^{-1}(\tchi_2\mathcal{F}_{1}f)\right\|_{L^2}^2\right)\leq C\|\rmA f\|_{L^2}^2.
\eeno
\end{remark}
The proof is similar to the proof of {\it (3.39a)} in \cite{BM1}. 
\subsection{Nonlinear coordinate transform}
Let us recall the nonlinear change of coordinate from \cite{BM1}, namely $(t,x,y)\to (t,z,v)$: 
\beq\label{eq: coor-change}
\begin{split}
v(t,y)&=u(y)+\f{1}{t}\int_0^t<-\pa_y\psi>(t',y)\chi_1(y)dt'\\
&=u(y)-\f{\chi_1(y)}{t}\int_0^t\f{1}{2\pi}\int_{\bbT}\pa_y\psi(t',x,y)dxdt',\\
z(t,x,y)&=x-tv(t,y).
\end{split}
\eeq
Thus $Ran\, u=Ran\, v=[u(0),u(1)]$. This is the nonlinear version of \eqref{eq: linear-coordinate}. 

Let us also define
\begin{align*}
&\udl{\pa_tv}(t,v)=\pa_tv(t,y),\  
\udl{\pa_yv}(t,v)=\pa_yv(t,y),\ \udl{\pa_{yy}v}(t,v)=\pa_{yy}v(t,y),\\
&\udl{u''}(t,v)=u''(y),\ \udl{u'}(t,v)=u'(y). 
\end{align*}
Here and below, for any $\varphi(y)$, $\udl{\varphi}(t,v(t,y))=\varphi(y)$. 

Define $\Om(t,z,v)=\om(t,x,y)$ and $\Psi(t,z,v)=\psi(t,x,y)$, hence the original 2D Euler system \eqref{eq:vorticity} is expressed as
\beq\label{eq: Om}
\left\{
\begin{aligned}
&\pa_t\Om-\udl{u''}\pa_zP_{\neq}\Psi
+\mathrm{U}\cdot\na_{z,v}\Om=0,\\
&\Delta_t\Psi(t,z,v)\eqdef \pa_{zz}\Psi+(\udl{\pa_yv})^2(\pa_v-t\pa_z)^2\Psi+\udl{\pa_{yy}v}(\pa_v-t\pa_z)\Psi=\Om(t,z,v),
\end{aligned}\right.
\eeq 
where $\mathrm{U}=(0,\udl{\pa_tv})+\udl{\pa_yv}\Upsilon\na^{\bot}_{v,z}P_{\neq}(\Psi\Upsilon)$. 
Here we use a smoother cut-off function $\Upsilon(v)\in \mathcal{G}^{M(\th_0),\f{s+1}{2}}_{ph,1}$ which satisfies $\mathrm{supp}\, \Upsilon(v)\subset \left[u(\f{\th_0}{2}),u(1-\f{\th_0}{2})\right]$ and
$
\Upsilon(v)\equiv 1,\  \text{for }v\in [u({\th_0}),u(1-\th_0)]
$
to replace $\tchi_2$. 

We introduce $\udl{\Delta_t^{-1}}\Om=\Upsilon\Delta_t^{-1}(\Upsilon\Om)$, then by the compact support of $\Om$, we have
\beno
\Psi\Upsilon=\udl{\Delta_t^{-1}}\Om.
\eeno

Now let us introduce the good unknown
\beq\label{eq: def-f}
\begin{split}
f(t,z,v)
&\eqdef<\Om>(t,v)+P_{|k|\geq k_{M}}\Om(t,z,v)\\
&\quad+\sum_{0<|k|<k_{M}}\bfD_{u,k}(\mathcal{F}_1{\Om}(t,k,\cdot))(t,k,v)e^{izk},
\end{split}
\eeq
where 
\beno
P_{|k|\geq k_{M}}\Om(t,z,v)=\sum_{|k|\geq k_{M}}\mathcal{F}_1\Om(t,k,v)e^{ikz}.
\eeno
Here $k_{M}$ is a large constant depending only on the background shear flow $u(y)$ and will be determined in the proof.

\begin{remark}\label{Rmk: wave bdd-A}
It holds that
\beno
\Om(t,z,v)=<f>(t,v)+P_{|k|\geq k_{M}}f(t,z,v)+\sum_{0<|k|<k_{M}}\bfD_{u,k}^{-1}(\mathcal{F}_1{f}(t,k,\cdot))(t,k,v)e^{izk}.
\eeno
which gives us that there exist $C=C(k_M)\geq 1$ independent of $t$ such that for any $\la<\la_{\mathcal{D}}$,
\begin{align*}
{C}^{-1}\|\Om\|_{\mathcal{G}^{\la,\s;s}(\mathbb{T}\times [u(0),u(1)])}
\leq \|f\|_{\mathcal{G}^{\la,\s;s}(\mathbb{T}\times [u(0),u(1)])}
\leq C\|\Om\|_{\mathcal{G}^{\la,\s;s}(\mathbb{T}\times [u(0),u(1)])},
\end{align*}
and that there exist $C=C(k_M)\geq 1$ independent of $t$  such that for any $\la(t)<\la_{\mathcal{D}}$,
\beno
C^{-1}\|\rmA\Om\|_2^2\leq \|\rmA f\|_2^2\leq C\|\rmA\Om\|_2^2. 
\eeno
\end{remark}
The proof of the last inequality is similar to the proof of the {\it product Lemma 3.8 and (3.40)} in \cite{BM1}, which follows from {\it Lemma 3.5 and 3.4} in \cite{BM1}, we omit the details. 

An easy calculation gives that
\begin{align*}
\pa_tf
&=\pa_tP_0\Om+\pa_tP_{k\geq k_M}\Om\\
&\quad+\pa_t\sum_{0<|k|<k_{M}}\bfD_{u,k}(\mathcal{F}_1{\Om}(t,k,\cdot))(t,k,v)e^{iz k}\\
&=-P_{0}\left(\rmU\cdot\na_{z,v}\Om\right)
+\udl{u''}\pa_{z}P_{k\geq k_M}\big(\underline{\Delta_t^{-1}}\Om\big)\\
&\quad-P_{k\geq k_M}\Big(\rmU\cdot\na_{z,v}\Om\Big)\\
&\quad+\sum_{0<|k|<k_{M}}[\pa_t,\bfD_{u,k}]\left(\mathcal{F}_{1}\Om(t,k,\cdot)\right)(t,k,v)e^{iz k}\\
&\quad+\sum_{0<|k|<k_{M}}\bfD_{u,k}\left(\mathcal{F}_{1}\Big(\udl{u''}\pa_{z}\big(\Delta_t^{-1}\Om\big)\Big)\right)(t,k,v)e^{iz k}\\
&\quad-\sum_{0<|k|<k_{M}}\bfD_{u,k}\left(\mathcal{F}_{1}\Big(\rmU\cdot\na_{z,v}\Om\Big)\right)(t,k,v)e^{iz k}.
\end{align*}
By the fact that,
\begin{align*}
&\bfD_{u,k}\left(\mathcal{F}_{1}\Big(\udl{u''}\pa_{z}\big(\Delta_t^{-1}\Om\big)\Big)\right)(t,k,v)\\
&=\bfD_{u,k}\left(\mathcal{F}_{1}\Big(\widetilde{u''}\pa_{z}\big(\Delta_u^{-1}\Om\big)\Big)\right)(t,k,v)
+\bfD_{u,k}\left(\mathcal{F}_{1}\Big(\udl{u''}\pa_{z}\big(\Delta_t^{-1}\Om\big)-\widetilde{u''}\pa_{z}\big(\Delta_u^{-1}\Om\big)\Big)\right)(t,k,v)\\
&=\bfD_{u,k}\left(\mathcal{F}_{1}\Big(\widetilde{u''}\pa_{z}\big(\Delta_u^{-1}\Om\big)\Big)\right)(t,k,v)
+\bfD_{u,k}\left(\mathcal{F}_{1}\Big(\big(\udl{u''}-\widetilde{u''}\big)\pa_{z}\big(\Delta_t^{-1}\Om\big)-\widetilde{u''}\pa_{z}\big(\Delta_u^{-1}\Om-\Delta_t^{-1}\Om\big)\Big)\right)(t,k,v)\\
&=-ikv\bfD_{u,k}\left(\mathcal{F}_{1}\Om(t,k,\cdot)\right)(t,k,v)
+\bfD_{u,k}\left(ik\mathcal{F}_{1}v\Om(t,k,\cdot)\right)(t,k,v)\\
&\quad+\bfD_{u,k}\left(\mathcal{F}_{1}\Big(\big(\udl{u''}-\widetilde{u''}\big)\pa_{z}\big(\Delta_t^{-1}\Om\big)-\widetilde{u''}\pa_{z}\big(\Delta_u^{-1}\Om-\Delta_t^{-1}\Om\big)\Big)\right)(t,k,v),
\end{align*}
we obtain by \eqref{eq:[pa_t,D]} that
\beq\label{eq:f}
\begin{split}
\pa_tf
&=-P_{0}\left(\rmU\cdot\na_{z,v}\Om\right)
+\udl{u''}\pa_{z}P_{|k|\geq k_M}\big(\underline{\Delta_t^{-1}}\Om\big)\\
&\quad-P_{|k|\geq k_M}\Big(\rmU\cdot\na_{z,v}\Om\Big)\\
&\quad+\sum_{0<|k|<k_{M}}\bfD_{u,k}\left(\mathcal{F}_{1}\Big(\big(\udl{u''}-\widetilde{u''}\big)\pa_{z}\big(\Delta_t^{-1}\Om\big)-\widetilde{u''}\pa_{z}\big(\Delta_u^{-1}\Om-\Delta_t^{-1}\Om\big)\Big)\right)(t,k,v)e^{izk}\\
&\quad-\sum_{0<|k|<k_{M}}\bfD_{u,k}\left(\mathcal{F}_{1}\Big(\rmU\cdot\na_{z,v}\Om\Big)\right)(t,k,v)e^{iz k}.
\end{split}
\eeq
We also have 
\begin{align*}
\pa_tv(t,y)=-\f{\chi_1(y)}{t^2}\int_0^t<-\pa_y\psi>(t',y)dt'+\f{\chi_1(y)}{t}<-\pa_y\psi>(t,y),
\end{align*}
and
\begin{align*}
\pa_{tt}v(t,y)=\pa_t\big(\udl{\pa_tv}(t,v(t,y))\big)=\pa_t\big(\udl{\pa_tv}\big)(t,v(t,y))+\pa_tv(t,y)\pa_v\big(\udl{\pa_tv}\big)(t,v(t,y))
\end{align*}
which gives that
\begin{align*}
\pa_{tt}v(t,y)&=\f{2\chi_1(y)}{t^3}\int_0^t<-\pa_y\psi>(t',y)dt'-\f{2\chi_1(y)}{t^2}<-\pa_y\psi>(t,y)+\f{\chi_1(y)}{t}<\pa_tU^x>(t,y)\\
&=-\f{2}{t}\pa_tv(t,y)+\f{\chi_1(y)}{t}<\pa_tU^x>(t,y).
\end{align*}
By using the fact that
\begin{align*}
\pa_tU^x+u(y)\pa_xU^x+u''(y)U^y+\pa_xp+U^x\pa_xU^x+U^y\pa_yU^x=0,
\end{align*}
we obtain that
\begin{align}\label{eq: v_t}
\pa_t(\udl{\pa_tv})+\udl{\pa_tv}\pa_v(\udl{\pa_tv})+\f{2}{t}(\udl{\pa_tv})=-\f{\udl{\chi_1}}{t}\udl{\pa_yv}<\na_{z,v}^{\bot}P_{\neq}(\Psi\Upsilon)\cdot\na_{z,v}\widetilde{U^x}>,
\end{align}
where $\widetilde{U^x}(t,z(t,x,y),v(t,y))=U^x(t,x,y)$. 

We also have 
\begin{align*}
\pa_yv(t,y)=u'(y)-\f{1}{t}\int_0^t<\om>(t',y)dt',
\end{align*}
then it holds that
\begin{align*}
\pa_t\Big(t\big(\pa_yv(t,y)-u'(y)\big)\Big)=-<\om>(t,y). 
\end{align*}
Let $h(t,v(t,y))=\pa_yv(t,y)-u'(y)$, then $h(t,v)$ satisfies
\beq\label{eq: h}
\pa_th+\udl{\pa_tv}\pa_vh=\f{1}{t}\big(-P_0\Om-h\big)\eqdef \bar{h}=\udl{\pa_yv}\pa_v\udl{\pa_tv}.
\eeq
By \eqref{eq: v_t} and \eqref{eq: Om}, we have
\begin{align}\label{eq: bar{h}}
\pa_t\bar{h}=-\f{\bar{h}}{t}-\f{1}{t}(\pa_tP_0\Om+\pa_th)
=-\f{\bar{2h}}{t}-\udl{\pa_tv}\pa_v\bar{h}+\f{\udl{\pa_yv}}{t}<\na^{\bot}_{v,z}P_{\neq}(\Psi\Upsilon)\cdot\na_{z,v}\Om>.
\end{align}
\begin{remark}
Recall that $\udl{\varphi}(t,v(t,y))=\varphi(y)$ and $\widetilde{\varphi}(u(y))=\varphi(y)$, then $\udl{\varphi}(t,v)$ satisfies the following transport equation
\beno
\pa_t\udl{\varphi}(t,v)+\udl{\pa_tv}(t,v)\pa_v\udl{\varphi}(t,v)=0,
\eeno
and $\varphi_{\d}=\udl{\varphi}(t,v)-\widetilde{\varphi}(v)$ satisfies
\beno
\pa_t\varphi_{\d}+\udl{\pa_tv}(t,v)\pa_v\varphi_{\d}=-\udl{\pa_tv}(t,v)\pa_v\widetilde{\varphi}(v). 
\eeno
\end{remark}
Thus we have 
\begin{align}
\label{eq: chi-chi}
&\big(\pa_t+\udl{\pa_tv}(t,v)\pa_v\big)(\udl{\chi_1}-\tchi_1)=-\udl{\pa_tv}(t,v)\pa_v\tchi_1(v),\\ 
\label{eq: u'-u'}
&\big(\pa_t+\udl{\pa_tv}(t,v)\pa_v\big)(\udl{u'}-\widetilde{u'})=-\udl{\pa_tv}(t,v)\pa_v\widetilde{u'}(v),\\ 
\label{eq: u''-u''}
&\big(\pa_t+\udl{\pa_tv}(t,v)\pa_v\big)(\udl{u''}-\widetilde{u''})=-\udl{\pa_tv}(t,v)\pa_v\widetilde{u''}(v). 
\end{align}
Note that on the right hand side, the functions $\pa_v\tchi_1(v)$, $\pa_v\widetilde{u'}(v)$ and $\pa_v\widetilde{u''}(v)$ only depend on the background $u$ and are smoother than $\udl{\pa_tv}(t,v)$. 

\subsection{Main energy estimate}
Let us now introduce the main energy for large time:
\beq\label{eq: main-energy}
\rmE(t)=\f12\left\|\rmA(t,\na)f\right\|_2^2+\rmE_{v}(t)+\rmE_d(t)
\eeq
and for some constants $\rmK_v$, $\rmK_{\rmD}$ depending only on $s$, $\la_M$, $\la_{\infty}'$ fixed by the proof
\beq\label{eq: energy-E_v}
\rmE_{v}(t)=\langle t\rangle^{2+2s}\left\|\f{\rmA}{\langle\pa_v\rangle^s}\bar{h}(t)\right\|_2^2+\langle t\rangle^{4-\rmK_{\rmD}\ep}\left\|\udl{\pa_tv}(t)\right\|_{\mathcal{G}^{\la(t),\s-6}}^2+\f{1}{\rmK_v}\left\|\rmA^{\mathrm{R}}h(t)\right\|_2^2.
\eeq
and 
\beq\label{eq: energy-E_d}
\rmE_{d}(t)=\f{1}{\rmK_v}\Big(
\left\|\rmA^{\mathrm{R}}(\udl{\chi_1}-\tchi_1)\right\|_2^2
+\left\|\rmA^{\mathrm{R}}(\udl{u'}-\widetilde{u'})\right\|_2^2
+\left\|\rmA^{\mathrm{R}}(\udl{u''}-\widetilde{u''})\right\|_2^2\Big).
\eeq
We refer to {\it Lemma 2.1} in \cite{BM1} for the local wellposedness for 2D Euler in Gevrey spaces also see \cite{BB1977}. By this way, we may safely ignore the time interval $[0,1]$ by further restricting the size of the initial data.  

The goal is next to prove by a continuity argument that this energy $\rmE(t)$ (together with some related quantities) is uniformly bounded for all time if $\ep$ is sufficiently small. 
 
We define the following controls referred to in the sequel as the bootstrap hypotheses for $t\geq 1$ and some constant $C(k_{M})$,
\begin{itemize}
\item[(B1)]  $\rmE(t)\leq 10C(k_{M})\ep^2$; 
\item[(B2)] Compact support of $\Om$: $\mathrm{supp}\,\Om\subset \Big[2.5\th_0-10C(k_{M})\ep,1-2.5\th_0+10C(k_{M})\ep\Big]$;
\item[(B3)] `$\mathrm{CK}$' integral estimates
\begin{align*}
\int_{1}^t\bigg[&\mathrm{CK}_{\la}(\tau)+\mathrm{CK}_{w}(\tau)+\mathrm{CK}_{g}(\tau)+\mathrm{CK}_w^{v,2}(\tau)+\mathrm{CK}_{\la}^{v,2}(\tau)\\
&+\rmK_{v}^{-1}\left(\mathrm{CK}_{w}^{v,1}(\tau)+\mathrm{CK}_{\la}^{v,1}(\tau)\right)
+\rmK_{v}^{-1}\sum_{i=1}^2\left(\mathrm{CCK}_{w}^{i}(\tau)+\mathrm{CCK}_{\la}^{i}(\tau)\right)\bigg]d\tau
\leq 20C(k_{M})\ep^2.
\end{align*}
\end{itemize}
The $\mathrm{CK}$ terms above arise from the time derivates of $\rmA(t)$ and $\rmA^{\rmR}(t)$ and are naturally controlled by the energy estimates we are making. See \eqref{eq:CK_la}, \eqref{eq:CK_w}, \eqref{eq:CCK^i} for the definitions.

\begin{proposition}[Bootstrap]\label{prop: btsp}
Let $k_{M}$ be fixed and sufficiently large. There exists an $\ep_0\in (0,\f12)$ depending only on $k_{M}$, $\la_{0}$, $\la'$, $s$ and $\s$ such that if $\ep<\ep_0$ and on $[1, T^*]$ the bootstrap hypotheses (B1)-(B3) hold, then for $\forall t\in [1, T^*]$,
\begin{itemize}
\item[1.]  $\rmE(t)\leq 8C(k_{M})\ep^2$, 
\item[2.] Compact support of $\Om$: $\mathrm{supp}\,\Om\subset \Big[2.5\th_0-8C(k_{M})\ep,1-2.5\th_0+8C(k_{M})\ep\Big]$,
\item[3.] and the $\mathrm{CK}$ controls satisfy:
\begin{align*}
\int_{1}^t\bigg[&\mathrm{CK}_{\la}(\tau)+\mathrm{CK}_{w}(\tau)+\mathrm{CK}_{g}(\tau)+\mathrm{CK}_w^{v,2}(\tau)+\mathrm{CK}_{\la}^{v,2}(\tau)\\
&+\rmK_{v}^{-1}\left(\mathrm{CK}_{w}^{v,1}(\tau)+\mathrm{CK}_{\la}^{v,1}(\tau)\right)
+\rmK_{v}^{-1}\sum_{i=1}^2\left(\mathrm{CCK}_{w}^{i}(\tau)+\mathrm{CCK}_{\la}^{i}(\tau)\right)
\bigg]d\tau\leq 18C(k_{M})\ep^2,
\end{align*}
\end{itemize}
form which it follows that $T^*=+\infty$. 
\end{proposition}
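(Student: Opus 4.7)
The proof is a standard continuity/bootstrap argument: starting from (B1)--(B3) on $[1,T^*]$, the goal is to upgrade the constants from $10$ to $8$ and from $20$ to $18$, so that local well--posedness in Gevrey class (Lemma~2.1 of \cite{BM1}) together with the continuity of $\rmE(t)$ forces $T^*=+\infty$. The three bootstrap quantities are handled in parallel: the main energy $\|\rmA f\|_2^2$ through \eqref{eq:f}, the coordinate energy $\rmE_v$ through the evolution equations \eqref{eq: v_t}, \eqref{eq: h} and \eqref{eq: bar{h}}, and the difference energy $\rmE_d$ through the transport equations \eqref{eq: chi-chi}--\eqref{eq: u''-u''}. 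Along the way, the $\mathrm{CK}$ terms appear naturally as the contributions of $\dot\la(t)$ and $\pa_t w_k$ to $\frac{d}{dt}\rmA$ and $\frac{d}{dt}\rmA^{\mathrm R}$, and are controlled simultaneously with the energies themselves.

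\textbf{Energy estimate for $\|\rmA f\|_2^2$.} Differentiating in $t$ and using \eqref{eq:f}, three families of terms arise. (i) The transport, reaction and remainder contributions coming from $P_0(\rmU\cdot\na\Om)$ and $P_{|k|\geq k_M}(\rmU\cdot\na\Om)$ are of exactly the type handled in \cite{BM1}, so the corresponding bounds are imported as Propositions \ref{prop: transport}, \ref{prop:reaction} and \ref{prop: remainder}; since the troublesome low--frequency range $|k|<k_M$ has been removed, one additionally gains a factor of $k_M^{-1}$ in the worst paraproduct pieces. (ii) The low--$k$ wave--operator terms $\sum_{0<|k|<k_M}\bfD_{u,k}(\mathcal{F}_1(\rmU\cdot\na\Om))e^{ikz}$, after inserting $\Om=\bfD_{u,k}^{-1}f$ on the low frequencies, produce commutator contributions $[\bfD_{u,k},\rmU\cdot\na]$ which by Proposition \ref{prop: kernel-wave-op}, Corollary \ref{corol: commutator} and Remark \ref{eq: wave-A} act boundedly on $\mathcal{G}^{\la,\s;s}$ uniformly in $t$ as long as $\la(t)<\la_{\mathcal D}$; this is the content of Proposition \ref{prop:com}. (iii) The genuinely new nonlocal terms $(\udl{u''}-\widetilde{u''})\pa_z\Delta_t^{-1}\Om$ and $\widetilde{u''}\pa_z(\Delta_u^{-1}-\Delta_t^{-1})\Om$ are handled by Propositions \ref{prop: nonlocal} and \ref{prop:nonlocal-ep}: the first factor in each is small thanks to $\rmE_d$, and the difference $\Delta_u^{-1}-\Delta_t^{-1}$ is controlled via the elliptic estimate (Proposition \ref{prop:elliptic}) using the coordinate corrections $\udl{\pa_yv}-\widetilde{u'}$ and $\udl{\pa_{yy}v}-\widetilde{u''}$.

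\textbf{Coordinate and difference energies.} For $\rmE_v$, one differentiates the weighted norms of $\bar h$, $\udl{\pa_tv}$ and $h$ using \eqref{eq: v_t}, \eqref{eq: h} and \eqref{eq: bar{h}}. The forcing $\langle \na^\bot_{v,z}P_{\neq}(\Psi\Upsilon)\cdot\na\Om\rangle$ decays like $\ep/\langle t\rangle$ via Proposition \ref{prop:elliptic} applied to the already--controlled $\|\rmA f\|_2$, which is exactly what is needed to sustain the $\langle t\rangle^{2+2s}$ and $\langle t\rangle^{4-\rmK_{\rmD}\ep}$ weights built into \eqref{eq: energy-E_v}; the pure transport piece for $h$ is absorbed by choosing $\rmK_v$ large (Proposition \ref{prop: coordinate}). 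For $\rmE_d$, the equations \eqref{eq: chi-chi}--\eqref{eq: u''-u''} are transport by $\udl{\pa_tv}\pa_v$ driven by derivatives of the \emph{background} quantities $\tchi_1$, $\widetilde{u'}$, $\widetilde{u''}$, which are smoother than the flow itself; a Gevrey product estimate combined with the decay $\|\udl{\pa_tv}\|_{\mathcal{G}^{\la,\s-6}}\lesssim \ep\langle t\rangle^{-2+\rmK_{\rmD}\ep/2}$ coming from $\rmE_v$ closes these norms.

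\textbf{Support propagation and conclusion.} For (B2), the support of $\Om$ in $y$ can drift only at speed $\|U^y(t,\cdot)\|_{L^\infty_y}$ at the moving boundary; by the inviscid damping bound on $U^y$ obtained from the improved (B1) and Gevrey embedding, this is integrable in $t$ with total mass $\lesssim C(k_M)\ep$, strictly smaller than the $10C(k_M)\ep$ allowance. Combining all of the above and choosing first $k_M$ large and then $\ep_0$ small yields (B1)--(B3) with the improved constants $8C(k_M)\ep^2$ and $18C(k_M)\ep^2$, and the continuity argument concludes $T^*=+\infty$. The main obstacle throughout is the coupled treatment of the new contributions (ii) and (iii): one needs the Gevrey--bandwidth restriction $\la(t)<\la_{\mathcal D}$ to apply Proposition \ref{prop: kernel-wave-op} to the wave operator, and at the same time one needs the elliptic difference $\Delta_u^{-1}-\Delta_t^{-1}$ to gain one power of $\langle t\rangle^{-1}$ from the coordinate corrections that live in $\rmE_v+\rmE_d$, so these estimates must be closed simultaneously rather than sequentially.
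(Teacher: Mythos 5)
Your proposal is correct and follows essentially the same architecture as the paper: the energy identity for the good unknown $f$ decomposed into transport/reaction/remainder pieces imported from \cite{BM1}, the wave-operator commutator terms (Proposition \ref{prop:com}), the nonlocal terms gaining smallness from $k_M$ and from $\rmE_d$ (Propositions \ref{prop: nonlocal}, \ref{prop:nonlocal-ep}), the elliptic and coordinate-system controls, and a Lagrangian argument for the support. Two minor inaccuracies worth noting: the paper does not discard the low-frequency transport (it is reinstated as $\Pi^{tr3}$ after extracting the commutator, so no $k_M^{-1}$ gain is claimed there — that gain belongs to $\Pi^{u''}$), and the support argument must also use that the source $u''\pa_x\psi$ vanishes along trajectories starting outside $\mathrm{supp}\,u''+O(\ep)$, since $\om$ is not purely advected.
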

The remainder of this section is devoted to the proof of Proposition \ref{prop: btsp}, the primary step being to show that on $[1, T^*]$, we have
\begin{align}
\label{eq:energy-goal}
\rmE(t)+\f12\int_{1}^t\bigg[&\mathrm{CK}_{\la}(\tau)+\mathrm{CK}_{w}(\tau)+\mathrm{CK}_{g}(\tau)+\mathrm{CK}_w^{v,2}(\tau)+\mathrm{CK}_{\la}^{v,2}(\tau)\\
\nonumber
&+\rmK_{v}^{-1}\left(\mathrm{CK}_{w}^{v,1}(\tau)+\mathrm{CK}_{\la}^{v,1}(\tau)\right)
+\rmK_{v}^{-1}\sum_{i=1}^2\left(\mathrm{CCK}_{w}^{i}(\tau)+\mathrm{CCK}_{\la}^{i}(\tau)\right)\bigg]d\tau\\
\nonumber
&\leq \rmE(1)+C(k_{M})\ep^2+K\ep^3. 
\end{align}
for some constant $K$ which is independent of $\ep$ and $T^*$. If $\ep$ is sufficiently small then \eqref{eq:energy-goal} implies Proposition \ref{prop: btsp}. Indeed, the control of the compact support of $\Om$ is based on a standard argument using the Lagrangian coordinates and the inviscid damping result. The proof can be found at the end of the section. 

It is natural to compute the time evolution of $\rmE(t)$. Indeed we have
\begin{align}\label{eq:energy2}
&\f{1}{2}\f{d}{dt}\left\|\rmA(t,\na)f\right\|_2^2\\
\nonumber
&=-\mathrm{CK}_{\la}-\mathrm{CK}_{w}
-\f{1}{2\pi}\int\rmA_0(\eta)\bar{\widehat{\Om}}_0(\eta)
\rmA_0(\eta)\mathcal{F}_2P_0\left(\rmU\cdot\na_{z,v}\Om\right)d\eta\\
\nonumber
&\quad+\sum_{|k|\geq k_M}\f{1}{2\pi}\int\rmA_k(\eta)\bar{\hat{f}}_k(\eta)\rmA_k(\eta)\widehat{\left(\udl{u''}\pa_{z}\big(\underline{\Delta_t^{-1}}f\big)\right)}_k(\eta)d\eta\\
\nonumber
&\quad-\sum_{|k|\geq k_{M}}\f{1}{2\pi}\int\rmA_k(\eta)\bar{\widehat{\Om}}_k(\eta)\rmA_k(\eta)\widehat{\Big(\rmU\cdot\na_{z,v}\Om\Big)}_k(\eta)d\eta\\
\nonumber
&\quad+\sum_{0<|k|<k_{M}}\f{1}{2\pi}\int\rmA_k(\eta)\bar{\hat{f}}_k(\eta)\rmA_k(\eta)\mathcal{F}_2\Big(ik\bfD_{u,k}\left(\mathcal{F}_{1}\Big(\big(\udl{u''}-\widetilde{u''}\big)\big(\Delta_t^{-1}\Om\big)\Big)\right)(t,k,\cdot)\Big)d\eta\\
\nonumber
&\quad-\sum_{0<|k|<k_{M}}\f{1}{2\pi}\int\rmA_k(\eta)\bar{\hat{f}}_k(\eta)\rmA_k(\eta)\mathcal{F}_2\Big(ik\bfD_{u,k}\left(\mathcal{F}_{1}\Big(\widetilde{u''}\big(\Delta_u^{-1}\Om-\Delta_t^{-1}\Om\big)\Big)\right)(t,k,\cdot)\Big)d\eta\\
\nonumber
&\quad-\sum_{0<|k|<k_{M}}\f{1}{2\pi}\int\rmA_k(\eta)\bar{\hat{f}}_k(\eta)\rmA_k(\eta)\mathcal{F}_2\Big(\bfD_{u,k}\left(\mathcal{F}_{1}\Big(\rmU\cdot\na_{z,v}\Om\Big)\right)(t,k,\cdot)\Big)_k(\eta)d\eta\\
\label{eq:energy-2}&=-\mathrm{CK}_{\la}-\mathrm{CK}_{w}
+\f{1}{2\pi}\Pi^{tr1}+\f{1}{2\pi}\Pi^{u''}+\f{1}{2\pi}\Pi^{tr2}+\f{1}{2\pi}\Pi^{u'',\ep1}+\f{1}{2\pi}\Pi^{u'',\ep2}+\f{1}{2\pi}\Pi,
\end{align}
where the $\mathrm{CK }$ stands for `Cauchy-Kovalevskaya' and expressed as
\begin{align}
\label{eq:CK_la}
&\mathrm{CK}_{\la}=-\dot{\la}(t)\left\||\na|^{s/2}\rmA f\right\|_2^2,\\
\label{eq:CK_w}
&\mathrm{CK}_{w}=\sum_k\int\f{\pa_tw_k(t,\eta)}{w_k(t,\eta)}e^{\la(t)|k,\eta|^s}\langle k,\eta\rangle^{\s}\f{e^{\mu|\eta|^{1/2}}}{w_k(t,\eta)}\rmA_k(t,\eta)|\widehat{f}_k(t,\eta)|^2d\eta.
\end{align}
We define
\begin{align}
&\tilde{\rmJ}_k(t,\eta)=\f{e^{\mu|\eta|^{1/2}}}{w_k(t,\eta)},\\
&\tilde{\rmA}_k(t,\eta)=e^{\la(t)|k,\eta|^s}\langle k,\eta\rangle^{\s}\tilde{\rmJ}_k(t,\eta).
\end{align}

\begin{remark}
Proposition \ref{prop: kernel-wave-op} gives us that for $\la_0$ and $\la'$ sufficiently small, 
\beno
-\dot{\la}(t)\left\||\na|^{s/2}\rmA \Om\right\|_2^2\lesssim \mathrm{CK}_{\la},\quad
\left\|\sqrt{\f{\pa_tw}{w}}\rmA\Om\right\|_2^2
\lesssim \mathrm{CK}_{w}+\mathrm{CK}_{\la}. 
\eeno
\end{remark}
\begin{proof}
The first inequality is easy to prove. One can refer to the proof of {\it (3.39a)} in \cite{BM1} for more details. 

One can also follow the proof of {\it (3.39b)} in \cite{BM1} to prove the second inequality. Here, we want to point out the difference. 
By applying Proposition \ref{prop: kernel-wave-op}, we get that
\begin{align*}
\left|\sqrt{\f{\pa_tw_k(\eta)}{w_k(\eta)}}\rmA_k(\eta)\widehat{\Om}_k(\eta)\right|
&\lesssim \sum_{\rmM\geq 8}\int_{\R}\sqrt{\f{\pa_tw_k(\eta)}{w_k(\eta)}}\rmA_k(\eta)(e^{-\la_{\mathcal{D}}|\eta-\xi|^{s_0}})_{\rmM}|\hat{f}_k(\xi)_{<\rmM/8}|d\xi\\
&\quad+\sum_{\rmM\geq 8}\int_{\R}\sqrt{\f{\pa_tw_k(\eta)}{w_k(\eta)}}\rmA_k(\eta)(e^{-\la_{\mathcal{D}}|\eta-\xi|^{s_0}})_{<\rmM/8}|\hat{f}_k(\xi)_{\rmM}|d\xi\\
&\quad+\sum_{\rmM'\in\mathcal{D}}\sum_{\f18\rmM'\rmM\leq 8\rmM'}\int_{\R}\sqrt{\f{\pa_tw_k(\eta)}{w_k(\eta)}}\rmA_k(\eta)(e^{-\la_{\mathcal{D}}|\eta-\xi|^{s_0}})_{\rmM'}|\hat{f}_k(\xi)_{\rmM}|d\xi\\
&=I_{HL}+I_{LH}+I_{R}. 
\end{align*}
One can follow the same argument as in the proof of {\it (3.39b)} in \cite{BM1} and obtain the estimates of $I_{LH}$ and $I_{R}$. To treat $I_{HL}$, we use the fact that 
\beno
\sqrt{\f{\pa_tw_k(\eta)}{w_k(\eta)}}\lesssim 1_{t\leq 2|\eta|}\lesssim \f{|\eta|}{t},\ \text{for}\ t\geq 1,
\eeno
which implies that
\beno
\|I_{HL}\|_{L^2}^2\lesssim \f{\|f\|_{\mathcal{G}^{c\la,\s;s}}^2}{\langle t\rangle^2}\lesssim \mathrm{CK}_{\la}. 
\eeno
Thus we complete the proof of the remark. 
\end{proof}

Now we control \eqref{eq:energy-2}. The most difficult part is to treat the $\Pi$ term. The key idea is to extract the transport structure from $\Pi$ which can be treated with $\Pi^{tr1}$ and $\Pi^{tr2}$. That is 
\begin{align*}
\Pi=\Pi^{tr3}+\Pi^{com},\quad \text{also see \eqref{eq: Pi^com}.}
\end{align*}
with 
\beno
\Pi^{tr3}=-\sum_{0<|k|< k_{M}}\int\rmA_k(\eta)\bar{\widehat{\Om}}_k(\eta)\rmA_k(\eta)\widehat{\Big(\rmU\cdot\na_{z,v}\Om\Big)}_k(\eta)d\eta.
\eeno

The estimate of $\Pi^{tr}=\Pi^{tr1}+\Pi^{tr2}+\Pi^{tr3}$ is similar to the {\it (2.23)} in \cite{BM1}. We start with integrating by parts, 
\beno
\f{1}{2\pi}\Pi^{tr}=\int \rmA\left(\mathrm{U}\cdot\na_{z,v}\Om\right)\rmA \Om dx
=-\f12\int \na\cdot\mathrm{U}|\rmA\Om|^2dx
+\int\rmA\Om\left[\rmA(\mathrm{U}\cdot\na_{z,v}\Om)-\mathrm{U}\cdot\na_{z,v}\rmA\Om\right]dx.
\eeno
By Sobolev embedding, $\s>5$ and the bootstrap hypotheses, we have 
\ben\label{eq: 2.25}
\left|\int \na\cdot\mathrm{U}|\rmA\Om|^2dx\right|
\leq \|\na\mathrm{U}\|_{\infty}\|\rmA\Om\|_2^2\lesssim \f{\ep}{\langle t\rangle^{2-\rmK_{\rmD}\ep/2}}\|\rmA\Om\|_2^2\lesssim \f{\ep^3}{\langle t\rangle^{2-\rmK_{\rmD}\ep/2}}.
\een

Similar to the proof in \cite{BM1}, we use a paraproduct decomposition to deal with the commutator. 
\begin{align*}
&\mathrm{T}_{\rmN}=2\pi \int \rmA \Om \left[\rmA\left(\mathrm{U}_{<\rmN/8}\cdot\na_{z,v}\Om_{\rmN}\right)-\mathrm{U}_{<\rmN/8}\cdot\na_{z,v}\rmA\Om_{\rmN}\right] dx\\
&\mathrm{R}_{\rmN}=2\pi \int \rmA \Om \left[\rmA\left(\mathrm{U}_{\rmN}\cdot\na_{z,v}\Om_{<\rmN/8}\right)-\mathrm{U}_{\rmN}\cdot\na_{z,v}\rmA\Om_{<\rmN/8}\right] dx\\
&\mathcal{R}=2\pi\sum_{\rmN\in \bf{D}}\sum_{\f18\rmN\leq \rmN'\leq 8\rmN}\int \rmA \Om \left[\rmA\left(\mathrm{U}_{\rmN}\cdot\na_{z,v}\Om_{\rmN'}\right)-\mathrm{U}_{\rmN}\cdot\na_{z,v}\rmA\Om_{\rmN'}\right] dx.
\end{align*}

The treatment of the transport term is similar to {\it section 5} in \cite{BM1}. 
\begin{proposition}[Transport]\label{prop: transport}
Under the bootstrap hypotheses, 
\beno
\sum_{\rmN\geq 8}|\rmT_{\rmN}|\lesssim \ep \mathrm{CK}_{\la}+\ep\mathrm{CK}_{w}+\f{\ep^3}{\langle t\rangle^{2-\rmK_{\rmD}\ep/2}}.
\eeno
\end{proposition}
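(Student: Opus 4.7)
The plan is to follow the strategy of \cite{BM1}, Section 5, viewing $\rmT_{\rmN}$ as a paraproduct commutator in which the gain comes from the frequency separation between the low-frequency transport velocity $\rmU_{<\rmN/8}$ and the high-frequency vorticity $\Om_{\rmN}$. First I would write the commutator on the Fourier side as
\begin{align*}
\rmT_\rmN = \sum_{k,l}\iint \rmA_k(\eta)\,\overline{\hat{\Om}_k(\eta)}\,\bigl[\rmA_k(\eta) - \rmA_l(\xi)\bigr]\,\widehat{\rmU}_{<\rmN/8}(k-l,\eta-\xi)\cdot i(l,\xi-tl)\,\widehat{\Om}_\rmN(l,\xi)\,d\xi\,d\eta,
\end{align*}
and note that on the support of the integrand $|k-l,\eta-\xi|\lesssim \rmN/8$ while $|l,\xi|\approx |k,\eta|\approx \rmN$, so we are in the classic low--high paraproduct regime.

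Next I would split the multiplier difference into the three natural pieces corresponding to the three factors of $\rmA$:
\[
\rmA_k(\eta)-\rmA_l(\xi) = \bigl(e^{\la|k,\eta|^s}-e^{\la|l,\xi|^s}\bigr)\langle k,\eta\rangle^{\s}\rmJ_k(\eta) + e^{\la|l,\xi|^s}\bigl(\langle k,\eta\rangle^{\s}-\langle l,\xi\rangle^{\s}\bigr)\rmJ_k(\eta) + e^{\la|l,\xi|^s}\langle l,\xi\rangle^{\s}\bigl(\rmJ_k(\eta)-\rmJ_l(\xi)\bigr),
\]
and estimate each piece separately. For the first two one uses the classical paraproduct bounds
\[
|e^{\la|k,\eta|^s}-e^{\la|l,\xi|^s}|\lesssim \la|k-l,\eta-\xi|^s\bigl(e^{\la|k,\eta|^s}+e^{\la|l,\xi|^s}\bigr),
\]
together with the analogous polynomial bound for $\langle k,\eta\rangle^{\s}-\langle l,\xi\rangle^{\s}$; these produce a $|k-l,\eta-\xi|^s$ factor which can be distributed as $|k-l,\eta-\xi|^{s/2}|k-l,\eta-\xi|^{s/2}$, one half absorbed into $\sqrt{\mathrm{CK}_\la}$ on $\Om_\rmN$ and the other half leaving $\rmU_{<\rmN/8}$ controlled in a Gevrey norm by the elliptic estimate (Proposition~\ref{prop:elliptic}) combined with the bootstrap bound $\|\rmA\Om\|_2\lesssim\ep$.

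The delicate piece is $\rmJ_k(\eta)-\rmJ_l(\xi)$. Here I would borrow the resonant/non-resonant case analysis from \cite{BM1}: depending on whether $(k,\eta)$ and $(l,\xi)$ lie in the same critical interval $\mathrm{I}_{k,\eta}$, in neighboring intervals, or in the non-resonant region, one uses the explicit form of $w_{\mathrm{R}}$ and $w_{\mathrm{NR}}$ given by \eqref{eq: w_R-R}--\eqref{eq: w_R-L} to bound the ratio $w_k(\eta)/w_l(\xi)$ and to produce either an additional $|k-l,\eta-\xi|^{s/2}$ gain (when the frequencies are non-resonantly separated) or a factor that is controlled pointwise by $\sqrt{\pa_tw_l/w_l}\,\rmA_l(\xi)$ and thereby absorbed into $\sqrt{\mathrm{CK}_w}$. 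Cauchy--Schwarz then yields
\[
|\rmT_\rmN| \lesssim \|\widehat{\rmU}_{<\rmN/8}\|_{L^\infty_{k,\eta}}\bigl(\ep\,\mathrm{CK}_\la^{1/2}+\ep\,\mathrm{CK}_w^{1/2}\bigr)\|\rmA\Om\|_2 + \|\na\rmU_{<\rmN/8}\|_\infty\|\rmA\Om\|_2^2,
\]
and the bootstrap control $\|\na\rmU\|_\infty \lesssim \ep\langle t\rangle^{-2+\rmK_\rmD\ep/2}$ (cf.\ \eqref{eq: 2.25}) provides the residual $\ep^3\langle t\rangle^{-(2-\rmK_\rmD\ep/2)}$. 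Summing the geometric tail in $\rmN$ finishes the estimate.

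The main obstacle will be the third piece of the multiplier decomposition, that is, the Orr weight ratio $\rmJ_k(\eta)/\rmJ_l(\xi)-1$, which is not uniformly bounded near the Orr critical times $t_{k,\eta}$ and requires the full resonant/non-resonant case split from \cite{BM1}. Once that piece is in hand the rest is routine paraproduct analysis; importantly, the wave-operator machinery developed earlier in Section~2 plays no role here because $\rmT_\rmN$ is an honest transport commutator evaluated against the same multiplier $\rmA$ as in \cite{BM1}, and the good unknown $f$ appears only indirectly through $\|\rmA\Om\|_2\approx\|\rmA f\|_2$ supplied by Remark~\ref{Rmk: wave bdd-A}.
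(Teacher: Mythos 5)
Your proposal is correct and follows exactly the route the paper takes: the paper gives no independent proof of this proposition but defers entirely to Section 5 of \cite{BM1}, and your sketch is a faithful reconstruction of that argument (Fourier-side paraproduct commutator, three-way splitting of $\rmA_k(\eta)-\rmA_l(\xi)$, resonant/non-resonant analysis of the $\rmJ$ ratio, absorption into $\mathrm{CK}_{\la}$ and $\mathrm{CK}_w$ via the decay of $\rmU$). Your closing observation that the wave operator is irrelevant here and that $\Om$-based $\mathrm{CK}$ quantities transfer to $f$-based ones is also precisely the point of Remark 2.11 and the remark following \eqref{eq:CK_w}.
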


To obtain the estimate of the reaction term, one can follow step by step the proof in {\it Section 6} of \cite{BM1}. We need to mention that we should replace $\udl{\pa_yv}-1$ in \cite{BM1} by $\udl{\pa_yv}-\widetilde{u'}$ and we will use the fact that $\udl{\pa_yv}-\widetilde{u'}=h+(\udl{u'}-\widetilde{u'})$. One may refer to \cite{BM1} for more details. 
\begin{proposition}[Reaction]\label{prop:reaction}
Under the bootstrap hypotheses, 
\beq\label{eq:reaction}
\begin{split}
\sum_{\rmN\geq 8}|\rmR_{\rmN}|\lesssim 
&\ep \mathrm{CK}_{\la}+\ep\mathrm{CK}_{w}+\f{\ep^3}{\langle t\rangle^{2-\rmK_{\rmD}\ep/2}}
+\ep\mathrm{CK}_{\la}^{v,1}+\ep\mathrm{CK}_{w}^{v,1}\\
&+\ep\left\|\left\langle\f{\pa_v}{t\pa_z}\right\rangle^{-1}(\pa_z^2+(\pa_v-t\pa_z)^2)\left(\f{|\na|^{\f{s}{2}}}{\langle t\rangle^s}\rmA+\sqrt{\f{\pa_t w}{w}}\tilde{\rmA}\right)P_{\neq}(\Psi\Upsilon)\right\|_2^2. 
\end{split}
\eeq
\end{proposition}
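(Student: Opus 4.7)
The plan is to follow Section~6 of \cite{BM1} essentially verbatim, with the nonlinear-coordinate twist absorbed through the decomposition $\udl{\pa_yv}-\widetilde{u'}=h+(\udl{u'}-\widetilde{u'})$ suggested by the authors. First I would expand $\rmU=(0,\udl{\pa_tv})+\udl{\pa_yv}\,\Upsilon\,\na_{v,z}^{\bot}P_{\neq}(\Psi\Upsilon)$ and split $\rmR_{\rmN}$ into a stream-function contribution and a $(0,\udl{\pa_tv})$ contribution. Writing $\udl{\pa_yv}=\widetilde{u'}+h+(\udl{u'}-\widetilde{u'})$ in the stream-function piece, the leading term has the structure $\widetilde{u'}\,\Upsilon\,\na_{v,z}^{\bot}P_{\neq}(\Psi\Upsilon)_{\rmN}\cdot\na_{z,v}\Om_{<\rmN/8}$, which is exactly the \cite{BM1} Couette reaction term dressed by the smooth Gevrey-$\f{1}{s_0}$ factor $\widetilde{u'}$, while the remainders involving $h$ and $\udl{u'}-\widetilde{u'}$ are controlled by the bootstrap on $\rmE_v$ and $\rmE_d$.

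For the leading piece, the commutator $\rmA(U\cdot\na\Om_{<\rmN/8})-U\cdot\na\rmA\Om_{<\rmN/8}$ reduces after paraproduct frequency localization to an integral weighted by the symbol difference $\rmA_k(t,\eta)-\rmA_k(t,\eta-\xi)$ with $|\eta|\sim\rmN$ and $|\xi|\ll\rmN$. I would split this difference into the Gevrey-exponent, the Sobolev prefactor, and the $\rmJ_k$ contributions. The Gevrey exponent yields a factor $\sim s\la(t)|\eta|^{s-1}|\xi|$ which after redistribution becomes $\ep\,\mathrm{CK}_\la$ plus a derivative on $P_{\ne}(\Psi\Upsilon)_{\rmN}$; the Sobolev prefactor is harmless; the $\rmJ_k$-difference vanishes outside $\rmI_{k,\eta}$ and on $\rmI_{k,\eta}$ is traded for $\sqrt{\pa_t w_k/w_k}$ via the explicit formulas \eqref{eq: w_NR-R}--\eqref{eq: w_R-L}, producing $\ep\,\mathrm{CK}_w$. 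The loss $|k|/\sqrt{|\eta|}$ extracted in this last step is precisely compensated by the gain $\langle\pa_v/(t\pa_z)\rangle^{-1}(\pa_z^2+(\pa_v-t\pa_z)^2)$ coming from $\Om=\Delta_t(\Psi\Upsilon)$ on the low-frequency factor, giving rise to the last norm on the right-hand side of \eqref{eq:reaction}.

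The remainders and the $\udl{\pa_tv}$-piece are simpler. For $\udl{\pa_tv}$, the $\rmE_v$ bound $\|\udl{\pa_tv}\|_{\mathcal{G}^{\la,\s-6}}\lesssim\ep\langle t\rangle^{-2+\rmK_{\rmD}\ep/2}$ yields directly the $\ep^3\langle t\rangle^{-2+\rmK_{\rmD}\ep/2}$ term after the paraproduct estimate. For the perturbations $h\,\na_{v,z}^{\bot}P_{\neq}(\Psi\Upsilon)$ and $(\udl{u'}-\widetilde{u'})\,\na_{v,z}^{\bot}P_{\neq}(\Psi\Upsilon)$, the resonant-multiplier bounds $\|\rmA^{\mathrm{R}}h\|_2$ and $\|\rmA^{\mathrm{R}}(\udl{u'}-\widetilde{u'})\|_2$ built into $\rmK_v^{-1}(\rmE_v+\rmE_d)$ give the Gevrey control; after applying the product lemma (Lemma~3.8 of \cite{BM1}) the excess $\rmA$-weight is transferred to $\Psi\Upsilon$ and generates the $\ep\,\mathrm{CK}_w^{v,1}+\ep\,\mathrm{CK}_\la^{v,1}$ contributions in \eqref{eq:reaction}.

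The main obstacle is the symbol-difference analysis near the critical intervals $\rmI_{k,\eta}$. The precise algebraic matching between the loss $|k|/\sqrt{|\eta|}$ from $\sqrt{\pa_t w_k/w_k}$ and the gain from $\langle\pa_v/(t\pa_z)\rangle^{-1}(\pa_z^2+(\pa_v-t\pa_z)^2)$ is what makes the reaction estimate close, and it requires the careful resonant/non-resonant partitioning of $\rmI_{k,\eta}$ carried out in Section~6 of \cite{BM1}. The only genuinely new step relative to \cite{BM1} is verifying that multiplication by $\widetilde{u'}$ and by the bootstrap-controlled quantities $h,\udl{u'}-\widetilde{u'}$ preserves these estimates, which follows from $s<s_0$ together with the Gevrey product lemma in \cite{BM1}.
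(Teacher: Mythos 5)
Your proposal matches the paper's treatment: the authors prove this proposition exactly by following Section~6 of \cite{BM1} step by step, with the single modification of replacing $\udl{\pa_yv}-1$ there by $\udl{\pa_yv}-\widetilde{u'}$ and invoking the decomposition $\udl{\pa_yv}-\widetilde{u'}=h+(\udl{u'}-\widetilde{u'})$, which is precisely your strategy. One small bookkeeping remark: the $\ep\,\mathrm{CK}_{\la}^{v,1}+\ep\,\mathrm{CK}_{w}^{v,1}$ terms arise from the high-frequency part of the coordinate-system velocity $\udl{\pa_tv}$ (cf.\ their definition in \eqref{eq:2.31}), not from the $h$ and $\udl{u'}-\widetilde{u'}$ perturbations, whose high-frequency contributions are instead absorbed by the $\rmA^{\mathrm{R}}$ bounds in $\rmE_v$ and $\rmE_d$.
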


The elliptic estimate given below is slightly different from \cite{BM1}. We will regard $\Delta_t$ as a perturbation of $\Delta_u$ instead of $\Delta_L$ in \cite{BM1}: 
\begin{proposition}[Precision elliptic control] \label{prop:elliptic}
Under the bootstrap hypotheses,
\beq\label{eq:elliptic}
\begin{split}
&\left\|\left\langle\f{\pa_v}{t\pa_z}\right\rangle^{-1}(\pa_z^2+(\pa_v-t\pa_z)^2)\left(\f{|\na|^{\f{s}{2}}}{\langle t\rangle^s}\rmA+\sqrt{\f{\pa_t w}{w}}\tilde{\rmA}\right)P_{|k|\geq k_{M}}(\Psi\Upsilon)\right\|_2^2\\
&\leq C\Big[ \mathrm{CK}_{\la}+\mathrm{CK}_{w}
+\ep^2\Big(\sum_{i=1}^2\mathrm{CCK}_{\la}^i+\mathrm{CCK}^i_{w}\Big)\Big],
\end{split}
\eeq
where the `coefficient Cauchy-Kovalevskaya' terms are given by
\beq\label{eq:CCK^i}
\begin{split}
\mathrm{CCK}_{\la}^1&
=-\dot{\la}(t)\left\||\pa_v|^{s/2}\rmA^{\mathrm{R}}\left((u'\circ u^{-1})^2-(\udl{\pa_yv})^2\right)\right\|_2^2,\\
\mathrm{CCK}_{w}^1&
=\left\|\sqrt{\f{\pa_tw}{w}}\rmA^{\mathrm{R}}\left((u'\circ u^{-1})^2-(\udl{\pa_yv})^2\right)\right\|_2^2,\\
\mathrm{CCK}_{\la}^2&
=-\dot{\la}(t)\left\||\pa_v|^{s/2}\f{\rmA^{\mathrm{R}}}{\langle\pa_v\rangle}\left((u''\circ u^{-1})-\udl{\pa_{yy}v}\right)\right\|_2^2,\\
\mathrm{CCK}_{w}^2&
=\left\|\sqrt{\f{\pa_tw}{w}}\f{\rmA^{\mathrm{R}}}{\langle\pa_v\rangle}\left((u''\circ u^{-1})-\udl{\pa_{yy}v}\right)\right\|_2^2.
\end{split}
\eeq
The constant $C$ is independent of $k_{M}$. \\
Generally, it holds that
\beq\label{eq:elliptic}
\begin{split}
&\left\|\left\langle\f{\pa_v}{t\pa_z}\right\rangle^{-1}(\pa_z^2+(\pa_v-t\pa_z)^2)\left(\f{|\na|^{\f{s}{2}}}{\langle t\rangle^s}\rmA+\sqrt{\f{\pa_t w}{w}}\tilde{\rmA}\right)P_{\neq}(\Psi\Upsilon)\right\|_2^2\\
&\lesssim_{k_{M}} \mathrm{CK}_{\la}+\mathrm{CK}_{w}
+\ep^2\Big(\sum_{i=1}^2\mathrm{CCK}_{\la}^i+\mathrm{CCK}^i_{w}\Big),
\end{split}
\eeq
\end{proposition}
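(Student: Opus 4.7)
The plan is to decompose the transformed Laplacian as $\Delta_t = \Delta_u + \cE$ where
\[
\cE = \bigl[(\udl{\pa_yv})^2 - (\tilde{u'})^2\bigr](\pa_v - t\pa_z)^2 + \bigl[\udl{\pa_{yy}v} - \widetilde{u''}\bigr](\pa_v - t\pa_z),
\]
so that from $\Delta_t(\Psi\Upsilon) = \Om$ on the support of $\Upsilon$ we obtain
\[
\Psi\Upsilon = \Delta_u^{-1}\Om - \Delta_u^{-1}\cE(\Psi\Upsilon).
\]
The coefficients in $\cE$ are precisely the CCK-controlled differences $(\tilde{u'})^2-(\udl{\pa_yv})^2$ and $\widetilde{u''}-\udl{\pa_{yy}v}$, so the correction piece will naturally produce the $\ep^2\sum_i(\mathrm{CCK}_\la^i+\mathrm{CCK}_w^i)$ in the bound, while the principal piece recovers the $\mathrm{CK}_\la + \mathrm{CK}_w$ contribution.

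For the principal piece $\Delta_u^{-1}\Om$ restricted to the modes $|k|\geq k_M$, the operator $\Delta_{u,k}$ is uniformly elliptic in $v$ with a $-k^2$ zeroth-order term, and its resolvent composed with the multiplier $\langle\pa_v/(t\pa_z)\rangle^{-1}(\pa_z^2+(\pa_v-t\pa_z)^2)$ is bounded on the relevant Gevrey space with constants depending only on $c_0$, $\|\tilde{u'}\|_\infty$ and $\|\widetilde{u''}\|_\infty$, hence uniformly in $k_M$; combined with the Couette-type elliptic argument of \cite{BM1}, this yields the $k_M$-independent $\mathrm{CK}_\la+\mathrm{CK}_w$ bound of the first inequality. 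For the general inequality with $P_{\neq}$, the remaining low nonzero modes $0<|k|<k_M$ are inverted via the wave-operator identity
\[
\Delta_{u,k}^{-1} = \bigl[u''(u^{-1}(v))\bigr]^{-1}\bigl(v\,\mathrm{Id} - \bfD_{u,k}^{-1}M_v\bfD_{u,k}\bigr),
\]
derived from $\bfR_{u,k} = v\,\mathrm{Id} - u''(u^{-1}(v))\Delta_{u,k}^{-1}$ and $\bfD_{u,k}\bfR_{u,k}=v\bfD_{u,k}$; the Gevrey kernel bounds of Proposition \ref{prop: kernel-wave-op} and the boundedness in Remark \ref{eq: wave-A} furnish the $\lesssim_{k_M}$ constant.

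For the correction $\Delta_u^{-1}\cE(\Psi\Upsilon)$, I would perform a paraproduct decomposition of each product in $\cE$. In the high-low piece, where the coefficient carries the high frequency, placing $\rmA^{\mathrm{R}}$ on the coefficient and invoking the Cauchy--Kovalevskaya time-derivative factors $\sqrt{-\dot\la|\na|^s}$ or $\sqrt{\pa_tw/w}$ reproduces $\mathrm{CCK}_\la^i$ and $\mathrm{CCK}_w^i$ (the extra $\langle\pa_v\rangle^{-1}$ in $\mathrm{CCK}^2$ accounts for the single vs.\ double derivative in the two terms of $\cE$); the derivatives $(\pa_v-t\pa_z)$ or $(\pa_v-t\pa_z)^2$ fall on $\Psi\Upsilon$ with loss absorbed by the $\langle\pa_v/(t\pa_z)\rangle^{-1}$ prefactor. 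In the low-high piece, the coefficient is $O(\ep)$ in $L^\infty$ by Sobolev embedding with $\s>5$ (from the bootstrap controls on $\rmE_d$ and $\rmK_v^{-1}\|\rmA^{\mathrm{R}}h\|_2^2$), so the resulting term reproduces the LHS multiplied by $\ep$ and is absorbed on the left; the balanced remainder piece is analogous.

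The main obstacle I anticipate is the careful bookkeeping of the $\rmK_v$-weights needed to secure the sharp $\ep^2$ prefactor on the CCK contribution: the $\rmA^{\mathrm{R}}$-norm of the CCK-controlled coefficient is bounded by $\sqrt{\rmK_v\,\mathrm{CCK}}$ via $\rmE_v$ and $\rmE_d$, while the bootstrap smallness of the coefficient in its unweighted norm supplies $\sqrt{\ep^2/\rmK_v}$, together producing the required $\ep^2\,\mathrm{CCK}$ factor. A secondary subtlety is the commutator $[\Upsilon,\Delta_t](\Psi\Upsilon)$, which vanishes on $\mathrm{supp}\,\Om$ thanks to the compact-support bootstrap (B2) and so contributes no boundary error, justifying the replacement of $\Delta_t(\Psi\Upsilon)$ by $\Om$ at the start of the argument.
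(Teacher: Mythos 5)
Your strategy coincides with the paper's proof in Section \ref{sec:Precision elliptic control}: write $\Delta_t=\Delta_u+\cE$ with $\cE$ carrying exactly the coefficient errors $h_1=(\widetilde{u'})^2-(\udl{\pa_yv})^2$ and $h_2=\widetilde{u''}-\udl{\pa_{yy}v}$ (this is \eqref{eq: Delta_uPsi}), invert $\Delta_u$ through the Gevrey kernel $\mathbf{G}_2$ of Remark \ref{Rmk: Laplace} to get the $\mathrm{CK}_{\la}+\mathrm{CK}_w$ contribution from the principal piece, and run a paraproduct in $v$ on $\rmT^1,\rmT^2$ so that the high--low pieces produce $\ep^2\,\mathrm{CCK}^i$ while the low--high and remainder pieces produce $\ep^2$ times the left-hand side, absorbed for $\ep$ small. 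The handling of the cut-off is also the paper's: $h_j\pa_v\Upsilon=h_j\pa_{vv}\Upsilon=0$ because $v\equiv u$ where $\Upsilon\not\equiv 1$, cf.\ \eqref{eq: cut-off,h_1h_2}.

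One step, however, is wrong as stated: the proposed inversion formula $\Delta_{u,k}^{-1}=[u''\circ u^{-1}]^{-1}\bigl(v\,\mathrm{Id}-\bfD_{u,k}^{-1}M_v\bfD_{u,k}\bigr)$ requires dividing by $u''\circ u^{-1}$, which vanishes identically outside $u([4\th_0,1-4\th_0])$, so the identity is not available on most of the domain. It is also unnecessary: the Green's function representation \eqref{eq: Laplace-inver-form-2} and the bound in Remark \ref{Rmk: Laplace} are uniform in $k$ for all $k\neq 0$, so the modes $0<|k|<k_M$ need no separate inversion. The $k_M$-dependence in the second inequality does not come from the Laplacian at all; it enters when $\|\mathcal{M}_i\Om\|_2^2$ is converted into $\mathrm{CK}_{\la}+\mathrm{CK}_w$, since those are expressed in terms of $f$ and the passage $\Om\leftrightarrow f$ on the modes $0<|k|<k_M$ uses the wave-operator norm equivalence of Remark \ref{Rmk: wave bdd-A}, whose constants depend on $k_M$; for $|k|\geq k_M$ one has $f_k=\hat\Om_k$ and the constant is $k_M$-free. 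Separately, your bookkeeping for the $\ep^2$ prefactor on $\mathrm{CCK}^i$ attributes the extra $\ep$ to the unweighted smallness of the coefficient; in the high--low piece the coefficient carries the full $\mathrm{CCK}$ multiplier and the factor $\ep$ must instead come from the low-frequency stream-function factor $(\pa_v-t\pa_z)^j(\Psi\Upsilon)_{<M/8}$ via the lossy elliptic estimate (Lemma \ref{lem: lossy-elliptic t}). These are local repairs; the architecture of the argument is the paper's.
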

The proof is in Section \ref{Sec: elliptic}.

The estimate of the remainder term is similar to the proof of {\it Proposition 2.6} in \cite{BM1}: 
\begin{proposition}[Remainders]\label{prop: remainder}
Under the bootstrap hypotheses,
\beno
\mathcal{R}\lesssim \f{\ep^3}{\langle t\rangle^{2-\rmK_{\rmD}\ep/2}}.
\eeno
\end{proposition}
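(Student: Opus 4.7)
The proof follows the paraproduct blueprint of Proposition~2.6 in \cite{BM1}, so the plan is to verify that each ingredient there still goes through once the velocity $\mathrm{U}=(0,\udl{\pa_tv})+\udl{\pa_yv}\Upsilon\na^{\bot}_{v,z}P_{\neq}(\Psi\Upsilon)$ and the cut-off $\Upsilon$ are present. I would begin by splitting $\mathrm{U}$ into its mean part $(0,\udl{\pa_tv})$ and its fluctuation part, and treating the two contributions to $\mathcal R$ separately. The mean part is a one-variable advection in $v$; within each summand of $\mathcal R$ the commutator reduces to a classical one-dimensional Gevrey commutator, and the bootstrap bound $\|\udl{\pa_tv}\|_{\mathcal{G}^{\la(t),\s-6}}\lesssim\ep\langle t\rangle^{-2+\rmK_\rmD\ep/2}$ coming from $\rmE_v(t)$, together with $\|\rmA\Om\|_2\lesssim\ep$, closes this contribution with room to spare.

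For the fluctuation part, the essential point of the remainder regime is that $\rmN\approx \rmN'$, so the multiplier $\rmA$ can be redistributed between the two factors via the standard sub-additivity inequality
$$\rmA_k(\eta)\lesssim \rmA_{k-\ell}(\eta-\xi)\tilde{\rmJ}_\ell(\xi)+\tilde{\rmJ}_{k-\ell}(\eta-\xi)\rmA_\ell(\xi)$$
valid on comparable-frequency shells; the corresponding weight inequalities used in \cite{BM1} transfer here without change, since the wave operator $\bfD_{u,k}$ enters only through the bounded change of unknown $f\leftrightarrow\Om$ (Remark~\ref{Rmk: wave bdd-A}) and therefore preserves the paraproduct structure. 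After substitution and Young's convolution in dyadic blocks, the whole remainder is dominated by $\|\rmA\Om\|_{L^2}$ times an $L^2$ norm of $\rmA\,(\mathrm{U}\cdot\na_{z,v}\Om)$ in which one of the two factors carries a derivative of order one. The velocity-side factor is then handled through Proposition~\ref{prop:elliptic}, which provides exactly the $\langle t\rangle^{-2+\rmK_\rmD\ep/2}$ decay via the elliptic gain $\left\langle\pa_v/t\pa_z\right\rangle^{-1}(\pa_z^2+(\pa_v-t\pa_z)^2)$ and the $\mathrm{CCK}$-type coefficient terms.

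The main obstacle is the absence of any frequency separation in the remainder regime: unlike in the transport and reaction pieces, no derivative of the velocity can be cheaply transferred onto the lower-frequency factor, so the required $\langle t\rangle^{-2+\rmK_\rmD\ep/2}$ decay must come entirely from the elliptic control. This is exactly why Proposition~\ref{prop:elliptic} is stated with the full elliptic gain $(\pa_z^2+(\pa_v-t\pa_z)^2)$ and accepts a $\mathrm{CK}$-type right-hand side rather than a pointwise-in-time bound. Once this matching is in place, summation over $\rmN\in\mathcal{D}$ is a convergent geometric series, the cut-off $\Upsilon$ contributes only harmless Gevrey-smooth errors thanks to (B2), and the stated estimate $\mathcal R\lesssim\ep^3\langle t\rangle^{-2+\rmK_\rmD\ep/2}$ follows after absorbing the resulting $\mathrm{CK}$-terms into the overall energy budget.
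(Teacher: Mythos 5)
Your overall framework (paraproduct, comparable-frequency regime, splitting $\rmU$ into its mean and fluctuation parts, using the bootstrap decay of $\udl{\pa_tv}$) matches the blueprint of {\it Section 7} of \cite{BM1} that the paper simply cites. However, there is a genuine gap in the step where you obtain the time decay for the fluctuation part of the velocity.

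You claim that in the remainder regime ``the required $\langle t\rangle^{-2+\rmK_\rmD\ep/2}$ decay must come entirely from the elliptic control,'' by which you mean the precision elliptic control of Proposition \ref{prop:elliptic}, and that one concludes ``after absorbing the resulting $\mathrm{CK}$-terms into the overall energy budget.'' This cannot work. The conclusion of Proposition \ref{prop: remainder} is a pointwise-in-time bound $\mathcal{R}\lesssim \ep^3\langle t\rangle^{-2+\rmK_\rmD\ep/2}$ with no $\mathrm{CK}$ or $\mathrm{CCK}$ terms on the right-hand side; the $\mathrm{CK}$-type quantities produced by Proposition \ref{prop:elliptic} are only controlled after integration in time (hypothesis (B3)) and are in no way pointwise of size $\ep^2\langle t\rangle^{-2}$, so they cannot be ``absorbed'' to yield the stated estimate. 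The correct mechanism is the opposite of what you describe: precisely because $\rmN\approx\rmN'$, one may freely pay several derivatives on the stream-function factor (both factors carry comparable regularity, and the multiplier is redistributed with a strictly smaller Gevrey radius $c\la$, $c<1$, on each factor), and the $\langle t\rangle^{-2}$ decay then comes from the \emph{lossy} elliptic estimate, Lemma \ref{lem: lossy-elliptic t}, which trades two derivatives of $\Om$ for the factor $(1+t^2)^{-1}$ pointwise in time. The precision elliptic control is reserved for the reaction term, where the velocity factor sits at high frequency and no derivative loss is affordable — which is exactly why Proposition \ref{prop:reaction}, unlike Proposition \ref{prop: remainder}, does carry $\mathrm{CK}$ terms and the elliptic norm on its right-hand side. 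As written, your argument would not close.
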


The next 3 terms are new. The $\Pi^{com}$ comes from the application of the wave operator to the nonlinear terms. 
\begin{proposition}\label{prop:com}
Under the bootstrap hypotheses, 
\beno
\begin{split}
|\Pi^{com}|\lesssim 
&\ep \mathrm{CK}_{\la}+\ep\mathrm{CK}_{w}+\f{\ep^3}{\langle t\rangle^{2-\rmK_{\rmD}\ep/2}}
+\ep\mathrm{CK}_{\la}^{v,1}+\ep\mathrm{CK}_{w}^{v,1}\\
&+\ep\left\|\left\langle\f{\pa_v}{t\pa_z}\right\rangle^{-1}(\pa_z^2+(\pa_v-t\pa_z)^2)\left(\f{|\na|^{\f{s}{2}}}{\langle t\rangle^s}\rmA+\sqrt{\f{\pa_t w}{w}}\tilde{\rmA}\right)P_{\neq}(\Psi\Upsilon)\right\|_2^2. 
\end{split}
\eeno
\end{proposition}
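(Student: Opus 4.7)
\emph{Proof plan.} By definition of $f$ and \eqref{eq:energy-2},
\[
\Pi = -\sum_{0<|k|<k_M}\int \rmA_k(\eta)\overline{\hat f_k(\eta)}\,\rmA_k(\eta)\mathcal{F}_2\bigl[\bfD_{u,k}(\mathcal{F}_1 G_k)\bigr](\eta)\,d\eta,
\]
with $G = \rmU\cdot\na_{z,v}\Om$ and $\hat f_k = \mathcal{F}_2\bfD_{u,k}(\mathcal{F}_1\Om_k)$ for $0<|k|<k_M$, so that $\Pi^{com}=\Pi-\Pi^{tr3}$ measures exactly the defect in commuting the real Fourier multiplier $\rmA_k^2$ past the wave operator $\bfD_{u,k}$. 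The plan is to use Plancherel in $v$ together with the adjoint identity $\bfD_{u,k}^{-1}=(\bfD_{u,k}^1)^*$ from Proposition \ref{prop:wave-coordinate} to transfer one wave operator across the pairing, yielding
\[
\Pi^{com} = -2\pi\sum_{0<|k|<k_M}\int \bar\Om_k\,\mathcal{O}_k G_k\,dv,\qquad \mathcal{O}_k := (\bfD_{u,k}^1)^{-1}\bigl[\rmA_k^2\,(\bfD_{u,k}-\bfD_{u,k}^1) + [\rmA_k^2,\bfD_{u,k}^1]\bigr].
\]

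The operator $\mathcal{O}_k$ will be analysed in two pieces. The first summand carries the kernel of $\bfD_{u,k}-\bfD_{u,k}^1$, bounded in Corollary \ref{corol: commutator} by $e^{-\la_{\mathcal{D}}|\xi-\xi_1|^{s_0}}/(1+|\xi-kt|^2)$; the factor $1/(1+|\xi-kt|^2)$ produces an extra $\langle t\rangle^{-2}$ of time decay, and combined with the uniform boundedness of $(\bfD_{u,k}^1)^{-1}$ (Remark \ref{eq: wave-A}) and the bootstrap hypothesis (B1), contributes an $O(\ep^3\langle t\rangle^{-2+\rmK_{\rmD}\ep/2})$ error. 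For the second summand, write
\[
[\rmA_k^2,\bfD_{u,k}^1] = \rmA_k[\rmA_k,\bfD_{u,k}^1]+[\rmA_k,\bfD_{u,k}^1]\rmA_k,
\]
and observe that the kernel of $[\rmA_k,\bfD_{u,k}^1]$ is $(\rmA_k(\eta)-\rmA_k(\eta'))\mathcal{D}^1(t,k,\eta,\eta')$; since $s<s_0$, the Gevrey-$s_0$ decay of $\mathcal{D}^1$ in $|\eta-\eta'|$ (Proposition \ref{prop: kernel-wave-op}) dominates the Gevrey-$s$ growth of $\rmA_k$ via the standard product inequalities of \cite{BM1} Appendix A, so that $\mathcal{O}_k$ acts boundedly on $\mathcal{G}^{\la,\s;s}$ with operator norm essentially that of $\rmA_k$ itself.

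After these reductions, the remaining contribution to $\Pi^{com}$ is structurally identical to the transport integrals $\Pi^{tr1}$, $\Pi^{tr2}$, $\Pi^{tr3}$ already addressed in the excerpt. We then perform the standard paraproduct decomposition $G = T_{\rmU}\Om+T_{\Om}\rmU+\mathcal{R}$ and apply Propositions \ref{prop: transport}, \ref{prop:reaction} and \ref{prop: remainder} line by line, invoking the elliptic bound \eqref{eq:elliptic} of Proposition \ref{prop:elliptic} to control $\rmU=\na_{z,v}^{\bot}P_{\neq}(\Psi\Upsilon)$; each resulting piece lands in exactly one of the terms on the right-hand side of the claimed estimate. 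The principal obstacle is the commutator analysis above: transferring $\rmA_k$ past the genuinely non-local operator $\bfD_{u,k}^1$ without losing Gevrey regularity is only possible because the wave-operator frequency kernel is smoother (Gevrey-$s_0$) than the bulk regularity encoded in $\rmA_k$ (Gevrey-$s$); this is precisely the role of the strict inequality $s<s_0$ in Theorem \ref{Thm: main} and of the choice $\la(t)<\la_{\mathcal{D}}$ recorded in Remark \ref{eq: wave-A}.
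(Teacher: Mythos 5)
Your overall reduction — using the duality $\langle \bfD_{u,k}f,\bfD^1_{u,k}g\rangle=\langle f,g\rangle$ to peel off $\Pi^{tr3}$ and leave (i) a term carrying the kernel of $\bfD_{u,k}-\bfD^1_{u,k}$ and (ii) commutators of $\rmA_k$ with the wave operators — is the same strategy the paper follows (its $\Pi^{com3}$, resp. $\Pi^{com1}$, $\Pi^{com2}$). However, both of your key estimates for the correction terms are wrong as stated. For piece (i) you claim the factor $1/(1+|\xi-kt|^2)$ from Corollary \ref{corol: commutator} ``produces an extra $\langle t\rangle^{-2}$ of time decay'' and hence an $O(\ep^3\langle t\rangle^{-2+\rmK_{\rmD}\ep/2})$ contribution. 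This fails precisely at the resonant times: when $t\in \mathrm{I}_{k,\xi}$ one has $\xi\approx kt$, so $1/(1+|\xi-kt|^2)=O(1)$ while $\langle t\rangle\approx |\xi|/|k|$ is large, and no decay in $t$ is available from the kernel. The paper must split $1=1_{R}+1_{NR}$ on $R=\{t\in\mathrm{I}_{k,\eta}\cap\mathrm{I}_{k,\xi}\}$: only on the non-resonant set does $|\eta-kt|^2+|\xi-kt|^2\gtrsim|\eta|^2$ yield the $\ep^3\langle t\rangle^{-2}$ bound, while on $R$ the surviving factor $|k,\eta|/(k^2+|\eta-kt|^2)$ must be treated like the resonant reaction terms of \cite{BM1}, producing the $\ep\mathrm{CK}_{\la}+\ep\mathrm{CK}_{w}$ contributions. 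Your argument misses this resonant contribution entirely.

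For piece (ii), showing that $[\rmA_k,\bfD^1_{u,k}]$ ``acts boundedly with operator norm essentially that of $\rmA_k$'' is not sufficient — indeed it is the wrong target. If the commutator were merely comparable to $\rmA_k$ applied to the nonlinearity, the resulting term would be as large as $\int|\rmA\Om|\,|\rmA(\rmU\cdot\na\Om)|$ with none of the transport cancellation (the integration by parts in $\na\cdot\rmU$ and the commutator structure $\rmA(\rmU_{lo}\cdot\na\Om)-\rmU_{lo}\cdot\na\rmA\Om$ are unavailable once $\mathcal{O}_k$ replaces $\rmA_k^2$), and the estimate cannot close. What is actually needed, and what the paper does, is to exploit the \emph{gain} in the difference $\rmA_k(\eta)-\rmA_k(\xi)$ relative to $\rmA_k(\xi)$ on the region $|\eta-\xi|\ll|\eta|$: decomposing it as in \eqref{eq:Decompose} into the exponential piece (which gains a factor $\la|\eta-\xi|\,|k,\eta|^{s-1}$, absorbed by $\mathrm{CK}_{\la}$), the $\rmJ_k(\eta)/\rmJ_k(\xi)-1$ piece (which again requires the resonant-interval analysis and produces $\mathrm{CK}_{w}$ and the elliptic term), and the Sobolev-weight piece. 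The Gevrey-$s_0$ versus Gevrey-$s$ comparison you invoke only controls the region $|\eta-\xi|\gtrsim|\eta|$ (the paper's $1_{D^c}$ contribution); on the complementary region it gives no smallness at all, and that is where the entire difficulty of the proposition lives.
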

The term $\mathrm{II}^{u''}$ was not present in \cite{BM1}. The key observation for this term is as follows: 1. we can treat it as the reaction term; 2. we can get an extra smallness if $|k|\geq k_{M}$ with $k_{M}$ sufficiently large.
\begin{proposition}\label{prop: nonlocal}
Under the bootstrap hypotheses, for any $\kappa_0>0$ there exists $k_{M}$ such that 
\begin{align*}
|\Pi^{u''}|
\leq \kappa_0&\bigg(\f{\ep^2}{\langle t\rangle^2}
+\mathrm{CK}_{\la}+\mathrm{CK}_w
+ \left\|\left\langle\f{\pa_v}{t\pa_z}\right\rangle^{-1}\f{|\na|^{s/2}}{\langle t\rangle^{s}}\Delta_{L}P_{|k|\geq k_{M}}\rmA (\Psi\Upsilon)\right\|_2^2\\
&\quad +\left\|1_{R}\sqrt{\f{\pa_tw}{w}}\Delta_{L}\tilde{\rmA}P_{|k|\geq k_{M}}(\Psi\Upsilon)\right\|_2^2\bigg). 
\end{align*}
\end{proposition}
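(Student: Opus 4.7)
The strategy is to mimic the paraproduct analysis used for $\Pi^{tr}$ and Proposition \ref{prop:reaction}, while exploiting two features that are peculiar to this term. First, $\udl{u''}$ enjoys Gevrey-$\f{1}{s_0}$ regularity: it is the transport of the smooth function $u''$ under the flow of $\udl{\pa_tv}$ according to \eqref{eq: u''-u''}, and the bootstrap hypothesis (B1) yields uniform Gevrey bounds on both $\widetilde{u''}$ and on the correction $\udl{u''}-\widetilde{u''}$ through \eqref{eq: energy-E_d}. Second, the projection $|k|\geq k_M$ combined with $\pa_z\underline{\Delta_t^{-1}}$ produces an inverse-frequency gain: the symbol of $\pa_z\Delta_L^{-1}$ is $ik/(k^2+(\eta-kt)^2)$, so its modulus is bounded by $k_M^{-1}$ on that range, and the perturbation $\underline{\Delta_t^{-1}}-\Delta_L^{-1}$ is controlled by Proposition \ref{prop:elliptic} together with the bootstrap bounds on $h$, $\udl{u'}-\widetilde{u'}$ and $\udl{u''}-\widetilde{u''}$. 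The factor $k_M^{-1}$ is exactly what converts the natural $O(\ep^2)$ bound into the $\kappa_0$-small bound claimed, once $k_M$ is taken large.

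I would decompose the product by a Littlewood-Paley paraproduct in $(z,v)$,
\beno
\udl{u''}\,\pa_z(\underline{\Delta_t^{-1}}f)=\sum_{\rmN}\udl{u''}_{<\rmN/8}\bigl(\pa_z\underline{\Delta_t^{-1}}f\bigr)_{\rmN}+\sum_{\rmN}\udl{u''}_{\rmN}\bigl(\pa_z\underline{\Delta_t^{-1}}f\bigr)_{<\rmN/8}+\mathcal{R}^{u''},
\eeno
yielding three contributions $\Pi^{u'',LH}$, $\Pi^{u'',HL}$, $\Pi^{u'',R}$. For the principal piece $\Pi^{u'',LH}$, the multiplier $\rmA_k$ acting on the product is comparable, up to the usual resonant/non-resonant weight commutators, to $\rmA_k$ on the high-frequency factor times the Gevrey norm of $\udl{u''}_{<\rmN/8}$. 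Using $\Psi\Upsilon=\underline{\Delta_t^{-1}}\Om$ and $f=\Om$ on $|k|\geq k_M$, Cauchy-Schwarz reduces matters to
\beno
\|\rmA f\|_2\cdot\Bigl\|\rmA\,\pa_z P_{|k|\geq k_M}(\Psi\Upsilon)\Bigr\|_2\lesssim \ep\,k_M^{-1}\,\Bigl\|\Bigl\langle\f{\pa_v}{t\pa_z}\Bigr\rangle^{-1}\rmA\,\Delta_L P_{|k|\geq k_M}(\Psi\Upsilon)\Bigr\|_2,
\eeno
where the $\langle\pa_v/(t\pa_z)\rangle^{-1}$ localization comes from the extra $(\eta-kt)^2/(k^2+(\eta-kt)^2)$ factor built into $\pa_z\Delta_L^{-1}$. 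A Young inequality in $\ep$ then produces $\kappa_0$ times the second and fourth norms on the right-hand side of the proposition, provided $k_M$ has been chosen large enough. The commutators between $\rmA_k$ and the multiplication by $\udl{u''}_{<\rmN/8}$ are of the same form as in Proposition \ref{prop: transport}, and deliver the $\mathrm{CK}_\la$ and $\mathrm{CK}_w$ contributions via the standard weight separation together with the $\pa_tw/w$ factor.

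For $\Pi^{u'',HL}$, the high-frequency piece $\udl{u''}_{\rmN}$ enjoys rapid Gevrey-$\f{1}{s_0}$ decay in $\rmN$, so summing in $\rmN$ absorbs the loss of derivatives on the low-frequency factor, which is handled in $L^\infty$ by Bernstein while still retaining the $k_M^{-1}$ gain from the $z$-frequency restriction. The remainder $\Pi^{u'',R}$ is treated exactly as in Proposition \ref{prop: remainder} and contributes at most $\ep^3/\langle t\rangle^{2-\rmK_\rmD\ep/2}$, which is absorbed into $\kappa_0\,\ep^2/\langle t\rangle^2$ once $\ep$ is sufficiently small. The main technical obstacle is the rigorous transfer of the $k_M^{-1}$ smallness from $\pa_z\Delta_L^{-1}$ to $\pa_z\underline{\Delta_t^{-1}}$: one writes $\underline{\Delta_t^{-1}}=\Delta_L^{-1}+(\underline{\Delta_t^{-1}}-\Delta_L^{-1})$ and must show that the perturbation contributes only terms already bounded by the coefficient-CK quantities $\mathrm{CCK}^i$ and by $\ep$ times the elliptic norms on the right-hand side, which is a small variant of the computation behind Proposition \ref{prop:elliptic}.
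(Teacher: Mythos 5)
Your overall skeleton (paraproduct in $(z,v)$, Gevrey regularity of $\udl{u''}$, smallness from the restriction $|k|\geq k_M$) matches the paper's, but the central quantitative step is not correct as you state it, and it hides exactly the part of the argument that produces the right-hand side of the proposition. The inequality you invoke, $\|\rmA\,\pa_z P_{|k|\geq k_M}(\Psi\Upsilon)\|_2\lesssim \ep k_M^{-1}\|\langle\pa_v/(t\pa_z)\rangle^{-1}\rmA\,\Delta_L P_{|k|\geq k_M}(\Psi\Upsilon)\|_2$, pairs $\|\rmA f\|_2$ with a norm of $\Delta_L\rmA(\Psi\Upsilon)$ that carries neither the factor $|\na|^{s/2}/\langle t\rangle^s$ nor $\sqrt{\pa_tw/w}$; neither side is then a time-integrable (CK-type) quantity, so no Young inequality can yield $\kappa_0(\mathrm{CK}_\la+\mathrm{CK}_w+\cdots)$. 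The actual mechanism in the paper is a pointwise frequency inequality on the non-resonant set, $\f{\langle t\rangle^{s}|k|}{\langle k,\xi\rangle^{s/2}}1_{t\notin \mathbf{I}_{k,\xi}}1_{|k|\geq k_M}\leq \kappa_0\langle\xi/(tk)\rangle^{-1}\f{|k,\xi|^{s/2}}{\langle t\rangle^{s}}(k^2+(\xi-kt)^2)$, proved by a three-case analysis in $|\xi/(tk)|$; this distributes one factor $|\na|^{s/2}/\langle t\rangle^s$ onto $\rmA f$ and one onto $\Delta_L\rmA(\Psi\Upsilon)$, which is what makes both sides controllable. Moreover, you never split the low-high term into its resonant/non-resonant pieces. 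In the case where $(k,\xi)$ is resonant the symbol $|k|/(k^2+(\xi-kt)^2)$ alone does not suffice: one must write $\rmJ_k(\xi)=\rmJ^{NR}\cdot(w_R/w_{NR})$ and use $\sqrt{w/\pa_tw}\,|k|\,(w_R/w_{NR})1_{t\in\mathrm{I}_{k,\xi}}1_{|k|\geq k_M}\approx k_M^{-1}(k^2+(\xi-kt)^2)\sqrt{\pa_tw/w}$ to convert to the quantity $\|1_{R}\sqrt{\pa_tw/w}\,\Delta_L\tilde{\rmA}P_{|k|\geq k_M}(\Psi\Upsilon)\|_2$. That fourth term on the right-hand side of the proposition is unexplained by your argument.

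A second, smaller but genuine error: $\Pi^{u''}$ is bilinear in the solution with the $O(1)$ coefficient $\udl{u''}$ (the $\ep$-small coefficient $\udl{u''}-\widetilde{u''}$ belongs to $\Pi^{u'',\ep 1}$, not here), so the remainder piece is of size $\ep^2/\langle t\rangle^2$ with a $k_M^{-1}$ gain, not $\ep^3/\langle t\rangle^{2-\rmK_{\rmD}\ep/2}$. Your proposed absorption of an $\ep^3\langle t\rangle^{\rmK_{\rmD}\ep/2-2}$ bound into $\kappa_0\ep^2\langle t\rangle^{-2}$ by taking $\ep$ small also fails for large $t$, and in any case the proposition requires the $\kappa_0$ smallness to come from $k_M$ alone, not from $\ep$.
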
 
The treatment of the nonlocal error terms $\Pi^{u'',\ep_1}$ and $\Pi^{u'',\ep_2}$ is also inspired from the reaction term in {\it section 6} of \cite{BM1}. 
\begin{proposition}\label{prop:nonlocal-ep}
Under the bootstrap hypotheses, 
\beno
\begin{split}
|\Pi^{u'',\ep_1}|+|\Pi^{u'',\ep_2}|\lesssim 
&\ep \mathrm{CK}_{\la}+\ep\mathrm{CK}_{w}+\f{\ep^3}{\langle t\rangle^{2-\rmK_{\rmD}\ep/2}}
+\ep\mathrm{CK}_{\la}^{v,1}+\ep\mathrm{CK}_{w}^{v,1}\\
&+\ep\left\|\left\langle\f{\pa_v}{t\pa_z}\right\rangle^{-1}(\pa_z^2+(\pa_v-t\pa_z)^2)\left(\f{|\na|^{\f{s}{2}}}{\langle t\rangle^s}\rmA+\sqrt{\f{\pa_t w}{w}}\tilde{\rmA}\right)P_{\neq}(\Psi\Upsilon)\right\|_2^2. 
\end{split}
\eeno
\end{proposition}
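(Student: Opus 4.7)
The approach is to treat $\Pi^{u'',\ep_1}$ and $\Pi^{u'',\ep_2}$ as perturbative variants of the reaction term $\rmR_{\rmN}$ handled in Proposition \ref{prop:reaction}. Two structural features make this possible: (a) the sum runs only over the bounded range $0<|k|<k_{M}$, so the symbol $ik$ contributes only a harmless constant and does not interact with the critical times; and (b) by Remark \ref{eq: wave-A}, the wave operator $\bfD_{u,k}$ is bounded on $\mathcal{G}^{\la,\s;s}$ with a constant depending only on $k_{M}$ and the background profile, and by Proposition \ref{prop: kernel-wave-op} its kernel is concentrated in frequency by a Gevrey-$\f{1}{s_0}$ tail $e^{-\la_{\mathcal{D}}|\xi_1-\xi_2|^{s_0}}$. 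Hence, up to a harmless Gevrey convolution (absorbed by Cauchy--Schwarz exactly as in the proof of the remark following \eqref{eq:CK_w}), one can pass $\rmA_k\bfD_{u,k}$ through to land on a standard reaction-type bilinear expression in $v$.

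First I would dispose of $\Pi^{u'',\ep_1}$. After unrolling the wave operator, the task reduces to bounding, for each fixed $0<|k|<k_{M}$, a paraproduct
\begin{equation*}
\int \rmA_k \overline{\hat f_k}\,\rmA_k\,\mathcal{F}_1\!\left[(\udl{u''}-\widetilde{u''})\,\Delta_t^{-1}\Om\right]_k d\eta
=\sum_{\rmN\geq 8}(\cT_{\rmN}^{\ep_1}+\cR_{\rmN}^{\ep_1})+\mathcal{R}^{\ep_1},
\end{equation*}
splitting according to which factor is at high frequency in $v$. In the high-low piece $\cT_{\rmN}^{\ep_1}$, where $(\udl{u''}-\widetilde{u''})$ sits at low frequency, the coefficient is $L^\infty$-small via the energy $\rmE_d$, and the remaining $\Delta_t^{-1}\Om$ is handled by the elliptic bound of Proposition \ref{prop:elliptic}, producing exactly the $\mathrm{CK}$ and $P_{\neq}(\Psi\Upsilon)$ terms on the right-hand side of \eqref{eq:reaction}. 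In the reaction-type piece $\cR_{\rmN}^{\ep_1}$, where the coefficient is at high frequency, we use the bootstrap control
\begin{equation*}
\|\rmA^{\rmR}(\udl{u''}-\widetilde{u''})\|_2^2\lesssim \rmK_v\,\rmE_d\lesssim \ep^2,
\end{equation*}
together with the $\rmA^{\rmR}$ to $\rmA$ transition (as in Section 6 of \cite{BM1}), and $\Delta_t^{-1}\Om$ now evaluated at low frequency loses at worst two derivatives, which is absorbed by the $\langle \pa_v/(t\pa_z)\rangle^{-1}$ structure used in \eqref{eq:reaction}. The remainder $\mathcal{R}^{\ep_1}$ is handled as in Proposition \ref{prop: remainder}.

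For $\Pi^{u'',\ep_2}$ I would use the resolvent identity
\begin{equation*}
\Delta_u^{-1}\Om-\Delta_t^{-1}\Om=\Delta_u^{-1}(\Delta_t-\Delta_u)\Delta_t^{-1}\Om,
\end{equation*}
so that the integrand contains the coefficient differences
\begin{equation*}
\bigl((\udl{\pa_yv})^2-(\tilde{u'})^2\bigr)(\pa_v-t\pa_z)^2\Delta_t^{-1}\Om
+\bigl(\udl{\pa_{yy}v}-\widetilde{u''}\bigr)(\pa_v-t\pa_z)\Delta_t^{-1}\Om,
\end{equation*}
pre-multiplied by the smooth factor $\widetilde{u''}$ and by $\Delta_u^{-1}$. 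Since $\Delta_u^{-1}$ (in Fourier) is a bounded Fourier multiplier at any fixed $0<|k|<k_{M}$ with good Gevrey-$\f{1}{s_0}$ symbol (inherited from the proof of Proposition \ref{prop: kernel-wave-op}), it plays the same role as the wave operator and is absorbed in the same way. The resulting paraproduct in $v$ is then exactly of the reaction type: in the high-low split the high frequencies on the coefficients $(\udl{\pa_yv})^2-(\tilde{u'})^2=-((u'\circ u^{-1})^2-(\udl{\pa_yv})^2)+((u'\circ u^{-1})^2-(\tilde{u'})^2)$ (plus the analogous decomposition for the second line) are controlled by $\mathrm{CCK}^1$ and $\mathrm{CCK}^2$, with the smooth $(u'\circ u^{-1})^2-(\tilde{u'})^2$ piece controlled by $\rmE_d$; in the low-high split the second derivatives $(\pa_v-t\pa_z)^2\Delta_t^{-1}\Om$ match exactly the object bounded by Proposition \ref{prop:elliptic}.

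The main obstacle I expect is bookkeeping rather than a new mechanism: one must make sure that when $\rmA$ is pushed through both $\bfD_{u,k}$ and $\Delta_u^{-1}$ in the resolvent identity for $\Pi^{u'',\ep_2}$, the resulting composition of Gevrey-kernel convolutions (one from each operator) still obeys a single exponential tail with parameter $<\la_{\mathcal{D}}$, so that the subsequent reaction-type paraproduct closes at the same Gevrey radius $\la(t)$ used by $\rmA$. This is where the choice $\la_0,\la'<\la_{\mathcal{D}}$ and the smallness coming from $k_M$ being finite are used; once this reduction is in place, each term maps onto exactly one of the terms already appearing in the bound for $\rmR_{\rmN}$, and the estimate \eqref{eq:reaction} is reproduced with the coefficients from $\rmE_d$ or $\mathrm{CCK}$ replacing those handled there.
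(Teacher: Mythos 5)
Your proposal is correct and follows essentially the same route as the paper: for $\Pi^{u'',\ep_1}$ the paper likewise invokes the $\ep$-smallness of $\udl{u''}-\widetilde{u''}$ (from $\rmE_d$) together with the reaction-term paraproduct machinery of \emph{Section 6} of \cite{BM1}, and for $\Pi^{u'',\ep_2}$ your resolvent identity is exactly the paper's reduction $\widetilde{u''}(\Delta_u^{-1}\Om-\Delta_t^{-1}\Om)=-\widetilde{u''}\udl{\Delta_u^{-1}}(\rmT^1+\rmT^2)$, after which the precision elliptic analysis of Section \ref{sec:Precision elliptic control} (with the $\mathrm{CCK}^i$ terms and Remark \ref{Rmk: 2identity}) closes the estimate. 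The only cosmetic difference is that the Gevrey off-diagonal decay you attribute to Proposition \ref{prop: kernel-wave-op} for $\Delta_u^{-1}$ actually comes from Lemma \ref{eq: kernel-elliptic} and Remark \ref{Rmk: Laplace}, but the mechanism is the same.
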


The next step in the bootstrap is to provide good estimates on the coordinate system. 
\begin{proposition}[Coordinate system controls]\label{prop: coordinate} Under the bootstrap hypotheses, for $\ep$ sufficiently small and $\rmK_v$ sufficiently large there is a $\rmK>0$ such that
\begin{align}
\label{(2.30a)}
&\|\rmA^{\rmR}h\|_2^2+\rmE_d(t)
+\f12\int_{1}^t\sum_{i=1}^2\mathrm{CCK}_{w}^i(\tau)d\tau
+\f12\int_{1}^t\sum_{i=1}^2\mathrm{CCK}_{\la}^i(\tau)d\tau\leq \f{1}{2}\rmK_v\ep^2,\\
\label{(2.30b)}
&\langle t\rangle^{2+2s}\left\|\f{\rmA}{\langle\pa_v\rangle^s}\bar{h}\right\|_2^2
+\f12\int_{1}^t\mathrm{CK}_{\la}^{v,2}(\tau)+\mathrm{CK}_{w}^{v,2}(\tau)d\tau\leq C(k_{M})\ep^2+\rmK\ep^3,\\
\label{(2.30c)}
&\langle t\rangle^{4-\rmK_{\rmD}\ep}\|\udl{\pa_tv}\|_{\mathcal{G}^{\la(t),\s-6;s}}^2\leq C(k_{M})\ep^2+\rmK\ep^3,\\
\label{(2.30d)}
&\int_{1}^t\mathrm{CK}_{\la}^{v,1}(\tau)+\mathrm{CK}_{w}^{v,1}(\tau)d\tau\leq \f{1}{2}\rmK_{v}\ep^2,
\end{align}
where the $\mathrm{CK}^{v,i}$ terms are given by
\beq\label{eq:2.31}
\begin{split}
&\mathrm{CK}_{w}^{v,2}(\tau)=\langle\tau\rangle^{2+2s}\left\|\sqrt{\f{\pa_tw}{w}}\f{\rmA}{\langle\pa_v\rangle^s}\bar{h}\right\|_2^2,\\
&\mathrm{CK}_{\la}^{v,2}(\tau)=\langle\tau\rangle^{2+2s}(-\dot{\la}(\tau))\left\||\pa_v|^s\f{\rmA}{\langle\pa_v\rangle^s}\bar{h}\right\|_2^2,\\
&\mathrm{CK}_{w}^{v,1}(\tau)=\langle\tau\rangle^{2+2s}\left\|\sqrt{\f{\pa_tw}{w}}\f{\rmA}{\langle\pa_v\rangle^s}\udl{\pa_tv}\right\|_2^2,\\
&\mathrm{CK}_{\la}^{v,1}(\tau)=\langle\tau\rangle^{2+2s}(-\dot{\la}(\tau))\left\||\pa_v|^s\f{\rmA}{\langle\pa_v\rangle^s}\udl{\pa_tv}\right\|_2^2. 
\end{split}
\eeq
\end{proposition}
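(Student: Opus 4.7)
The plan is to derive the four bounds by performing weighted energy estimates on the evolution equations \eqref{eq: h}, \eqref{eq: v_t}, \eqref{eq: bar{h}}, \eqref{eq: chi-chi}--\eqref{eq: u''-u''}. Because these equations are coupled (the forcing in one involves another), I would close the bootstrap in the order (2.30c)$\Rightarrow$(2.30b)+(2.30d)$\Rightarrow$(2.30a); the small factors of $\ep$ carried by $\udl{\pa_tv}$ and $\bar h$ are what make each step close. The CK-type quantities on the left of each inequality come, as usual, from the time derivative of the multiplier $\rmA$ or $\rmA^{\mathrm{R}}$ and are kept as positive contributions on the LHS; they never have to be estimated, only matched against what the RHS produces.

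For \eqref{(2.30c)}, I would multiply \eqref{eq: v_t} by $\langle t\rangle^{4-\rmK_{\rmD}\ep}\rmA^2\udl{\pa_tv}/\langle\pa_v\rangle^{12}$ (regularity loss of $6$ twice) and integrate. The coefficient $\f{2}{t}\udl{\pa_tv}$ combined with $\f{d}{dt}\langle t\rangle^{4-\rmK_{\rmD}\ep}$ yields an effective damping $-(\rmK_{\rmD}\ep)/t$ times $\langle t\rangle^{4-\rmK_{\rmD}\ep}\|\rmA\udl{\pa_tv}\|^2/\langle\pa_v\rangle^{12}$; this matches $\mathrm{CK}_{\la}^{v,1}+\mathrm{CK}_w^{v,1}$ (hence \eqref{(2.30d)}) once Gevrey weights are counted. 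The transport commutator $[\rmA, \udl{\pa_tv}\pa_v]$ is handled by the paraproduct argument of Proposition \ref{prop: transport}, using that $\udl{\pa_tv}$ itself is small by bootstrap. The forcing $-\f{\udl{\chi_1}}{t}\udl{\pa_yv}\langle\na^{\bot}P_{\neq}(\Psi\Upsilon)\cdot\na\widetilde{U^x}\rangle$ is a product of two elliptic quantities; I would split $\udl{\pa_yv}=\widetilde{u'}+h+(\udl{u'}-\widetilde{u'})$ and use the elliptic control \eqref{eq:elliptic} of Proposition \ref{prop:elliptic} together with the $1/t$ prefactor to bound it by $\ep^3/\langle t\rangle^{2-\rmK_{\rmD}\ep/2}$, which is integrable.

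For \eqref{(2.30b)}, I would multiply \eqref{eq: bar{h}} by $\langle t\rangle^{2+2s}\rmA^2\bar h/\langle\pa_v\rangle^{2s}$ and integrate. Here the $-\f{2\bar h}{t}$ coefficient combines with $\f{d}{dt}\langle t\rangle^{2+2s}\sim \f{2+2s}{t}\langle t\rangle^{2+2s}$ to give a net prefactor $\f{2s-2}{t}<0$ (since $s<1$), so no unfavorable growth. The transport term $\udl{\pa_tv}\pa_v\bar h$ is, as before, estimated via commutator/paraproduct with smallness coming from \eqref{(2.30c)}. The key forcing $\f{\udl{\pa_yv}}{t}\langle\na^{\bot}_{v,z}P_{\neq}(\Psi\Upsilon)\cdot\na_{z,v}\Om\rangle$ is the hardest term and is essentially a reaction-type contribution: I would apply Proposition \ref{prop:elliptic} to $P_{\neq}(\Psi\Upsilon)$, pair it with $\rmA\Om$ controlled by (B1), and use the factor $1/t$ to obtain integrable decay of the form $\ep^3/\langle t\rangle^{2-\rmK_{\rmD}\ep/2}$, yielding the bound $C(k_M)\ep^2+\rmK\ep^3$.

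For \eqref{(2.30a)}, I would perform $\rmA^{\mathrm{R}}$-weighted energy estimates on the transport equations \eqref{eq: h} and \eqref{eq: chi-chi}--\eqref{eq: u''-u''}. The transport commutators are small because $\udl{\pa_tv}$ is now controlled by \eqref{(2.30c)}. The time derivative of $\rmA^{\mathrm{R}}$ produces $\mathrm{CCK}^i_{\la}$ and $\mathrm{CCK}^i_w$, which are kept on the LHS. The forcing for $h$ is $\bar h$, bounded by \eqref{(2.30b)} and decaying like $\ep/\langle t\rangle^{1+s}$, hence integrable against the $\rmA^{\mathrm{R}}$-weighted $h$. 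The forcings for the three coordinate differences are of the form $-\udl{\pa_tv}\pa_v(\text{smooth background})$, where $\pa_v\tchi_1, \pa_v\widetilde{u'}, \pa_v\widetilde{u''}$ enjoy Gevrey-$\f{1}{s_0}$ control (assumption 4 and \eqref{eq: chi}); together with the bound on $\udl{\pa_tv}$ from \eqref{(2.30c)} this gives a forcing of size $\ep^2/\langle t\rangle^{2-\rmK_{\rmD}\ep/2}$, integrable in time. Choosing $\rmK_v$ large enough absorbs all generic constants and yields the factor $\f12\rmK_v\ep^2$ asserted.

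The main obstacle will be the elliptic forcing in the equation for $\bar h$ in step two: the RHS of Proposition \ref{prop:elliptic} contains the CK$^{v,1}$ terms with a coefficient $\ep^2$, while CK$^{v,1}$ is exactly what \eqref{(2.30d)} is supposed to bound. This forces the coupled loop to be closed with care — the $\ep^2$ coefficient together with the $\rmK_v^{-1}$ weight placed on $\mathrm{CK}_{\la}^{v,1}+\mathrm{CK}_w^{v,1}$ in the bootstrap hypothesis (B3) is what makes the scheme consistent; I would pick $\rmK_v$ sufficiently large (after $k_M$ is chosen by Propositions \ref{prop: nonlocal}--\ref{prop:nonlocal-ep}) and $\ep_0$ sufficiently small so that all cross-terms reabsorb into the LHS.
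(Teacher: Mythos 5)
Your overall strategy coincides with the paper's: the authors give no argument here beyond deferring to {\it Section 8} of \cite{BM1} and observing that the new quantity $\rmE_d(t)$ obeys the same transport equation as $h$ with a forcing $\udl{\pa_tv}\,\pa_v(\text{smooth background})$ that is \emph{better} than $\bar h$ — which is exactly how you handle it. Your treatment of \eqref{(2.30a)}, \eqref{(2.30b)} and \eqref{(2.30c)} (weighted energy estimates, CK terms from $\pa_t\rmA$ and $\pa_t\rmA^{\rmR}$ kept on the left, damping bookkeeping $\f{2+2s-4}{t}=\f{2s-2}{t}<0$, elliptic control of the forcings) is the BM1 argument and is sound at the sketch level.

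There is, however, one concrete gap: your derivation of \eqref{(2.30d)}. You claim that $\int(\mathrm{CK}_{\la}^{v,1}+\mathrm{CK}_{w}^{v,1})$ falls out of the energy estimate for $\langle t\rangle^{4-\rmK_{\rmD}\ep}\|\udl{\pa_tv}\|^2_{\mathcal{G}^{\la(t),\s-6}}$ via the net damping $-\rmK_{\rmD}\ep/t$, ``once Gevrey weights are counted.'' This cannot work as stated: the multiplier in that estimate is $e^{\la(t)|\eta|^s}\langle\eta\rangle^{\s-6}$ with weight $\langle t\rangle^{4-\rmK_{\rmD}\ep}$, and it contains no factor of $w$ whatsoever, so the quantity $\mathrm{CK}_{w}^{v,1}=\langle\tau\rangle^{2+2s}\big\|\sqrt{\pa_tw/w}\,\rmA\langle\pa_v\rangle^{-s}\udl{\pa_tv}\big\|_2^2$ simply never appears in its time derivative; the damping coefficient instead controls $\int \f{\ep}{\tau}\langle\tau\rangle^{4-\rmK_{\rmD}\ep}\|\udl{\pa_tv}\|^2_{\mathcal{G}^{\la,\s-6}}d\tau$, a different object. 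The missing idea is the structural identity $\bar h=\udl{\pa_yv}\,\pa_v\udl{\pa_tv}$ with $\udl{\pa_tv}$ compactly supported: on the support of $\pa_tw/w$ one has $t\leq 2|\eta|$, hence $|\eta|\gtrsim 1$, so $|\widehat{\udl{\pa_tv}}(\eta)|=|\eta|^{-1}|\mathcal{F}(\bar h/\udl{\pa_yv})(\eta)|\lesssim |\mathcal{F}(\bar h/\udl{\pa_yv})(\eta)|$, and $\mathrm{CK}^{v,1}$ is absorbed into $\mathrm{CK}^{v,2}$ (plus integrable low-frequency remainders coming from the decay in \eqref{(2.30c)}); \eqref{(2.30d)} then follows from \eqref{(2.30b)} after choosing $\rmK_v$ large. (Alternatively, a separate energy estimate on $\langle t\rangle^{2+2s}\|\rmA\langle\pa_v\rangle^{-s}\udl{\pa_tv}\|_2^2$ with the correct multiplier would do, but that is not what you wrote.) A minor further slip: the $\ep^2\!\cdot\!\mathrm{CK}^{v,1}$ feedback you worry about in your last paragraph comes from the reaction estimate, Proposition \ref{prop:reaction}, not from the elliptic Proposition \ref{prop:elliptic}, whose right-hand side carries the $\mathrm{CCK}^i$ terms; your conclusion about closing the loop with $\rmK_v^{-1}$ weights in (B3) is nevertheless correct.
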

Compared to {\it Proposition 2.5} in \cite{BM1}, the main difference is the estimate of $\rmE_d(t)$. 
Note that the equations of $\udl{\chi_1}-\tchi_1$, $\udl{u'}-\widetilde{u'}$ and $\udl{u''}-\widetilde{u''}$ \eqref{eq: chi-chi}-\eqref{eq: u''-u''} have the same transport structure as the equation of $h$:
\beq
(\pa_t+\udl{\pa_tv}\pa_v)\mathcal{U}=\mathcal{F},
\eeq
where $\mathcal{F}$ represents one of the forcing terms $\udl{\pa_tv}\pa_v\tchi_1$, $\udl{\pa_tv}\pa_v\widetilde{u'}$ or $\udl{\pa_tv}\pa_v\widetilde{u''}$. Due to the fact that $\tchi_1$, $\widetilde{u'}$ and $\widetilde{u''}$ are smoother given functions compared to the solutions, $\mathcal{F}$ has the same behavior as $\udl{\pa_tv}$ which is better than the force term $\bar{h}=\udl{\pa_yv}\pa_v\udl{\pa_tv}$ in the equation of $h$ \eqref{eq: h}. By following the estimate of $h$ in {\it Section 8.2} of \cite{BM1}, one can obtain the estimate of $\rmE_d(t)$ in \eqref{(2.30a)} easily. We omit the proof of this proposition. 
\begin{remark}\label{Rmk: 2identity}
It holds that
\beno
&&(\widetilde{u'})^2-(\udl{\pa_yv})^2
=-2(\widetilde{u'})\Upsilon(v)\left(h+(\udl{u'}-\widetilde{u'})\right)-\left(h+(\udl{u'}-\widetilde{u'})\right)^2,\\
&&\widetilde{u''}-\udl{\pa_{yy}v}
=\left(\widetilde{u''}-\udl{u''}\right)-\udl{\pa_yv}\Upsilon(v)\pa_vh. 
\eeno
\end{remark}

\subsection{Conclusion of proof}
Now, we prove the compact support result in Proposition \ref{prop: btsp} and conclude the proof. Under the bootstrap assumption, we have
\beno
\Delta\psi(t,x,y)=\om(t,x,y)=\Om(t,x-tv(t,y),v(t,y)), \quad 
\psi(x,0)=\psi(x,1)=0. 
\eeno
Taking the Fourier transform in $x$, we get
\ben\label{eq:psi}
\mathcal{F}_1\psi(t,k,y)=\int_0^1G_k(y,z)\Om(t,k,v(t,z))e^{-iktv(t,z)}dz,
\een
and
\ben\label{eq:u}
\pa_y\mathcal{F}_1\psi(t,k,y)=\int_0^1\pa_yG_k(y,z)\Om(t,k,v(t,z))e^{-iktv(t,z)}dz.
\een
Integrating by parts in $z$ in the identities \eqref{eq:psi}(twice) and \eqref{eq:u} (once), we get that
\ben\label{eq: inviscid-damping}
|U^y(t,x,y)|\lesssim \f{\ep}{\langle t\rangle^2},\quad |U^x-<U^x>|\lesssim \f{\ep}{\langle t\rangle}. 
\een
Then let us define
\beno
\left\{\begin{aligned}
&\f{d(X^1,X^2)}{dt}(t,x,y)=(u(y)+U^x(t,X^1,X^2),U^y(t,X^1,X^2)),\\
&(X^1,X^2)\big|_{t=0}=(x,y).\\
\end{aligned}\right.
\eeno
Then $\f{d}{dt}\om(t,X^1,X^2)=(u''\pa_x\psi)(t,X_1,X_2)$ and $|X^2(t,x,y)-y|\leq C\ep$, thus the for $y\notin [4\th_0-C\ep, 1-4\th_0+C\ep]$, $\f{d}{dt}\om(t,X^1,X^2)=0$. Thus $\om(t)$ will always be away from the boundary. 

This ends the proof of the bootstrap Proposition \ref{prop: btsp}.

Let us now prove \eqref{eq: Scattering}, applying the same method in {\it Section 2.4} of \cite{BM1}, we get for $\om_1(t,z,y)=\Om(t,z,v)=\om(t,x,y)$ and $\psi_1(t,z,y)=\Psi(t,z,v)=\psi(t,x,y)$, 
\beno
\pa_t\om_1+\na^{\bot}_{z,y}P_{\neq}\psi_1\cdot\na_{z,y}\om_1-u''\pa_z\psi_1=0. 
\eeno
Then we can define
\beno
f_{\infty}=\om_1(1)-\int_{1}^{\infty}\na^{\bot}_{z,y}P_{\neq}\psi_1(s)\cdot\na_{z,y}\om_1(s)ds+\int_{1}^{\infty}u''\pa_z\psi_1(s)ds. 
\eeno
Therefore there exists $f_{\infty}$ such that, 
\beno
\left\|\om(t,x+tu(y)+\Phi(t,y)\chi_1(y),y)-f_{\infty}(x,y)\right\|_{\mathcal{G}^{\la_{\infty}}}\lesssim \f{\ep}{\langle t\rangle}.
\eeno
Note that $\om(t,x,y)=0$ for $y\notin [2\th_0,1-2\th_0]$ so is $f_{\infty}$, thus we can replace $\Phi(t,y)\chi_1(y)$ by $\Phi(t,y)$ which gives \eqref{eq: Scattering}. 

Now, we prove \eqref{eq: inviscid damping}. We have 
\beno
-\pa_y<U^x>=<\om>,
\quad
\pa_t<U^x>+<U^y\om>=0,
\eeno
which implies that $<U^x>(t,y)=C_U$ for any $t\geq 1$ and $y\in [0,2\th_0]\cup [1-2\th_0,1]$. 
Let $\tilde{U}^x(t,z,y)=U^x(t,x,y)$, then $\Big(<\tilde{U}^x>(t,y)-C_{U}\Big)$ has the same compact support as $\om_1$ and satisfies
\begin{align*}
\pa_t\Big(<\tilde{U}^x>(t,y)-C_{U}\Big)
&=-<\na^{\bot}_{z,y}P_{\neq }\psi_1\cdot\na_{z,y}\tilde{U}^x>\chi_1(y)\\
&=-<-\pa_yP_{\neq }(\Upsilon\psi_1)\pa_z(\pa_y-t\pa_yv\pa_z)(\Upsilon\psi_1)+\pa_zP_{\neq }(\Upsilon\psi_1)\pa_y\pa_z(\Upsilon\psi_1)>. 
\end{align*}
Then \eqref{eq: inviscid damping} follows from the fact that 
\ben\label{eq:decay-psi_1}
\|\Upsilon\psi_1\|_{\mathcal{G}^{\la_{\infty}',2;s}}\lesssim \f{\ep}{\langle t\rangle^2}. 
\een
This ends the proof of Theorem \ref{Thm: main}. 

\subsection{Proof of Corollary \ref{Rmk: better-scattering}}
Now let us start the proof of Corollary \ref{Rmk: better-scattering}. Let $\rmF(t,z,v)=(\bbD_{u}\om)(t,x+tu(y)+\Phi(t,y),v(t,y))=(\bbD_{u}\om)(t,x+tv(t,y),y)$ with $z=x-tv(t,y)$, then 
\beno
\rmF(t,z,v)=<\om_1>+\sum_{k\neq 0}^{\infty}\bbD_{u,k}(\mathcal{F}_1{\om}_k)(t,k,y)e^{ikx-iktv(t,y)}
\eeno 
and
\beq\label{eq:f}
\begin{split}
\pa_t\rmF
&=-P_{0}\left(\rmU\cdot\na_{z,v}\Om\right)\\
&\quad+\sum_{|k|>0}\bfD_{u,k}\left(\mathcal{F}_{1}\Big(\big(\udl{u''}-\widetilde{u''}\big)\pa_{z}\big(\Delta_t^{-1}\Om\big)-\widetilde{u''}\pa_{z}\big(\Delta_u^{-1}\Om-\Delta_t^{-1}\Om\big)\Big)\right)(t,k,v)e^{izk}\\
&\quad-\sum_{|k|>0}\bfD_{u,k}\left(\mathcal{F}_{1}\Big(\rmU\cdot\na_{z,v}\Om\Big)\right)(t,k,v)e^{iz k}.
\end{split}
\eeq
By \eqref{eq:decay-psi_1} and the uniform $L^2$ to $L^2$ boundedness of $\bfD_{u,k}$, we have that
\begin{align*}
&\left\|\sum_{|k|>0}\bfD_{u,k}\left(\mathcal{F}_{1}\Big(\big(\udl{u''}-\widetilde{u''}\big)\pa_{z}\big(\Delta_t^{-1}\Om\big)-\widetilde{u''}\pa_{z}\big(\Delta_u^{-1}\Om-\Delta_t^{-1}\Om\big)\Big)\right)(t,k,v)e^{izk}\right\|_2^2\\
&\lesssim \sum_{|k|>0}\left\|\mathcal{F}_{1}\Big(\big(\udl{u''}-\widetilde{u''}\big)\pa_{z}\big(\Delta_t^{-1}\Om\big)-\widetilde{u''}\pa_{z}\big(\Delta_u^{-1}\Om-\Delta_t^{-1}\Om\big)\Big)_k\right\|_2^2\lesssim \f{\ep^2}{\langle t\rangle^2}, \\
&\left\|\sum_{|k|>0}\bfD_{u,k}\left(\mathcal{F}_{1}\Big(\rmU\cdot\na_{z,v}\Om\Big)\right)(t,k,v)e^{iz k}\right\|_2^2
\lesssim \sum_{|k|>0}\left\|\mathcal{F}_{1}\Big(\rmU\cdot\na_{z,v}\Om\Big)_k\right\|_2^2
\lesssim \f{\ep^2}{\langle t\rangle^2}.
\end{align*}
Let us define $\rmf_{\infty}(t,z,y)=\rmF_{\infty}(t,z,v)$ with
\begin{align*}
\rmF_{\infty}(z,v)
&=\rmF(1,z,v)-\int_1^{\infty}P_{0}\left(\rmU\cdot\na_{z,v}\Om\right)(\tau)d\tau\\
&\quad+\int_1^{\infty}\sum_{|k|>0}\bfD_{u,k}\left(\mathcal{F}_{1}\Big(\big(\udl{u''}-\widetilde{u''}\big)\pa_{z}\big(\Delta_t^{-1}\Om\big)-\widetilde{u''}\pa_{z}\big(\Delta_u^{-1}\Om-\Delta_t^{-1}\Om\big)\Big)\right)(\tau,k,v)e^{izk}d\tau\\
&\quad-\int_1^{\infty}\sum_{|k|>0}\bfD_{u,k}\left(\mathcal{F}_{1}\Big(\rmU\cdot\na_{z,v}\Om\Big)\right)(\tau,k,v)e^{iz k}d\tau.
\end{align*}
Together with \eqref{eq: inviscid damping}, it gives Corollary \ref{Rmk: better-scattering}. 

\section{Elliptic estimate}\label{Sec: elliptic}
The purpose of this section is to provide a thorough analysis of $\udl{\Delta_u^{-1}}$ and $\udl{\Delta_t^{-1}}$. 
\subsection{Basic estimate of the Green's function}
In this section, we study the operator $\udl{\Delta_u^{-1}}$ in the linear change of coordinates \eqref{eq: linear-coordinate}. Here, 
\beno
\psi(t,x,y)=\tpsi(x-tu(y),u(y)),\quad 
\om(t,x,y)=\tom(x-tu(y),u(y)),
\eeno 
hence,  $\Delta\psi=\om\chi_2$ is equivalent to $\Delta_u\tpsi=\tom\tchi_2$. 

Therefore we get that for $k\neq 0$,
\begin{align*}
\mathcal{F}_1{\psi}(k,y_1)=\int_0^1G_k(y_1,y_2)\chi_2(y_2)\mathcal{F}_1{\om}(k,y_2)dy_2,
\end{align*}
and for $k=0$,
\begin{align*}
\mathcal{F}_1{\psi}(0,y_1)=\int_0^1G_0(y_1,y_2)\chi_2(y_2)\mathcal{F}_1{\om}(0,y_2)dy_2,
\end{align*}
where 
\beno
G_k(y_1,y_2)=\f{1}{k\sinh k}\times\left\{\begin{aligned}
&\sinh(k(1-y_2))\sinh ky_1,\quad y_1\leq y_2,\\
&\sinh(ky_2)\sinh(k(1-y_1)),\quad y_1\geq y_2,
\end{aligned}\right.
\eeno
and
\beno
G_0(y_1,y_2)=\left\{\begin{aligned}
&(1-y_2)y_1,\quad y_1\leq y_2,\\
&y_2(1-y_1),\quad y_1\geq y_2,
\end{aligned}\right.
\eeno

Let $\tG_k(u(y_1),u(y_2))=G_k(y_1,y_2)$, then we have
\begin{align*}
&\mathcal{F}_1\udl{\Delta_u^{-1}}\tom
=\mathcal{F}_1(\tchi_2\Delta_u^{-1}(\tchi_2\tom))\\
&=\int_{u(0)}^{u(1)}\tG_k(u_1,u_2)(u^{-1})'(u_2)\tchi_2(u_1)\tchi_2(u_2)e^{-ikt(u_2-u_1)} \mathcal{F}_1{\tom}(k,u_2)du_2. 
\end{align*}
Let us denote the Fourier transform of the $t$-independent part of the kernel by 
\beno
\mathbf{G}(k,\xi_1,\xi_2)=\f{1}{2\pi}\int_{\R^2}\tG_k(u_1,u_2)(u^{-1})'(u_2)\tchi_2(u_1)\tchi_2(u_2)e^{-iu_1\xi_1-iu_2\xi_2}du_1du_2.
\eeno
Thus we have 
\ben\label{eq: Laplace-inver-form}
\widehat{\udl{\Delta_u^{-1}}\tom}(t,k,\xi_1)=\f{1}{2\pi}\int_{\R}\mathbf{G}(k, \xi_1-kt,kt-\xi_2)\widehat{\tom}(t, k,\xi_2)d\xi_2.
\een
We have the following lemma. 
\begin{lemma}[\cite{Jia2}] \label{eq: kernel-elliptic}
Suppose $u$ and $\chi_2$ are defined as above with $s_0\geq s$, then there exist $\la_{\Delta}=\la_{\Delta}(K,\th_0,s_0)>0$, $C>0$ such that for all $k$,
\beno
\left|\mathbf{G}(k,\xi_1,\xi_2)\right|\leq C\min\left\{\f{e^{-\la_{\Delta}\langle \xi_1+\xi_2\rangle^{s_0}}}{1+k^2+\xi_1^2},\f{e^{-\la_{\Delta}\langle \xi_1+\xi_2\rangle^{s_0}}}{1+k^2+\xi_2^2}\right\}.
\eeno
\end{lemma}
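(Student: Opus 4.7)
The plan is to reduce to the $y$-coordinate via the change of variables $u_i = u(y_i)$, so that
\[
\mathbf{G}(k,\xi_1,\xi_2) = \frac{1}{2\pi}\int_{\mathbb{R}^2} G_k(y_1,y_2)\,\chi_2(y_1)\chi_2(y_2)\,u'(y_1)\,e^{-iu(y_1)\xi_1 - iu(y_2)\xi_2}\,dy_1dy_2,
\]
where the Green's function satisfies $(\partial_{y_1}^2 - k^2)G_k(y_1,y_2) = \delta(y_1-y_2)$ with Dirichlet boundary conditions and has the explicit hyperbolic-sine form already displayed, enjoying a decay $|G_k(y_1,y_2)|\lesssim |k|^{-1}e^{-|k||y_1-y_2|}$ for $|k|\geq 1$. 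The two factors $(1+k^2+\xi_i^2)^{-1}$ and $e^{-\lambda_\Delta\langle\xi_1+\xi_2\rangle^{s_0}}$ will be produced by two separate mechanisms.

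To obtain the elliptic factor $(1+k^2+\xi_i^2)^{-1}$, I would integrate by parts twice in $y_1$ (symmetrically in $y_2$ for the alternate bound). Since $\partial_{y_1}^2 e^{-iu(y_1)\xi_1} = -\bigl((u')^2\xi_1^2 + iu''\xi_1\bigr)e^{-iu(y_1)\xi_1}$, transferring the derivatives onto $G_k$ via integration by parts produces the factor $\sim ((u')^2\xi_1^2+k^2)\mathbf{G}$ on the left, while on the right the Green's function equation collapses the two $y_1$-derivatives onto the $\delta(y_1-y_2)$, reducing one of the integrals to a one-dimensional integral along the diagonal. Because $\chi_2$ is supported away from $y=0,1$, all boundary contributions vanish. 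The residual commutators involving $u''$ are of lower order and absorbed by $u'\geq c_0>0$.

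For the Gevrey factor, the main observation is that the singularity of $G_k$ lies entirely along the diagonal $\{y_1=y_2\}$, while every other ingredient ($u,u^{-1},\chi_2$) is Gevrey-$\frac{1}{s_0}$ with compact support. Splitting the $(y_1,y_2)$-integral into the two triangles $y_1\leq y_2$ and $y_1\geq y_2$, in each region $G_k$ is jointly Gevrey-smooth. Passing to variables $a=u_1+u_2,\;b=u_1-u_2$, the Fourier dual of $a$ is $(\xi_1+\xi_2)/2$, so it suffices to bound $\partial_a^n$ of the kernel by $C^n(n!)^{1/s_0}$ uniformly in $b$ and in $k$ (after the polynomial factor has been extracted). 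This bound follows from the chain rule, the Gevrey assumptions on $u''$ and on $\chi_2$ (via the standard Faà~di~Bruno combinatorics), and the fact that $u'\geq c_0$ controls all derivatives of $u^{-1}$. A Paley–Wiener type argument (either by optimizing $n\sim |\xi_1+\xi_2|^{s_0}$ in the derivative bound or by shifting the contour into a complex strip whose width shrinks like $|\xi_1+\xi_2|^{s_0-1}$) then yields the claimed exponential decay $e^{-\lambda_\Delta\langle\xi_1+\xi_2\rangle^{s_0}}$, with $\lambda_\Delta$ depending only on $K,\theta_0,s_0$.

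The main obstacle is making the two mechanisms coexist uniformly in $k$. In the region $|\xi_1|\sim|\xi_2|$ but $|\xi_1+\xi_2|$ small, the ellipticity is all one has; in the regime $|\xi_1+\xi_2|$ large, one must be careful that the $n$-fold differentiation in $a$ does not destroy the two integrations by parts that produced the polynomial factor. The clean way to handle this is to first perform the two integrations by parts — which converts the Green's-function singularity into a surface contribution on the diagonal treated by a one-dimensional Gevrey Paley–Wiener estimate — and only then differentiate in $a$. Controlling the interaction between the $\delta$-distributional piece along $y_1=y_2$ and the tangential Gevrey differentiation is where care is needed, but since the diagonal itself inherits Gevrey regularity from $u$ and $\chi_2$, the surface term admits the same $e^{-\lambda_\Delta\langle\xi_1+\xi_2\rangle^{s_0}}$ bound.
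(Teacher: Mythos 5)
First, note that the paper does not actually prove this lemma: it is quoted from \cite{Jia2}, with only the one-line remark that ``the key idea is to use the definition of Gevrey class on the physical side.'' Your sketch follows exactly that idea (physical-side kernel, elliptic integration by parts for the polynomial factor, tangential Gevrey regularity plus Paley--Wiener for the exponential factor), so architecturally you are on the right track. Two points, one minor and one substantive. The minor one: after integrating by parts twice in $y_1$ the factor you produce is $\big((u'(y_1))^2\xi_1^2+k^2\big)$ \emph{inside} the integral, not a constant multiple of $\mathbf{G}$; the clean fix is the standard non-stationary-phase operator, writing $e^{-iu(y_1)\xi_1}=\big(1+k^2+(u'(y_1))^2\xi_1^2+iu''(y_1)\xi_1\big)^{-1}(1+k^2-\pa_{y_1}^2)e^{-iu(y_1)\xi_1}$ and integrating by parts, using $u'\geq c_0$ so the denominator is $\gtrsim 1+k^2+\xi_1^2$; the $\delta(y_1-y_2)$ produced by $\pa_{y_1}^2G_k=k^2G_k+\delta$ then yields the one-dimensional diagonal term you describe. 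This is routine.

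The substantive gap is the uniformity in $k$ of the Gevrey bound, which the statement requires ($\la_{\Delta}$ and $C$ independent of $k$). Your justification of the bound $|\pa_a^nK|\leq C^n(n!)^{1/s_0}$ invokes only the Gevrey regularity of $u$, $u^{-1}$, $\chi_2$ and the assertion that ``in each triangle $G_k$ is jointly Gevrey-smooth.'' But the derivative bounds of $G_k$ on a triangle grow like $|k|^{m}$, so treating $G_k$ as a Gevrey function with constant $K\sim|k|$ and optimizing $n\sim|\xi_1+\xi_2|^{s_0}$ gives an exponent $\la_{\Delta}(k)$ that degenerates as $|k|\to\infty$. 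What saves the estimate is structural: writing $\sinh(k(1-y_2))\sinh(ky_1)=\f12\big[\cosh(k(1-|y_1-y_2|))-\cosh(k(1-y_1-y_2))\big]$, the first (large, diagonal-singular) piece of $G_k$ depends only on $b=y_1-y_2$, so \emph{all} its tangential derivatives $(\pa_{y_1}+\pa_{y_2})^n$ vanish; the remaining piece depends only on $y_1+y_2$ and, on $\mathrm{supp}\,\chi_2\otimes\chi_2$ where $\th_0\leq y_1+y_2\leq 2-\th_0$, satisfies $|(\pa_{y_1}+\pa_{y_2})^n(\cdot)|\lesssim |k|^{n}e^{-\th_0|k|}\lesssim \th_0^{-n}n!$, uniformly in $k$. (Equivalently, $(\pa_{y_1}+\pa_{y_2})G_k=\sinh(k(1-y_1-y_2))/\sinh k$ globally.) This is precisely where the hypothesis that $\chi_2$ is supported away from the corners of $[0,1]^2$ enters the exponential factor --- not merely to kill boundary terms --- and it is why $\la_{\Delta}$ depends on $\th_0$. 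Without this observation the argument as written does not deliver a $k$-independent $\la_{\Delta}$; with it, your plan goes through.
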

\begin{remark}\label{Rmk: Laplace}
To treat $\udl{\Delta_t^{-1}}$, we also need another Green's function $\mathbf{G}_2(k,\xi_1,\xi_2)$ with a different cut-off function $\Upsilon$, 
\beno
\mathbf{G}_2(k,\xi_1,\xi_2)=\f{1}{2\pi}\int_{\R^2}\tG_k(u_1,u_2)(u^{-1})'(u_2)\Upsilon(u_1)\Upsilon(u_2)e^{-iu_1\xi_1-iu_2\xi_2}du_1du_2.
\eeno
Thus we have 
\ben\label{eq: Laplace-inver-form-2}
\mathcal{F}_2\mathcal{F}_1\Big(\Upsilon\Delta_u^{-1}(\Upsilon\tom)\Big)(t,k,\xi_1)=\f{1}{2\pi}\int_{\R}\mathbf{G}_2(k, \xi_1-kt,kt-\xi_2)\widehat{\tom}(t, k,\xi_2)d\xi_2.
\een
Moreover, there exist $\la_{\Delta}=\la_{\Delta}(K,\th_0,s_0)>0$, $C>0$ such that for all $k$,
\beno
\left|\mathbf{G}_2(k,\xi_1,\xi_2)\right|\leq C\min\left\{\f{e^{-\la_{\Delta}\langle \xi_1+\xi_2\rangle^{s_0}}}{1+k^2+\xi_1^2},\f{e^{-\la_{\Delta}\langle \xi_1+\xi_2\rangle^{s_0}}}{1+k^2+\xi_2^2}\right\}.
\eeno
\end{remark}
The key idea of the proof is to use the definition of Gevrey class in physical side (see \cite{Jia2}). 

\subsubsection{Lossy estimate}
The following is the fundamental estimate on $\udl{\Delta_u^{-1}}$ or $\udl{\Delta_t^{-1}}$ which allows to trade the regularity of $\tom$ in a high norm for decay of the stream function in a slightly lower norm. By the estimate of the kernel $\mathbf{G}$ and $\mathbf{G}_2$, $\Delta_{L}\udl{\Delta_u^{-1}}\tom$ has the same regularity as $\tom$, where
\ben
\Delta_{L}=\pa_{zz}+(\pa_v-t\pa_{z})^2. 
\een 
We have the following lemmas.
\begin{lemma}\label{lem: lossy-elliptic u}
Under the same assumption of Lemma \ref{eq: kernel-elliptic}, it holds that
\ben\label{eq: lossy-elliptic u1}
\|P_{\neq}\pa_{z}^2\udl{\Delta_u^{-1}}\tom\|_{\mathcal{G}^{\la(t),\s-2;s}}\lesssim \f{\|\tom\|_{\mathcal{G}^{\la(t),\s;s}}}{1+t^2},
\een
and
\ben\label{eq: lossy-elliptic u2}
\|\rmA P_{\neq}\pa_{z}^2\udl{\Delta_u^{-1}}\tom\|_2+\|\rmA P_{0}\langle\pa_u\rangle^2\udl{\Delta_u^{-1}}\tom\|_{L^2}\lesssim \|\rmA \tom\|_{L^2}.
\een
\end{lemma}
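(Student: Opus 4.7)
The plan is to work on the Fourier side via the kernel representation \eqref{eq: Laplace-inver-form} and the bound from Lemma \ref{eq: kernel-elliptic}. Writing
\beno
\widehat{P_{\neq}\pa_z^2 \udl{\Delta_u^{-1}}\tom}(t,k,\xi_1)=-\f{k^2}{2\pi}\int_{\R}\mathbf{G}(k,\xi_1-kt,kt-\xi_2)\widehat{\tom}(t,k,\xi_2)\,d\xi_2,
\eeno
the integrand is pointwise dominated by $\f{k^2 e^{-\la_{\Delta}\langle\xi_1-\xi_2\rangle^{s_0}}}{1+k^2+(\xi_1-kt)^2}|\widehat{\tom}(t,k,\xi_2)|$, which has a convolution structure in $\xi_1-\xi_2$. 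Both claimed inequalities will follow from Schur's test, or equivalently from Young's inequality, once the relevant multiplicative gains are extracted from this Fourier symbol.

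For \eqref{eq: lossy-elliptic u1} the gain to be extracted is $(1+t^2)^{-1}$, and I would split into two frequency regions. If $|\xi_1|\leq |kt|/2$, then $|\xi_1-kt|\geq |kt|/2$, and since $|k|\geq 1$ this yields $1+k^2+(\xi_1-kt)^2\gtrsim k^2(1+t^2)$, so $(1+t^2)^{-1}$ comes out of the denominator directly. If instead $|\xi_1|>|kt|/2$, then $\langle k,\xi_1\rangle\gtrsim \langle t\rangle$, so the mismatch between $\langle k,\xi_1\rangle^{\s-2}$ and $\langle k,\xi_2\rangle^{\s}$ in the Gevrey-Sobolev norms supplies the missing $\langle t\rangle^{-2}$. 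The remaining Gevrey loss $e^{\la(t)|k,\xi_1|^s-\la(t)|k,\xi_2|^s}\leq e^{C\la(t)|\xi_1-\xi_2|^s}$, together with any residual polynomial weight ratio, is absorbed into $e^{-\la_{\Delta}\langle\xi_1-\xi_2\rangle^{s_0}/2}$ thanks to $s_0\geq s$ and the smallness constraint $\la(t)<\la_{\Delta}$ built into the choice of $\la_0,\la'$ at the beginning of Section 2. The residual kernel in $\xi_1-\xi_2$ is integrable, so Young's inequality closes the first estimate.

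For \eqref{eq: lossy-elliptic u2} no time decay is required, so I would just use the trivial bound $\f{k^2}{1+k^2+(\xi_1-kt)^2}\leq 1$. The Fourier symbol is then dominated by the Schwartz convolution kernel $e^{-\la_{\Delta}\langle\xi_1-\xi_2\rangle^{s_0}}$. The new subtlety is that $\rmA$ carries the resonance factor $\rmJ_k$, so I must control $\rmJ_k(t,\xi_1)/\rmJ_k(t,\xi_2)$; this is handled by the Lipschitz-type frequency estimates for $\rmJ_k$ established in \cite{BM1}, which cost at most a factor of the form $e^{C|\xi_1-\xi_2|^{1/2}}$ and are again absorbable into the kernel tail. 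The $P_0$ piece is cleaner: at $k=0$ the kernel bound reads $|\mathbf{G}(0,\xi_1,-\xi_2)|\lesssim e^{-\la_{\Delta}\langle\xi_1-\xi_2\rangle^{s_0}}/(1+\xi_1^2)$, so multiplication by $\langle\xi_1\rangle^2$ produces a uniformly bounded Fourier symbol and the same convolution argument closes this part.

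The main obstacle I anticipate is the bookkeeping of the hierarchy of Gevrey indices: one must ensure that the bandwidth $\la(t)$, together with all losses coming from $\rmJ_k$ and from the Gevrey triangle inequality, stays strictly below $\la_{\Delta}$ uniformly in $t$. This is precisely the smallness condition on $\la_0$ and $\la'$ imposed in Section 2; once it is honored, every remaining step reduces to convolution against an integrable, rapidly decaying kernel in $\xi_1-\xi_2$, and no further ideas are needed.
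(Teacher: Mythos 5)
Your proposal is correct and follows essentially the same route as the paper: the kernel representation \eqref{eq: Laplace-inver-form} together with Lemma \ref{eq: kernel-elliptic}, absorption of the Gevrey/Sobolev/$\rmJ_k$ ratios into the exponentially decaying kernel in $\xi_1-\xi_2$, and Young's inequality. The only cosmetic difference is that the paper first proves $\|\Delta_L\udl{\Delta_u^{-1}}\tom\|_{\mathcal{G}^{\la,\s;s}}\lesssim\|\tom\|_{\mathcal{G}^{\la,\s;s}}$ and then invokes the generic multiplier inequality \emph{(4.3)} of \cite{BM1} to trade two derivatives for $\langle t\rangle^{-2}$, whereas you perform the underlying frequency splitting $|\xi_1|\lessgtr|kt|/2$ directly on the symbol; these are the same computation.
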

As can be easily seen from examining $\Delta_u$, we cannot expect to gain $O(t^{-2})$ decay without paying two derivatives. Notice that since the coefficient $\udl{\pa_{yy}v}$ effectively contains a derivative on $\Om$, the estimate below loses three derivatives. 
\begin{lemma}\label{lem: lossy-elliptic t}
Under the bootstrap hypotheses, for $\ep$ sufficiently small, 
\beno
\|P_{\neq}(\Upsilon\Psi)\|_{\mathcal{G}^{\la(t),\s-3;s}}=\|P_{\neq}\udl{\Delta_t^{-1}}\Om\|_{\mathcal{G}^{\la(t),\s-3;s}}\lesssim \f{\|\Om\|_{\mathcal{G}^{\la(t),\s-1;s}}}{1+t^2}.
\eeno
\end{lemma}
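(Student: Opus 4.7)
The plan is to regard $\Delta_t$ as a perturbation of $\Delta_u$ and invoke Lemma \ref{lem: lossy-elliptic u} as a black box, closing the resulting identity by a small-coefficient argument that uses the bootstrap hypotheses. Starting from $\Delta_t\Psi=\Om$ with $\Psi\Upsilon=\udl{\Delta_t^{-1}}\Om$, I would rewrite
\begin{equation*}
\Delta_u\Psi=\Om-(\Delta_t-\Delta_u)\Psi,
\end{equation*}
where by the definitions of $\Delta_t$ and $\Delta_u$,
\begin{equation*}
\Delta_t-\Delta_u=\bigl[(\udl{\pa_yv})^2-(\widetilde{u'})^2\bigr](\pa_v-t\pa_z)^2+\bigl[\udl{\pa_{yy}v}-\widetilde{u''}\bigr](\pa_v-t\pa_z).
\end{equation*}
Applying $\udl{\Delta_u^{-1}}=\Upsilon\Delta_u^{-1}(\Upsilon\,\cdot)$, and using that $\Upsilon\equiv 1$ on $\mathrm{supp}\,\Om$ together with the compact support of $\Om$ to discard the cut-off commutators, I obtain the representation
\begin{equation*}
\Upsilon\Psi=\udl{\Delta_u^{-1}}\Om-\udl{\Delta_u^{-1}}\!\left[(\Delta_t-\Delta_u)\Psi\right].
\end{equation*}

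The first term is directly controlled by Lemma \ref{lem: lossy-elliptic u}: the $\pa_z^2$-estimate gives $\|P_{\neq}\udl{\Delta_u^{-1}}\Om\|_{\mathcal{G}^{\la(t),\s-3;s}}\lesssim (1+t^2)^{-1}\|\Om\|_{\mathcal{G}^{\la(t),\s-1;s}}$ with room to spare on the regularity index. For the perturbation term, I would apply Lemma \ref{lem: lossy-elliptic u} once more (losing the factor $(1+t^2)^{-1}$ on the $\pa_z^2$ component), then use the Gevrey product lemma recalled from \cite{BM1} together with the identities in Remark \ref{Rmk: 2identity}, namely
\begin{equation*}
(\widetilde{u'})^2-(\udl{\pa_yv})^2=-2\widetilde{u'}\Upsilon\bigl(h+(\udl{u'}-\widetilde{u'})\bigr)-\bigl(h+(\udl{u'}-\widetilde{u'})\bigr)^2,
\end{equation*}
\begin{equation*}
\widetilde{u''}-\udl{\pa_{yy}v}=(\widetilde{u''}-\udl{u''})-\udl{\pa_yv}\,\Upsilon\,\pa_v h,
\end{equation*}
to absorb a factor $\lesssim\ep$ coming from the bootstrap estimates on $h$, $\udl{u'}-\widetilde{u'}$, and $\udl{u''}-\widetilde{u''}$ in the norm $\rmE_d$ and $\rmE_v$.

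The output of this step is a bound of the form
\begin{equation*}
\|P_{\neq}(\Upsilon\Psi)\|_{\mathcal{G}^{\la(t),\s-3;s}}\lesssim\frac{\|\Om\|_{\mathcal{G}^{\la(t),\s-1;s}}}{1+t^2}+\frac{C\ep}{1+t^2}\bigl\|(\pa_v-t\pa_z)^2 P_{\neq}(\Upsilon\Psi)\bigr\|_{\mathcal{G}^{\la(t),\s-3;s}},
\end{equation*}
together with an analogous contribution involving $(\pa_v-t\pa_z)P_{\neq}(\Upsilon\Psi)$ with one derivative of $h$ as coefficient. Since $(\pa_v-t\pa_z)^2 P_{\neq}(\Upsilon\Psi)=\Delta_L P_{\neq}(\Upsilon\Psi)-\pa_z^2 P_{\neq}(\Upsilon\Psi)$ and $\Delta_L(\Upsilon\Psi)$ is essentially $\Om$ up to lower-order terms handled by the same procedure, I would close the estimate by first establishing a weak a priori bound on $P_{\neq}(\Upsilon\Psi)$ at a lower regularity (so that the right-hand side is finite) and then absorbing the $C\ep$ term into the left-hand side for $\ep$ small.

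The main obstacle is the extra derivative lost through the $\pa_v h$ factor in $\widetilde{u''}-\udl{\pa_{yy}v}$: this is what forces the statement to ask for $\|\Om\|_{\mathcal{G}^{\la(t),\s-1;s}}$ on the right (one derivative worse than one might naively hope) and is responsible for the $\s-3$ index on the left. Handling this carefully—making sure the loss is exactly one derivative and that the factor $(1+t^2)^{-1}$ survives intact after commuting $(\pa_v-t\pa_z)$ past the perturbed coefficients—is the delicate part of the argument.
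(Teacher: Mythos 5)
Your proposal follows the same strategy as the paper: write $\Delta_t$ as a perturbation of $\Delta_u$, use Remark \ref{Rmk: 2identity} and the bootstrap control of $h$, $\udl{u'}-\widetilde{u'}$, $\udl{u''}-\widetilde{u''}$ to get an $O(\ep)$ coefficient, and absorb for $\ep$ small; you also correctly identify that the $\pa_vh$ factor in $\widetilde{u''}-\udl{\pa_{yy}v}$ is the source of the one-derivative loss. The only real difference is the order of operations, and the paper's ordering is cleaner: rather than inverting first and trying to close the absorption on $\|P_{\neq}(\Upsilon\Psi)\|_{\mathcal{G}^{\la,\s-3;s}}$ (which, as you note, does not control the higher-order quantity $\|(\pa_v-t\pa_z)^2P_{\neq}(\Upsilon\Psi)\|$ appearing on the right, forcing you into a separate weak a priori bound and an iteration), the paper first closes a fixed point on the data-level quantity $\|\Delta_u\Psi\|_{\mathcal{G}^{\la,\s-1;s}}\lesssim\|\Om\|_{\mathcal{G}^{\la,\s-1;s}}+\ep\|\Delta_u\Psi\|_{\mathcal{G}^{\la,\s-1;s}}$, where the absorption is immediate, and only then applies the lossy inversion \eqref{eq:elliptic-G_2} a single time to trade two derivatives for the factor $(1+t^2)^{-1}$. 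One caveat in your closing step: the claim that ``$\Delta_L(\Upsilon\Psi)$ is essentially $\Om$'' is not literally correct, since $\Delta_L$ and $\Delta_t$ differ by the $O(1)$ coefficient $(\udl{\pa_yv})^2-1$ (the background $u'$ need not be near $1$); the correct statement, which is what the paper uses via the kernel bound \eqref{eq:Delta_L-est}, is that $\|\Delta_L(\Upsilon\Psi)\|_{\mathcal{G}^{\la,\s-1;s}}\lesssim\|\Delta_u\Psi\|_{\mathcal{G}^{\la,\s-1;s}}$, i.e.\ $\Delta_L\udl{\Delta_u^{-1}}$ is bounded. With that substitution your argument closes and is equivalent to the paper's.
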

Let us now start to prove the lemmas.
\begin{proof}
\no{\bf Proof of Lemma \ref{lem: lossy-elliptic u}. }
Omitting the time-dependence in $\la$, $\tom$, for $\la$ chosen sufficiently small than $\la_{\Delta}$, by {\it Lemma  A.2} in \cite{BM1}, we obtain that
\begin{align}\label{eq:Delta_L-est}
&\|\Delta_L\udl{\Delta_u^{-1}}\tom\|_{\mathcal{G}^{\la,\s;s}}\\
\nonumber&\lesssim \sum_{k}\left\|(k^2+(\xi-kt)^2)e^{\la|k,\xi|^s}\langle k,\xi\rangle^{\s}\int_{\R}\mathbf{G}(k,\xi-kt,kt-\xi_2)\widehat{\tom}_k(\xi_2)d\xi_2\right\|_{L^2_{\xi}}^2\\
\nonumber&\lesssim \sum_{k}\left\|\int_{\R}e^{-\la_{\Delta}|\xi-\xi_2|^{s_0}}e^{(1+c)\la|\xi-\xi_2|^s}e^{\la|k,\xi_2|^s}\langle k,\xi_2\rangle^{\s}|\widehat{\tom}_k(\xi_2)|d\xi_2\right\|_{L^2_{\xi}}^2\lesssim \|\tom\|_{\mathcal{G}^{\la,\s;s}},
\end{align}
which also implies the first part of \eqref{eq: lossy-elliptic u2}.  
Next by the same argument as {\it (4.3)} in \cite{BM1}, we have that
\begin{align*}
\|P_{\neq}\pa_z^2\udl{\Delta_u^{-1}}\tom\|_{\mathcal{G}^{\la,\s-2;s}}\lesssim \f{1}{t^2+1}\|\Delta_LP_{\neq}\udl{\Delta_u^{-1}}\tom\|_{\mathcal{G}^{\la,\s;s}}, 
\end{align*}
which gives \eqref{eq: lossy-elliptic u1}.

Similarly, we have for any $\s_1\leq \s$
\beq\label{eq:elliptic-G_2}
\|P_{\neq}\Upsilon\Delta_u^{-1}(\Upsilon\tom)\|_{\mathcal{G}^{\la,\s_1-2;s}}\lesssim \f{1}{t^2+1}\|P_{\neq}\tom\|_{\mathcal{G}^{\la,\s_1;s}}. 
\eeq

To prove \eqref{eq: lossy-elliptic u2}, one can follow the proof in \eqref{eq:Delta_L-est} and obtain that
\begin{align}\label{eq:Delta_L-est-1}
&\|\langle\pa_u^2\rangle P_0\udl{\Delta_u^{-1}}\tom\|_{\mathcal{G}^{\la,\s;s}}\\
\nonumber&\lesssim \sum_{k}\left\|(1+\xi^2)\f{\rmA_0(\xi)}{\rmA_0(\xi_2)}\int_{\R}\mathbf{G}(k,\xi-kt,kt-\xi_2)\rmA_0(\xi_2)\widehat{\tom}_0(\xi_2)d\xi_2\right\|_{L^2_{\xi}}^2\\
\nonumber&\lesssim \sum_{k}\left\|\int_{\R}e^{-\la_{\Delta}|\xi-\xi_2|^{s_0}}e^{(1+c)\la|\xi-\xi_2|^s}\rmA_0(\xi_2)|\widehat{\tom}_0(\xi_2)|d\xi_2\right\|_{L^2_{\xi}}^2\lesssim \|\rmA P_0\tom\|_{L^2}.
\end{align} 
Thus we proved the lemma. 

\no{\bf Proof of Lemma \ref{lem: lossy-elliptic t}. }
Now we write $\Delta_t$ as a perturbation of $\Delta_{u}$. Recall $\Delta_t\Psi=\Om$ and
\begin{align}\label{eq: Delta_uPsi}
\Delta_u\Psi
&=\Om+h_1(t,v)\Upsilon(v)(\pa_v-t\pa_z)^2(\Upsilon\Psi)+h_2(t,v)\Upsilon(v)(\pa_v-t\pa_z)(\Upsilon\Psi),
\end{align}
where 
\ben\label{eq: h_1,h_2}
h_1(t,v)=(\widetilde{u'})^2-(\udl{\pa_yv})^2,
\quad
h_2(t,v)=\widetilde{u''}-\udl{\pa_{yy}v}.
\een
Here we use the fact that $v(t,y)\equiv u(y)$ for $y\notin [\th_0,1-\th_0]$, which implies that $v^{-1}(t,c)\equiv u^{-1}(c)$ for $c\notin [u(\th_0), u(1-\th_0)]$ and then 
\beq\label{eq: cut-off,h_1h_2}
h_j(t,v)=h_j(t,v)\Upsilon(v), \  h_j(t,v)\pa_v\Upsilon(v)=0,\ h_j(t,v)\pa_{vv}\Upsilon(v)=0\quad j=1,2.
\eeq
Therefore, 
\begin{align*}
\|\Delta_u\Psi\|_{\mathcal{G}^{\la,\s-1;s}}
&\lesssim \|\Om\|_{\mathcal{G}^{\la,\s-1;s}}
+\|h_1\|_{\mathcal{G}^{\la,\s-1;s}}\|\Delta_L(\Upsilon\Psi)\|_{\mathcal{G}^{\la,\s-1;s}}\\
&\quad+\|h_2\|_{\mathcal{G}^{\la,\s-1;s}}\|(\pa_v-t\pa_z)(\Upsilon\Psi)\|_{\mathcal{G}^{\la,\s-1;s}}\\
&\lesssim \|\Om\|_{\mathcal{G}^{\la,\s-1;s}}
+\ep\|\Delta_u\Psi\|_{\mathcal{G}^{\la,\s-1;s}}.
\end{align*}
Together with \eqref{eq:elliptic-G_2}, this implies Lemma \ref{lem: lossy-elliptic t} provided that $\ep$ is sufficiently small. 
\end{proof}

\subsection{Precision elliptic control}\label{sec:Precision elliptic control}
Now we turn to the proof of Proposition \ref{prop:elliptic}. 
By using \eqref{eq: Delta_uPsi}, we get that
\begin{align}\label{eq: PsiUpsilon}
\widehat{\Psi\Upsilon}(k,\xi_1)&=\int_{\R}\mathbf{G}_2(k, \xi_1-kt,kt-\xi_2)\widehat{\Om}(k,\xi_2)d\xi_2\\
\nonumber&\quad-\int_{\R^2}\mathbf{G}_2(k, \xi_1-kt,kt-\xi_2)\widehat{h_1}(t,\xi_2-\xi_3)(\xi_3-kt)^2\widehat{\Psi\Upsilon}(k,\xi_3)d\xi_3d\xi_2\\
\nonumber&\quad+i\int_{\R^2}\mathbf{G}_2(k, \xi_1-kt,kt-\xi_2)\widehat{h_2}(t,\xi_2-\xi_3)(\xi_3-kt)\widehat{\Psi\Upsilon}(k,\xi_3)d\xi_3d\xi_2. 
\end{align}
Define the multipliers
\beno
&&\mathcal{M}_1(t,k,\xi)=\left\langle\f{\xi}{kt}\right\rangle^{-1}\f{|k,\xi|^{s/2}}{\langle t\rangle^{s}}\rmA P_{\neq},\\
&&\mathcal{M}_2(t,k,\xi)=\left\langle\f{\xi}{kt}\right\rangle^{-1}\sqrt{\f{\pa_tw}{w}}\tilde{\rmA} P_{\neq}.
\eeno
By Lemma \ref{eq: kernel-elliptic}, we have 
\begin{align*}
&\sum_{i=1,2}\left\|\mathcal{M}_i(k^2+(\xi_1-kt)^2)\int_{\R}\mathbf{G}_2(k, \xi_1-kt,kt-\xi_2)\widehat{\Om}(k,\xi_2)d\xi_2\right\|_2^2\\
&\lesssim \left\|\int_{\R}e^{-\la_{\Delta}\langle\xi_1-\xi_2\rangle^{s_0}}\f{\rmA_k(\xi_1)}{\rmA_k(\xi_2)}\left(\f{|k,\xi_1|}{|k,\xi_2|}\right)^{\f{s}{2}}\mathcal{M}_1\widehat{\Om}(k,\xi_2)d\xi_2\right\|_2^2\\
&\quad+\left\|\int_{\R}e^{-\la_{\Delta}\langle\xi_1-\xi_2\rangle^{s_0}}\tilde{\rmA}_k(\xi_1)\sqrt{\f{\pa_tw_k(t,\xi_1)}{w_k(t,\xi_1)}}\widehat{\Om}(k,\xi_2)d\xi_2\right\|_2^2
\end{align*}
By {\it Lemma 3.6 and Lemma A.2} in \cite{BM1}, we have
\begin{align*}
\left\|\mathcal{M}_1(k^2+(\xi_1-kt)^2)\int_{\R}\mathbf{G}_2(k, \xi_1-kt,kt-\xi_2)\widehat{\Om}(k,\xi_2)d\xi_2\right\|_2^2
\lesssim \left\|\mathcal{M}_1\Om\right\|_2^2
\lesssim \f{1}{\langle t\rangle^{2s}}\left\||\na|^{s/2}\rmA\Om\right\|_2^2.
\end{align*}
By {\it Lemma 3.3, Remark 8, Lemma 3.4 and the proof of Lemma 3.6} in \cite{BM1}, we have 
\begin{align*}
&\left\|\int_{\R}e^{-\la_{\Delta}\langle\xi_1-\xi_2\rangle^{s_0}}\tilde{\rmA}_k(\xi_1)\sqrt{\f{\pa_tw_k(t,\xi_1)}{w_k(t,\xi_1)}}\widehat{\Om}(k,\xi_2)d\xi_2\right\|_2^2\\
&\lesssim \left\|\int_{\R}e^{-\la_{\Delta}\langle\xi_1-\xi_2\rangle^{s_0}}\langle\xi_1-\xi_2\rangle^{\f12}\tilde{\rmA}_k(\xi_1)\sqrt{\f{\pa_tw_k(t,\xi_2)}{w_k(t,\xi_2)}}\widehat{\Om}(k,\xi_2)d\xi_2\right\|_2^2\\
&\quad+\left\|\int_{\R}e^{-\la_{\Delta}\langle\xi_1-\xi_2\rangle^{s_0}}\langle\xi_1-\xi_2\rangle\tilde{\rmA}_k(\xi_1)\f{|\xi_2|^{s/2}}{\langle t\rangle^s}\widehat{\Om}(k,\xi_2)d\xi_2\right\|_2^2, 
\end{align*}
and
\begin{align*}
\f{\tilde{\rmJ}_k(\xi_1)}{\tilde{\rmJ}_k(\xi_2)}\lesssim e^{10\mu |\xi_1-\xi_2|^{1/2}}.
\end{align*}
Thus we get that
\beq\label{eq: inverse-Lap-est}
\sum_{i=1,2}\left\|\mathcal{M}_i(k^2+(\xi_1-kt)^2)\int_{\R}\mathbf{G}_2(k, \xi_1-kt,kt-\xi_2)\widehat{\Om}(k,\xi_2)d\xi_2\right\|_2^2
\lesssim \sum_{i=1,2}\left\|\mathcal{M}_i\Om\right\|_2^2.
\eeq
Define,
\ben\label{eq: T^1,T^2}
\mathrm{T}^1=h_1(\pa_v-t\pa_z)^2(\Psi\Upsilon),\quad
\mathrm{T}^2=h_2(\pa_v-t\pa_z)(\Psi\Upsilon).
\een
and divide each via a paraproduct decomposition in the $v$ variable only 
\begin{align}
\label{eq:T_1}\mathrm{T}^1&=\sum_{M\geq 8}(h_1)_{M}(\pa_v-t\pa_z)^2(\Psi\Upsilon)_{<M/8}
+\sum_{M\geq 8}(h_1)_{<M/8}(\pa_v-t\pa_z)^2(\Psi\Upsilon)_M\\
\nonumber
&\quad+\sum_{M\in \mathcal{D}}\sum_{\f18M\leq M'\leq 8M}(h_1)_{M'}(\pa_v-t\pa_z)^2(\Psi\Upsilon)_{M}\\
\nonumber
&=\mathrm{T}^1_{\mathrm{HL}}+\mathrm{T}^1_{\mathrm{LH}}+\mathrm{T}^1_{\mathcal{R}},\\
\label{eq:T_2}\mathrm{T}^2&=\sum_{M\geq 8}(h_2)_{M}(\pa_v-t\pa_z)(\Psi\Upsilon)_{<M/8}
+\sum_{M\geq 8}(h_2)_{<M/8}(\pa_v-t\pa_z)(\Psi\Upsilon)_M\\
\nonumber&\quad+\sum_{M\in \mathcal{D}}\sum_{\f18M\leq M'\leq 8M}(h_2)_{M'}(\pa_v-t\pa_z)(\Psi\Upsilon)_{M}\\
\nonumber&=\mathrm{T}^2_{\mathrm{HL}}+\mathrm{T}^2_{\mathrm{LH}}+\mathrm{T}^2_{\mathcal{R}}.
\end{align}
By following the argument of the {\it proof of Proposition 2.4} in \cite{BM1}, we have 
\begin{align*}
&\left\|\mathcal{M}_1\mathrm{T}^2_{\mathrm{LH}}\right\|_2^2+\left\|\mathcal{M}_1\mathrm{T}^1_{\mathrm{LH}}\right\|_2^2\lesssim \ep^2\left\|\mathcal{M}_1\Delta_{L}P_{\neq}(\Psi\Upsilon)\right\|_2^2\\
&\left\|\mathcal{M}_2\mathrm{T}^2_{\mathrm{LH}}\right\|_2^2+\left\|\mathcal{M}_2\mathrm{T}^1_{\mathrm{LH}}\right\|_2^2\lesssim \ep^2\left\|\mathcal{M}_1\Delta_{L}P_{\neq}(\Psi\Upsilon)\right\|_2^2+\ep^2\left\|\mathcal{M}_2\Delta_{L}P_{\neq}(\Psi\Upsilon)\right\|_2^2,
\end{align*}
and
\begin{align*}
&\left\|\mathcal{M}_1\mathrm{T}^2_{\mathrm{HL}}\right\|_2^2\lesssim \ep^2\left\|\mathcal{M}_1\Delta_{L}(\Psi\Upsilon)\right\|_2^2+\ep^2\mathrm{CCK}_{\la}^2,\\
&\left\|\mathcal{M}_1\mathrm{T}^1_{\mathrm{HL}}\right\|_2^2\lesssim \ep^2\left\|\mathcal{M}_1\Delta_{L}(\Psi\Upsilon)\right\|_2^2+\ep^2\mathrm{CCK}_{\la}^1,\\
&\left\|\mathcal{M}_2\mathrm{T}^1_{\mathrm{HL}}\right\|_2^2+\left\|\mathcal{M}_2\mathrm{T}^2_{\mathrm{HL}}\right\|_2^2\lesssim \ep^2\left\|\mathcal{M}_2\Delta_{L}(\Psi\Upsilon)\right\|_2^2+\ep^2\left\|\mathcal{M}_1\Delta_{L}(\Psi\Upsilon)\right\|_2^2\\
&\qquad\qquad\qquad\qquad\qquad\qquad\ 
+\ep^2\mathrm{CCK}_{\la}^1+\ep^2\mathrm{CCK}_{w}^1+\ep^2\mathrm{CCK}_{\la}^2+\ep^2\mathrm{CCK}_{w}^2,
\end{align*}
and
\begin{align*}
&\left\|\mathcal{M}_1\mathrm{T}_{\mathcal{R}}^1\right\|_2^2\lesssim \ep^2\left\|\mathcal{M}_1\Delta_{L}(\Psi\Upsilon)\right\|_2^2,\\
&\left\|\mathcal{M}_2\mathrm{T}_{\mathcal{R}}^1\right\|_2^2+\left\|\mathcal{M}_2\mathrm{T}_{\mathcal{R}}^2\right\|_2^2\lesssim \ep^2\left\|\mathcal{M}_1\Delta_{L}(\Psi\Upsilon)\right\|_2^2+\ep^2\left\|\mathcal{M}_2\Delta_{L}(\Psi\Upsilon)\right\|_2^2. 
\end{align*}
Together we have 
\begin{align*}
&\left\|\mathcal{M}_1\Delta_{L}(\Psi\Upsilon)\right\|_2^2+\left\|\mathcal{M}_2\Delta_{L}(\Psi\Upsilon)\right\|_2^2\\
&\lesssim \left\|\mathcal{M}_1\Om\right\|_2^2+\left\|\mathcal{M}_2\Om\right\|_2^2+\ep^2\left\|\mathcal{M}_1\Delta_{L}(\Psi\Upsilon)\right\|_2^2+\ep^2\left\|\mathcal{M}_2\Delta_{L}(\Psi\Upsilon)\right\|_2^2\\
&\quad+\ep^2\mathrm{CCK}_{\la}^1+\ep^2\mathrm{CCK}_{w}^1+\ep^2\mathrm{CCK}_{\la}^2+\ep^2\mathrm{CCK}_{w}^2.
\end{align*}
This completes the proof of the Proposition \ref{prop:elliptic}. 

\section{The $\Pi$ term}
In this section, we study $\Pi$ defined in \eqref{eq:energy-2}. For $0<|k|<k_{M}$, let 
\beno
\Pi_k=\int\rmA_k(\eta)\overline{\mathcal{F}_2\bfD_{u,k}(\mathcal{F}_{1}\Om)}_k(\eta)\rmA_k(\eta)\mathcal{F}_2\Big(\bfD_{u,k}\left(\mathcal{F}_{1}\Big(\rmU\cdot\na_{z,v}\Om\Big)\right)(t,k,\cdot)\Big)(\eta)d\eta
\eeno
then
\begin{align*}
\Pi_k&=\int\rmA_k(\eta)\overline{\mathcal{F}_2\bfD_{u,k}(\mathcal{F}_{1}\Om)}_k(\eta)\rmA_k(\eta)\mathcal{F}_2\Big(\bfD_{u,k}\left(\mathcal{F}_{1}\Big(\rmU\cdot\na_{z,v}\Om\Big)\right)(t,k,\cdot)\Big)(\eta)d\eta\\
&=\int\rmA_k(\eta)\overline{\mathcal{F}_2\bfD_{u,k}(\mathcal{F}_{1}\Om)}_k(\eta)\rmA_k(\eta)\mathcal{F}_2\Big(\tchi_2\bfD_{u,k}^1\left(\tchi_2\mathcal{F}_{1}\Big(\rmU\cdot\na_{z,v}\Om\Big)\right)(t,k,\cdot)\Big)(\eta)d\eta\\
&\quad+\int\rmA_k(\eta)\overline{\mathcal{F}_2\bfD_{u,k}(\mathcal{F}_{1}\Om)}_k(\eta)\rmA_k(\eta)
\mathcal{F}_2\Big[\big(\tchi_2\bfD_{u,k}-\tchi_2\bfD_{u,k}^1\big)\left(\tchi_2\mathcal{F}_{1}\Big(\rmU\cdot\na_{z,v}\Om\Big)\right)(t,k,\cdot)\Big](\eta)d\eta\\
&=\int\rmA_k(\eta)\overline{\widehat{\Om}}_k(\eta)\rmA_k(\eta)\widehat{\big(\rmU\cdot\na_{z,v}\Om\big)}_k(\eta)d\eta\\
&\quad+\int(\rmA_k(\eta)-\rmA_k(\xi))\overline{\mathcal{D}(t,k,\eta,\xi)\hat{\Om}_k(\xi)}\rmA_k(\eta)\mathcal{F}_2\Big(\tchi_2\bfD_{u,k}^1\left(\tchi_2\mathcal{F}_{1}\Big(\rmU\cdot\na_{z,v}\Om\Big)\right)(t,k,\cdot)\Big)(\eta)d\xi d\eta\\
&\quad+\sum_{l}\int\overline{\mathcal{F}_2\bfD_{u,k}\big(\mathcal{F}_2^{-1}(\rmA_k(\eta)\widehat{\Om}_k)\big)}(\eta)\\
&\qquad\qquad\qquad\qquad
\times(\rmA_k(\eta)-\rmA_k(\xi))\mathcal{D}^1(t,k,\eta,\xi)\Big(\widehat{\rmU}_{k-l}(\xi-\xi_1)\cdot(il,i\xi_1)\widehat{\Om}_l(\xi_1)\Big)d\xi_1 d\xi d\eta\\
&\quad+\int\rmA_k(\eta)\bar{\hat{f}}_k(\eta)\rmA_k(\eta)\mathcal{F}_2\Big[\big(\tchi_2\bfD_{u,k}-\tchi_2\bfD_{u,k}^1\big)\left(\tchi_2\mathcal{F}_{1}\Big(\rmU\cdot\na_{z,v}\Om\Big)\right)(t,k,\cdot)\Big](\eta)d\eta\\
&=\Pi_k^{tr3}+\Pi_k^{com1}+\Pi_k^{com2}+\Pi_k^{com3}=\Pi_k^{tr3}+\Pi_k^{com},
\end{align*}
and then
\ben\label{eq: Pi^com}
\Pi=-\sum_{0<|k|<k_M}\Pi_k=-\sum_{0<|k|<k_M}\Pi_k^{tr3}-\sum_{0<|k|<k_M}\Pi_k^{com}\eqdef \Pi^{tr3}+\Pi^{com}. 
\een

\subsection{Estimate of $\Pi^{com1}_k$}
By Proposition \ref{prop: kernel-wave-op}, we have
\begin{align*}
|\Pi_k^{com1}|&\lesssim \sum_l\int\Big(\f{\rmA_k(\eta)}{\rmA_k(\xi)}-1\Big)e^{-\la_{\mathcal{D}}|\eta-\xi|^{s_0}}\rmA_k(\xi)|\Om_k(\xi)|\rmA_k(\eta)e^{-\la_{\mathcal{D}}|\eta-\xi_1|^{s_0}}\\
&\qquad \qquad \qquad \qquad \qquad 
\times|\widehat{\rmU}_{k-l}(\xi_1-\xi_2)||l,\xi_2||\widehat{\Om}_l(\xi_2)|d\xi d\xi_2d\xi_1d\eta\\
&\lesssim \sum_l\int 1_{D}\Big(\f{\rmA_k(\eta)}{\rmA_k(\xi)}-1\Big)e^{-\la_{\mathcal{D}}|\eta-\xi|^{s_0}}\rmA_k(\xi)|\Om_k(\xi)|\rmA_k(\eta)e^{-\la_{\mathcal{D}}|\eta-\xi_1|^{s_0}}\\
&\qquad \qquad \qquad \qquad \qquad 
\times|\widehat{\rmU}_{k-l}(\xi_1-\xi_2)||l,\xi_2||\widehat{\Om}_l(\xi_2)|d\xi d\xi_2d\xi_1d\eta\\
&\quad+\sum_l\int 1_{D^c}\Big(\f{\rmA_k(\eta)}{\rmA_k(\xi)}-1\Big)e^{-\la_{\mathcal{D}}|\eta-\xi|^{s_0}}\rmA_k(\xi)|\Om_k(\xi)|\rmA_k(\eta)e^{-\la_{\mathcal{D}}|\eta-\xi_1|^{s_0}}\\
&\qquad \qquad \qquad \qquad \qquad 
\times|\widehat{\rmU}_{k-l}(\xi_1-\xi_2)||l,\xi_2||\widehat{\Om}_l(\xi_2)|d\xi d\xi_2d\xi_1d\eta\\
&=\Pi^{com1}_{k,1}+\Pi^{com1}_{k,2}.
\end{align*}
where $D=\left\{|\eta|\geq 100k_{M}, \ |\eta-\xi|\leq \f{1}{100}|\eta|,\  |\eta-\xi_1|\leq\f{1}{100}|\eta|\right\}$ and $1_{D^c}=1-1_{D}$. 
The second term is easy to deal with. We have that
\begin{align*}
\f{\rmA_k(\eta)}{\rmA_k(\xi)}\lesssim e^{C\la|\eta-\xi|^s},
\end{align*}
then by the fact that $\la$ is chosen much smaller than $\la_{\mathcal{D}}$, we have that for $|k|<k_{M}$, 
\begin{align}\label{eq:Pi_2^com1}
\Pi^{com1}_{k,2}
&\lesssim \|\rmA\Om\|_{L^2}\|\rmU\|_{\mathcal{G}^{c\la,3;s}}\|\Om\|_{\mathcal{G}^{c\la,3;s}}
\lesssim \f{\ep^3}{t^{2-\rmK_{\rmD}\ep/2}},
\end{align}
with $c\in (0,1)$. 

By using the Bony decomposition, we write 
\begin{align*}
\Pi^{com1}_{k,1}
&=\sum_{\rmN\geq 8}\sum_l\int 1_{D}\Big(\f{\rmA_k(\eta)}{\rmA_k(\xi)}-1\Big)e^{-\la_{\mathcal{D}}|\eta-\xi|^{s_0}}\rmA_k(\xi)|\Om_k(\xi)|\rmA_k(\eta)e^{-\la_{\mathcal{D}}|\eta-\xi_1|^{s_0}}\\
&\qquad \qquad \qquad \qquad \qquad 
\times|\widehat{\rmU}_{k-l}(\xi_1-\xi_2)_{<\rmN/8}||l,\xi_2||\widehat{\Om}_l(\xi_2)_{\rmN}|d\xi d\xi_2d\xi_1d\eta\\
&\quad+\sum_{\rmN\geq 8}\sum_l\int 1_{D}\Big(\f{\rmA_k(\eta)}{\rmA_k(\xi)}-1\Big)e^{-\la_{\mathcal{D}}|\eta-\xi|^{s_0}}\rmA_k(\xi)|\Om_k(\xi)|\rmA_k(\eta)e^{-\la_{\mathcal{D}}|\eta-\xi_1|^{s_0}}\\
&\qquad \qquad \qquad \qquad \qquad 
\times|\widehat{\rmU}_{k-l}(\xi_1-\xi_2)_{\rmN}||l,\xi_2||\widehat{\Om}_l(\xi_2)_{<\rmN/8}|d\xi d\xi_2d\xi_1d\eta\\
&\quad+\sum_{\rmN'\in\mathcal{D}}\sum_{\f18\rmN'\leq \rmN\leq 8\rmN'}\sum_l\int 1_{D}\Big(\f{\rmA_k(\eta)}{\rmA_k(\xi)}-1\Big)e^{-\la_{\mathcal{D}}|\eta-\xi|^{s_0}}\rmA_k(\xi)|\Om_k(\xi)|\rmA_k(\eta)e^{-\la_{\mathcal{D}}|\eta-\xi_1|^{s_0}}\\
&\qquad \qquad \qquad \qquad \qquad 
\times|\widehat{\rmU}_{k-l}(\xi_1-\xi_2)_{\rmN'}||l,\xi_2||\widehat{\Om}_l(\xi_2)_{\rmN}|d\xi d\xi_2d\xi_1d\eta\\
&=\Pi^{com1}_{k,1,\rmT}+\Pi^{com1}_{k,1,\rmR}+\Pi^{com1}_{k,1,\mathcal{R}}.
\end{align*}
The $\Pi^{com1}_{k,1,\rmR}$ term is better than the reaction term, since we gain derivatives from $\rmA_{k}(\eta)-\rmA_{k}(\xi)$. We omit the proof and show the result. One can also refer to {\it section 6} in \cite{BM1} for more details. 
\begin{align}\label{eq:com1_1,R}
\Pi^{com1}_{k,1,\rmR}\lesssim &\ep \mathrm{CK}_{\la}+\ep\mathrm{CK}_{w}+\f{\ep^3}{\langle t\rangle^{2-\rmK_{\rmD}\ep/2}}
+\ep\mathrm{CK}_{\la}^{v,1}+\ep\mathrm{CK}_{w}^{v,1}\\
\nonumber
&+\ep\left\|\left\langle\f{\pa_v}{t\pa_z}\right\rangle^{-1}(\pa_z^2+(\pa_v-t\pa_z)^2)\left(\f{|\na|^{\f{s}{2}}}{\langle t\rangle^s}\rmA+\sqrt{\f{\pa_t w}{w}}\tilde{\rmA}\right)P_{\neq}(\Psi\Upsilon)\right\|_2^2. 
\end{align}
The remainder term is also easy to deal with, we have
\ben\label{eq:com1_1,RR}
\Pi^{com1}_{k,1,\mathcal{R}}\lesssim \f{\ep^3}{\langle t\rangle^{2-\rmK_{\rmD}\ep/2}}. 
\een
Decompose the difference:
\begin{align}\label{eq:Decompose}
\rmA_{k}(\eta)-\rmA_{k}(\xi)&=\rmA_k(\xi)\left[e^{\la|k,\eta|^s-\la|k,\xi|^s}-1\right]\\
\nonumber
&\quad+\rmA_k(\xi)e^{\la|k,\eta|^s-\la|k,\xi|^s}\left[\f{\rmJ_k(\eta)}{\rmJ_{k}(\xi)}-1\right]\f{\langle k,\eta\rangle^{\s}}{\langle k,\xi\rangle^{\s}}\\
\nonumber
&\quad+\rmA_k(\xi)e^{\la|k,\eta|^s-\la|k,\xi|^s}\left[\f{\langle k,\eta\rangle^{\s}}{\langle k,\xi\rangle^{\s}}-1\right]. 
\end{align}
We write
\begin{align*}
\Pi^{com1}_{k,1,\rmT}&\lesssim \sum_{\rmN\geq 8}\sum_l\int 1_{D}e^{-\la_{\mathcal{D}}|\eta-\xi|^{s_0}}\rmA_k(\xi)|\Om_k(\xi)|\left[e^{\la|k,\eta|^s-\la|k,\xi|^2}-1\right]\rmA_k(\eta)e^{-\la_{\mathcal{D}}|\eta-\xi_1|^{s_0}}\\
&\qquad \qquad \qquad \qquad \qquad 
\times|\widehat{\rmU}_{k-l}(\xi_1-\xi_2)_{<\rmN/8}||l,\xi_2||\widehat{\Om}_l(\xi_2)_{\rmN}|d\xi d\xi_2d\xi_1d\eta\\
&\quad+\sum_{\rmN\geq 8}\sum_l\int 1_{D}e^{-\la_{\mathcal{D}}|\eta-\xi|^{s_0}}\rmA_k(\xi)|\Om_k(\xi)|e^{\la|k,\eta|^s-\la|k,\xi|^s}\left[\f{\rmJ_k(\eta)}{\rmJ_{k}(\xi)}-1\right]\f{\langle k,\eta\rangle^{\s}}{\langle k,\xi\rangle^{\s}}\rmA_k(\eta)e^{-\la_{\mathcal{D}}|\eta-\xi_1|^{s_0}}\\
&\qquad \qquad \qquad \qquad \qquad 
\times|\widehat{\rmU}_{k-l}(\xi_1-\xi_2)_{<\rmN/8}||l,\xi_2||\widehat{\Om}_l(\xi_2)_{\rmN}|d\xi d\xi_2d\xi_1d\eta\\
&\quad+\sum_{\rmN\geq 8}\sum_l\int 1_{D}e^{-\la_{\mathcal{D}}|\eta-\xi|^{s_0}}\rmA_k(\xi)|\Om_k(\xi)|e^{\la|k,\eta|^s-\la|k,\xi|^s}\left[\f{\langle k,\eta\rangle^{\s}}{\langle k,\xi\rangle^{\s}}-1\right]\rmA_k(\eta)e^{-\la_{\mathcal{D}}|\eta-\xi_1|^{s_0}}\\
&\qquad \qquad \qquad \qquad \qquad 
\times|\widehat{\rmU}_{k-l}(\xi_1-\xi_2)_{\rmN/8}||l,\xi_2||\widehat{\Om}_l(\xi_2)_{\rmN}|d\xi d\xi_2d\xi_1d\eta\\
&=\Pi^{com1}_{k,1,1,\rmT}+\Pi^{com1}_{k,1,2,\rmT}+\Pi^{com1}_{k,1,3,\rmT}. 
\end{align*}
First we treat $\Pi^{com1}_{k,1,1,\rmT}$. Since on the support of the integrand
\beno
&&|k,\xi|\approx |k,\xi_1|,\quad
\big||k,\xi_1|-|l,\xi_2|\big|\leq |k-l,\xi_1-\xi_2|\leq \f{6}{32}|l,\xi_2|,\\
&&\f{26}{32}|l,\xi_2|\leq |k,\xi_1|\leq \f{38}{32}|l,\xi_2|,
\eeno
it gives us that
\begin{align*}
\Pi^{com1}_{k,1,1,\rmT}\lesssim \la\sum_{\rmN\geq 8}\left\||\na|^{s/2}\rmA\Om_{\sim\rmN}\right\|_2\|U\|_{\mathcal{G}^{\la,\s-4;s}}\left\||\na|^{s/2}\rmA\Om_{\rmN}\right\|_2
\lesssim \f{\la\ep}{\langle t\rangle^{2-\rmK_{\rmD}\ep/2}}\left\||\na|^{s/2}\rmA\Om\right\|_2^2. 
\end{align*}
Similarly, we can treat $\Pi^{com1}_{k,1,3}$. Here we omit the proof. 
\beq\label{eq: Pi^com1_1,1}
|\Pi^{com1}_{k,1,3,\rmT}|\lesssim \f{\ep^3}{\langle t\rangle^{2-\rmK_{\rmD}\ep/2}}.
\eeq
We now treat $\Pi^{com1}_{k,1,2,\rmT}$. 
We divide the integral as follows
\begin{align*}
\Pi^{com1}_{k,1,2,\rmT}&\lesssim\sum_{\rmN\geq 8}\sum_l\int 1_{D}[1_S+1_{L}]e^{-\la_{\mathcal{D}}|\eta-\xi|^{s_0}}\rmA_k(\xi)|\Om_k(\xi)|e^{c\la|\eta-\xi|^s}\left[\f{\rmJ_k(\eta)}{\rmJ_{k}(\xi)}-1\right]\\
&\qquad \qquad \qquad
\times\rmA_k(\xi_1)e^{-c\la_{\mathcal{D}}|\eta-\xi_1|^{s_0}}|\widehat{\rmU}_{k-l}(\xi_1-\xi_2)_{<\rmN/8}||l,\xi_2||\widehat{\Om}_l(\xi_2)_{\rmN}|d\xi d\xi_2d\xi_1d\eta\\
&=\Pi^{com1}_{k,1,2,\rmT,S}+\Pi^{com1}_{k,1,2,\rmT,L},
\end{align*}
where $S=\{t\leq \f12\min(\sqrt{|\xi|},\sqrt{|\eta|})\}$, $1_{L}=1-1_{S}$. 

In $\Pi^{com1}_{k,1,2,\rmT,S}$, we apply {\it Lemma 3.7} in \cite{BM1} to gain $1/2$ derivatives. Indeed, we have
\begin{align*}
\Pi^{com1}_{k,1,2,\rmT,S}\lesssim \f{\ep}{\langle t\rangle^{2-\rmK_{\rmD}\ep/2}}\left\||\na|^{s/2}\rmA\Om\right\|_2^2+\f{\ep^3}{\langle t\rangle^{2-\rmK_{\rmD}\ep/2}}. 
\end{align*}
To treat $\Pi^{com1}_{k,1,2,\rmT,L}$, we divide into two cases based on the relative size of $|l|$ and $|\xi_2|$
\begin{align*}
\Pi^{com1}_{k,1,2,\rmT,L}&\lesssim\sum_{\rmN\geq 8}\sum_l\int 1_{D}1_{L}[1_{|l|>100|\xi_2|}+1_{|l|\leq 100|\xi_2|}]e^{-\la_{\mathcal{D}}|\eta-\xi|^{s_0}}\rmA_k(\xi)|\Om_k(\xi)|e^{c\la|\eta-\xi|^s}e^{10\mu|\eta-\xi|^{\f12}}\\
&\qquad \qquad \qquad
\times\rmA_k(\xi_1)e^{-c\la_{\mathcal{D}}|\eta-\xi_1|^{s_0}}|\widehat{\rmU}_{k-l}(\xi_1-\xi_2)_{<\rmN/8}||l,\xi_2||\widehat{\Om}_l(\xi_2)_{\rmN}|d\xi d\xi_2d\xi_1d\eta\\
&=\Pi^{com1}_{k,1,2,\rmT,L,z}+\Pi^{com1}_{k,1,2,\rmT,L,v}.
\end{align*}
In the first case $|k-l|\leq \f{1}{5}|l|$ and $|k|\leq k_{M}$, we get that 
\beno
\Pi^{com1}_{k,1,2,\rmT,L,z}\lesssim \f{\ep^3}{\langle t\rangle^{2-\rmK_{\rmD}\ep/2}}.
\eeno
Turn now to $\Pi^{com1}_{k,1,2,\rmT,L,v}$. Note that $|l,\xi_2|\lesssim |\xi_2|\approx |\xi|\lesssim t^2$. We have $|l,\xi_2|\lesssim |k,\xi|^{s/2}|l,\xi_2|^{s/2}t^{2-2s}$
\beno
\Pi^{com1}_{k,1,2,\rmT,L,v}\lesssim \f{\ep}{\langle t\rangle^{2s-\rmK_{\rmD}\ep/2}}\big\||\na|^{s/2}\rmA\Om\big\|_2^2.
\eeno
Thus we conclude that
\begin{align}\label{eq:com1_1,T}
\Pi^{com1}_{k,1,\rmT}\lesssim \f{\ep}{\langle t\rangle^{2-\rmK_{\rmD}\ep/2}}\left\||\na|^{s/2}\rmA\Om\right\|_2^2+\f{\ep^3}{\langle t\rangle^{2-\rmK_{\rmD}\ep/2}}+\f{\ep}{\langle t\rangle^{2s-\rmK_{\rmD}\ep/2}}\big\||\na|^{s/2}\rmA\Om\big\|_2^2. 
\end{align}
\subsection{Estimate of $\Pi_k^{com2}$}
Now we treat $\Pi_k^{com2}$. We have
\begin{align*}
\Pi^{com2}_k&\lesssim \sum_{l}\int\mathcal{D}(t,k,\eta,\xi_2)\rmA_k(\xi_2)\bar{\hat{\Om}}_k(\xi_2)(\rmA_k(\eta)-\rmA_k(\xi))\\
&\qquad\qquad\qquad\qquad
\times\mathcal{D}^1(t,k,\eta,\xi)\Big(\widehat{\rmU}_{k-l}(\xi-\xi_1)\cdot(il,i\xi_1)\widehat{\Om}_l(\xi_1)\Big)d\xi_2d\xi_1 d\xi d\eta\\
&\lesssim \sum_{l}\int [1_{D}+1_{D^c}]e^{-\la_{\mathcal{D}}|\eta-\xi_2|^{s_0}}\rmA_k(\xi_2)\bar{\hat{\Om}}_k(\xi_2)\Big[\f{\rmA_k(\eta)}{\rmA_k(\xi)}-1\Big]\\
&\qquad\qquad\qquad\qquad
\times e^{-\la_{\mathcal{D}}|\eta-\xi|^{s_0}}\rmA_k(\xi)\Big(\widehat{\rmU}_{k-l}(\xi-\xi_1)\cdot(il,i\xi_1)\widehat{\Om}_l(\xi_1)\Big)d\xi_2d\xi_1 d\xi d\eta\\
&=\Pi_{k,1}^{com2}+\Pi_{k,2}^{com2},
\end{align*}
where $D=\{|\eta-\xi|\leq \f{1}{100}|\xi|,\ |\xi|\geq 100k_{M}\}$ and $1_{D^c}=1-1_{D}$. 

Then we have 
\ben\label{eq:Pi^com2_2}
|\Pi_{k,2}^{com2}|\lesssim \f{\ep^3}{\langle t\rangle^{2-\rmK_{\rmD}\ep/2}}. 
\een

Similarly, we write
\begin{align*}
\Pi_{k,1}^{com2}&\lesssim \sum_{\rmN\geq 8}\sum_{l}\int 1_{D}e^{-\la_{\mathcal{D}}|\eta-\xi_2|^{s_0}}\rmA_k(\xi_2)\bar{\hat{\Om}}_k(\xi_2)\Big[\f{\rmA_k(\eta)}{\rmA_k(\xi)}-1\Big]\\
&\qquad\qquad\qquad\qquad
\times e^{-\la_{\mathcal{D}}|\eta-\xi|^{s_0}}\rmA_k(\xi)\Big(\widehat{\rmU}_{k-l}(\xi-\xi_1)_{<\rmN/8}|l,\xi_1|\widehat{\Om}_l(\xi_1)_{\rmN}\Big)d\xi_2d\xi_1 d\xi d\eta\\
&\quad+\sum_{\rmN\geq 8}\sum_{l}\int 1_{D}e^{-\la_{\mathcal{D}}|\eta-\xi_2|^{s_0}}\rmA_k(\xi_2)\bar{\hat{\Om}}_k(\xi_2)\Big[\f{\rmA_k(\eta)}{\rmA_k(\xi)}-1\Big]\\
&\qquad\qquad\qquad\qquad
\times e^{-\la_{\mathcal{D}}|\eta-\xi|^{s_0}}\rmA_k(\xi)\Big(\widehat{\rmU}_{k-l}(\xi-\xi_1)_{\rmN}|l,\xi_1|\widehat{\Om}_l(\xi_1)_{<\rmN/8}\Big)d\xi_2d\xi_1 d\xi d\eta\\
&\quad+\sum_{\rmN'\in\mathcal{D}}\sum_{\f18\rmN'\leq \rmN\leq 8\rmN'}\sum_{l}\int 1_{D}e^{-\la_{\mathcal{D}}|\eta-\xi_2|^{s_0}}\rmA_k(\xi_2)\bar{\hat{\Om}}_k(\xi_2)\Big[\f{\rmA_k(\eta)}{\rmA_k(\xi)}-1\Big]\\
&\qquad\qquad\qquad\qquad
\times e^{-\la_{\mathcal{D}}|\eta-\xi|^{s_0}}\rmA_k(\xi)\Big(\widehat{\rmU}_{k-l}(\xi-\xi_1)_{\rmN}|l,\xi_1|\widehat{\Om}_l(\xi_1)_{\rmN'}\Big)d\xi_2d\xi_1 d\xi d\eta\\
&=\Pi_{k,1,\rmT}^{com2}+\Pi_{k,1,\rmR}^{com2}+\Pi_{k,1,\mathcal{R}}^{com2}. 
\end{align*}
Similar to the $\Pi_{k,1,\rmR}^{com1}$ and $\Pi_{k,1,\mathcal{R}}^{com1}$, we can use the same argument as the estimate of the reaction term and remainder term in {\it section 6 and 7} of \cite{BM1}. We omit the proof and show the result.
\begin{align}\label{eq:Pi_1,RandPi_1,RRcom2}
|\Pi^{com2}_{k,1,\rmR}|+|\Pi^{com2}_{k,1,\mathcal{R}}|\lesssim &\ep \mathrm{CK}_{\la}+\ep\mathrm{CK}_{w}+\f{\ep^3}{\langle t\rangle^{2-\rmK_{\rmD}\ep/2}}
+\ep\mathrm{CK}_{\la}^{v,1}+\ep\mathrm{CK}_{w}^{v,1}\\
\nonumber
&+\ep\left\|\left\langle\f{\pa_v}{t\pa_z}\right\rangle^{-1}(\pa_z^2+(\pa_v-t\pa_z)^2)\left(\f{|\na|^{\f{s}{2}}}{\langle t\rangle^s}\rmA+\sqrt{\f{\pa_t w}{w}}\tilde{\rmA}\right)P_{\neq}(\Psi\Upsilon)\right\|_2^2. 
\end{align}
We treat $\Pi^{com2}_{k,1,\rmT}$. Since on the support of the integrand
\begin{align*}
&||k,\xi|-|l,\xi_1||\leq |k-l,\xi-\xi_1|\leq \f{6}{32}|l,\xi_1|,\\
&\f{26}{32}|l,\xi_1|\leq |k,\xi|\leq \f{38}{32}|l,\xi_1|,\ 0<|k|\leq k_{M}, \\
&|k,\xi_2|+|\xi_2|\geq |k,\eta|\approx |\eta|\approx|k,\xi|\approx |\xi|\geq 100k_{M}.
\end{align*}
By using the composition in \eqref{eq:Decompose} and following the same argument as the treatments of $\Pi_{k,1,1,\rmT}^{com1}$, $\Pi_{k,1,2,\rmT}^{com1}$ and $\Pi_{k,1,3,\rmT}^{com1}$, we get that
\begin{align}\label{eq:com2_1,T}
\Pi^{com2}_{k,1,\rmT}\lesssim \f{\ep}{\langle t\rangle^{2s-\rmK_{\rmD}\ep/2}}\left\||\na|^{s/2}\rmA\Om\right\|_2^2+\f{\ep^3}{\langle t\rangle^{2-\rmK_{\rmD}\ep/2}}. 
\end{align}

\subsection{Estimate of $\Pi_k^{com3}$}
By Corollary \ref{corol: commutator}, we get that
\begin{align*}
\Pi_k^{com3}&=i\sum_l\int\rmA_k(\eta)\bar{\hat{f}}_k(\eta)
\rmA_k(\eta)\mathcal{D}^{com}(t,k,\eta,\xi)\widehat{\rmU}_{k-l}(\xi-\xi_1)\cdot(l,\xi_1)\hat{\Om}_l(\xi_1)d\xi_1 d\xi d\eta\\
&\lesssim \sum_l\int 1_{D^c}\rmA_k(\eta)|\bar{\hat{f}}_k(\eta)|
\f{e^{-c\la_{\mathcal{D}}|\eta-\xi|^{s_0}}}{1+|\eta-kt|^2+|\xi-kt|^2}\rmA_{k}(\xi)\widehat{\rmU}_{k-l}(\xi-\xi_1)|l,\xi_1|\hat{\Om}_l(\xi_1)d\xi_1 d\xi d\eta\\
&\quad+\sum_{\rmN\geq 8}\sum_l\int 1_{D}\rmA_k(\eta)|\bar{\hat{f}}_k(\eta)|
\f{e^{-c\la_{\mathcal{D}}|\eta-\xi|^{s_0}}}{1+|\eta-kt|^2+|\xi-kt|^2}\\
&\qquad\qquad\qquad\qquad\qquad\qquad
\times \rmA_{k}(\xi)|\widehat{\rmU}_{k-l}(\xi-\xi_1)_{<\rmN/8}||l,\xi_1||\hat{\Om}_l(\xi_1)_{\rmN}|d\xi_1 d\xi d\eta\\
&\quad+\sum_{\rmN\geq 8}\sum_l\int 1_{D}\rmA_k(\eta)|\bar{\hat{f}}_k(\eta)|
\f{e^{-c\la_{\mathcal{D}}|\eta-\xi|^{s_0}}}{1+|\eta-kt|^2+|\xi-kt|^2}\\
&\qquad\qquad\qquad\qquad\qquad\qquad
\times \rmA_{k}(\xi)|\widehat{\rmU}_{k-l}(\xi-\xi_1)_{\rmN}||l,\xi_1||\hat{\Om}_l(\xi_1)_{<\rmN/8}|d\xi_1 d\xi d\eta\\
&\quad+\sum_{\rmN'\in \mathcal{D}}\sum_{\f18\rmN'\leq \rmN\leq 8\rmN'}\sum_l\int 1_{D}\rmA_k(\eta)|\bar{\hat{f}}_k(\eta)|
\f{e^{-c\la_{\mathcal{D}}|\eta-\xi|^{s_0}}}{1+|\eta-kt|^2+|\xi-kt|^2}\\
&\qquad\qquad\qquad\qquad\qquad\qquad
\times \rmA_{k}(\xi)|\widehat{\rmU}_{k-l}(\xi-\xi_1)_{\rmN}||l,\xi_1||\hat{\Om}_l(\xi_1)_{\rmN'}|d\xi_1 d\xi d\eta\\
&=\Pi^{com3}_{k,2}+\Pi^{com3}_{k,1,\rmT}+\Pi^{com3}_{k,1,\rmR}
+\Pi^{com3}_{k,1,\mathcal{R}},
\end{align*}
where $D=\{|\eta-\xi|\leq \f{1}{100}|\xi|,\  |\xi|\geq 100k_{M}\}$ and $1_{D^c}=1-1_{D}$. 

The term $\Pi^{com3}_{k,2}$ is easy to deal with, we have
\ben\label{eq:com3_2}
|\Pi^{com3}_{k,2}|\lesssim \f{\ep^3}{\langle t\rangle^{2-\rmK_{\rmD}\ep/2}}. 
\een
Similar to the above cases, $\Pi^{com3}_{k,1,\rmR}$ and $\Pi^{com3}_{k,1,\mathcal{R}}$ are easy to treat, by following the estimate in {\it sections 6 and 7} of \cite{BM1}. We have
\begin{align}\label{eq:Pi_1,RandPi_1,RRcom3}
|\Pi^{com3}_{k,1,\rmR}|+|\Pi^{com3}_{k,1,\mathcal{R}}|&\lesssim \ep \mathrm{CK}_{\la}+\ep\mathrm{CK}_{w}+\f{\ep^3}{\langle t\rangle^{2-\rmK_{\rmD}\ep/2}}
+\ep\mathrm{CK}_{\la}^{v,1}+\ep\mathrm{CK}_{w}^{v,1}\\
\nonumber
&+\ep\left\|\left\langle\f{\pa_v}{t\pa_z}\right\rangle^{-1}(\pa_z^2+(\pa_v-t\pa_z)^2)\left(\f{|\na|^{\f{s}{2}}}{\langle t\rangle^s}\rmA+\sqrt{\f{\pa_t w}{w}}\tilde{\rmA}\right)P_{\neq}(\Psi\Upsilon)\right\|_2^2. 
\end{align}
We treat $\Pi^{com3}_{k,1,\rmT}$. Since on the support of the integrand
\begin{align*}
&||k,\xi|-|l,\xi_1||\leq |k-l,\xi-\xi_1|\leq \f{6}{32}|l,\xi_1|,\\
&\f{26}{32}|l,\xi_1|\leq |k,\xi|\leq \f{38}{32}|l,\xi_1|,\ 0<|k|\leq k_{M}, \\
&|k,\eta|\approx |\eta|\approx|k,\xi|\approx |\xi|\geq 100k_{M}.
\end{align*}
We write 
\begin{align*}
\Pi^{com3}_{k,1,\rmT}&=\sum_{\rmN\geq 8}\sum_l\int 1_{D}[1_{R}+1_{NR}]\rmA_k(\eta)|\bar{\hat{f}}_k(\eta)|
\f{e^{-c\la_{\mathcal{D}}|\eta-\xi|^{s_0}}}{1+|\eta-kt|^2+|\xi-kt|^2}\\
&\qquad\qquad\qquad\qquad\qquad\qquad
\times \rmA_{k}(\xi)|\widehat{\rmU}_{k-l}(\xi-\xi_1)_{<\rmN/8}||l,\xi_1||\hat{\Om}_l(\xi_1)_{\rmN}|d\xi_1 d\xi d\eta\\
&=\Pi^{com3}_{k,1,\rmT,1}+\Pi^{com3}_{k,1,\rmT,2},
\end{align*}
where $R=\{t\in \mathrm{I}_{k,\eta}\cap\mathrm{I}_{k,\xi}\}$ and $1_{NR}=1-1_{R}$. \\
For $\Pi^{com3}_{k,1,\rmT,2}$, by the fact that $|\eta-kt|^2+|\xi-kt|^2\gtrsim \f{|\eta|^2}{k^2}\gtrsim |\eta|^2$, we have
\ben\label{com3_1,T,2}
|\Pi^{com3}_{k,1,\rmT,2}|\lesssim \f{\ep^3}{\langle t\rangle^{2-\rmK_{\rmD}\ep/2}}. 
\een
Now we treat $\Pi^{com3}_{k,1,\rmT,1}$, we have
\begin{align*}
\Pi^{com3}_{k,1,\rmT,1}&\lesssim \sum_{\rmN\geq 8}\sum_l\int 1_{D}1_{R}\rmA_k(\eta)|\bar{\hat{f}}_k(\eta)|
\f{e^{-c\la_{\mathcal{D}}|\eta-\xi|^{s_0}}|k,\eta|}{k^2+|\eta-kt|^2}\\
&\qquad\qquad\qquad\qquad\qquad\qquad
\times \rmA_{k}(\xi)|\widehat{\rmU}_{k-l}(\xi-\xi_1)_{<\rmN/8}||\hat{\Om}_l(\xi_1)_{\rmN}|d\xi_1 d\xi d\eta.
\end{align*}
It can be regarded as the action of the resonant modes $|\bar{\hat{f}}_k(\eta)|
\f{|k,\eta|}{k^2+|\eta-kt|^2}$ on $\hat{\Om}_l(\xi_1)$ which can be either non-resonant or resonant. Due to the fact that $\rmU$ decays as $\langle t\rangle^{-2+\rmK_{\rmD}\ep/2}$,
this term is better than $\rmR^{\rmN\rmR,\rmR}_{\rmN}$ and $\rmR^{\rmR,\rmR}_{\rmN}$ in {\it section 6.1.3 and 6.1.4} of \cite{BM1}. 
Thus we obtain that
\begin{align}\label{eq:Pi_1,Tcom3}
|\Pi^{com3}_{k,1,\rmT,1}|\lesssim& \ep \mathrm{CK}_{\la}+\ep\mathrm{CK}_{w}+\f{\ep^3}{\langle t\rangle^{2-\rmK_{\rmD}\ep/2}}. 
\end{align}

Together with \eqref{eq:Pi_2^com1}, \eqref{eq:com1_1,R}, \eqref{eq:com1_1,RR}, \eqref{eq:com1_1,T}, \eqref{eq:Pi^com2_2}, \eqref{eq:Pi_1,RandPi_1,RRcom2}, \eqref{eq:com2_1,T}, \eqref{eq:com3_2}, \eqref{eq:Pi_1,RandPi_1,RRcom3}, \eqref{com3_1,T,2} and \eqref{eq:Pi_1,Tcom3}, we prove Proposition \ref{prop:com}.

\section{Nonlocal terms}\label{Sec:non-local-largetime}
First let us deal with the nonlocal term $\mathrm{II}^{u''}$. We have
\begin{align*}
\Pi^{u''}&=\sum_{\rmN\in \mathcal{D},\, \rmN\geq 8}\int \rmA\left(\udl{u''}_{\rmN}1_{|k|\geq k_{M}}P_{\neq}\pa_z(\Psi\Upsilon)_{<\f18\rmN}\right)\rmA fdx\\
&\quad+\sum_{\rmN\in \mathcal{D},\, \rmN\geq 8}\int \rmA\left(\udl{u''}_{<\f18\rmN}1_{|k|\geq k_{M}}P_{\neq}\pa_z(\Psi\Upsilon)_{\rmN}\right)\rmA f dx\\
&\quad+\sum_{\rmN\in \mathcal{D}}\sum_{\f18\rmN\leq \rmN'\leq 8\rmN}\int \rmA\left(\udl{u''}_{\rmN}1_{|k|\geq k_{M}}P_{\neq}\pa_z(\Psi\Upsilon)_{\rmN'}\right)\rmA f dx\\
&=\Pi^{u''}_{\mathrm{HL}}+\Pi^{u''}_{\mathrm{LH}}+\Pi^{u''}_{\mathcal{R}}.
\end{align*}
In this section we use the fact that 
\beno
P_{|k|\geq k_{M}}\Psi\Upsilon=\udl{\Delta_t^{-1}}P_{|k|\geq k_{M}}f.
\eeno 
\subsection{High-low interaction and high-high interaction}
The estimate of $\Pi^{u''}_{\mathrm{HL}}$ and $\Pi^{u''}_{\mathcal{R}}$ are easy. We omit the proof and show the result that
\ben\label{eq:u''}
|\Pi^{u''}_{\mathrm{HL}}|+|\Pi^{u''}_{\mathcal{R}}|\lesssim \f{\ep^2}{k_{M}\langle t\rangle^2}. 
\een

\subsection{Low-high interaction}
We get that
\begin{align*}
|\Pi^{u''}_{\mathrm{LH}}|
&\lesssim \sum_{\rmN\in \mathcal{D},\, \rmN\geq 8}\sum_{k\neq 0}\left|\int_{\xi,\eta} \rmA \overline{\widehat{f}}_k(\eta)\rmA_{k}(t,\eta)\left(\widehat{\udl{u''}}(\eta-\xi)_{<\f18\rmN}1_{|k|\geq k_{M}}|k|\widehat{(\Psi\Upsilon)}_k(\xi)_{\rmN}\right) d\xi d\eta\right|\\
&\lesssim \sum_{\rmN\in \mathcal{D},\, \rmN\geq 8}\sum_{k\neq 0}\bigg|\int_{\xi,\eta} [1_{\mathrm{NR,NR}}+1_{\mathrm{NR,R}}+1_{\mathrm{R,NR}}+1_{\mathrm{R,R}}]1_{|k|\geq k_{M}}\\
&\qquad\qquad\qquad\times\rmA \overline{\widehat{f}}_k(\eta)\rmA_{k}(t,\eta)\left(\widehat{\udl{u''}}(\eta-\xi)_{<\f18\rmN}|k|\widehat{(\Psi\Upsilon)}_k(\xi)_{\rmN}\right) d\xi d\eta\bigg|\\
&=\mathrm{II}^{u'';\mathrm{NR,NR}}_{\mathrm{LH}}
+\mathrm{II}^{u'';\mathrm{NR,R}}_{\mathrm{LH}}
+\mathrm{II}^{u'';\mathrm{R,NR}}_{\mathrm{LH}}
+\mathrm{II}^{u'';\mathrm{R,R}}_{\mathrm{LH}}
\end{align*}

\no{\bf Treatment of $\mathrm{II}^{u'';\mathrm{NR,NR}}_{\mathrm{LH}}$. }

By {\it (3.31), (6.1) and (A.11)} in \cite{BM1}, we have
\begin{align*}
\mathrm{II}^{u'';\mathrm{NR,NR}}_{\mathrm{LH}}&\lesssim \sum_{\rmN\in \mathcal{D},\, \rmN\geq 8}\|\udl{u''}\|_{\mathcal{G}^{\la}}\||\na|^{s/2}\rmA f_{\sim\rmN}\|_2
\left\|\f{|\pa_z|}{\langle\na\rangle^{s/2}}1_{\mathrm{NR}} 1_{|k|\geq k_{M}}P_{\neq}\rmA(\Psi\Upsilon)_{\rmN}\right\|_2. 
\end{align*}
\begin{remark}\label{Rmk: modified 6.11}
For any $\kappa_0>0$, there exist $k_{M}$ such that for $|k|>k_{M}$, it holds that
\beno
\langle t\rangle^{s}\left\|\f{|\pa_z|}{\langle\na\rangle^{s/2}}1_{\mathrm{NR}} 1_{|k|\geq k_{M}}P_{\neq}\rmA(\Psi\Upsilon)\right\|_2
\leq \kappa_0 \left\|\left\langle\f{\pa_v}{t\pa_z}\right\rangle^{-1}\f{|\na|^{s/2}}{\langle t\rangle^{s}}\Delta_{L}P_{\neq}\rmA (\Psi\Upsilon)\right\|_2.
\eeno
\end{remark}
\begin{proof}
The remark follows from the following claim: \\
{\bf Claim: }For any $\kappa_0>0$, there exists $k_{M}$ such that for $|k|>k_{M}$ and $t\notin \mathbf{I}_{k,\xi}$, it holds that
\beq\label{eq: 6.12modify}
\f{\langle t\rangle^{s}|k|}{\langle k,\xi\rangle^{s/2}}1_{t\notin \mathbf{I}_{k,\xi}} 1_{|k|\geq k_{M}}\leq \kappa_0 \left\langle\f{\xi}{tk}\right\rangle^{-1}\f{|k,\xi|^{s/2}}{\langle t\rangle^{s}}(k^2+(\xi-kt)^2). 
\eeq
Let consider the following three cases:

Case 1: $\left|\f{\xi}{tk}\right|\leq \f12$, then $\left\langle\f{\xi}{tk}\right\rangle^{-1}\approx 1$ and $(k^2+(\xi-kt)^2)\approx k^2t^2$, thus \eqref{eq: 6.12modify} is equivalent to prove that 
\beno
\f{\langle t\rangle^{2s}|k|}{\langle k,\xi\rangle^{s}}
\leq \kappa_0 k^2t^2,
\eeno
which is obvious when $|k|\geq k_{M}$.

Case 2: $\left|\f{\xi}{tk}\right|\geq \f32$, then $\left\langle\f{\xi}{tk}\right\rangle^{-1}\approx \f{tk}{\xi}$, $\langle k,\xi\rangle^{s}\approx |\xi|^{s}$ and $(k^2+(\xi-kt)^2)\approx k^2+\xi^2$, thus \eqref{eq: 6.12modify} is equivalent to prove that
\beno
\f{|t|^{2s}|k||\xi|}{|kt||\xi|^{s}(k^2+\xi^2)}
\leq \kappa_0. 
\eeno
Indeed we have for the left hand side that
\beno
\f{|t|^{2s}|k||\xi|}{|kt||\xi|^{s}(k^2+\xi^2)}
\approx 
|t|^{2s-1}|\xi|^{-s-1}\lesssim |t|^{s-2}|k|^{-s-1}\leq \kappa_0. 
\eeno

Case 3: $\f12\leq \left|\f{\xi}{tk}\right|\leq \f32$, then $|\xi|\approx |kt|$ and $(k^2+(\xi-kt)^2)=k^2(1+|t-\f{\xi}{k^2}|)\geq k^2+\f{\xi^2}{k^2}\approx k^2+t^2$. Thus
\beno
\f{\langle t\rangle^{2s}|k|}{\langle k,\xi\rangle^{s}(k^2+(\xi-kt)^2)}\lesssim \f{\langle t\rangle^{s}|k|^{1-s}}{(k^2+t^2)}\leq \kappa_0. 
\eeno
Thus we proved the claim, which gives the remark. 
\end{proof}
Therefore we conclude that for $k_{M}$ sufficiently large, 
\ben\label{eq:LHNRNR}
|\mathrm{II}^{u'';\mathrm{NR,NR}}_{\mathrm{LH}}|
\leq \f{\kappa_0}{\langle t\rangle^{2s}} \||\na|^{s/2}\rmA f\|_2^2+\kappa_0 \left\|\left\langle\f{\pa_v}{t\pa_z}\right\rangle^{-1}\f{|\na|^{s/2}}{\langle t\rangle^{s}}\Delta_{L}P_{|k|\geq k_{M}}\rmA (\Psi\Upsilon)\right\|_2^2.
\een

\no{\bf Treatment of $\mathrm{II}^{u'';\mathrm{R,NR}}_{\mathrm{LH}}$.}

By {\it (6.1) and (A.7)} in \cite{BM1} and Remark \ref{Rmk: modified 6.11}, for some $c\in (0,1)$, and for any $\kappa_0>0$ there exists $k_{M}$ such that
\beq\label{eq:LH,RNR}
\begin{split}
|\mathrm{II}^{u'';\mathrm{R,NR}}_{\mathrm{LH}}|
&\lesssim \sum_{\rmN\in \mathcal{D},\, \rmN\geq 8}
\sum_{k\neq 0}\int_{\xi,\eta} |\rmA\overline{\widehat{f}}_k(\eta)|\rmJ_{k}(\eta)e^{\la |k,\xi|^s}e^{c\la|\eta-\xi|^s}\langle k,\xi\rangle^{\s} |k| 1_{t\in \mathrm{I}_{k,\eta}}1_{t\notin \mathrm{I}_{k,\xi}}\\
&\qquad\qquad \qquad\qquad\times\bigg|\widehat{\udl{u''}}(\eta-\xi)_{<\f18\rmN}1_{|k|\geq k_{M}}|k|\widehat{(\Psi\Upsilon)}_k(\xi)_{\rmN}\bigg| d\xi d\eta\\
&\lesssim \sum_{\rmN\in \mathcal{D},\, \rmN\geq 8}
\||\na|^{s/2}\rmA f_{\sim \rmN}\|_2\left\|\f{\pa_z}{|\na|^{s/2}}1_{\mathrm{NR}}1_{|k|\geq k_{M}}\rmA P_{\neq}(\Psi\Upsilon)_{\rmN}\right\|_2\\
&\leq \f{\kappa_0}{\langle t\rangle^{2s}} \||\na|^{s/2}\rmA f\|_2^2+\kappa_0 \left\|\left\langle\f{\pa_v}{t\pa_z}\right\rangle^{-1}\f{|\na|^{s/2}}{\langle t\rangle^{s}}\Delta_{L}P_{|k|\geq k_{M}}\rmA (\Psi\Upsilon)\right\|_2^2. 
\end{split}\eeq

\no{\bf Treatment of $\mathrm{II}^{u'';\mathrm{NR,R}}_{\mathrm{LH}}$.}

In this case $(k,\xi)$ is resonant and $(k,\eta)$ in non-resonant. It follows that $4|k|^2\leq |\xi|$ and since $\rmN\geq 8$, we have $|\xi|\approx |\eta|$. By {\it (3.16), (3.32), (A.7), (A.12), (A.11) and (6.6)} in \cite{BM1}, 
\begin{align*}
|\mathrm{II}^{u'';\mathrm{NR,R}}_{\mathrm{LH}}|&\lesssim \sum_{\rmN\in \mathcal{D},\, \rmN\geq 8}\sum_{k\neq 0}\int_{\xi,\eta}1_{\mathrm{NR,R}}1_{|k|\geq k_{M}}\bigg[\sqrt{\f{\pa_tw_k(t,\eta)}{w_k(t,\eta)}}+\f{|k,\eta|^{s/2}}{\langle t\rangle^s}\bigg]|\rmA\widehat{f}_k(\eta)_{\sim\rmN}|\\
&
\quad\times \rmJ_k(\xi)\f{w_{\mathrm{R}}(\xi)}{w_{\mathrm{NR}}(\xi)}\sqrt{\f{w_k(t,\xi)}{\pa_tw_k(t,\xi)}}e^{\la|k,\xi|^s}e^{\la|\eta-\xi|^s}\langle k,\xi\rangle^{\s}|k|P_{\neq}\widehat{(\Psi\Upsilon)}_{k}(\xi)_{N}\widehat{\langle\pa_v\rangle\udl{u''}}(\eta-\xi)_{<\f18N}d\eta d\xi.
\end{align*}
On the support of the integrand, we have $\rmA_k(\eta)\lesssim \tilde{\rmA}_k(\eta)$ and $\rmA_k(\xi)\lesssim \tilde{\rmA}_k(\xi)$. Moreover $|\xi|\approx |kt|$. Since $t\in\mathrm{I}_{k,\xi}$, thus by {\it (3.5), (3.8) and (3.14)} in \cite{BM1}, we have $|\xi|\gtrsim k^2$
\begin{align*}
&\sqrt{\f{w(t,\xi)}{\pa_tw(t,\xi)}}|k|\f{w_{\mathrm{R}}(t,\xi)}{w_{\mathrm{NR}}(t,\xi)}1_{|k|\geq k_{M}}1_{t\in \mathrm{I}_{k,\xi}}\\
&\approx \Big(1+\big|t-\f{\xi}{k}\big|\Big)^{\f12}\f{|k|^3\Big(1+\big|t-\f{\xi}{k}\big|\Big)}{\xi}1_{t\in \mathrm{I}_{k,\xi}}1_{|k|\geq k_{M}}\\
&\approx \f{1}{k_{M}}\big(k^2+(\xi-kt)^2\big)\sqrt{\f{\pa_tw(t,\xi)}{w(t,\xi)}}
\end{align*}
Thus for any $\kappa_0>0$, there is $k_{M}$, such that for $|k|\geq k_{M}$
\begin{align}\label{eq: LHNRR}
|\mathrm{II}^{u'';\mathrm{NR,R}}_{\mathrm{LH}}|
&\lesssim \sum_{\rmN\in \mathcal{D},\, \rmN\geq 8}
\left\|\pa_v\udl{u''}\right\|_{\mathcal{G}^{\la}}
\left(\left\|\sqrt{\f{\pa_tw}{w}}\tilde{\rmA}f_{\sim \rmN}\right\|_2+\left\|\f{|\na|^{s/2}}{\langle t\rangle^{s}}\rmA f_{\sim \rmN}\right\|_2\right)\\
\nonumber&\quad\qquad\qquad\times\left\|\sqrt{\f{w}{\pa_tw}}|\pa_z|\f{w_{\mathrm{R}}}{w_{\mathrm{NR}}}1_{\mathrm{R}}1_{|k|\geq k_{M}}\tilde{\rmA}(\Psi\Upsilon)_{\rmN}\right\|_2\\
\nonumber&\leq \kappa_0\sum_{\rmN\in \mathcal{D},\, \rmN\geq 8}
\left\|\sqrt{\f{\pa_tw}{w}}\tilde{\rmA}f_{\sim \rmN}\right\|_2^2+\left\|\f{|\na|^{s/2}}{\langle t\rangle^{s}}\rmA f_{\sim \rmN}\right\|_2^2+\left\|\sqrt{\f{\pa_tw}{w}}\Delta_{L}\tilde{\rmA}P_{|k|\geq k_{M}}(\Psi\Upsilon)_{\rmN}\right\|_2^2\\
\nonumber&\leq \kappa_0\left\|\sqrt{\f{\pa_tw}{w}}\tilde{\rmA} f\right\|_2^2+\kappa_0\left\|\f{|\na|^{s/2}}{\langle t\rangle^{s}}\rmA f\right\|_2^2+\kappa_0\left\|\sqrt{\f{\pa_tw}{w}}\Delta_{L}\tilde{\rmA}P_{|k|\geq k_{M}}(\Psi\Upsilon)\right\|_2^2.
\end{align}

\no{\bf Treatment of $\mathrm{II}^{u'';\mathrm{R,R}}_{\mathrm{LH}}$.}

We have
\begin{align*}
|\mathrm{II}^{u'';\mathrm{R,R}}_{\mathrm{LH}}|
&\lesssim \sum_{\rmN\in \mathcal{D},\, \rmN\geq 8}\sum_{|k|\geq k_{M}}\bigg|\int_{\xi,\eta} 1_{\mathrm{R,R}}1_{|k|\geq k_{M}}\\
&\qquad\qquad\qquad\times\rmA \overline{\widehat{f}}_k(\eta)\rmA_{k}(t,\eta)\left(\widehat{\udl{u''}}(\eta-\xi)_{<\f18\rmN}|k|\widehat{(\Psi\Upsilon)}_k(\xi)_{\rmN}\right) d\xi d\eta\bigg|.
\end{align*}
By {\it (3.31) and Lemma 3.4} in \cite{BM1} and the fact that $\rmA_{k}(\eta)\lesssim \tilde{\rmA}_k(\eta)$ and $\rmA_{k}(\xi)\lesssim \tilde{\rmA}_k(\xi)$, we have
\begin{align*}
|\mathrm{II}^{u'';\mathrm{R,R}}_{\mathrm{LH}}|
&\leq C \sum_{\rmN\in \mathcal{D},\, \rmN\geq 8}\sum_{|k|\geq k_{M}}\int_{\xi,\eta} |\rmA\overline{\widehat{f}}_k(\eta)|\f{\rmJ_{k}(t,\eta)}{\rmJ_{k}(t,\xi)}\sqrt{\f{\pa_tw_k(t,\eta)}{w_k(t,\eta)}}\sqrt{\f{w_k(t,\xi)}{\pa_tw_k(t,\xi)}}\langle \xi-\eta\rangle^{1/2}\\
&\qquad\qquad\times e^{\la|\xi|^s}\langle k,\xi\rangle^{\s}\rmJ_{k}(t,\xi)e^{c\la|\eta-\xi|^s}\left|\widehat{\udl{u''}}(\eta-\xi)_{<\f18\rmN}1_{|k|\geq k_{M}}|k|\widehat{(\Psi\Upsilon)}_k(\xi)_{\rmN}\right| d\xi d\eta\\
&\leq C \sum_{\rmN\in \mathcal{D},\, \rmN\geq 8}\sum_{|k|\geq k_{M}}\int_{\xi,\eta} |\rmA\overline{\widehat{f}}_k(\eta)|e^{10\mu|\eta-\xi|^{\f12}}\sqrt{\f{\pa_tw_k(t,\eta)}{w_k(t,\eta)}}\sqrt{\f{w_k(t,\xi)}{\pa_tw_k(t,\xi)}}\langle \xi-\eta\rangle^{1/2}\\
&\qquad\qquad\times e^{\la|\xi|^s}\langle k,\xi\rangle^{\s}\rmJ_{k}(t,\xi)e^{c\la|\eta-\xi|^s}\left|\widehat{\udl{u''}}(\eta-\xi)_{<\f18\rmN}1_{|k|\geq k_{M}}|k|\widehat{(\Psi\Upsilon)}_k(\xi)_{\rmN}\right| d\xi d\eta\\
&\leq C \sum_{\rmN\in \mathcal{D},\, \rmN\geq 8}\left\|\sqrt{\f{\pa_tw}{w}}\tilde{\rmA}f_{\sim \rmN}\right\|_2
\left\|\sqrt{\f{w}{\pa_tw}}|\pa_z|1_{\mathrm{R}}1_{|k|\geq k_{M}}\tilde{\rmA}P_{\neq} (\Psi\Upsilon)_{\rmN}\right\|_2\|\udl{u''}\|_{\mathcal{G}^{\la}}. 
\end{align*}
The constant $C$, is independent of $k_{M}$. 
By {\it (3.14)} in \cite{BM1}, 
\begin{align*}
\sqrt{\f{w_{k}(t,\xi)}{\pa_tw_{k}(t,\xi)}}|k|1_{t\in \mathrm{I}_{k,\xi}}1_{|k|\geq k_{M}}
&\lesssim |k|\sqrt{1+\Big|t-\f{\xi}{k}\Big|}1_{t\in \mathrm{I}_{k,\xi}}1_{|k|\geq k_{M}}\\
&\lesssim |k|\bigg(1+\Big|t-\f{\xi}{k}\Big|\bigg)\sqrt{\f{\pa_tw_{k}(t,\xi)}{w_{k}(t,\xi)}}1_{t\in \mathrm{I}_{k,\xi}}1_{|k|\geq k_{M}}
\end{align*}
which implies, 
\begin{align}\label{eq:LH,1RR}
|\mathrm{II}^{u'';\mathrm{R,R}}_{\mathrm{LH}}|
\leq \f{1}{k_{M}} \left\|\sqrt{\f{\pa_tw}{w}}\tilde{\rmA}f\right\|_2^2+\f{C}{k_{M}}
\left\|\sqrt{\f{\pa_tw}{w}}\Delta_{L}\tilde{\rmA}P_{|k|\geq k_{M}}1_{\rmR} (\Psi\Upsilon)\right\|_2^2,
\end{align}
for some constant $C$ independent of $k_{M}$. 

Finally, summing together \eqref{eq:u''}, \eqref{eq:LHNRNR}, \eqref{eq: LHNRR}, \eqref{eq:LH,RNR} and \eqref{eq:LH,1RR} and by taking $k_{M}$ sufficiently large, we proved Proposition \ref{prop: nonlocal}.

\subsection{Nonlocal error terms}
In this subsection, let us treat $\Pi^{u'',\ep_1}$ and $\Pi^{u'',\ep_2}$. 
By using the $\ep$-smallness of $\udl{u''}-\widetilde{u''}$ and the same argument as in {\it section 6} of \cite{BM1}, it is easy to obtain that
\beno
\begin{split}
|\Pi^{u'',\ep_1}|\lesssim 
&\ep \mathrm{CK}_{\la}+\ep\mathrm{CK}_{w}+\f{\ep^3}{\langle t\rangle^{2-\rmK_{\rmD}\ep/2}}
+\ep\mathrm{CK}_{\la}^{v,1}+\ep\mathrm{CK}_{w}^{v,1}\\
&+\ep\left\|\left\langle\f{\pa_v}{t\pa_z}\right\rangle^{-1}(\pa_z^2+(\pa_v-t\pa_z)^2)\left(\f{|\na|^{\f{s}{2}}}{\langle t\rangle^s}\rmA+\sqrt{\f{\pa_t w}{w}}\tilde{\rmA}\right)P_{\neq}(\Psi\Upsilon)\right\|_2^2. 
\end{split}
\eeno

To treat $\Pi^{u'',\ep_2}$, let us recall \eqref{eq: Delta_uPsi} and the definition of $\rmT^1$ and $\rmT^2$ \eqref{eq: T^1,T^2}.  
Thus we get that
\beno
\widetilde{u''}(\Delta_u^{-1}\Om-\Delta_t^{-1}\Om)
=-\widetilde{u''}\udl{\Delta_u^{-1}}\Big(\rmT^1+\rmT^2\Big).
\eeno
Therefore, by using the same argument as in {\it section 6} of \cite{BM1} and the estimate in section \ref{sec:Precision elliptic control}, we obtain that
\beno
\begin{split}
|\Pi^{u'',\ep_2}|\lesssim 
&\ep \mathrm{CK}_{\la}+\ep\mathrm{CK}_{w}+\f{\ep^3}{\langle t\rangle^{2-\rmK_{\rmD}\ep/2}}
+\ep\mathrm{CK}_{\la}^{v,1}+\ep\mathrm{CK}_{w}^{v,1}\\
&+\ep\left\|\left\langle\f{\pa_v}{t\pa_z}\right\rangle^{-1}(\pa_z^2+(\pa_v-t\pa_z)^2)\left(\f{|\na|^{\f{s}{2}}}{\langle t\rangle^s}\rmA+\sqrt{\f{\pa_t w}{w}}\tilde{\rmA}\right)P_{\neq}(\Psi\Upsilon)\right\|_2^2. 
\end{split}
\eeno

\section{Appendix: Gevery spaces}
The physical space characterization of Gevrey functions is also used in this paper. We start with a characterization of the Gevery spaces on physical side. 
See {\it Lemma A.1} in \cite{IonescuJia} for the elementary proof. 
\begin{lemma}[\cite{IonescuJia}]\label{Lem: A1}
Suppose that $d=1,2$, $0<s<1$, $K>1$ and $g\in C^{\infty}(\R^d)$ with $\mathrm{supp}\, g\subset [a,b]^d$ satisfies the bounds
\ben
|D^{\al}g(x)|\leq K^m(m+1)^{m/s}, \quad x\in \R^d
\een
for all integers $m\geq 0$ and multi-indeices $\al$ with $|\al|=m$. Then
\ben
|\hat{g}(\xi)|\lesssim_{K,s} Le^{-\mu |\xi|^s},
\een
for all $\xi\in \R^d$ and some $\mu=\mu(K,s)>0$. 

Conversely, assume that, for some $\mu>0$ and $s\in (0,1)$,
\ben
|\hat{g}(\xi)|\leq Le^{-\mu |\xi|^s},
\een
for all $\xi\in \R^d$. Then there is $K>1$ depending on $s$ and $\mu$ such that 
 \ben
|D^{\al}g(x)|\lesssim_{\mu,s} K^m(m+1)^{m/s}, \quad x\in \R^d
\een
for all intergers $m\geq 0$ and multi-indeices $\al$ with $|\al|=m$.
\end{lemma}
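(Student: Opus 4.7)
Given that $g$ is supported in $[a,b]^d$ and satisfies $|D^\alpha g|\le K^m(m+1)^{m/s}$ for every multi-index $\alpha$ of length $m$, I would estimate $\hat g(\xi)$ by repeated integration by parts. Choose a coordinate direction $j$ with $|\xi_j|\gtrsim_d |\xi|$; then integrating by parts $m$ times in that direction gives
\[
|\hat g(\xi)|\;\lesssim\; |\xi|^{-m}\,\|\partial_{x_j}^m g\|_{L^1}\;\lesssim_{a,b,d}\; |\xi|^{-m}\,K^m(m+1)^{m/s},
\]
using compact support to convert the $L^\infty$ derivative bound into an $L^1$ bound. Now optimize in $m$: pick $m=m(\xi)$ to be (the integer part of) $c\,|\xi|^s$ for a small constant $c=c(K,s)>0$. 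Using the elementary estimate $(m+1)^{m/s}\le e^{m/s}(m!)^{1/s}$ and Stirling's formula $m!\le C\, m^m e^{-m}$, the right-hand side becomes $\exp\bigl(m\log(K/|\xi|)+(m/s)\log m+O(m)\bigr)$, which with the choice $m\approx c|\xi|^s$ collapses to $\exp(-\mu|\xi|^s)$ for some $\mu=\mu(K,s)>0$. For $|\xi|$ bounded we just use the trivial bound $|\hat g(\xi)|\le \|g\|_{L^1}$, and $L$ is absorbed into the implicit constant.

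\textbf{Plan for the reverse direction.} Assume $|\hat g(\xi)|\le L\,e^{-\mu|\xi|^s}$. By Fourier inversion
\[
D^\alpha g(x)=\frac{1}{(2\pi)^d}\int_{\mathbb{R}^d}(i\xi)^\alpha \hat g(\xi)\,e^{ix\cdot\xi}\,d\xi,
\]
so $|D^\alpha g(x)|\le \frac{L}{(2\pi)^d}\int_{\mathbb{R}^d}|\xi|^m e^{-\mu|\xi|^s}\,d\xi$. Passing to polar coordinates and substituting $r=t^{1/s}$ reduces this to a Gamma integral:
\[
\int_0^\infty r^{m+d-1}e^{-\mu r^s}\,dr=\frac{1}{s\,\mu^{(m+d)/s}}\,\Gamma\!\left(\tfrac{m+d}{s}\right).
\]
By Stirling, $\Gamma\bigl(\tfrac{m+d}{s}\bigr)\le C_{s,d}\,\bigl(\tfrac{m+d}{s}\bigr)^{(m+d)/s} e^{-(m+d)/s}$, which is bounded by $K^m(m+1)^{m/s}$ once $K=K(\mu,s,d)$ is taken large enough. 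This yields the required derivative bound uniformly in $x$.

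\textbf{Main obstacles and remarks.} The whole argument is routine once one commits to the optimization $m\sim c|\xi|^s$ in the forward direction and the Gamma-function asymptotics in the reverse direction; the only delicate point is tracking constants carefully so that $\mu$ in the forward direction can be chosen strictly positive and depending only on $(K,s)$, and so that $K$ in the reverse direction depends only on $(\mu,s)$. The mild annoyance is the discrepancy between $(m+1)^{m/s}$ in the hypothesis and the factorial $(m!)^{1/s}$ that naturally appears from Stirling; this is handled by the elementary two-sided comparison $c_s^m (m!)^{1/s}\le (m+1)^{m/s}\le C_s^m(m!)^{1/s}$ and absorbing the geometric factors into the constant $K$ (or $\mu$). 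Compact support is essential in the forward direction to pass from pointwise derivative bounds to $L^1$ bounds, and the restriction $d\in\{1,2\}$ plays no real role beyond fixing the constants.
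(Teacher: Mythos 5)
Your argument is correct and is essentially the standard elementary proof: repeated integration by parts in the dominant frequency direction followed by optimization over $m\sim c|\xi|^s$ for the forward implication, and Fourier inversion with the Gamma-function/Stirling estimate for the converse. The paper itself does not prove this lemma but defers to \emph{Lemma A.1} in \cite{IonescuJia}, whose proof proceeds along exactly these lines, so there is nothing to add beyond noting that the stray factor $L$ in the forward conclusion (absent from the hypothesis as stated) is indeed best treated as you do, by absorbing it into the implicit constant.
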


For $x\in [a,b]^d$ and parameters $s\in (0,1)$ and $M\geq 1$, we define the spaces
\beq
\mathcal{G}_{ph}^{M,s}([a,b]^d)\eqdef
\left\{g: [a,b]^d\to \mathbb{C}:~\|g\|_{\mathcal{G}_{ph}^{M,s}([a,b]^d)}<\infty\right\}. 
\eeq
where 
\beq
\|g\|_{\mathcal{G}_{ph}^{M,s}([a,b]^d)}\eqdef \sup_{x\in [a,b]^d,\, m\geq 0,\, |\al|\leq m}\f{|D^{\al}g(x)|}{(m+1)^{m/s}M^{m}}. 
\eeq
Here `{\it ph}' represents the physical side. 

We define the spaces
\beq
\mathcal{G}_{ph,1}^{M,s}([a,b]^d)\eqdef
\left\{g: [a,b]^d\to \mathbb{C}:~\|g\|_{\mathcal{G}_{ph,1}^{M,s}([a,b]^d)}<\infty\right\}. 
\eeq
where 
\beq
\|g\|_{\mathcal{G}_{ph,1}^{M,s}([a,b]^d)}\eqdef \sup_{x\in [a,b]^d,\, m\geq 0,\, |\al|\leq m}\f{|D^{\al}g(x)|}{\Gamma_s(m)M^{m}},
\eeq
with $\Gamma_s(m)=(m!)^{\f1s}(m+1)^{-2}$, also see \cite{Yamanaka} for more details. 

By the Stirling's formula $N!\sim \sqrt{2\pi N}(N/e)^{N}$, it is easy to check that there exist $K_1<K_2$ such that 
\beno
K_1^{m}(m+1)^{m/s}\lesssim \Gamma_s(m)\lesssim  K_2^{m}(m+1)^{m/s},
\eeno
which implies
\ben\label{eq: space-embed}
\mathcal{G}_{ph}^{K_1M,s}([a,b]^d)\subset \mathcal{G}_{ph,1}^{M,s}([a,b]^d)\subset \mathcal{G}_{ph}^{K_2M,s}([a,b]^d). 
\een
We also have
\begin{equation}\label{eq:3.9}
\sum_{j=0}^{k}\f{k!}{j!(k-j)!}\Gamma_s(j)\Gamma_s(k-j)<\Gamma_s(k),
\end{equation}
which implies
\ben
\|fg\|_{\mathcal{G}_{ph,1}^{M,s}([a,b]^d)}\leq \|f\|_{\mathcal{G}_{ph,1}^{M,s}([a,b]^d)}\|g\|_{\mathcal{G}_{ph,1}^{M,s}([a,b]^d)}. 
\een

\begin{remark}\label{Rmk: fourier-gevrey}
Suppose $g\in \mathcal{G}_{ph,1}^{M,s}(\R^d)$ with $\mathrm{supp}\, g\subset [a,b]^d$, then for all integers $m\geq 0$ and multi-indeices $\al$ with $|\al|=m$. Then
\ben
|\hat{g}(\xi)|\lesssim_{K,s} Le^{-\mu |\xi|^s},
\een
for all $\xi\in \R^d$ and some $\mu=\mu(K,s)>0$. 
\end{remark}

We also introduce the composition lemma in \cite{Yamanaka}. 
\begin{lemma}[\cite{Yamanaka}]\label{eq: composition}
Let $I$, $J$ be real open intervals and let $f:J\to \R$ be a $C^{\infty}$-function such that $f'\in \mathcal{G}_{ph,1}^{L,s}(J)$ and $g: I\to J$ be a $C^{\infty}$-function such that $g'\in \mathcal{G}_{ph,1}^{M,s}(I)$. Let $N$ be a real constant such that
\beno
N\geq \max(M,L\|g'\|_{\mathcal{G}_{ph,1}^{M,s}(I)}).
\eeno
Then the derivative $(f\circ g)'$ of the composite function $f\circ g$ belongs to $\mathcal{G}_{ph,1}^{N,s}(I)$ and satisfies
\beno
\|(f\circ g)'\|_{\mathcal{G}_{ph,1}^{N,s}(I)}\leq \f{N}{L}\|f'\|_{\mathcal{G}_{ph,1}^{L,s}(J)}.
\eeno
\end{lemma}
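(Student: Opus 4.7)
Since $(f\circ g)' = (f' \circ g)\cdot g'$, the goal is to bound $|((f'\circ g)\cdot g')^{(n)}(x)|$ by $(N/L)\,\|f'\|_{\mathcal{G}_{ph,1}^{L,s}(J)} \cdot \Gamma_s(n) N^n$ for every integer $n \geq 0$ and every $x \in I$. I propose to combine the Fa\`a di Bruno formula with an iteration of the submultiplicativity bound \eqref{eq:3.9}.

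First, a Leibniz expansion gives
\[
((f'\circ g)\cdot g')^{(n)}(x) = \sum_{k=0}^{n} \binom{n}{k} (f'\circ g)^{(k)}(x)\, g^{(n-k+1)}(x),
\]
and the Fa\`a di Bruno formula applied to $(f'\circ g)^{(k)}$ produces a sum over compositions $\lambda = (\lambda_1,\ldots,\lambda_k) \in \mathbb{Z}_{\geq 0}^k$ with $\sum_j j\lambda_j = k$ and $|\lambda| = \sum_j \lambda_j$:
\[
(f'\circ g)^{(k)}(x) = \sum_\lambda \frac{k!}{\prod_j \lambda_j!(j!)^{\lambda_j}}\, (f')^{(|\lambda|)}(g(x)) \prod_{j=1}^{k} \bigl(g^{(j)}(x)\bigr)^{\lambda_j}.
\]
Inserting the pointwise bounds $|(f')^{(|\lambda|)}(g(x))| \leq \|f'\|_{\mathcal{G}_{ph,1}^{L,s}(J)} \Gamma_s(|\lambda|) L^{|\lambda|}$ and $|g^{(j)}(x)| \leq \|g'\|_{\mathcal{G}_{ph,1}^{M,s}(I)} \Gamma_s(j-1) M^{j-1}$ (valid for $j \geq 1$) reduces the task to a purely combinatorial estimate.

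Writing $A=\|f'\|_{\mathcal{G}_{ph,1}^{L,s}(J)}$ and $B=\|g'\|_{\mathcal{G}_{ph,1}^{M,s}(I)}$, and using $\sum_j (j-1)\lambda_j = k - |\lambda|$, the scale factor collected from a single term equals $AB\,(LB)^{|\lambda|} M^{n-|\lambda|}$, which under the hypothesis $N \geq \max(M, LB)$ is bounded by $A N^{n+1}/L$, producing exactly the desired prefactor $N/L$. The remaining task is to verify that the combinatorial-plus-$\Gamma_s$ sum
\[
\sum_{k=0}^n \binom{n}{k} \Gamma_s(n-k) \sum_\lambda \frac{k!}{\prod_j \lambda_j!(j!)^{\lambda_j}} \Gamma_s(|\lambda|) \prod_j \Gamma_s(j-1)^{\lambda_j}
\]
is controlled by a constant multiple of $\Gamma_s(n)$: the inner $\lambda$-sum is handled by iterating \eqref{eq:3.9}, exploiting the quadratic factor $(m+1)^{-2}$ in $\Gamma_s$ that makes the resulting series uniformly summable, and the outer Leibniz sum then collapses by one further application of \eqref{eq:3.9}.

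The main obstacle is precisely this combinatorial identity. The subtlety is that neither the raw factorials $(m!)^{1/s}$ nor the quadratic weight alone suffice: both ingredients in $\Gamma_s(m) = (m!)^{1/s}(m+1)^{-2}$ must cooperate, the factorial part providing the Gevrey growth and the quadratic part guaranteeing summability of the Fa\`a di Bruno expansion. The two pieces of the hypothesis on $N$ play distinct roles here, $N \geq M$ dominating the pure-$g$ derivatives and $N \geq LB$ dominating the coupling between $f'$ and $g$; any weakening of either leads to a divergent combinatorial sum.
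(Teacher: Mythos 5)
The paper does not actually prove this lemma: it is quoted verbatim from \cite{Yamanaka}, and the whole point of that reference (the ``new higher order chain rule'' of its title) is to \emph{avoid} the Fa\`a di Bruno formula, because the route you take cannot deliver the constants in the statement. Two concrete problems.

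First, the termwise bound is already too lossy at $n=1$. Write $A=\|f'\|_{\mathcal{G}_{ph,1}^{L,s}(J)}$ and $B=\|g'\|_{\mathcal{G}_{ph,1}^{M,s}(I)}$. Your estimate applied to $(f\circ g)''=(f''\circ g)(g')^2+(f'\circ g)\,g''$ gives $|(f\circ g)''|\le A\Gamma_s(1)L\cdot B^2+A\cdot B\Gamma_s(1)M=\tfrac14AB(LB+M)$, whereas the lemma asserts $|(f\circ g)''|\le\tfrac NLA\,\Gamma_s(1)N=\tfrac14AN^2/L$. When $LB=M=N$ the first quantity is $\tfrac12ABN$ and the second is $\tfrac14ABN$: the claimed inequality is missed by a factor $2$, and no reshuffling of the same pointwise bounds recovers it, since each individual bound is attained. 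Your own phrasing (``a constant multiple of $\Gamma_s(n)$'') concedes this; but then what you prove is $\|(f\circ g)'\|\le C\tfrac NL\|f'\|$, equivalently membership in $\mathcal{G}_{ph,1}^{C'N,s}(I)$ for some absolute $C'>1$, which is a strictly weaker statement than the one under review (though it would in fact suffice for every use of the lemma in this paper).

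Second, the combinatorial estimate you defer is the entire proof, and the tool you propose does not close it as stated. Inequality \eqref{eq:3.9} is false as printed: at $k=1$ the left side is $2\Gamma_s(0)\Gamma_s(1)=\tfrac12$ while $\Gamma_s(1)=\tfrac14$. What is true is the version with an absolute constant $C_0>1$ on the right (using $\binom{k}{j}(j!(k-j)!)^{1/s}\le(k!)^{1/s}$ for $s\le1$ one gets $C_0\le8\zeta(2)$). Iterating such an inequality across a Fa\`a di Bruno term with $p=|\lambda|$ blocks costs $C_0^{\,p}$, and $p$ ranges up to $n$; a loss of order $C_0^{\,n}$ cannot be absorbed into $N^n$ without replacing $N$ by $C_0N$, again changing the statement. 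Showing that your double sum is bounded by $C\,\Gamma_s(n)$ \emph{uniformly in $n$} therefore needs a genuinely finer argument exploiting the shifted indices $\Gamma_s(j-1)$ and the factors $1/p!$ and $\prod_i j_i^{-2}$ — this is precisely the work Yamanaka's alternative chain rule is designed to do. As written, ``handled by iterating \eqref{eq:3.9}'' is the gap, not a proof.
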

We also introduce the estimate of inverse function in Gevrey class. 
\begin{lemma}[\cite{Yamanaka}]\label{eq: inverse-gevrey}
Let $I$, $J$ be real open intervals. Let $f:I\to J$ be a $C^{\infty}$-surjection such that
\beno
|f'(x)|\geq \f{1}{A},\quad x\in I
\eeno
for some $A>0$ and such that $f''\in \mathcal{G}_{ph,1}^{L,s}(I)$. Then $f$ has a $C^{\infty}$-inverse $f^{-1}:J\to I$ such that $(f^{-1})'\in  \mathcal{G}_{ph,1}^{M,s}(J)$ for some $M>0$. 
\end{lemma}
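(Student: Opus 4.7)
\textbf{Proof plan for Lemma \ref{eq: inverse-gevrey}.} The plan is to construct the inverse classically and then control its Gevrey norm through a Faà di Bruno induction that is compatible with the Gevrey product inequality \eqref{eq:3.9}.

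First I would invoke the classical inverse function theorem: from $|f'|\geq 1/A$ and the surjectivity of $f$ one obtains a $C^\infty$-inverse $g = f^{-1}:J\to I$ satisfying $g'(y) = 1/f'(g(y))$ pointwise. The only nontrivial point is to upgrade this identity to a $\mathcal{G}_{ph,1}^{M,s}$ bound on $g'$ for some $M>0$.

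The second step is to show that the reciprocal $\phi := 1/f'$ lies in $\mathcal{G}_{ph,1}^{L_1,s}(I)$ for some $L_1 = L_1(L,A)$. From $f''\in\mathcal{G}_{ph,1}^{L,s}(I)$, direct integration gives $f'\in\mathcal{G}_{ph,1}^{L',s}(I)$ with $L'$ controlled by $L$. Then Faà di Bruno applied to the analytic map $\zeta\mapsto 1/\zeta$ (which is analytic on $\{|\zeta|\ge 1/A\}$) expresses $\phi^{(n)}$ as a sum, over partitions of $\{1,\dots,n\}$, of products of derivatives of $f'$ divided by a power of $f'$. Using $|f'|\geq 1/A$ and the Gevrey bound $|(f')^{(m)}|\leq (L')^m\,\Gamma_s(m)$, together with iterated applications of \eqref{eq:3.9}, these combinatorial sums collapse to $|\phi^{(n)}(x)|\leq L_1^{\,n}\,\Gamma_s(n)$.

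The main step is to prove by induction on $n\ge 1$ the bound
\[
|g^{(n)}(y)|\leq C_0\, M^{\,n-1}\,\Gamma_s(n-1),\qquad y\in J,
\]
for suitable constants $C_0, M$ depending on $L_1$ and $A$. The base case is $|g'(y)| = |\phi(g(y))|\leq A$, so we may take $C_0 = A$. For the inductive step, I would differentiate the identity $g'(y)\,f'(g(y))=1$ a total of $n-1$ times via Leibniz combined with Faà di Bruno on $f'\circ g$, isolate the unique top-order term of the form $g^{(n)}(y)\,f'(g(y))$ (whose coefficient has absolute value at least $1/A$), and absorb every remaining term using the inductive hypothesis on $g^{(k)}$ for $k<n$, the Gevrey bound on $(f')^{(m)}$ from Step~2, and \eqref{eq:3.9}. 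The hard part is the combinatorial bookkeeping: the Faà di Bruno sum runs over compositions/partitions, and the accumulated weights $\prod_j\Gamma_s(k_j)\Gamma_s(m_j)$ must telescope against $\Gamma_s(n-1)$ with a constant that can be absorbed into $M$. This is precisely the telescoping that \eqref{eq:3.9} is designed to deliver, and it forces the choice $M\gtrsim A\cdot L_1$ (with a large absolute constant independent of $n$). Once the induction closes, the definition of $\mathcal{G}_{ph,1}^{M,s}(J)$ yields $(f^{-1})'\in\mathcal{G}_{ph,1}^{M,s}(J)$, completing the proof.
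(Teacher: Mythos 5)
Your plan is sound, but note that the paper does not prove this lemma at all: it is imported verbatim from \cite{Yamanaka}, so there is no internal proof to compare against. What you sketch --- invert classically, show $1/f'$ is Gevrey via Fa\`a di Bruno applied to $\zeta\mapsto 1/\zeta$ using the lower bound $|f'|\geq 1/A$, then close an induction of the form $|g^{(n)}|\leq C_0M^{n-1}\Gamma_s(n-1)$ by differentiating $g'\cdot(f'\circ g)=1$ and isolating the top term $g^{(n)}\,f'(g)$ --- is precisely the standard route and is essentially what the cited reference carries out. Two small remarks. First, your claim that ``direct integration gives $f'\in\mathcal{G}_{ph,1}^{L',s}(I)$'' is unnecessary (and not quite justified if $I$ is unbounded): in both the reciprocal step and the induction, only derivatives of $f'$ of order $\geq 1$ appear, i.e.\ derivatives of $f''$, so the hypothesis $f''\in\mathcal{G}_{ph,1}^{L,s}(I)$ together with $|f'|\geq 1/A$ is exactly what is used, and no bound on $\sup|f'|$ is needed. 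Second, the ``combinatorial bookkeeping'' you defer is not a throwaway: iterating the binary inequality \eqref{eq:3.9} over the partition structure of the Bell polynomials, with the index shifts coming from $(f')^{(m)}=(f'')^{(m-1)}$ and from the inductive bound carrying $\Gamma_s(k-1)$ rather than $\Gamma_s(k)$, is the entire substance of Yamanaka's ``higher order chain rule''; the shifts do telescope (that is why the induction hypothesis is stated with $\Gamma_s(n-1)$), but a complete write-up would have to verify this rather than assert it. With that caveat, your argument closes and yields $(f^{-1})'\in\mathcal{G}_{ph,1}^{M,s}(J)$ with $M\gtrsim A L_1$ as you indicate.
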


\section{Appendix: the wave operator}
\subsection{The existence of wave operator}\label{sect: The existence of wave operator}
In this subsection, we aim to construct the wave operator and prove Proposition \ref{prop: general-wave}. We will refer to some details from \cite{WZZ1}. Let us first recall {\it Definition 4.3, Lemma 4.4, Proposition 4.5 and Proposition 5.1} in \cite{WZZ1} that, there exist $\phi(k,y,y_c)$ and $\phi_1(k,y,y_c)$(here to make the notation good in this paper, we use $\phi(k,y,y_c)$ and $\phi_1(k,y,y_c)$ rather than $\phi(k,y,c)$ and $\phi_1(k,y,c)$ in \cite{WZZ1}) such that $\phi(k,y,y_c)=(u(y)-u(y_c))\phi_1(k,y,y_c)$ and
\beq\label{eq: hom-Rei}
\pa_{yy}\phi-k^2\phi-\f{u''(y)}{u(y)-u(y_c)}\phi=0,\quad
\phi(k,y_c,y_c)=0,\ \pa_y\phi(k,y_c,y_c)=u'(y_c). 
\eeq
Moreover, it holds that
\beno
(\mathrm{Id}-k^2T)(\phi_1)(k,y,y_c)=1,
\eeno
and $(\mathrm{Id}-k^2T)$ is invertible in $L^{\infty}_{y,y_c}([0,1]^2)$, and 
\beno
\phi_1(k,y,y_c)\approx \f{\sinh k(y-y_c)}{k(y-y_c)},
\eeno
where 
\beno
T\phi_1=T_0\circ T_{2,2}\phi_1=\int_{y_c}^y\f{1}{(u(y')-u(y_c))^2}\int_{y_c}^{y'}(u(y'')-u(y_c))^2\phi_1(y'',y_c)dy''dy'
\eeno
with 
\beno
T_0\phi_1=\int_{y_c}^y\phi_1(y',y_c)dy',
\eeno 
and 
\beno
T_{2,2}\phi_1=\f{1}{(u(y)-u(y_c))^2}\int_{y_c}^{y}(u(y'')-u(y_c))^2\phi_1(y'',y_c)dy''. 
\eeno

Then by {\it (7.1), (7.2) with $t=0$ and (7.3)} in \cite{WZZ1}, we obtain that for $k\neq 0$, it holds that
\beq\label{eq:rep-form}
\psi(k,y)=-\int_0^1\f{\rho(y_c)\bbD_{u,k}\left(\Delta_k\psi\right)(k,y_c)e(k,y,y_c)}{\sqrt{\left(u'(y_c)\rho(y_c)p.v.\int_0^1\f{1}{\phi(k,y',y_c)^2}dy'\right)^2+\left(\pi\rho(y_c)\f{u''(y_c)}{u'(y_c)^2}\right)^2}}dy_c
\eeq
with
\ben\label{eq:e}
e(k,y,y_c)=
\left\{
\begin{aligned}
&\phi(k,y,y_c)\int_{0}^y\frac{1}{\phi(k,y',y_c)^2}dy'\quad 0\leq y<y_c,\\
&\phi(k,y,y_c)\int_1^y\frac{1}{\phi(k,y',y_c)^2}dy'\quad y_c<y\leq 1,
\end{aligned}
\right.
\een
where 
\begin{align*}
\rho(y_c)&=(u(y_c)-u(0))(u(1)-u(y_c)), \\
u'(y_c)\rho(y_c)p.v.\int_0^1\f{1}{\phi(k,y,y_c)^2}dy
&\eqdef u(0)-u(1)-\rho(y_c)p.v.\int_0^1\f{u'(y)-u'(y_c)}{(u(y)-u(y_c))^2}dy\\
&\quad +u'(y_c)\rho(y_c)\int_0^1\f{1}{(u(y)-u(y_c))^2}\Big(\f{1}{\phi_1(k,y,y_c)^2}-1\Big)dy,
\end{align*}
and
\beq\label{eq: wave1}
\begin{split}
\bbD_{u,k}(\om)(k,y_c)&\eqdef \f{u'(y_c)\rho(y_c)p.v.\int_0^1\f{1}{\phi(k,y,y_c)^2}dy \ \om(y_c)}{\sqrt{\left(u'(y_c)\rho(y_c)p.v.\int_0^1\f{1}{\phi(k,y',y_c)^2}dy'\right)^2+\left(\pi\rho(y_c)\f{u''(y_c)}{u'(y_c)^2}\right)^2}}\\
&\quad+\f{\rho(y_c)u''(y_c)p.v. \int_0^1\f{\int_{y_c}^{y'}\om(y'')\phi_1(k,y'',y_c)dy''}{\phi(k,y',y_c)^2}dy'}{\sqrt{\left(u'(y_c)\rho(y_c)p.v.\int_0^1\f{1}{\phi(k,y',y_c)^2}dy'\right)^2+\left(\pi\rho(y_c)\f{u''(y_c)}{u'(y_c)^2}\right)^2}}.
\end{split}
\eeq
Let us now define
\begin{align}
\label{eq:d_1}
d_1(k,y)&=\f{u'(y)\rho(y)p.v.\int_0^1\f{1}{\phi(k,y',y)^2}dy'}{\sqrt{\left(u'(y)\rho(y)p.v.\int_0^1\f{1}{\phi(k,y',y)^2}dy'\right)^2+\left(\pi\rho(y)\f{u''(y)}{u'(y)^2}\right)^2}},\\
\label{eq:d_2}
d_2(k,y)&=-\f{\rho(y)}{\sqrt{\left(u'(y)\rho(y)p.v.\int_0^1\f{1}{\phi(k,y',y)^2}dy'\right)^2+\left(\pi\rho(y)\f{u''(y)}{u'(y)^2}\right)^2}}.
\end{align}
Thus we get that
\begin{align}\label{eq: D_uk}
\bbD_{u,k}(f)(k,y)=d_1(k,y)f(y)+u''(y)d_2(k,y)\int_0^1\f{e(k,y',y)}{u(y')-u(y)}f(y')dy'.
\end{align}

Then we turn to the proof of Proposition \ref{prop: general-wave}. 
\begin{proof}
By {\it Lemma 6.1 and (8.6)} in \cite{WZZ1} and the no eigenvalue assumption of the Rayleigh operator, we have 
\ben\label{eq: Wronskian}
\left(u'(y_c)\rho(y_c)p.v.\int_0^1\f{1}{\phi(k,y,y_c)^2}dy\right)^2+\left(\pi\rho(y_c)\f{u''(y_c)}{u'(y_c)^2}\right)^2\approx 1+k^2\rho(y_c)^2,
\een
and then by {\it (8.1)} in \cite{WZZ1}, it holds that
\ben
\|\bbD_{u,k}\om\|_2\leq C \|\om\|_2,
\een
here the constant $C$ is independent of $k$. 

Let $g\in H^2(0,1)\cap H_0^1(0,1)$ and $\Delta_k\psi=(\pa_{yy}-k^2)\psi=\om$, then by \eqref{eq:rep-form}, it holds that
\begin{align*}
&\int_0^1\om(y)g(y)dy=\int_0^1\psi(k,y)(\Delta_k g)(k,y)dy\\
&=\int_{0}^{1}\f{\int_0^1\f{\int_{y_c}^{y'}\Delta_kg(k,y'')\phi(k,y'',y_c)dy''}{\phi(k,y',y_c)^2}dy'\rho(y_c)\bbD_{u,k}(\om)(k,y_c)}{\sqrt{\left(u'(y_c)\rho(y_c)p.v.\int_0^1\f{1}{\phi(k,y',y_c)^2}dy'\right)^2+\left(\pi\rho(y_c)\f{u''(y_c)}{u'(y_c)^2}\right)^2}}dy_c.
\end{align*}
Integrating by parts, we obtain that
\begin{align*}
&\int_{y_c}^{y'}\Delta_kg(k,y'')\phi(k,y'',y_c)dy''\\
&=\int_{y_c}^{y'}g(k,y'')\Delta_k\phi(k,y'',y_c)dy''
+\pa_yg(k,y')\phi(k,y',y_c)-g(k,y')\pa_y\phi(k,y',y_c)+u'(y_c)g(k,y_c), 
\end{align*}
which gives us 
\begin{align*}
&p.v.\int_0^1
\f{\int_{y_c}^{y'}\Delta_kg(k,y'')\phi(k,y'',y_c)dy''}{\phi(k,y',y_c)^2}dy'\\
&=p.v.\int_0^1
\f{\int_{y_c}^{y'}g(k,y'')\f{u''(y'')\phi(k,y'',y_c)}{u(y'')-u(y_c)}dy''}{\phi(k,y',y_c)^2}dy'\\
&\quad+p.v.\int_0^1\f{\pa_yg(k,y')\phi(k,y',y_c)-g(k,y')\pa_y\phi(k,y',y_c)+u'(y_c)g(k,y_c)}{\phi(k,y',y_c)^2}dy'\\
&=p.v.\int_0^1
\f{\int_{y_c}^{y'}g(k,y'')u''(y'')\phi_1(k,y'',y_c)dy''}{\phi(k,y',y_c)^2}dy'
+\f{g(k,y_c)}{\rho(y_c)}u'(y_c)\rho(y_c)p.v.\int_0^1\f{1}{\phi(k,y,y_c)^2}dy,
\end{align*}
which implies that
\beno
\int_0^1\bbD_{u,k}(\om)(k,y_c){\bbD}_{u,k}^1(g)(k,y_c)dy_c=\int_0^1\om(k,y)g(k,y)dy,
\eeno
where
\beq\label{eq: wave1-dual}
\begin{split}
{\bbD}_{u,k}^1(g)(k,y_c)&\eqdef \f{u'(y_c)\rho(y_c)p.v.\int_0^1\f{1}{\phi(k,y,y_c)^2}dy g(y_c)}{\sqrt{\left(u'(y_c)\rho(y_c)p.v.\int_0^1\f{1}{\phi(k,y',y_c)^2}dy'\right)^2+\left(\pi\rho(y_c)\f{u''(y_c)}{u'(y_c)^2}\right)^2}}\\
&\quad+\f{\rho(y_c)p.v. \int_0^1\f{\int_{y_c}^{y'}u''(y'')g(y'')\phi_1(k,y'',y_c)dy''}{\phi(k,y',y_c)^2}dy'}{\sqrt{\left(u'(y_c)\rho(y_c)p.v.\int_0^1\f{1}{\phi(k,y',y_c)^2}dy'\right)^2+\left(\pi\rho(y_c)\f{u''(y_c)}{u'(y_c)^2}\right)^2}}.
\end{split}
\eeq
Thus by {\it (8.1)} in \cite{WZZ1} and \eqref{eq: Wronskian}, we have
\ben
\|\bbD_{u,k}^1g\|_2\leq C \|g\|_2,
\een
here the constant $C$ is independent of $k$. 

Due to the fact that $H^2\cap H_0^1$ is dense in $L^2$, for any $g\in L^2$, there is a sequence $\{g_n\}_{n\geq 1}$ such that $H^2\cap H_0^1 \ni g_n\to g$ in $L^2$ and 
\begin{align*}
\int_{0}^1\bbD_{u,k}(f)(y_c){\bbD}_{u,k}^1(g_n)(y_c)dy_c=\int_0^1f(y)g_n(y)dy. 
\end{align*}
Then 
\begin{align*}
&\left|\int_{0}^1\bbD_{u,k}(f)(y_c){\bbD}_{u,k}^1(g)(y_c)dy_c-\int_0^1f(k,y)g(k,y)dy\right|\\
&=\left|\int_{0}^1\bbD_{u,k}(f)(y_c){\bbD}_{u,k}^1(g_n)(y_c)dy_c-\int_0^1f(k,y)(g(y)-g_n(y))dy\right|\\
&\leq C\|\bbD_{u,k}(f)\|_{L^2}\|{\bbD}_{u,k}^1(g-g_n)\|_{L^2}+\|f\|_{L^2}\|g-g_n\|_{L^2}\to 0.
\end{align*}
Thus we have proved \eqref{eq: id1} and \eqref{eq: est1}. 

In order to get \eqref{eq:DR=uD}, we integrate by parts and use $\phi(k,y_c,y_c)=0, \pa_y\phi(k,y_c,y_c)=u'(y_c),\psi(k,0)=\psi(k,1)=0$ to arrive at
\begin{align*}
&p.v.\int_{0}^1\frac{\int_{y_c}^yu''(y')\psi(k,y')\phi_1(k,y',y_c)dy'}{\phi(k,y,y_c)^2}dy
=p.v.\int_{0}^1\frac{\int_{y_c}^y \psi(k,y')(\pa_y^2-k^2)\phi(k,y',y_c)dy'}{\phi(k,y,y_c)^2}dy\\
&=p.v. \int_{0}^1\frac{\int_{y_c}^y (\pa_y^2-k^2)\psi(k,y')\phi(k,y',y_c)dy'}{\phi(k,y,y_c)^2}dy
+p.v.\int_{0}^1\f{\psi(k,y) \pa_y\phi(k,y,y_c)-\psi(k,y_c)u'(y_c)}{\phi(k,y,y_c)^2} dy\\
&\quad-\int_{0}^1\f{\pa_y\psi(k,y) \phi(k,y,y_c)}{\phi(y,c)^2} dy\\
&=p.v.\int_{0}^1\frac{\int_{y_c}^yu(y')\om(k,y')\phi_1(k,y',y_c)dy'}{\phi(k,y,y_c)^2}dy-u'(y_c)p.v.\int_{0}^1\frac{\int_{y_c}^y\om(k,y')\phi_1(k,y',y_c)dy'}{\phi(k,y,y_c)^2}dy\\
&\quad-\f{\psi(k,y_c)}{\rho(y_c)}u'(y_c)\rho(y_c)p.v.\int_{0}^1\f{1}{\phi(k,y,y_c)^2} dy,
\end{align*}
which gives \eqref{eq:DR=uD}.
\end{proof}

By the \eqref{eq: id1}, it is easy to obtain that the inverse of $\bbD_{u,k}$ exists and has the following formula
\begin{align}\label{eq: D_uk^{-1}}
\bbD_{u,k}^{-1}(f)(k,y)&=d_1(k,y)f(y')+u''(y)\int_0^1\f{e(k,y,y')}{u(y)-u(y')}d_2(k,y')f(y')dy',
\end{align}
which gives Remark \ref{Rmk: Formula}. 

\subsection{The Fourier transform of the integral operator}
In this section, we make preparations to study the Gevrey regularity of the nonlocal part in the wave operator. Indeed, we will write the nonlocal part into the following four types of integral operators:
\begin{align*}
&\Pi_m(F)(c)=\int_{\R}K(u,c)\mu_m(u-c)e^{-ikt(u-c)}F(u)du,\quad m=1,2,3,4,\\
&\Pi_m^*(F)(u)=\int_{\R}K(u,c)\mu_m(u-c)e^{-ikt(u-c)}F(c)dc,\quad m=1,2,3,4,
\end{align*}
where $K(u_1,u)$ represent a smooth kernel with compact support which may vary from one line to the other and  $\mu_1\in C^{\infty}(\R)$ with compact support such that 
\beno
\mu_1(u)=\left\{\begin{aligned}
&1,|u|\leq u(1)-u(0),\\
&0,|u|\geq 2(u(1)-u(0)),
\end{aligned}\right.
\eeno
$\mu_2(u)=\mu_1(u)\mathbf{1}_{\R^-}(v)$ and $\mu_3(u)=\mu_1(u)\ln|u|$, $\mu_4(u)=p.v.\f{1}{u}$. 

It is easy to check that $\Pi_m^*, \, m=1,2,3,4$ are the dual operators of $\Pi_m,\, m=1,2,3,4$. 

For a smooth kernel with compact support defined on $\R^2$, let $\widehat{\widehat{K}}$ be the Fourier transform in both variables. 
\begin{lemma}\label{lem: Fourier_type1}
Suppose $K(u,c)\in C^{\infty}(\R^2)$ with compact support, then it holds for $m=1,2,3,4$ that
\beno
\widehat{\Pi_m(F)}(\eta)=\f{1}{2\pi}\int_{\R^2}\hat{F}(\xi)\widehat{\widehat{K}}(-\xi-\xi', \eta+\xi')\widehat{\mu_m}(\xi'+kt)d\xi'd\xi,
\eeno
and
\beno
\widehat{\Pi_m^*(F)}(\xi)=\f{1}{2\pi}\int_{\R^2}\hat{F}(\eta)\widehat{\widehat{K}}(-\xi-\xi', \eta+\xi')\widehat{\mu_m}(\xi'+kt)d\xi'd\eta.
\eeno
\end{lemma}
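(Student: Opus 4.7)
The proof is a direct Fourier inversion computation. I will describe the argument for $\widehat{\Pi_m(F)}$; the formula for $\Pi_m^\ast$ follows by the same reasoning (or by duality, since the two operators are adjoint).

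The plan is to write each of the three ingredients $K$, $\mu_m$ and $F$ by Fourier inversion, substitute into the definition of $\Pi_m$, and then identify the resulting integral with the claimed convolution-like expression via two delta-function reductions. Concretely, I would first expand
\[
K(u,c)=\frac{1}{(2\pi)^2}\int_{\R^2}\widehat{\widehat K}(\xi_1,\xi_2)e^{iu\xi_1+ic\xi_2}d\xi_1d\xi_2,\qquad
\mu_m(u-c)=\frac{1}{2\pi}\int_{\R}\widehat{\mu_m}(\zeta)e^{i(u-c)\zeta}d\zeta,
\]
together with the inverse formula for $F$, and insert these into the definition of $\Pi_m(F)(c)$. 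The factor $e^{-ikt(u-c)}$ combines cleanly with the exponential from $\widehat{\mu_m}$ so that in the $u$-integral the total phase is $e^{iu(\xi_1+\zeta-kt+\xi)}$. Fubini (justified because $K$ is smooth with compact support, $\widehat{\mu_m}$ is a tempered distribution and $F$ is taken Schwartz-like, with the singular cases $\mu_2,\mu_3,\mu_4$ handled in the principal-value sense) allows one to perform this $u$-integration first, producing $2\pi\,\delta(\xi_1+\zeta-kt+\xi)$ and hence eliminating $\xi_1=kt-\zeta-\xi$. The natural substitution $\xi'=\zeta-kt$ then converts $\widehat{\mu_m}(\zeta)$ into $\widehat{\mu_m}(\xi'+kt)$ and $\xi_1$ into $-\xi'-\xi$, and the remaining $c$-dependence is $e^{ic(\xi_2-\xi')}$.

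Taking the Fourier transform in $c$ introduces a second delta, $\delta(\xi_2-\xi'-\eta)$, which eliminates $\xi_2=\xi'+\eta$. Collecting the factors of $2\pi$ yields
\[
\widehat{\Pi_m(F)}(\eta)=\frac{1}{2\pi}\int_{\R^2}\hat{F}(\xi)\,\widehat{\widehat K}(-\xi-\xi',\eta+\xi')\,\widehat{\mu_m}(\xi'+kt)\,d\xi'd\xi,
\]
which is the desired identity. For $\Pi_m^\ast$, the same computation is carried out with the roles of $u$ and $c$ interchanged; the delta in the $c$-integration now fixes $\xi_2=-\xi'-\eta$, and the final Fourier transform in $u$ fixes $\xi_1=-\xi-\xi'$, which after relabelling produces the claimed symmetric formula.

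The only real subtlety is the treatment of the non-smooth weights $\mu_2$ (indicator), $\mu_3$ (logarithm) and $\mu_4$ (principal value), whose Fourier transforms must be interpreted as tempered distributions. The key point is that in every application of this lemma $\widehat{\widehat K}$ decays rapidly in both variables (coming from the smoothness and compact support of the kernels arising from the wave operator construction in Proposition \ref{prop: general-wave}), so $\widehat{\mu_m}$ is always paired against a Schwartz function in $\xi'$ and all the formal manipulations above are legitimate. This will be the main obstacle in the rigorous write-up, but it is a standard distributional argument rather than a deep difficulty.
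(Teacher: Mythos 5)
Your proposal is correct and follows essentially the same route as the paper: a direct Fourier inversion of the ingredients, an application of Fubini justified by the smoothness and compact support of $K$ (with $\widehat{\mu_m}$ interpreted distributionally for $m=2,3,4$), and a collapse of the resulting phases. The paper merely organizes the same computation differently, first substituting $K_L(u-c,c)=K(u,c)$ so that the $u$-integral becomes the Fourier transform of the product $K_L(\cdot,c)\mu_m$ (hence a convolution of $\widehat{K_L}$ with $\widehat{\mu_m}$) and then using $\widehat{\widehat{K_L}}(\xi,\eta)=\widehat{\widehat{K}}(\xi,\eta-\xi)$, where you instead track the exponentials explicitly and reduce via delta functions; the two bookkeepings are equivalent.
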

\begin{proof}
Let $K_{L}(u-c,c)=K(u,c)$, then we have that 
\begin{align*}
\Pi_m(F)(c)&=\int_{\R}K_L(u-c,c)\mu_m(u-c)e^{-ikt(u-c)}F(u)dv\\
&=\f{1}{2\pi}\int_{\R}K_L(u-c,c)\mu_m(u-c)e^{-ikt(u-c)}\int_{\R}\hat{F}(\xi)e^{i\xi u}d\xi du\\
&=\f{1}{2\pi}\int_{\R}\hat{F}(\xi)e^{ic\xi}\int_{\R}K_L(u-c,c)\mu_m(u-c)e^{-ikt(u-c)}e^{i\xi (u-c)} dud\xi\\
&=\f{1}{2\pi}\int_{\R}\hat{F}(\xi)e^{ic\xi}\mathcal{F}_1\big(K_L(\cdot, c)\mu_m\big)(-\xi+kt)d\xi\\
&=\f{1}{2\pi}\int_{\R}\int_{\R}\hat{F}(\xi)e^{ic\xi}\widehat{K_L}(-\xi-\xi', c)\widehat{\mu_m}(\xi'+kt)d\xi'd\xi.
\end{align*}

Thus we get that
\begin{align*}
\widehat{\Pi_m(F)}(\eta)=\f{1}{2\pi}\int_{\R^2}\hat{F}(\xi)\widehat{\widehat{K_L}}(-\xi-\xi', \eta-\xi)\widehat{\mu_m}(\xi'+kt)d\xi'd\xi.
\end{align*}
We also have
\begin{align*}
\widehat{\widehat{K_L}}(\xi, \eta)
&=\int_{\R^2}K_L(w,c)e^{-iw\xi-ic\eta}dwdc\\
&=\int_{\R^2}K_L(v-c,c)e^{-i(u-c)\xi-ic\eta}dudc\\
&=\int_{\R^2}K(v,c)e^{-i(u-c)\xi-ic\eta}dudc
=\widehat{\widehat{K}}(\xi,\eta-\xi),
\end{align*}
which gives the lemma. 
\end{proof}
\begin{remark}\label{Rmk: fo-g-2}
Suppose $K(u,c)\in \mathcal{G}_{ph,1}^{M,s_0}(\R^2)$ with compact support in $[u(0),u(1)]^2$, then there exists $\la=\la(M,s_0)$, such that for $m=1,2,3,4,$
\beno
\left|\int\widehat{\widehat{K}}(-\xi-\xi', \eta+\xi')\widehat{\mu_m}(\xi'+kt)d\xi'\right|\lesssim e^{-\la|\eta-\xi|^{s_0}}. 
\eeno
\end{remark}
This remark follows directly from Remark \ref{Rmk: fourier-gevrey} and the fact that
\beno
\int_{\R}e^{-\la'(|\xi+\xi'|^2+|\eta+\xi'|^2)^{\f{s_0}{2}}}d\xi'
\lesssim \int_{\R}\f{e^{-\la|\xi-\eta|^{s_0}}}{1+|\xi+\xi'|^2+|\eta+\xi'|^2}d\xi'\lesssim e^{-\la|\xi-\eta|^{s_0}},
\eeno
holds for $0<\la<\la'$. 

\subsection{Gevrey regularity}\label{Sec:Gevrey regularity}
In this subsection, we will study the Gevrey regularity of the coefficients $d_1, d_2$ and the kernel $e$ of the wave operator and prove Proposition \ref{prop: kernel-wave-op}. 

A direct calculation gives us that
\begin{align*}
&\phi(k,y,y_c)\int_j^y\f{1}{\phi(k,y',y_c)^2}dy'\\
&=\phi(k,y,y_c)\int^y\f{1}{(u(y')-u(y_c))^2}\Big(\f{1}{\phi_1(k,y',y_c)^2}-1\Big)dy'\\
&\quad+\phi(k,y,y_c)\int^y\f{1}{(u(y')-u(y_c))^2}dy'\\
&=\phi(k,y,y_c)\int_j^y\f{1}{(u(y')-u(y_c))^2}\Big(\f{1}{\phi_1(k,y',y_c)^2}-1\Big)dy'\\
&\quad+\phi(k,y,y_c)\int_{u(j)}^{u(y)}\f{(u^{-1})'(u_1)-(u^{-1})'(u(y_c))-(u^{-1})''(u(y_c))(u_1-u(y_c))}{(u_1-u(y_c))^2}du_1\\
&\quad-\f{1}{u'(y_c)}\f{\phi(k,y,y_c)}{u(y')-u(y_c)}\bigg|_{y'=j}^{y'=y}
+\phi(k,y,y_c)\f{u''(y_c)}{u'(y_c)^3}\ln |u(y')-u(y_c)|\bigg|_{y'=j}^{y'=y}.
\end{align*}
Thus we have that 
\beno
\phi(k,y,y_c)\int_{j}^y\f{1}{\phi(k,y',y_c)^2}dy'
=-\f{1}{u'(y_c)}+\f{u''(y_c)}{u'(y_c)^3}\phi(k,y,y_c)\ln |u(y)-u(y_c)|+\phi_j^{re}(k,y,y_c),
\eeno
where 
\begin{align*}
&\phi_j^{re}(k,y,y_c)\\
&=\phi(k,y,y_c)\int_j^y\f{1}{(u(y')-u(y_c))^2}\Big(\f{1}{\phi_1(k,y',y_c)^2}-1\Big)dy'\\
&\quad+\phi(k,y,y_c)\int_{u(j)}^{u(y)}\f{(u^{-1})'(u_1)-(u^{-1})'(u(y_c))-(u^{-1})''(u(y_c))(u_1-u(y_c))}{(u_1-u(y_c))^2}du_1\\
&\quad+\f{1}{u'(y_c)}(1-\phi_1(k,y,y_c))
+\f{1}{u'(y_c)}\f{(u(y)-u(y_c))\phi_1(k,y,y_c)}{u(j)-u(y_c)}\\
&\quad-\phi(k,y,y_c)\f{u''(y_c)}{u'(y_c)^3}\ln |u(j)-u(y_c)|. 
\end{align*}
Therefore, 
\begin{align*}
e(k,y,y_c)&=-\f{1}{u'(y_c)}+\f{u''(y_c)}{u'(y_c)^3}\phi(k,y,y_c)\ln|u(y)-u(y_c)|\\
&\quad+\phi^{re}_0(k,y,y_c)(1_{\R^-}(y-y_c))+\phi^{re}_1(k,y,y_c)(1_{\R^+}(y-y_c)). 
\end{align*}
Let us define
\begin{align*}
\rmw_1(k,y_c)&=\f{-\f{1}{u'(y_c)}+\f{u''(y_c)}{u'(y_c)^3}\phi(k,1,y_c)\ln|u(1)-u(y_c)|+\phi_0^{re}(k,1,y_c)}{\phi_1(k,1,y_c)}(u(y_c)-u(0))u'(y_c),
\end{align*}
then, we get
\begin{align*}
d_1(k,y_c)=\f{\rmw_1(k,y_c)}{\sqrt{\rmw_1(k,y_c)^2+\left(\pi\rho(y_c)\f{u''(y_c)}{u'(y_c)^2}\right)^2}}.
\end{align*}
Making the change of coordinate $(y,y_c)\to (u,c)$ and using the definition \eqref{eq:DE}, we also have
\begin{align}
\label{eq: D_1}
D_1(k,c)&=\f{\rmW_1(k,c)}{\sqrt{\rmW_1(k,c)^2+\left(\pi\trho(c)\f{\widetilde{u''}(c)}{\widetilde{u'}(c)^2}\right)^2}},\\
\label{eq: D_2}
D_2(k,c)&=-\f{\trho(c)}{\sqrt{\rmW_1(k,c)^2+\left(\pi\trho(c)\f{\widetilde{u''}(c)}{\widetilde{u'}(c)^2}\right)^2}},\\
\label{eq: E}
E(k,u,c)&=-\f{1}{\widetilde{u'}(c)}+(u^{-1})''(c)(u-c)\Phi_1(k,u,c)\ln|u-c|\\
\nonumber
&\quad+\Phi^{re}_0(k,u,c)(1_{\R^-}(u-c))+\Phi^{re}_1(k,u,c)(1_{\R^+}(u-c)),
\end{align}
where 
\begin{align}
\nonumber
\trho(c)&=(u(1)-c)(c-u(0)),\ \widetilde{u'}(c)=u'\circ u^{-1}(c)=\f{1}{(u^{-1})'(c)},\  \widetilde{u''}(c)=u''\circ u^{-1}(c),\\
\nonumber
\rmW_1(k,c)&=\f{-c+u(0)}{\Phi_1(k,u(1),c)}+\f{(u^{-1})''(c)\trho(c)\ln|u(1)-c|}{(u^{-1})'(c)}+\f{\Phi_0^{re}(k,u(1),c)}{\Phi_1(k,u(1),c)}\f{c-u(0)}{(u^{-1})'(c)},\\
\label{eq: Phi^{re}_j}
\Phi^{re}_j(k,u,c)&=(u-c)\bigg[\Phi_1(k,u,c)\int_{u(j)}^u\f{1}{(u_1-c)^2}\Big(\f{1}{\Phi_1(k,u_1,c)^2}-1\Big)(u^{-1})'(u_1)du_1\\
\nonumber&\quad+\Phi_1(k,u,c)\int_{u(j)}^{u}\f{(u^{-1})'(u_1)-(u^{-1})'(c)-(u^{-1})''(c)(u_1-c)}{(u_1-c)^2}du_1\\
\nonumber&\quad+\f{1}{\tilde{u'}(c)}\f{1-\Phi_1(k,u,c)}{u-c}
+\f{1}{\tilde{u'}(c)}\f{\Phi_1(k,u,c)}{u(j)-c}
-\Phi_1(k,u,c)(u^{-1})''(c)\ln |u(j)-c|\bigg]\\
\label{eq: Phi^{re}_{j,1}}
&\eqdef(u-c)\Phi^{re}_{j,1}(k,u,c).
\end{align}
with $\Phi_1(k,u,c)$ satisfying 
\beno
\Phi_1(k,u(y),u(y_c))=\phi_1(k,y,y_c). 
\eeno

We divide the nonlocal part in the wave operator into four different types of integral operators whose kernels have different singularities:
\begin{align}\label{eq: bfD}
\bfD_{u,k}\big(\mathcal{F}_{1}\tilde{f}(t,k,\cdot)\big)(t,k,u)
=D_1(k,u)\mathcal{F}_{1}\tilde{f}(t,k,u)+\sum_{j=1}^4\Pi_j(\tilde{f}),
\end{align}
where
\begin{align*}
\Pi_1(\tilde{f})&=\widetilde{u''}(u)D_2(k,u)\int_{u(0)}^{u(1)}{\mathcal{F}_{1}\tilde{f}(t,k,u_1)e^{-i(u_1-u)tk}}(u^{-1})'(u_1)\Phi^{re}_{1,1}(k,u_1,u)du_1,\\
\Pi_2(\tilde{f})&=\widetilde{u''}(u)D_2(k,u)\int_{u(0)}^{u(1)}{\mathcal{F}_{1}\tilde{f}(t,k,u_1)e^{-i(u_1-u)tk}}(u^{-1})'(u_1)\Phi^{re}_{0,1}(k,u_1,u)(1_{\R^-}(u_1-u))du_1\\
&\quad-\widetilde{u''}(u)D_2(k,u)\int_{u(0)}^{u(1)}{\mathcal{F}_{1}\tilde{f}(t,k,u_1)e^{-i(u_1-u)tk}}(u^{-1})'(u_1)\Phi^{re}_{1,1}(k,u_1,u)1_{\R^-}(u_1-u)du_1,\\
\Pi_3(\tilde{f})&=\widetilde{u''}(u)D_2(k,u)\int_{u(0)}^{u(1)}{\mathcal{F}_{1}\tilde{f}(t,k,u_1)e^{-i(u_1-u)tk}}(u^{-1})'(u_1)(u^{-1})''(u)\Phi_1(k,u_1,u)\ln|u_1-u|du_1,\\
\Pi_4(\tilde{f})&=\widetilde{u''}(u)D_2(k,u)\int_{u(0)}^{u(1)}\f{\mathcal{F}_{1}\tilde{f}(t,k,u_1)e^{-i(u_1-u)tk}}{u_1-u}\f{(u^{-1})'(u_1)}{(u^{-1})'(u)}du_1.
\end{align*} 

To study the Gevrey regularity of the wave operator, we only need to study the smooth kernels $\Phi_1$, $\Phi_{0,1}^{re}$, $\Phi_{1,1}^{re}$ and the coefficients $D_1$, $D_2$. Indeed, all these functions are related to $\Phi_1$ closely. Once we obtain the estimate of $\Phi_1$, we can deduce the estimates of other functions easily. 

Now let us study the regularity of $\Phi_1$. 

Recall that $\Phi(k,u(y),u(y_c))=\phi(k,y,y_c)$, $\Phi_1(k,u(y),u(y_c))=\phi_1(k,y,y_c)$, $\Phi(k,u,c)=(u-c)\Phi_1(k,u,c)$ and
\ben\label{eq: Phi_1}
\begin{split}
\Phi_1(k,u,c)
&=1+k^2\int_{c}^u\f{(u^{-1})'(u_1)}{(u_1-c)^2}\int_c^{u_1}(u_2-c)^2(u^{-1})'(u_2)\Phi_1(k,u_2,c)d u_2 du_1\\
&=1+k^2\mathrm{T}_0\circ \mathrm{T}_{2,2}\Phi_1,
\end{split}
\een
with $\rmT_0\Phi_1(k,u,c)=\int_{c}^u\Phi_1(k,u_1,c)du_1$ and for $j_1\leq j_2+1$, $j_2=0,1,2,...,$
\beno
\rmT_{j_1,j_2}\Phi_1(k,u_1,c)=\f{(u^{-1})'(u_1)}{(u_1-c)^{j_1}}\int_c^{u_1}(u_2-c)^{j_2}(u^{-1})'(u_2)\Phi_1(k,u_2,c)d u_2.
\eeno

For any $k\neq 0$, let $M\geq 1$ be a large constant, then we define the following weighted Gevery spaces:
\ben
\mathcal{G}^{M|k|,s}_{ph,\cosh}([a,b]^2)\eqdef \{f\in C^{\infty}([a,b]^2):~\|f\|_{\mathcal{G}^{M|k|,s}_{ph,\cosh}([a,b]^2)}<\infty\}, 
\een
with 
\beq
\|f\|_{\mathcal{G}^{M|k|,s}_{ph,\cosh}([a,b]^2)}\eqdef
\sup_{(u,c)\in [u(0),u(1)]^2,\, m\geq m_1\geq 0}\f{|\pa_u^{m-m_1}(\pa_c+\pa_u)^{m_1}f(u,c)|}{\Gamma_s(m)M^m|k|^{m}\cosh M|k|(u-c)}. 
\eeq

\begin{remark}\label{rmk: Gevery1}
It holds that
\begin{align*}
\sup_{(u,c)\in [u(0),u(1)]^2,\, m\geq m_1\geq 0}\f{|\pa_u^{m-m_1}\pa_c^{m_1}f(u,c)|}{\Gamma_s(m)2^{m_1}M^m|k|^{m}\cosh M|k|(u-c)}\leq \|f\|_{\mathcal{G}^{M|k|,s}_{ph,\cosh}([a,b]^2)}. 
\end{align*}
\end{remark}
\begin{proof}
We have 
\begin{align*}
&\sup_{(u,c)\in [u(0),u(1)]^2,\, m\geq m_1\geq 0}\f{|\pa_u^{m-m_1}\pa_c^{m_1}f(u,c)|}{\Gamma_s(m)2^{m_1}M^m|k|^{m}\cosh M|k|(u-c)}\\
&\leq \sup_{m_1\geq 0}2^{-m_1}\sum_{m_2=0}^{m_1}\f{m_1!}{m_2!(m_1-m_2)!}\sup_{(u,c)\in [u(0),u(1)]^2,\, m\geq m_2\geq 0}\f{|\pa_u^{m-m_2}(\pa_c+\pa_u)^{m_2}f(u,c)|}{\Gamma_s(m)M^m|k|^{m}\cosh M|k|(u-c)}\\
&\leq \sup_{m_1\geq 0}2^{-m_1}\sum_{m_2=0}^{m_1}\f{m_1!}{m_2!(m_1-m_2)!}\|f\|_{\mathcal{G}^{M|k|,s}_{ph,\cosh}([a,b]^2)}\leq \|f\|_{\mathcal{G}^{M|k|,s}_{ph,\cosh}([a,b]^2)}. 
\end{align*}
Thus we proved the remark.
\end{proof}

\begin{remark}
It holds that
\begin{align*}
\|fg\|_{\mathcal{G}^{M|k|,s}_{ph,\cosh}([a,b]^2)}
\leq C\|f\|_{\mathcal{G}^{M|k|,s}_{ph,\cosh}([a,b]^2)}\|g\|_{\mathcal{G}^{M|k|,s}_{ph,1}([a,b]^2)}. 
\end{align*}
\end{remark}

\begin{proposition}\label{prop: T}
Suppose $(u^{-1})''\in \mathcal{G}^{K_u,s_0}_{ph,1}([u(0),u(1)])$, then there exists $M_0=M_0(K_u)\geq 1$, such that for any $M\geq M_0$, it holds that
\beno
\|\rmT_0\circ\rmT_{2,2}\|_{\mathcal{G}^{M|k|,s_0}_{ph,\cosh}([u(0),u(1)]^2)\to \mathcal{G}^{M|k|,s_0}_{ph,\cosh}([a,b]^2)}\leq \f{C}{M^2k^2}. 
\eeno
\end{proposition}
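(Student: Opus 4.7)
The strategy is to take advantage of two structural features: first, in $\rmT_0 \circ \rmT_{2,2}$ the singular weight $1/(u_1-c)^2$ is exactly compensated by $(u_2-c)^2$ in the inner integral; second, the vector fields $\pa_u$ and $\pa_u+\pa_c$ appearing in $\|\cdot\|_{\mathcal{G}^{M|k|,s_0}_{ph,\cosh}}$ behave naturally under an affine rescaling. I would start by substituting $u_1 = c + s_1 w$, $u_2 = c + s_1 s_2 w$ with $w := u - c$, which recasts the kernel as an integral over the fixed square $[0,1]^2$ of a smooth integrand and extracts a clean prefactor $w^2$:
\begin{equation*}
(\rmT_0 \circ \rmT_{2,2}\Phi_1)(k, u, c) = w^2 \int_0^1\!\!\int_0^1 s_1 s_2^2\,(u^{-1})'(c+s_1 w)\,(u^{-1})'(c+s_1 s_2 w)\,\Phi_1(c+s_1 s_2 w, c)\,ds_1\,ds_2.
\end{equation*}
This $w^2$ factor is precisely the source of the $1/(M^2 k^2)$ gain.

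Next I would pass to the coordinates $(c, w)$, under which $\pa_u = \pa_w$ and $\pa_u+\pa_c = \pa_c|_{w}$, so that the norm measures exactly $\pa_w^{m-m_1}(\pa_c|_w)^{m_1}$ of the integrand. Chain rule applied to each factor produces $((u^{-1})')^{(r_1)}$ at $c+s_1 w$ with an extra power of $s_1$, its analogue at $c + s_1 s_2 w$ with a power of $s_1 s_2$, and $\pa_u^{p_3}(\pa_u+\pa_c)^{q_3}\Phi_1$ at $(c+s_1 s_2 w, c)$ with another power of $s_1 s_2$. These redundant powers of $s_1,s_2 \in [0,1]$ can only decrease the integrand, so they may be discarded in the end.

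Applying the multivariable Leibniz rule, then regrouping the binomials via the Vandermonde identity so that $\binom{p}{p_1,p_2,p_3}\binom{q}{q_1,q_2,q_3}$ collapses into $\binom{m}{r_1,r_2,r_3}$ with $r_i = p_i+q_i$, then iterating the Gevrey sub-multiplicativity \eqref{eq:3.9} to contract $\sum\binom{m}{r_1,r_2,r_3}\prod_i \Gamma_{s_0}(r_i) \leq \Gamma_{s_0}(m)$, and finally choosing $M \geq M_0(K_u)$ so that $K_u \leq M|k|$ (hence $K_u^{r_1+r_2}(M|k|)^{r_3} \leq (M|k|)^m$), I would obtain the pointwise bound
\begin{equation*}
\bigl|\pa_u^{m-m_1}(\pa_u+\pa_c)^{m_1}(\rmT_0\circ\rmT_{2,2}\Phi_1)(u,c)\bigr| \leq C\,\|\Phi_1\|_{\mathcal{G}^{M|k|,s_0}_{ph,\cosh}}\,\Gamma_{s_0}(m)\,(M|k|)^m\,I_0(w),
\end{equation*}
where $I_0(w) = w^2\int_0^1\!\!\int_0^1 s_1 s_2^2 \cosh(M|k|s_1 s_2 w)\,ds_1\,ds_2$.

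The proof thus reduces to the scalar estimate $I_0(w) \leq \tfrac{C}{M^2 k^2}\cosh(M|k|w)$, which after reversing the substitution is
\begin{equation*}
\int_c^u \frac{1}{(u_1-c)^2}\int_c^{u_1}(u_2-c)^2\cosh(M|k|(u_2-c))\,du_2\,du_1 \leq \frac{C\cosh(M|k|(u-c))}{M^2 k^2}.
\end{equation*}
The inner integral expands as $(M|k|)^{-3}[(M|k|r)^2\sinh(M|k|r) - 2M|k|r\cosh(M|k|r) + 2\sinh(M|k|r)]$; a three-term cancellation removes the apparent $1/r^2$ singularity at $r=0$ (the combined integrand is $O(r)$ there), while the leading large-$r$ behaviour $\sim\sinh(M|k|r)/(M|k|)$ produces the claimed $1/(M^2k^2)\cosh(M|k|(u-c))$ after outer integration. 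Dividing through the previous pointwise bound and taking the supremum over $(u,c)$ and $m\geq m_1\geq 0$ then delivers the operator-norm estimate. The main obstacle is precisely this last integral inequality: a term-by-term absolute-value bound loses the $1/(M^2k^2)$ gain (yielding only $1/(M|k|)$ or a logarithmic loss), so the three-term cancellation must be exploited uniformly in $M|k|w$ in order to recover the sharp factor needed to absorb the $k^2$ in the Volterra equation $\Phi_1 = 1 + k^2\rmT_0\circ\rmT_{2,2}\Phi_1$.
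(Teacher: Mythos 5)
Your proof is correct in substance, but it is organized quite differently from the paper's. The paper splits into cases according to how many pure $\pa_u$ derivatives fall on the operator: for the pure $(\pa_u+\pa_c)^m$ derivatives it applies Leibniz directly to the nested integrals (this vector field passes through the integration limits) and then invokes, without proof, exactly your scalar inequality $\int_{c}^u\f{1}{(u_1-c)^2}\int_c^{u_1}(u_2-c)^2\cosh M|k|(u_2-c)\,d u_2\, du_1\leq CM^{-2}k^{-2}\cosh M|k|(u-c)$; for two or more pure $\pa_u$ derivatives it computes $\pa_u^2\rmT_0\circ\rmT_{2,2}f$ explicitly (obtaining $\rmT_{3,2}$-type operators plus $(u^{-1})'(u)^2f$, which are merely bounded, not smoothing), so that the $M^{-2}k^{-2}$ gain there comes from the two-unit mismatch in the derivative count rather than from the integral; and the rescaling $u'=c+(u-c)t$ appears only in the auxiliary Claim used to bound the $\rmT_{j+1,j}$ operators. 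Your single global rescaling to $[0,1]^2$ unifies these cases: since the domain of integration becomes fixed, $\pa_w=\pa_u$ produces no boundary terms and no case analysis on $m_1$ is needed, at the price of having to carry the prefactor $w^2$ and to prove the scalar inequality (whose two-regime verification — Taylor cancellation near $r=0$, leading $\sinh$ term for $M|k|r$ large — you correctly supply, and which the paper leaves implicit).

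One bookkeeping point you should make explicit: in your Leibniz expansion the derivatives can also hit the prefactor $w^2$, so your displayed pointwise bound with $I_0(w)$ on the right is not literally what comes out; a term in which $j\in\{1,2\}$ derivatives land on $w^2$ is controlled by $w^{2-j}\,\Gamma_{s_0}(m-j)(M|k|)^{m-j}\int_0^1\!\!\int_0^1 s_1s_2^2\cosh(M|k|s_1s_2w)\,ds_1ds_2$, and one must check separately that $(M|k|w)^{2-j}\int_0^1\!\!\int_0^1 s_1s_2^2\cosh(a s_1s_2)\,ds_1ds_2\lesssim\cosh a$ with $a=M|k|w$ for $j=0,1,2$ (it does, since the double integral is $\lesssim\min(1,a^{-2}\cosh a)$, and $m^2\Gamma_{s_0}(m-2)\lesssim\Gamma_{s_0}(m)$ absorbs the extra multinomial factors). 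With that added, the argument closes.
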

\begin{proof}
It is easy to check that
\begin{align*}
&(\pa_u+\pa_c)^{m}\rmT_0\circ\rmT_{2,2}f\\
&=\sum_{k_1+k_2\leq m}\f{m!}{(m-k_1-k_2)!k_1!k_2!}\\
&\qquad \times\int_{c}^u\f{(u^{-1})^{(k_1+1)}(u_1)}{(u_1-c)^2}\int_c^{u_1}(u_2-c)^2(u^{-1})^{(k_2+1)}(u_2)(\pa_{u_2}+\pa_c)^{m-k_1-k_2}f(k,u_2,c)d u_2 du_1,
\end{align*}
which gives us that
\begin{align*}
&\left|\f{(\pa_u+\pa_c)^{m}\rmT_0\circ\rmT_{2,2}f}{\Gamma_{s_0}(m)M^m|k|^m\cosh M|k|(u-c)}\right|\\
&\leq \sum_{k_1+k_2\leq m}\f{m!}{(m-k_1-k_2)!k_1!k_2!}
\Gamma_{s_0}(m-k_1-k_2)\Gamma_{s_0}(k_1)\Gamma_{s_0}(k_2)\\
&\qquad \times\int_{c}^u\f{1}{(u_1-c)^2}\int_c^{u_1}(u_2-c)^2\cosh M|k|(u_2-c)d u_2 du_1\\
&\qquad\times \sup_{0\leq m_1\leq m}\left|\f{(\pa_u+\pa_c)^{m_1}f}{\Gamma_{s_0}(m_1)M^{m_1}|k|^{m_1}\cosh M|k|(u-c)}\right|\|(u^{-1})'\|_{\mathcal{G}^{M|k|,s_0}_{ph,1}([u(0),u(1)])}\\
&\leq \f{C}{M^2k^2}\sup_{0\leq m_1\leq m}\left|\f{(\pa_u+\pa_c)^{m_1}f}{\Gamma_{s_0}(m_1)M^{m_1}|k|^{m_1}\cosh M|k|(u-c)}\right|. 
\end{align*}
which implies 
\beq
\sup_{m\geq 0}\left|\f{(\pa_u+\pa_c)^{m}\rmT_0\circ\rmT_{2,2}f}{\Gamma_{s_0}(m)M^m|k|^m\cosh M|k|(u-c)}\right|
\leq \f{C}{M^2k^2}\sup_{m\geq 0}\left|\f{(\pa_u+\pa_c)^{m}f}{\Gamma_{s_0}(m)M^{m}|k|^{m}\cosh M|k|(u-c)}\right|. 
\eeq
We also have 
\begin{align*}
&\pa_u\rmT_0\circ\rmT_{2,2}f=\rmT_{2,2}f\\
&\pa_u^2\rmT_0\circ\rmT_{2,2}f=(u-c)\f{(u^{-1})''(u)}{(u^{-1})'(u)}\rmT_{3,2}f-2\rmT_{3,2}f+(u^{-1})'(u)^2f(u,c)
\end{align*}
{\bf Claim:} It holds for any $s\in (0,1)$ and $j\geq 0$ that 
\beq
\begin{split}
&\sup_{m_1,\,m\geq 0}\left\|\f{\pa_u^{m_1}(\pa_u+\pa_c)^{m-m_1}\Big(\f{1}{(u-c)^{j+1}}\int_c^u(u'-c)^{j}f(u',c)du'\Big)}{\Gamma_{s}(m)M^m|k|^m\cosh M|k|(u-c)}\right\|_{L^{\infty}([u(0),u(1)]^2)}\\
&\qquad\leq C\sup_{m_1, m\geq 0}\left\|\f{(\pa_c+\pa_u)^{m-m_1}\pa_u^{m_1}f(u,c)}{\Gamma_{s}(m)M^m|k|^m\cosh M|k|(u-c)}\right\|_{L^{\infty}([u(0),u(1)]^2)}. 
\end{split}
\eeq
Let admit the claim and finish the proof of the proposition first. Indeed, it is easy to check that
\begin{align*}
\sum_{m=0}^{2}\sum_{m_1=0}^{m}\left\|\f{\pa_u^{m_1}\pa_c^{m-m_1}\rmT_0\circ\rmT_{2,2}f}{(M|k|)^m\cosh M|k|(u-c)}\right\|_{L^{\infty}([u(0),u(1)]^2)}
\leq \f{C}{M^2k^2}\sum_{m=0}^2\left\|\f{\pa_c^mf}{\cosh M|k|(u-c)}\right\|_{L^{\infty}([u(0),u(1)]^2)}. 
\end{align*}
For more details of the above estimate one can refer to \cite{WZZ1}. 

For $m_1\geq 2$, by using the claim, we have that 
\begin{align*}
&\left\|\f{(\pa_c+\pa_u)^{m-m_1}\pa_u^{m_1}\rmT_0\circ\rmT_{2,2}f}{\Gamma_{s_0}(m)(M|k|)^m\cosh M|k|(u-c)}\right\|_{L^{\infty}([u(0),u(1)]^2)}\\
&\leq \left\|\f{(\pa_c+\pa_u)^{m-m_1}\pa_u^{m_1-2}\Big((u-c)\f{(u^{-1})''(u)}{(u^{-1})'(u)}\rmT_{3,2}f\Big)}{\Gamma_{s_0}(m)(M|k|)^m\cosh M|k|(u-c)}\right\|_{L^{\infty}([u(0),u(1)]^2)}\\
&\quad+2\left\|\f{(\pa_c+\pa_u)^{m-m_1}\pa_u^{m_1-2}\rmT_{3,2}f}{\Gamma_{s_0}(m)(M|k|)^m\cosh M|k|(u-c)}\right\|_{L^{\infty}([u(0),u(1)]^2)}\\
&\quad+\left\|\f{(\pa_c+\pa_u)^{m-m_1}\pa_u^{m_1-2}((u^{-1})'(u)^2f(u,c))}{\Gamma_{s_0}(m)(M|k|)^m\cosh M|k|(u-c)}\right\|_{L^{\infty}([u(0),u(1)]^2)}\\
&\leq \f{C}{M^2k^2}\sup_{m_2\leq m_1-2,\, m_3\leq m-m_1}\left\|\f{(\pa_c+\pa_u)^{m_3}\pa_u^{m_2}f(u,c)}{\Gamma_{s_0}(m_2+m_3)(M|k|)^{m_2+m_3}\cosh M|k|(u-c)}\right\|_{L^{\infty}([u(0),u(1)]^2)}. 
\end{align*}

Finally, let us prove the claim. Indeed we have
\begin{align*}
&\pa_u^{m_1}(\pa_u+\pa_c)^{m-m_1}\left(\f{1}{(u-c)^{j+1}}\int_c^u(u'-c)^{j}f(u',c)du'\right)\\
&=\pa_u^{m_1}(\pa_u+\pa_c)^{m-m_1}\left(\int_0^1t^jf(c+(u-c)t,c)dt\right)\\
&=\int_0^1t^jt^{m_1}\big(\pa_u^{m_1}(\pa_u+\pa_c)^{m-m_1}f\big)(c+(u-c)t,c)dt,
\end{align*}
which gives that
\begin{align*}
&\left\|\f{\pa_u^{m_1}(\pa_u+\pa_c)^{m-m_1}\left(\f{1}{(u-c)^{j+1}}\int_c^u(u'-c)^{j}f(u',c)du'\right)}{\cosh M|k|(u-c)}\right\|_{L^{\infty}[u(0),u(1)]^2}\\
&\leq \f{\int_0^1t^jt^{m_1}\cosh tM|k|(u-c)dt}{\cosh M|k|(u-c)}\left\|\f{\pa_u^{m_1}(\pa_u+\pa_c)^{m-m_1}f}{\cosh M|k|(u-c)}\right\|_{L^{\infty}[u(0),u(1)]^2}\\
&\leq C\left\|\f{\pa_u^{m_1}(\pa_u+\pa_c)^{m-m_1}f}{\cosh M|k|(u-c)}\right\|_{L^{\infty}[u(0),u(1)]^2}.
\end{align*}
Thus we proved the proposition. 
\end{proof}

By \eqref{eq: Phi_1}, Proposition \ref{prop: T} and Remark \ref{rmk: Gevery1}, we have the following corollary:
\begin{corol}\label{corol: regular Phi}
Suppose $u$ is the same as in Proposition \ref{prop: T} and $\Phi_1$ satisfies \eqref{eq: Phi_1}, then there exist $M_0$ and $C(k)>0$ such that for any $|k|\neq 0$, for all integers $m\geq m_1\geq 0$ and $M\geq M_0$, 
\beno
\left|\pa_u^{m_1}\pa_c^{m-m_1}\Phi_1(k,u,c)\right|\leq C(k)\Gamma_{s_0}(m)(2M)^m|k|^m.
\eeno
Moreover there exists $C\geq 1$ such that
\beno
\left|\pa_u^{m_1}\pa_c^{m-m_1}\Big(\f{\Phi_1(k,u,c)-1}{(u-c)^2}\Big)\right|\leq C(k)\Gamma_{s_0}(m)(CM)^m|k|^m. 
\eeno
\end{corol}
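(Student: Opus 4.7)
The plan is to construct $\Phi_1$ by a Neumann series in the weighted Gevrey space $\mathcal{G}^{M|k|,s_0}_{ph,\cosh}([u(0),u(1)]^2)$. First I observe that the constant function $1$ lies in this space with norm at most $1$, since $1/\cosh(M|k|(u-c))\leq 1$ and all higher derivatives vanish. Proposition \ref{prop: T} then gives
\[
\|k^2\,\rmT_0\circ \rmT_{2,2}\|_{\mathcal{G}^{M|k|,s_0}_{ph,\cosh}\to\mathcal{G}^{M|k|,s_0}_{ph,\cosh}}\leq \f{C}{M^2},
\]
whose right-hand side is independent of $k$. Choosing $M_0=M_0(K_u)$ large enough that $C/M^2<1/2$ for all $M\geq M_0$, the Neumann series $\Phi_1=\sum_{n\geq 0}(k^2\rmT_0\circ \rmT_{2,2})^n 1$ converges in $\mathcal{G}^{M|k|,s_0}_{ph,\cosh}$ with a norm bounded by $2$, uniformly in $k\neq 0$.

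Next I convert this bound into the pointwise form of the corollary. The definition of the norm provides
\[
\left|\pa_u^{m-m_1}(\pa_u+\pa_c)^{m_1}\Phi_1(u,c)\right|\leq 2\,\Gamma_{s_0}(m)M^m|k|^m\cosh(M|k|(u-c)),
\]
and on the compact rectangle $\cosh(M|k|(u-c))\leq e^{M|k|(u(1)-u(0))}$, which is absorbed into a $k$-dependent constant $C(k)$. To pass from the mixed derivatives $\pa_u^{m-m_1}(\pa_u+\pa_c)^{m_1}$ to the ordinary derivatives $\pa_u^{m_1}\pa_c^{m-m_1}$ appearing in the statement, I apply Remark \ref{rmk: Gevery1}, which costs a factor $2^{m_1}\leq 2^m$ and thereby replaces $M^m$ by $(2M)^m$, giving the first estimate.

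For the second estimate, I use that $\Phi_1-1=k^2\rmT_0\circ \rmT_{2,2}\Phi_1$ and that this operator naturally extracts a factor $(u-c)^2$. Changing variables $u_1=c+(u-c)t_1$, $u_2=c+(u-c)t_1 t_2$ yields
\[
\f{\Phi_1(k,u,c)-1}{(u-c)^2}=k^2\int_0^1\!\!\int_0^1 t_1 t_2^2\,(u^{-1})'(c+(u-c)t_1)\,(u^{-1})'(c+(u-c)t_1 t_2)\,\Phi_1(k,c+(u-c)t_1 t_2,c)\,dt_2\,dt_1.
\]
Taking $\pa_u^{m_1}\pa_c^{m-m_1}$ and differentiating under the integral, each composition with the affine shift $c+(u-c)t_j$ is handled by Lemma \ref{eq: composition}, which preserves Gevrey-$s_0$ regularity at the cost of a bounded multiplicative factor in the radius (accounting for the constant $C$ in $(CM)^m$). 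The product structure is then controlled by the Gevrey product bound \eqref{eq: space-embed} together with the first part of the corollary applied to $\Phi_1$ and the hypothesis $(u^{-1})'\in \mathcal{G}^{K_u,s_0}_{ph,1}$; the prefactor $k^2$ is absorbed into $C(k)$.

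The main obstacle is the uniformity of the threshold $M_0$ in $k$. Since the Rayleigh equation carries a $k^2$ multiplier, a naive fixed-point argument would require $M_0$ to grow with $|k|$; the design of the $\cosh(M|k|(u-c))$-weighted norm and the sharp $C/(M^2 k^2)$ bound in Proposition \ref{prop: T} are precisely what allow the $k^2$ to cancel, producing a $k$-independent threshold and a uniform norm bound on $\Phi_1$.
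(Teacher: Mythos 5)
Your proposal is correct and follows essentially the same route as the paper: the first bound is obtained from Proposition \ref{prop: T} applied to the integral equation \eqref{eq: Phi_1} in the $\cosh$-weighted norm (the paper closes the estimate by absorption rather than by a Neumann series, a cosmetic difference), followed by Remark \ref{rmk: Gevery1} and bounding $\cosh M|k|(u-c)$ on the compact rectangle by a $k$-dependent constant. For the second bound the paper simply invokes the second-order vanishing of $\Phi_1-1$ on the diagonal, which your explicit change of variables in $k^2\rmT_0\circ\rmT_{2,2}\Phi_1$ realizes concretely; the two arguments are equivalent.
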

\begin{proof}
The second inequality follows directly from the first inequality and the fact that $\Phi_1(k,u,c)\big|_{u=c}=1$ and $\pa_c\Phi_1(k,u,c)\big|_{u=c}=\pa_u\Phi_1(k,u,c)=0\big|_{u=c}$. So we only need to prove the first inequality. 

By taking $M_0$ large enough, it follows from \eqref{eq: Phi_1} and Proposition \ref{prop: T} that
\begin{align*}
\|\Phi_1\|_{\mathcal{G}^{M|k|,s_0}_{ph,\cosh}([a,b]^2)}
&\leq \|k^2\rmT_0\circ\rmT_{2,2}\Phi_1\|_{\mathcal{G}^{M|k|,s_0}_{ph,\cosh}([a,b]^2)}+C\\
&\leq \f{C}{M^2}\|\Phi_1\|_{\mathcal{G}^{M|k|,s_0}_{ph,\cosh}([a,b]^2)}+C,
\end{align*}
which gives us that
\beno
\|\Phi_1\|_{\mathcal{G}^{M|k|,s_0}_{ph,\cosh}([a,b]^2)}\leq C. 
\eeno
Then the corollary follows from Remark \ref{rmk: Gevery1}. 
\end{proof}

\begin{corol}\label{corol: Phi_j^re}
For $j=0,1$, let $\Phi_{j}^{re}$ and $\Phi^{re}_{j,1}$ be defined as in \eqref{eq: Phi^{re}_j} and \eqref{eq: Phi^{re}_{j,1}}, then there exists $C=C(\th_0,M,k)$ such that for any $|k|\neq 0$, for all integers $m\geq m_1\geq 0$,
\beno
\sup_{(c,u)\in [u(\f{\th_0}{2}),u(1-\f{\th_0}{2})]^2}\left|\pa_u^{m_1}\pa_c^{m-m_1}\Phi_{j}^{re}(k,u,c)\right|\leq C\Gamma_{s_0}(m)(CM)^m|k|^m,
\eeno
and
\beno
\sup_{(c,u)\in [u(\f{\th_0}{2}),u(1-\f{\th_0}{2})]^2}\left|\pa_u^{m_1}\pa_c^{m-m_1}\Big(\Phi_{j,1}^{re}(k,u,c)\Big)\right|\leq C\Gamma_{s_0}(m)(CM)^m|k|^m. 
\eeno
\end{corol}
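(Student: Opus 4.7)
The plan is to prove the corollary by decomposing $\Phi_{j,1}^{re}$ into its five constituent pieces (two integrals, the quotient involving $1-\Phi_1$, the boundary-value term, and the logarithmic term) and showing each one satisfies the desired Gevrey-$\frac{1}{s_0}$ estimate on the restricted box $[u(\theta_0/2), u(1-\theta_0/2)]^2$. I will then combine by the Gevrey product inequality \eqref{eq:3.9} to obtain the bound for $\Phi_{j,1}^{re}$; multiplying by $(u-c)$ gives the bound for $\Phi_j^{re}$ trivially.

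Before turning to the pieces, I record two structural facts inherited from Corollary~\ref{corol: regular Phi}: (i) $\Phi_1$ lies in a suitable $\mathcal{G}^{CM|k|,s_0}_{ph,1}$ space on the full square; (ii) the quotient $(\Phi_1-1)/(u-c)^2$ lies in the same class, which uses that $\Phi_1(k,c,c)=1$ and $\partial_u\Phi_1(k,u,c)|_{u=c}=0$ (derived from the initial conditions $\phi(k,y_c,y_c)=0$, $\partial_y\phi(k,y_c,y_c)=u'(y_c)$ for \eqref{eq: hom-Rei}). From (ii) and the fact that $\Phi_1(k,u,c)\approx \sinh(k(u-c))/(k(u-c))\geq 1$, one deduces that $\Phi_1$ is bounded away from $0$ and hence $(1/\Phi_1^2-1)/(u_1-c)^2$ is also Gevrey-$\frac{1}{s_0}$ regular (Lemma~\ref{eq: composition} with the composition $x\mapsto 1/(1+x)^2$).

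Next I handle the pieces. On $c\in[u(\theta_0/2),u(1-\theta_0/2)]$, both $\ln|u(j)-c|$ and $1/(u(j)-c)$ are smooth since $|u(j)-c|\geq \min(u(\theta_0/2)-u(0),\,u(1)-u(1-\theta_0/2))>0$ by monotonicity; they belong to $\mathcal{G}_{ph,1}^{K(\theta_0),s_0}$ using Lemma~\ref{eq: inverse-gevrey} applied to the restriction of $u$. The term $(1-\Phi_1)/(u-c)$ equals $(u-c)\cdot(1-\Phi_1)/(u-c)^2$, handled by (ii). For the integral $B$, Taylor's theorem with integral remainder gives
\[
\frac{(u^{-1})'(u_1)-(u^{-1})'(c)-(u^{-1})''(c)(u_1-c)}{(u_1-c)^2}=\int_0^1(1-t)\,(u^{-1})'''(c+t(u_1-c))\,dt,
\]
which is jointly smooth in $(u_1,c)$ and Gevrey-$\frac{1}{s_0}$ regular by the composition lemma applied to $(u^{-1})'''$ (whose regularity follows from the hypothesis on $u''$ together with Lemma~\ref{eq: inverse-gevrey}). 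Integrating this in $u_1$ from $u(j)$ to $u$ preserves Gevrey regularity in the pair $(u,c)$: iterated $u$-derivatives produce derivatives of the integrand at $u_1=u$, iterated $c$-derivatives pass inside the integral, and the product term $\Phi_1(k,u,c)$ is multiplied in afterward via \eqref{eq:3.9}. The integral $A$ is treated analogously, now with integrand $\Phi_1(k,u,c)\bigl[(1/\Phi_1(k,u_1,c)^2-1)/(u_1-c)^2\bigr](u^{-1})'(u_1)$, where the bracketed factor is Gevrey-$\frac{1}{s_0}$ as noted above.

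The main obstacle is the bookkeeping of the constants $M$: each application of the product rule, composition lemma, and integration enlarges $M$ by a bounded factor, so I need to verify that only finitely many (and fixed-in-$m$) such enlargements occur, yielding the final constant $CM$. A subsidiary delicate point is checking that $\Phi_1$ stays uniformly bounded below away from zero so that $1/\Phi_1^2$ is admissible inside the integral $A$; the explicit asymptotic from the paragraph after \eqref{eq: hom-Rei} supplies this. The $k$-dependence is harmless since the statement only demands a constant $C=C(\theta_0,M,k)$, and each factor of $|k|$ in the bound for the $m$-th derivative is inherited directly from Corollary~\ref{corol: regular Phi} with no further multiplication.
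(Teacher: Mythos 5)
Your proposal is correct and follows essentially the same route as the paper: decompose $\Phi^{re}_{j,1}$ into its pieces, use the second inequality of Corollary \ref{corol: regular Phi} (plus the lower bound on $\Phi_1$ and the composition lemma) for the first integral, Taylor's theorem with integral remainder for the second, and the analyticity of $\ln|u(j)-c|$ and $1/(u(j)-c)$ on the restricted box, then combine via the Gevrey product inequality. Your write-up is in fact slightly more explicit than the paper's on the points it leaves implicit (the non-vanishing of $\Phi_1$ and the term $(1-\Phi_1)/(u-c)$).
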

\begin{proof}
By the second inequality in Corollary \ref{corol: regular Phi}, we obtain that for all integers $m\geq m_1\geq 0$, there exist $C$ such that for $j=0,1$
\begin{align*}
\sup_{(c,u)\in [u(0),u(1)]^2}&\left|\pa_u^{m_1}\pa_c^{m-m_1}\int_{u(j)}^u\f{1}{(u_1-c)^2}\Big(\f{1}{\Phi_1(k,u_1,c)^2}-1\Big)(u^{-1})'(u_1)du_1\right|\\
&\leq C\Gamma_{s_0}(m)(CM)^m|k|^m. 
\end{align*}
By the fact that 
\begin{align*}
&\f{(u^{-1})'(u)-(u^{-1})'(c)-(u^{-1})''(c)(u-c)}{(u-c)^2}\\
&=\f{1}{(u-c)^2}\int_c^{u}\int_{c}^{u_1}(u^{-1})'''(u_2)du_2du_1
=\int_0^1\int_0^1t(u^{-1})'''(c+st(u-c))dsdt,
\end{align*}
we obtain that there is $C\geq 1$ such that 
\beno
\left\|\f{(u^{-1})'(u)-(u^{-1})'(c)-(u^{-1})''(c)(u-c)}{(u-c)^2}\right\|_{\mathcal{G}_{ph,1}^{CK_u,s_0}([u(0),u(1)]^2)}\leq \|(u^{-1})''\|_{\mathcal{G}_{ph,1}^{K_u,s_0}([u(0),u(1)])},
\eeno
which gives us that for $j=0,1$ 
\beno
\left\|\int_{u(j)}^{u}\f{(u^{-1})'(u_1)-(u^{-1})'(c)-(u^{-1})''(c)(u_1-c)}{(u_1-c)^2}du_1\right\|_{\mathcal{G}_{ph,1}^{\tilde{C}K_u,s_0}([u(0),u(1)]^2)}\leq \|(u^{-1})''\|_{\mathcal{G}_{ph,1}^{K_u,s_0}([u(0),u(1)])}.
\eeno
We also have that for some $C(\th_0)$, 
\beno
\sup_{c\in [u(\f{\th_0}{2}),u(1-\f{\th_0}{2})]}\left(\left|\pa_c^m\Big(\f{1}{u(j)-c}\Big)\right|+\left|\pa_c^m\Big(\ln|u(j)-c|\Big)\right|\right)\leq C^mm!. 
\eeno
Thus together with Corollary \ref{corol: regular Phi}, Lemma \ref{eq: inverse-gevrey}, we obtain the Corollary. 
\end{proof}

Let us now turn to the proof of Proposition \ref{prop: kernel-wave-op}. 
\begin{proof}
By the definition of the Gevrey class $\mathcal{G}_{ph,1}^{M,s_0}([u(0),u(1)]^d)$ it is easy to check that for any $f\in\mathcal{G}_{ph,1}^{M,s_0}([u(0),u(1)]^d)$, its trace will be in the Gevrey class $\mathcal{G}_{ph,1}^{M,s_0}([u(0),u(1)]^{d-1})$ of the same regularity. 

By \eqref{eq: bfD} and Lemma \ref{lem: Fourier_type1}, we get that there exists $\mathcal{D}(t,k,\xi_1,\xi_2)$ such that
\beno
\mathcal{F}_{2}\Big(\bfD_{u,k}\big(\tchi_2\mathcal{F}_{1}f(t,k,\cdot)\big)\Big)(t,k,\xi_1)=\int \mathcal{D}(t,k,\xi_1,\xi_2)\hat{f}_k(t,\xi_2)d\xi_2. 
\eeno
The behavior of $\mathcal{D}(t,k,\xi_1,\xi_2)$ depends only on the regularity of $D_1$, $D_2$ and $E$. 
Indeed, by Corollary \ref{corol: regular Phi}, Corollary \ref{corol: Phi_j^re}, Lemma \ref{eq: inverse-gevrey} and \eqref{eq: Wronskian}, we obtain that there exist $M_u$ such that for all integers $m\geq m_1\geq 0$ it holds that
\begin{align*}
&\left\|\left(\rmW_1(k,c)^2+\left(\pi\trho(c)\f{\widetilde{u''}(c)}{\widetilde{u'}(c)^2}\right)^2\right)^{-\f12}\right\|_{\mathcal{G}_{ph,1}^{M_u,s_0}\left(\left[u\big(\f{\th_0}{2}\big),u\big(1-\f{\th_0}{2}\big)\right]\right)}\\
&+\left\|\Phi_{j,1}^{re}\right\|_{\mathcal{G}_{ph,1}^{M_u,s_0}\left(\left[u\big(\f{\th_0}{2}\big),u\big(1-\f{\th_0}{2}\big)\right]^2\right)}\leq C\left(k,\, \th_0,\, \|(u^{-1})''\|_{\mathcal{G}_{ph,1}^{K_u,s_0}([u(0),u(1)])}\right),
\end{align*}
which gives us that
\begin{align*}
&\left\|D_1(k,\cdot)\tchi_2\right\|_{\mathcal{G}_{ph,1}^{M_u,s_0}\left(\R\right)}
+\left\|D_2(k,\cdot)\tchi_2\right\|_{\mathcal{G}_{ph,1}^{M_u,s_0}\left(\R\right)}
+\left\|\Phi_{j,1}^{re}(k,u,c)\tchi_2(u)\tchi_2(c)\right\|_{\mathcal{G}_{ph,1}^{M_u,s_0}\left(\R^2\right)}\\
&\leq C\left(k,\, \th_0,\, \|(u^{-1})''\|_{\mathcal{G}_{ph,1}^{K_u,s_0}([u(0),u(1)])}\right). 
\end{align*}
Therefore by Lemma \ref{lem: Fourier_type1}, Remark \ref{Rmk: fourier-gevrey} and Remark \ref{Rmk: fo-g-2}, we obtain Proposition \ref{prop: kernel-wave-op}. 
\end{proof}

\subsection{Commutator}\label{Sec: comm}
In this section, let us study the difference between $\bfD_{u,k}^1$ and $\bfD_{u,k}$. We have
\begin{align*}
&\tchi_2\bfD_{u,k}(\mathcal{F}_1\tilde{f})-\tchi_2\bfD_{u,k}^1(\mathcal{F}_1\tilde{f})\\
&=\tchi_2D_2(k,u)\int_{u(0)}^{u(1)}\f{\mathcal{F}_{1}\tilde{f}(t,k,u_1)e^{-i(u_1-u)tk}}{u_1-u}\Big[\f{(u^{-1})'(u_1)\widetilde{u''}(u)}{(u^{-1})'(u)}-\widetilde{u''}(u_1)\Big]du_1\\
&\quad+\tchi_2D_2(k,u)\int_{u(0)}^{u(1)}(u^{-1})''(u)\Phi_1(k,u_1,u)\ln|u_1-u|{\mathcal{F}_{1}\tilde{f}(t,k,u_1)e^{-i(u_1-u)tk}}\\
&\qquad\qquad\qquad\qquad\qquad\qquad\qquad\qquad
\times\Big[(u^{-1})'(u_1)\widetilde{u''}(u)-(u^{-1})'(u)\widetilde{u''}(u_1)\Big]du_1\\
&\quad+\tchi_2D_2(k,u)\int_{u(0)}^{u(1)}{\mathcal{F}_{1}\tilde{f}(t,k,u_1)e^{-i(u_1-u)tk}}\Phi^{re}_{0,1}(k,u_1,u)(1_{\R^-}(u_1-u))\\
&\qquad\qquad\qquad\qquad\qquad\qquad\qquad\qquad
\times\Big[(u^{-1})'(u_1)\widetilde{u''}(u)-(u^{-1})'(u)\widetilde{u''}(u_1)\Big]du_1\\
&\quad-\tchi_2D_2(k,u)\int_{u(0)}^{u(1)}{\mathcal{F}_{1}\tilde{f}(t,k,u_1)e^{-i(u_1-u)tk}}\Phi^{re}_{1,1}(k,u_1,u)1_{\R^-}(u_1-u)\\
&\qquad\qquad\qquad\qquad\qquad\qquad\qquad\qquad
\times\Big[(u^{-1})'(u_1)\widetilde{u''}(u)-(u^{-1})'(u)\widetilde{u''}(u_1)\Big]du_1\\
&\quad+\tchi_2D_2(k,u)\int_{u(0)}^{u(1)}{\mathcal{F}_{1}\tilde{f}(t,k,u_1)e^{-i(u_1-u)tk}}\Phi^{re}_{1,1}(k,u_1,u)\\
&\qquad\qquad\qquad\qquad\qquad\qquad\qquad\qquad
\times\Big[(u^{-1})'(u_1)\widetilde{u''}(u)-(u^{-1})'(u)\widetilde{u''}(u_1)\Big]du_1. 
\end{align*}
Let $\mathcal{D}^{com}(t,k,\xi,\xi_1)$ be the Fourier kernel of the operator $\tchi_2\bfD_{u,k}(\tchi_2\mathcal{F}_1\tilde{f})-\tchi_2\bfD_{u,k}^1(\tchi_2\mathcal{F}_1\tilde{f})$, which means that
\beno
\mathcal{F}_2\Big(\tchi_2\bfD_{u,k}(\tchi_2\mathcal{F}_1\tilde{f})-\tchi_2\bfD_{u,k}^1(\tchi_2\mathcal{F}_1\tilde{f})\Big)(\xi)=\int \mathcal{D}^{com}(t,k,\xi,\xi_1)\hat{f}_k(t,\xi_1)d\xi_1.
\eeno
Then by the same argument as in the proof of Proposition \ref{prop: kernel-wave-op}, we get that there exists $\la_{\mathcal{D}}$ such that
\beno
\big|\mathcal{D}^{com}(t,k,\xi,\xi_1)\big|\lesssim e^{-\la_{\mathcal{D}}|\xi-\xi_1|^{s_0}}. 
\eeno
Let us also study the derivate $(\pa_u-itk)$ acting on $\tchi_2\bfD_{u,k}(\mathcal{F}_1\tilde{f})-\tchi_2\bfD_{u,k}^1(\mathcal{F}_1\tilde{f})$:
\begin{align*}
&(\pa_u-itk)\Big(\tchi_2\bfD_{u,k}(\mathcal{F}_1\tilde{f})-\tchi_2\bfD_{u,k}^1(\mathcal{F}_1\tilde{f})\Big)\\
&=\tchi_2D_2(k,u)\int_{u(0)}^{u(1)}(u^{-1})''(u)\Phi_1(k,u_1,u)\ln|u-u_1|{\mathcal{F}_{1}\tilde{f}(t,k,u_1)e^{-i(u_1-u)tk}}\\
&\qquad\qquad\qquad\qquad\qquad\qquad\qquad\qquad
\times\pa_u\Big[(u^{-1})'(u_1)\widetilde{u''}(u)-(u^{-1})'(u)\widetilde{u''}(u_1)\Big]du_1\\
&\quad+\tchi_2D_2(k,u)\int_{u(0)}^{u(1)}{\mathcal{F}_{1}\tilde{f}(t,k,u_1)e^{-i(u_1-u)tk}}\Phi^{re}_{0,1}(k,u_1,u)(1_{\R^-}(u_1-u))\\
&\qquad\qquad\qquad\qquad\qquad\qquad\qquad\qquad
\times\pa_u\Big[(u^{-1})'(u_1)\widetilde{u''}(u)-(u^{-1})'(u)\widetilde{u''}(u_1)\Big]du_1\\
&\quad-\tchi_2D_2(k,u)\int_{u(0)}^{u(1)}{\mathcal{F}_{1}\tilde{f}(t,k,u_1)e^{-i(u_1-u)tk}}\Phi^{re}_{1,1}(k,u_1,u)1_{\R^-}(u_1-u)\\
&\qquad\qquad\qquad\qquad\qquad\qquad\qquad\qquad
\times\pa_u\Big[(u^{-1})'(u_1)\widetilde{u''}(u)-(u^{-1})'(u)\widetilde{u''}(u_1)\Big]du_1\\
&\quad+\text{good terms}.
\end{align*}
We say the rest terms are good, because 
\beno
&&\f{\f{(u^{-1})'(u_1)\widetilde{u''}(u)}{(u^{-1})'(u)}-\widetilde{u''}(u_1)}{u-u_1}\in \mathcal{G}_{ph,1}^{K_u,s_0}([u(0),u(1)]^2),\\
&&\f{(u^{-1})'(u_1)\widetilde{u''}(u)-(u^{-1})'(u)\widetilde{u''}(u_1)}{u-u_1}\in \mathcal{G}_{ph,1}^{K_u,s_0}([u(0),u(1)]^2),
\eeno
for some $K_u>0$.

Therefore, there exists $\mathcal{D}^{com,1}(t,k,\xi,\xi_1)$ such that
\beno
(i\xi-ikt)\mathcal{F}_2\Big(\tchi_2\bfD_{u,k}(\tchi_2\mathcal{F}_1\tilde{f})-\tchi_2\bfD_{u,k}^1(\tchi_2\mathcal{F}_1\tilde{f})\Big)(\xi)=\int \mathcal{D}^{com,1}(t,k,\xi,\xi_1)\hat{f}_k(t,\xi_1)d\xi_1.
\eeno
Moreover, Remark \ref{Rmk: fo-g-2} gives us that there exists $\la_{\mathcal{D}}'$ such that 
\beno
|\mathcal{D}^{com,1}(t,k,\xi,\xi_1)|\lesssim e^{-\la_{\mathcal{D}}'|\xi-\xi_1|^{s_0}}, 
\eeno
which together with the fact that 
\beno
i(\xi-kt)\mathcal{D}^{com}(t,k,\xi,\xi_1)=\mathcal{D}^{com,1}(t,k,\xi,\xi_1),
\eeno
gives us that
\beno
\big|\mathcal{D}^{com}(t,k,\xi,\xi_1)\big|\lesssim \f{e^{-\la_{\mathcal{D}}'|\xi-\xi_1|^{s_0}}}{1+|\xi-kt|}. 
\eeno
We can repeat the above argument once more and get that 
\begin{align*}
&(\pa_u-itk)^2\Big(\tchi_2\bfD_{u,k}(\mathcal{F}_1\tilde{f})-\tchi_2\bfD_{u,k}^1(\mathcal{F}_1\tilde{f})\Big)\\
&=\tchi_2D_2(k,u)\int_{u(0)}^{u(1)}(u^{-1})''(u)\Phi_1(k,u_1,u)\f{1}{u-u_1}{\mathcal{F}_{1}\tilde{f}(t,k,u_1)e^{-i(u_1-u)tk}}\\
&\qquad\qquad\qquad\qquad\qquad\qquad\qquad\qquad
\times\Big[(u^{-1})'(u_1)\pa_u\widetilde{u''}(u)-(u^{-1})''(u)\widetilde{u''}(u_1)\Big]du_1\\
&\quad+\tchi_2D_2(k,u)\mathcal{F}_{1}\tilde{f}(t,k,u)\Phi^{re}_{0,1}(k,u,u)
\Big[(u^{-1})'(u)\pa_u\widetilde{u''}(u)-(u^{-1})''(u)\widetilde{u''}(u)\Big]\\
&\quad-\tchi_2D_2(k,u){\mathcal{F}_{1}\tilde{f}(t,k,u)}\Phi^{re}_{1,1}(k,u,u)\Big[(u^{-1})'(u)\pa_u\widetilde{u''}(u)-(u^{-1})''(u)\widetilde{u''}(u)\Big]\\
&\quad+\text{good terms}.
\end{align*}
Therefore, there exists $\mathcal{D}^{com,2}(t,k,\xi,\xi_1)$ such that
\beno
(i\xi-ikt)^2\mathcal{F}_2\Big(\tchi_2\bfD_{u,k}(\tchi_2\mathcal{F}_1\tilde{f})-\tchi_2\bfD_{u,k}^1(\tchi_2\mathcal{F}_1\tilde{f})\Big)(\xi)=\int \mathcal{D}^{com,2}(t,k,\xi,\xi_1)\hat{f}_k(t,\xi_1)d\xi_1.
\eeno
Moreover there exists $\la_{\mathcal{D}}'$ such that 
\beno
|\mathcal{D}^{com,2}(t,k,\xi,\xi_1)|\lesssim e^{-\la_{\mathcal{D}}'|\xi-\xi_1|^{s_0}}, 
\eeno
which together with the fact that 
\beno
-(\xi-kt)^2\mathcal{D}^{com}(t,k,\xi,\xi_1)=\mathcal{D}^{com,2}(t,k,\xi,\xi_1),
\eeno
gives us that
\beno
\big|\mathcal{D}^{com}(t,k,\xi,\xi_1)\big|\lesssim \f{e^{-\la_{\mathcal{D}}'|\xi-\xi_1|^{s_0}}}{1+|\xi-kt|^2}. 
\eeno
By the fact that 
\beno
\f{1+|\xi_1-kt|^2}{1+|\xi-kt|^2}\lesssim \f{1+|\xi-kt|^2+|\xi-\xi_1|^2}{1+|\xi-kt|^2}\lesssim \langle\xi-\xi_1\rangle^2,
\eeno
we obtain Corollary \ref{corol: commutator} for some $\la_{\mathcal{D}}<\la_{\mathcal{D}}'$. 

Finally, let us make a remark about the optimality of the 2 derivatives gain in the commutator. The commutator of a smooth function $u$ and the Hilbert transform $[u,H]f(x)=\f{1}{2\pi}\int_{a}^b\f{u(x)-u(y)}{x-y}f(y)dy$ is smooth as long as $f\in L^1$. However if there is a lower singularity in the integral for example $T(f)=\int_{a}^b\ln|x-y|f(y)dy$, then the commutator $[u,T]f=\int_{a}^b(u(x)-u(y))\ln|x-y|f(y)dy$ is not as good. Indeed, after taking derivatives twice, we have
\beno
\pa_x^2[u,T]f(x)=2\int_{a}^b\f{u'(x)}{x-y}f(y)dy+\int_{a}^bu''(x)\ln|x-y|f(y)dy+\int_a^b\pa_x\Big(\f{u(x)-u(y)}{x-y}\Big)f(y)dy.
\eeno
Thus the regularity of $\pa_x^2[u,T]f$ will depend on the regularity of $f$ because of the singular integral.

\end{CJK*}

\end{document}